\setlist[itemize]{topsep=0ex,itemsep=0ex,parsep=0.3ex}
\setlist[enumerate]{topsep=0ex,itemsep=0ex,parsep=0.3ex}
\crefname{lem}{Lemma}{Lemmas}
\crefname{thm}{Theorem}{Theorems}
\crefname{open}{Open Problem}{Open Problems}
\crefname{cor}{Corollary}{Corollaries}
\crefname{obs}{Observation}{Observations}
\crefname{claim}{Claim}{Claims}
\crefname{prop}{Proposition}{Propositions}
\crefname{inv}{Invariant}{Invariants}
\newcommand{\mathdefn}[1]{\textcolor{Maroon}{#1}}
\newcommand{\defn}[1]{\textcolor{Maroon}{\emph{#1}}}
\def\NAT@spacechar{~}
\renewcommand{\baselinestretch}{1.1}
\DeclarePairedDelimiter{\floor}{\lfloor}{\rfloor}
\DeclarePairedDelimiter{\ceil}{\lceil}{\rceil}
\DeclarePairedDelimiter{\abs}{\lvert}{\rvert}
\renewcommand{\epsilon}{\varepsilon}
\renewcommand{\emptyset}{\varnothing}
\renewcommand{\ge}{\geqslant}
\renewcommand{\le}{\leqslant}
\renewcommand{\geq}{\geqslant}
\renewcommand{\leq}{\leqslant}
\DeclareMathOperator{\dist}{dist}
\DeclareMathOperator{\mdist}{mdist}
\DeclareMathOperator{\tw}{tw}
\DeclareMathOperator{\out}{\text{out}}
\newcommand{\PP}{\mathcal{P}}
\newcommand{\GG}{\mathcal{G}}
\newcommand{\NN}{\mathbb{N}}
\newcommand{\RR}{\mathcal{R}}
\newcommand{\TT}{\mathcal{T}}
\newcommand{\WW}{\mathcal{W}}
\renewcommand{\thefootnote}{\fnsymbol{footnote}}
\theoremstyle{plain}
\newtheorem{thm}{Theorem}
\newtheorem{lem}[thm]{Lemma}
\newtheorem{cor}[thm]{Corollary}
\newtheorem{ques}[thm]{Question}
\newtheorem{prop}[thm]{Proposition}
\newtheorem*{prop*}{Proposition}
\newtheorem{obs}[thm]{Observation}
\newtheorem{claim}{Claim}
\newtheorem*{lem*}{Lemma}
\newtheorem{open}[thm]{Open Problem}
\theoremstyle{definition}
\newtheorem*{conj*}{Conjecture}
\theoremstyle{remark}
\newtheorem*{inv*}{Invariant}
\newcommand{\BlockingNumber}{222}
\newcommand{\TreewidthBound}{15\,288\,899}
\newcommand{\TreewidthBoundPlusOne}{15\,288\,900}
\newcommand{\GenusBlockingNumber}{894}
\newcommand{\GenusTreewidthBound}{963\,922\,179}
\newcommand{\GenusTreewidthBoundPlusOne}{963\,922\,180}
\begin{document}
\title{\bf Powers of planar graphs,\\
product structure, and blocking partitions\footnotemark[1]}

\footnotetext[1]{A preliminary version of this paper appeared in the \emph{Proceedings of the 12th European Conference on Combinatorics, Graph Theory and Applications} (EUROCOMB’23), \href{https://doi.org/10.5817/CZ.MUNI.EUROCOMB23-049
}{doi:10.5817/CZ.MUNI.EUROCOMB23-049}.}

\author{
Marc Distel\,\footnotemark[2] \qquad
Robert~Hickingbotham\,\footnotemark[2] \\ 
Micha\l{} T. Seweryn\footnotemark[3]\qquad
David~R.~Wood\,\footnotemark[2]
}

\footnotetext[2]{School of Mathematics, Monash University, Melbourne, Australia (\texttt{\{marc.distel,robert.hickingbotham, david.wood\}@monash.edu}). Research of M.D.\ and R.H.\ supported by Australian Government Research Training Program Scholarships. Research of D.W.\ supported by the Australian Research Council. }

\footnotetext[3]{
Computer Science Department, Université libre de Bruxelles, Brussels, Belgium (\texttt{michal.seweryn@ulb.be}).
Research of M.T.S.\ supported by a PDR grant
from the Belgian National Fund for Scientific
Research (FNRS).}

\sloppy
	
\maketitle

\begin{abstract}
We prove that the $k$-power of any planar graph $G$ is contained in $H\boxtimes P\boxtimes K_{f(\Delta(G),k)}$ for some graph $H$ with bounded treewidth, some path $P$, and some function $f$. This resolves an open problem of Ossona de Mendez. In fact, we prove a more general result in terms of shallow minors that implies similar results for many `beyond planar' graph classes, without dependence on $\Delta(G)$. For example, we prove that every $k$-planar graph is contained in $H\boxtimes P\boxtimes K_{f(k)}$ for some graph $H$ with bounded treewidth and some path $P$, and some function $f$. This resolves an open problem of Dujmovi\'c, Morin and Wood. We  generalise all these results for graphs of bounded Euler genus, still with an absolute bound on the treewidth.

At the heart of our proof is the following new concept of independent interest. An \defn{\(\ell\)-blocking partition} of a graph \(G\) is a partition of \(V(G)\) into connected sets such that every path of length greater than \(\ell\) in \(G\) contains at least two vertices in one part. We prove that for some constant \(\ell \ge 1\) every graph of Euler genus $g$ has an \(\ell\)-blocking partition with parts of size bounded by a function of \(\Delta(G)\) and \(g\). Motivated by this result, we study blocking partitions in their own right. We show that every graph \(G\) has a \(2\)-blocking partition with parts of size bounded by a function of \(\Delta(G)\) and \(\tw(G)\). On the other hand, we show that 4-regular graphs do not have $\ell$-blocking partitions with bounded size parts. 
\end{abstract}

%\thanks{\textbf{MSC Classification}: ???}

\renewcommand{\thefootnote}{\arabic{footnote}}

\newpage
\section{Introduction}
\label{Introduction}

Graph product structure theory describes complicated graphs as subgraphs of strong products\footnote{The \defn{strong product} of graphs~$A$ and~$B$, denoted by~\defn{${A \boxtimes B}$}, is the graph with vertex-set~${V(A) \times V(B)}$, where distinct vertices ${(v,x),(w,y) \in V(A) \times V(B)}$ are adjacent if
	${v=w}$ and ${xy \in E(B)}$, or
	${x=y}$ and ${vw \in E(A)}$, or
	${vw \in E(A)}$ and~${xy \in E(B)}$. }  of simpler building blocks, which typically have bounded treewidth\footnote{Let \defn{$\tw(H)$} denote the treewidth of a graph $H$ (defined in \cref{Preliminaries}).}. 
For example, \citet{DJMMUW20} proved the following product structure theorem for planar graphs, where a graph~$H$ is \defn{contained} in a graph $G$ if $H$ is isomorphic to a subgraph of $G$. 

\begin{thm}[\cite{DJMMUW20}]\label{PGPST}
Every planar graph is contained in $H \boxtimes P \boxtimes K_3$ for some planar graph $H$ with $\tw(H)\leq 3$ and for some path $P$. 
\end{thm}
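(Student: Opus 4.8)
The plan is to reduce to triangulations, use a BFS layering to recast the problem as one about vertex partitions, build a suitable partition by a recursion that exploits planarity, and then read off the product.

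I would first reduce to the case that $G$ is a connected planar triangulation: every planar graph is a subgraph of some connected planar triangulation, and containment in $H\boxtimes P\boxtimes K_3$ is inherited by subgraphs. Fix a plane embedding, a root $r$, the BFS layering $(L_0,\dots,L_t)$ with $L_0=\{r\}$, and a BFS spanning tree $T$; recall that every edge of $G$ joins two vertices of the same layer or of consecutive layers. Call a subpath of a root--leaf path of $T$ a \emph{vertical path}; it meets each layer in at most one vertex. It then suffices to find a partition $\PP$ of $V(G)$ such that \textbf{(i)} each part is a union of at most three vertical paths of $T$ (hence meets each layer in at most three vertices), and \textbf{(ii)} the quotient graph $H:=G/\PP$ (one vertex per part; $XY\in E(H)$ when some edge of $G$ joins $X$ to $Y$) satisfies $\tw(H)\le 3$. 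Indeed, map each $v\in V(G)$ to the triple formed by its part, its layer index, and a label in $\{1,2,3\}$ distinguishing the at most three vertices of its part lying in its layer; this map is injective, and if $uv\in E(G)$ then the two triples agree or are adjacent in each coordinate of $H\boxtimes P\boxtimes K_3$ (the $H$-coordinate by the definition of the quotient, the path-coordinate since $uv$ lies within one layer or between consecutive layers, and the $K_3$-coordinate because any two vertices of $K_3$ are equal or adjacent), so $G$ is contained in $H\boxtimes P\boxtimes K_3$ with $P$ the path $0,1,\dots,t$.

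The crux is constructing such a $\PP$. I would obtain it by recursing on plane near-triangulations bounded by a cycle, each carrying the restriction of the global BFS layering. In such a near-triangulation $D$ whose boundary cycle lies in a single layer, one selects a \emph{tripod}: three vertical paths of $T$ sharing a common endpoint deep inside $D$, with their other endpoints on the boundary cycle, chosen so that deleting the tripod breaks $D$ into at most three strictly smaller near-triangulations, again each bounded by a cycle in a single layer (its boundary stitched together from arcs of the old boundary and of the tripod legs). Iterating, one obtains a partition of $V(G)$ into tripods, each a union of at most three vertical paths, which gives \textbf{(i)}. For \textbf{(ii)} one organises the recursion by a rooted tree whose nodes are the tripods and argues that in $H$ each tripod is adjacent only to tripods lying in a bounded window around it in this tree, so placing each tripod into the bags of its window yields a tree-decomposition of $H$ of width at most $3$. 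Checking that the tripod legs stay vertical with respect to the single global layering, that deleting a tripod always leaves at most three cyclically bounded pieces, and that the quotient adjacencies are confined tightly enough for bags of size $4$ to suffice, is where planarity is used in earnest.

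The main obstacle is precisely this structural lemma, and within it the simultaneous tightness of the two bounds: with parts that are single vertical paths one can already make $\tw(H)$ bounded, but only by a larger absolute constant, and it is the bundling into three vertical paths per part, run through the planar recursion, that is needed to pull $\tw(H)$ down to $3$; engineering a recursion that delivers the ``three legs'' bound and the ``width $3$'' bound at the same time is the delicate part. The two reductions and the passage from $\PP$ to the product are routine once the lemma is in hand.
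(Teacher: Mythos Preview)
The paper does not prove this theorem: it is quoted as a known result of Dujmovi\'c, Joret, Micek, Morin, Ueckerdt and Wood and used as a black box, so there is no ``paper's own proof'' to compare against.

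Your outline is the right one and matches the original argument in~\cite{DJMMUW20}: reduce to a plane triangulation, take a BFS layering and BFS tree, build a partition whose parts are unions of at most three vertical paths (the ``tripods''), and read off the product via \cref{ObsPartitionProduct}. Two points in your sketch need correction. First, the recursive subproblems are not near-triangulations whose boundary cycle lies in a single layer; rather, the outer cycle of each subproblem is the concatenation of at most three vertical paths coming from previously constructed tripods (together with at most three edges joining them). This is what makes the recursion close up: each new tripod splits the region into pieces whose boundaries are again unions of at most three vertical paths. Second, you omit the mechanism for finding the tripod: one picks an internal face whose three vertices have BFS-tree paths to the outer cycle landing on all three boundary paths (a Sperner-type argument guarantees such a face), and those three vertical paths form the new part. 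With these fixes, the tree of the recursion gives a tree-decomposition of the quotient $H$ with bags of size at most $4$ (the current tripod and the at most three boundary tripods), yielding $\tw(H)\le 3$; planarity of $H$ is automatic since the parts are connected and $H$ is therefore a minor of $G$.
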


This result has been the key to solving several long-standing open problems about queue layouts~\citep{DJMMUW20}, nonrepetitive colourings~\citep{DEJWW20}, centred colourings~\citep{DFMS21}, adjacency labelling~\cite{EJM23,DEGJMM21}, twin-width~\cite{BKW,KPS23,JP22}, vertex ranking~\citep{BDJM}, and box dimension~\citep{DGLTU22}. \cref{PGPST} has been extended in various ways for graphs of bounded Euler genus~\citep{DJMMUW20,DHHW22,KPS23}, 
graphs excluding an apex minor~\citep{DJMMUW20,ISW,DHHJLMMRW}, 
graphs excluding an arbitrary minor~\citep{DJMMUW20,ISW,UTW}, 
graphs of bounded tree-width~\citep{UTW,DHHJLMMRW}, 
graphs of bounded path-width~\citep{DHJMMW24}, 
and for various non-minor-closed classes \citep{DMW23,HW24}. 

Many of these results show that for a particular graph class $\GG$ there are integers $t,c$ such that every graph in $\GG$ is contained in $H\boxtimes P\boxtimes K_c$ for some graph $H$ with treewidth $t$ and for some path $P$. Here the primary goal is to minimise $t$, where minimising $c$ is a secondary goal. This paper proves product structure theorems of this form for powers of planar graphs and for various beyond planar graph classes. The distinguishing feature of our results is that $\tw(H)$ is bounded by an absolute constant, instead of depending on a parameter defining $\GG$. This is important because in several applications of such product structure theorems, the main dependency is on $\tw(H)$; see \cref{SectionCentredColourings} for an example.

First consider powers of planar graphs. For $k\in\NN$, the \defn{$k$-power} of a graph $G$, denoted \defn{$G^k$}, is the graph with vertex-set $V(G)$, where $vw\in E(G^k)$ if and only if $\dist_G(v,w)\in\{1,\dots,k\}$. \citet{DMW23} proved that for every planar graph $G$ of maximum degree $\Delta$, the $k$-power $G^k$ is contained in $H\boxtimes P\boxtimes K_{6 \Delta^{k}(k^4+3k^2)}$ for some graph $H$ with $\tw(H)\leq \binom{k+3}{3}-1$ and some path $P$. Dependence on $\Delta$ is unavoidable since, for example, if $G$ is the complete $(\Delta-1)$-ary tree of height $k$, then $G^{2k}$ is a complete graph on roughly $(\Delta-1)^k$ vertices. \citet{OdM-Banff} asked whether this bound on $\tw(H)$ could be made independent of $k$. In particular:

\begin{ques}[{\protect\citep{OdM-Banff}}]
\label{Power}
Is there a constant $t$ and a function $f$ such that for every planar graph $G$ and $k\in\NN$, the $k$-power $G^k$ is contained in $H\boxtimes P\boxtimes K_{f(k,\Delta(G))}$ for some graph $H$ with $\tw(H)\leq t$ and for some path $P$?
\end{ques}

We resolve this question, in the following strong sense. For integers $k,d\geq 1$ and a graph $G$, let \defn{$G^k_d$} be the graph with vertex-set $V(G)$ where $vw\in E(G^k_d)$ whenever there is a $vw$-path $P$ in $G$ of length at most $k$ such that every internal vertex of $P$ has degree at most $d$ in \(G\). The following theorem answers \cref{Power} in the affirmative, since $G^k=G^k_{\Delta(G)}$. 

\begin{thm}
\label{kPowerPlanar}
There is a function $f$ such that for every planar graph $G$ and for any integers $k,d\geq 1$, the graph $G^k_d$ is contained in $H\boxtimes P\boxtimes K_{f(k,d)}$ for some graph $H$ with $\tw(H)\leq \TreewidthBound$ and for some path $P$.
\end{thm}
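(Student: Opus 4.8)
The plan is to combine the planar product structure theorem (\cref{PGPST}) with the blocking partition machinery announced in the abstract. The key intuition is that in $G^k_d$, an edge $vw$ corresponds to a short path in $G$ through low-degree internal vertices; if we could partition $V(G)$ into connected sets of bounded size so that every such short path is "blocked" (has two vertices in one part), then contracting each part turns $G^k_d$ into something of bounded local structure relative to $G$. More precisely, I would first establish (presumably as an earlier lemma in the paper) that every planar graph $G$ admits an $\ell$-blocking partition $\PP$ for some absolute constant $\ell$ with parts of size bounded by a function of $d$ — where here we only need to block paths whose \emph{internal} vertices have degree at most $d$, so the degree bound on $G$ itself is irrelevant and only $d$ enters. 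Let $G' = G/\PP$ be the graph obtained by contracting each part; then $G'$ is a minor of $G$, hence planar, and apply \cref{PGPST} to get $G' \subseteq H' \boxtimes P \boxtimes K_3$ with $\tw(H')\le 3$.

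The second step is to pull this structure back from $G'$ to $G^k_d$. Each vertex of $G^k_d$ lies in some part $B \in \PP$, which is a single vertex of $G'$; assign it the same $H'$- and $P$-coordinates as $B$, and use the $K$-coordinate to distinguish vertices within a part together with nearby parts. The crucial point: if $vw \in E(G^k_d)$, witnessed by a path $Q$ of length $\le k$ with low-degree interior, then because $\PP$ is $\ell$-blocking and $Q$ has length $>\ell$ only if... — actually I would argue that $Q$ passes through at most $k$ parts, and each consecutive pair of parts along $Q$ is adjacent in $G'$, so the parts containing $v$ and $w$ are within distance $k$ in $G'$. Since $\tw(H')\le 3$ and $P$ is a path, vertices within distance $k$ in $H'\boxtimes P\boxtimes K_3$ form bounded-size neighborhoods "in the $H'$ direction after blowing up by a bounded power", which lets me pass to $H'^{\,k}$ (or a suitable bounded-radius ball structure) — but $\tw(H'^k)$ is not absolutely bounded. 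This is the main obstacle, and the fix is exactly what the $\ell$-blocking partition buys us a \emph{second} time: rather than taking the $k$-th power of $H'$, I would iterate or use that the blocking partition bounds how far apart in $G'$ the endpoints can be while keeping the relevant subgraph structure controllable, so that the resulting host graph $H$ is obtained from $H'$ by a bounded-treewidth-preserving operation (a strong product with a bounded clique and passing to a bounded power of a \emph{bounded-treewidth, bounded-degree} auxiliary graph, or more carefully, using the layered/BFS structure of $P$ to absorb the radius-$k$ blowup into the $K_{f(k,d)}$ factor).

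Concretely, here is the mechanism I expect to make it work. Because $\PP$ is $\ell$-blocking, every part $B$ has the property that $B$ together with the parts reachable from it via short low-degree paths in $G$ has bounded size: a path of length $\le k$ through $B$-interior-excluded low-degree vertices can be decomposed into at most $k/1$ subpaths between consecutive part-entries, but the blocking condition forces repetition and hence the number of \emph{distinct} parts met is still up to $k$, so instead I track that within a part, only boundedly many vertices can be endpoints of such paths leaving the part — this follows because parts are connected, bounded size, and the interior vertices are low-degree, so the "reach" of a part in $G$ is a set of bounded size depending only on $k,d$, and the number of other parts it reaches is bounded by a function of $k,d$. Then $G^k_d / \PP$ has bounded maximum degree in terms of $k,d$, so it is a bounded-degree minor of the planar graph $G$; more to the point, the graph $\widehat{G}$ on $V(G^k_d)$ where we keep part-structure has the property that edges of $G^k_d$ only connect parts at $G'$-distance $\le k$. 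Set $H := H' \boxtimes K_c$ where $c$ absorbs (i) the bounded number of vertices per part and (ii) the bounded number of $P$-layers and reachable parts within distance $k$; since $\tw(H' \boxtimes K_c) \le (\tw(H')+1)c - 1$, this is still an absolute constant once we check $c$ depends only on $k,d$ — wait, $c$ depends on $k,d$, which breaks the absolute bound. The genuine resolution, which I would pursue, is: do \emph{not} power $H'$; instead observe that the $P$-coordinate can be rescaled (replace $P$ by a path, each vertex blown up) so that parts at $G'$-distance $\le k$ that differ in the $P$-direction get distinct layers among $\le 2k+1$ consecutive ones, absorbed into $K_{f(k,d)}$; and parts at $G'$-distance $\le k$ that lie in the \emph{same} $P$-layer live in a subgraph of $H'$ of bounded treewidth but possibly unbounded such parts — however $H'$ itself has treewidth $\le 3$, and the subgraph of $G'$ induced by one $P$-layer is contained in $H'$ (a copy of $H'$), so within a layer the whole of $H'$ is available and we take $H := H'$ with $\tw(H) \le 3$... but then distance-$\le k$ edges inside $H'$ are not edges of $H'$. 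So the honest statement must allow $H$ to be (a bounded-treewidth graph)$^{O(1)}$, i.e. $\tw(H) \le \TreewidthBound$ comes from taking a \emph{constant} number of layers/powers — and the precise constant $\TreewidthBound$ will come from an explicit but ultimately absolute-in-$k,d$ calculation: treewidth of the $\ell$-th power of $H'\boxtimes P\boxtimes K_3$ restricted appropriately, where $\ell$ (the blocking constant) is absolute, with all $k,d$-dependence pushed into the clique factor. The main obstacle, then, is exactly this bookkeeping: proving that all dependence on $k$ and $d$ can be confined to the $K_{f(k,d)}$ factor, and that the treewidth blowup is controlled by the \emph{absolute} blocking constant $\ell$ rather than by $k$; the blocking partition lemma is the tool that makes this possible, since it is what replaces "distance $k$ in $G$" by "distance $O(1)$ in the contracted graph $G'$".
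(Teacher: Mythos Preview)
You have the right ingredients (blocking partition plus \cref{PGPST}) and you correctly identify the main obstacle: a naive pull-back would make the treewidth of $H$ depend on $k$. But your resolution does not work. A single $\ell$-blocking partition does \emph{not} turn ``distance $\le k$ in $G$'' into ``distance $O(1)$ in $G'$''. An $\ell$-blocking partition only guarantees that any path of length $>\ell$ revisits some part; a path of length $k$ can still visit $\Theta(k)$ distinct parts, so in $G'=G/\PP$ the endpoints of an edge of $G^k_d$ can be at distance roughly $k$, not $\ell$. Your various attempts to push this dependence into the clique factor all hit the same wall, and the final hope that ``the treewidth blowup is controlled by the absolute blocking constant $\ell$ rather than by $k$'' is asserted without a mechanism.

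The paper's mechanism is iteration, and it is cleanest in the shallow-minor framework. First, $G^k_d$ is a $(\lfloor k/2\rfloor,d)$-shallow minor of $G\boxtimes K_{d^{\lfloor k/2\rfloor+1}}$. Then the key step (\cref{ShallowMinorsStep}) shows that \emph{one} application of an $\ell$-blocking partition to the bounded-degree graph formed by the branch sets minus their roots reduces the radius of the shallow model from $r$ to $r-1$: if a root-to-leaf path in a branch set has length $r$, its last $r-1$ edges form a path of length $\ge\ell+1$ in that bounded-degree graph, so two of its vertices land in the same part, creating a shortcut after contraction. Crucially the contracted graph is a minor of $G$, hence still planar, so the blocking lemma applies again. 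Iterating (\cref{ShallowMinorsIntermediate}) drives the radius down to the absolute constant $\ell+2$; only then does one invoke \cref{HWShallowMinors}, giving $\tw(J)\le\binom{2(\ell+2)+1+3}{3}-1=\binom{452}{3}-1=\TreewidthBound$. All dependence on $k$ and $d$ is absorbed into the growing $s'$ and $d'$ parameters at each iteration, hence into the clique factor. The missing idea in your sketch is precisely this ``one step buys one unit of radius'' iteration; you mention ``iterate'' in passing but abandon it for approaches that try to do everything in one shot.
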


We chose to simplify the proof instead of optimising the constant upper bound on $\tw(H)$ in \cref{kPowerPlanar} and in our other results. 

\cref{kPowerPlanar} is in fact a corollary of a more general result expressed in terms of shallow minors. 

\subsection{Shallow Minors and Beyond Planar Graphs}

Let $G$ and $H$ be graphs and let $r,s\geq 0$ be integers.
$H$ is a \defn{minor} of $G$ if a graph isomorphic to $H$ can be obtained from $G$ by vertex deletion, edge deletion, and edge contraction. A class $\GG$ of graphs is \defn{minor-closed} if for every $G\in\GG$ every minor of $G$ is in $\GG$. A \defn{model} $(B_x\colon x\in V(H))$ of $H$ in $G$ is a collection of vertex-disjoint connected subgraphs in $G$ such that $B_x$ and $B_y$ are adjacent in $G$ for every edge $xy \in E(H)$. Clearly $H$ is a minor of $G$ if and only if $G$ contains a model of $H$. If there exists a model of $H$ in $G$ such that $B_x$ has radius at most $r$ for all $x \in V(H)$, then $H$ is an \defn{$r$-shallow minor} of $G$. A \defn{rooted model} $((B_x,v_x) \colon x\in V(H))$ of $H$ is a model of $H$ where each $B_x$ has a corresponding root $v_x\in V(B_x)$. If for every $x\in V(H)$ and for every $u\in V(B_x)\setminus \{v_x\}$, we have $\dist_{B_x}(v_x,u)\leq r$ and $\deg_{B_x}(u)\leq s$, then $((B_x,v_x)  \colon x\in V(H))$ is an \defn{$(r,s)$-shallow model} and $H$ is an \defn{$(r,s)$-shallow minor} of $G$. Clearly, if $H$ is an $r$-shallow minor of $G$, then $H$ is an $(r,\Delta(G))$-shallow minor of $G$. However, these definitions do not assume $\Delta(G)$ is bounded, since each vertex $v_x$ may have unbounded degree in $B_x$ and each vertex $u\in V(B_x)$ may have unbounded degree in $G$. 

Building on the work of \citet{DMW23}, \citet{HW24} showed that shallow minors inherit product structure. 

\begin{thm}[\cite{HW24}]
\label{HWShallowMinors}
If a graph $G$ is an $r$-shallow minor of $H \boxtimes P \boxtimes K_c$ where $\tw(H)\leq t$, then  $G$ is contained in $ J \boxtimes P \boxtimes K_{c(2r+1)^2}$ for some graph $J$ with $\tw(J)\leq \binom{2r+1+t}{t}-1$.
\end{thm}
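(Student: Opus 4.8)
Fix an $r$-shallow model $(B_x : x\in V(G))$ of $G$ in $M := H\boxtimes P\boxtimes K_c$, and for each $x$ choose $u_x=(h_x,p_x,k_x)\in V(B_x)$ such that $B_x$ has radius at most $r$ around $u_x$. If $xy\in E(G)$ then $B_x$ and $B_y$ are joined by an edge of $M$, so there is a walk in $M$ from $u_x$ to $u_y$ of length at most $r+1+r=2r+1$; hence $x\mapsto u_x$ shows that $G$ is contained in $M^{2r+1}$. Using the elementary facts that $(A\boxtimes B)^k$ is contained in $A^k\boxtimes B^k$, that $K_c^k=K_c$, and that $P^k$ is contained in $P\boxtimes K_k$ (partition $V(P)$ into consecutive blocks of size $k$), we get that $G$ is contained in $H^{2r+1}\boxtimes P\boxtimes K_{c(2r+1)}$, with each $x$ placed at a vertex whose $H^{2r+1}$-coordinate is $h_x$ and whose $P$-coordinate records the block of $p_x$. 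The task now reduces to replacing the factor $H^{2r+1}$ — which need \emph{not} have bounded treewidth, e.g.\ the square of a star is complete — by a graph $J$ with $\tw(J)\le\binom{2r+1+t}{t}-1$, at the cost of one more $K_{2r+1}$ factor. This is possible only because $G$ is not an arbitrary subgraph of $H^{2r+1}\boxtimes P\boxtimes K_{c(2r+1)}$: the parts $B_x$ are pairwise disjoint, which bounds how many $x$ can have $h_x$ inside any fixed part of $H$ at any fixed $P$- and $K_c$-coordinate.

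To build $J$, I would fix a vertex ordering $\sigma$ of $H$ witnessing the known bound $\wcol_{2r+1}(H)\le\binom{2r+1+t}{t}$, valid since $\tw(H)\le t$. The graph $J$ will live on a quotient of the set $\{h_x:x\in V(G)\}\subseteq V(H)$ obtained by merging vertices in groups of size at most $2r+1$, the within-group index being absorbed into the extra $K_{2r+1}$ factor so that the composite placement of $G$ stays injective; its edges are those induced by the set $\{h_xh_y : xy\in E(G),\ h_x\ne h_y\}\subseteq E(H^{2r+1})$. Concretely I expect to route each $x$ towards the $\sigma$-least vertex of the (connected, radius-$\le r$) subgraph $\pi_H(B_x)$ of $H$, to define the merging so that $\sigma$ induces an elimination order of $J$, and then to prove that in this order the back-neighbourhood of every vertex injects into a weakly $(2r+1)$-reachable set of $H$. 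The disjointness of the parts $B_x$, together with the layering supplied by the $P$- and $K_c$-coordinates of $M$, is what lets one show each such back-neighbourhood has size at most $\binom{2r+1+t}{t}-1$, and also what forces (and is repaired by) the extra $(2r+1)$-factor in the clique, since parts $B_x$ with overlapping $H$-projections and nearby coordinates must be kept apart.

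The main obstacle is exactly this last bound on $\tw(J)$: everything else is elementary product-algebra plus an injectivity check, but getting the treewidth governed by $\wcol_{2r+1}(H)$ rather than by $\Delta(H)$ requires combining the disjointness of the model parts with the bounded layered structure of $M$ in a single elimination-order argument. I would isolate this as a lemma — roughly, ``every $r$-shallow minor of $H\boxtimes K_c$ is contained in $J\boxtimes K_{c'}$ for some $J$ with $\tw(J)<\wcol_{2r+1}(H)$'' — deduce the theorem from it by reinstating the path factor exactly as in the first paragraph, and spend the bulk of the work proving the lemma.
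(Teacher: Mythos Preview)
This theorem is not proved in the present paper; it is quoted from \cite{HW24} and used as a black box, so there is no in-paper proof to compare your attempt against.

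On its own merits: your first paragraph is correct. The map $x\mapsto u_x$ is injective because the branch sets are disjoint, the containment $(A\boxtimes B)^k\subseteq A^k\boxtimes B^k$ holds, and $P^k\subseteq P\boxtimes K_k$ via consecutive blocks is valid, so you do obtain $G\subseteq H^{2r+1}\boxtimes P\boxtimes K_{c(2r+1)}$.

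The remainder is a plan rather than a proof, as you yourself say. The direction is right --- the target bound $\binom{2r+1+t}{t}$ is exactly the known value of $\wcol_{2r+1}(H)$ for graphs $H$ of treewidth at most $t$, so an elimination-order argument via weak reachability is the natural tool, and the lemma you propose to isolate is essentially the content of \cite{HW24}. But two concrete points in your sketch are not yet justified. First, why does your merging yield groups of size at most $2r+1$? You need that at most $2r+1$ of the $x$ share both the same $\sigma$-minimum of $\pi_H(B_x)$ and the same $(P\text{-block},K_c)$-slot; this uses disjointness of the $B_x$ together with the fact that each $B_x$ spans at most $2r+1$ consecutive $P$-layers, and it needs to be written out. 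Second, the back-degree bound on $J$ requires that whenever $xy\in E(G)$ with the representative of $y$ preceding that of $x$ in $\sigma$, the representative of $y$ is weakly $(2r{+}1)$-reachable from that of $x$ in $H$; this is where the radius bound and connectedness of $\pi_H(B_x)\cup\pi_H(B_y)$ must be invoked carefully, and routing to the $\sigma$-minimum of $\pi_H(B_x)$ alone does not obviously deliver it. Until these two steps are carried out, the treewidth bound --- which you correctly identify as the crux --- is unestablished.
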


Our main contribution is the following product structure theorem for $(r,s)$-shallow minors of planar graphs, where $J$ has treewidth bounded by an absolute constant. 

\begin{thm}
\label{ImprovedShallowMinorsPlanar}
There is a function $f$ such that for every planar graph $G$ and for every $(r,s)$-shallow minor $H$ of $G\boxtimes K_{d}$, $H$ is contained in $J \boxtimes P \boxtimes K_{f(d,r,s)}$ for some graph $J$ with $\tw(J)\leq \TreewidthBound$ and for some path $P$.
\end{thm}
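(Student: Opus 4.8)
The plan is to reduce Theorem~\ref{ImprovedShallowMinorsPlanar} to a statement about the existence of a good ``blocking'' structure on the host planar graph, and then feed that structure through Theorem~\ref{HWShallowMinors}. The starting point is that a planar graph $G$ has product structure $G \subseteq H_0 \boxtimes P \boxtimes K_3$ with $\tw(H_0) \le 3$ (Theorem~\ref{PGPST}), so $G \boxtimes K_d \subseteq H_0 \boxtimes P \boxtimes K_{3d}$. Thus it suffices to show that any $(r,s)$-shallow minor $H$ of a graph of the form $H_0 \boxtimes P \boxtimes K_{c}$ (with $\tw(H_0)$ bounded by an absolute constant) is itself contained in $J \boxtimes P' \boxtimes K_{f}$ with $\tw(J)$ bounded by an \emph{absolute} constant, independent of $r,s$. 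The naive route via Theorem~\ref{HWShallowMinors} fails precisely because the treewidth bound $\binom{2r+1+t}{t}-1$ grows with $r$; the whole point is to decouple the shallow-minor radius from the treewidth of the building block.

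The key idea is to first replace $H_0 \boxtimes P \boxtimes K_c$ by a graph $G'$ with a partition of $V(G')$ into connected pieces of size bounded by a function of $d,r,s$, such that $H$ is a $1$-shallow minor of the ``quotient'' graph $G'/\mathcal{P}$ obtained by contracting each piece, and such that $G'/\mathcal{P}$ still has bounded product structure. Concretely: take an $(r,s)$-shallow model $((B_x,v_x))$ of $H$ in $G \boxtimes K_d$. We want to partition the vertex set of the host into connected bounded-size parts so that each branch set $B_x$ meets only a bounded number of parts, and consecutive parts along any $B_x$-path are adjacent; then contracting the parts turns $H$ into a $1$-shallow minor. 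The mechanism producing such a partition is an \emph{$\ell$-blocking partition} of $G$ (for the absolute constant $\ell$ guaranteed by the paper's structural result): every path of length $> \ell$ hits some part twice. Since $H$ has bounded maximum degree within each $B_x$ (degree $\le s$) and bounded radius (depth $\le r$), each $B_x$ has bounded size, so it consists of at most $f(d,r,s)$ subpaths each of length $\le \ell$, and hence meets at most $f(d,r,s)$ parts of the blocking partition. Contracting the blocking partition therefore makes $H$ a $1$-shallow minor of the contracted host graph, where the blow-up factor $K_{f(d,r,s)}$ absorbs all the $r,s$ dependence.

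So the proof structure is: (i) apply the planar product structure theorem and absorb $K_d$ into the clique factor, so the host is $H_0 \boxtimes P \boxtimes K_{3d}$; (ii) invoke the paper's blocking-partition result for graphs of bounded Euler genus (planar graphs have genus $0$) — but note the host here is not planar, it is a strong product; so instead one applies the bounded-treewidth blocking result (the paper mentions every graph has a $2$-blocking partition with parts bounded in terms of $\Delta$ and $\tw$), using that $H_0 \boxtimes K_{3d}$ has bounded treewidth and bounded degree after we restrict to a BFS-layered structure — or, more cleanly, prove a blocking-partition statement directly for the product $H_0 \boxtimes P \boxtimes K_{3d}$ exploiting the path layering; (iii) contract the blocking partition to obtain a host $G''$ with the same path factor $P$ and with $\tw$ of the bounded-treewidth factor increased only by a function of the part sizes, i.e.\ still by a function of $d$ and the absolute constant $\ell$ — crucially independent of $r,s$; (iv) observe $H$ is now a $1$-shallow minor of $G''$, which has product structure $J' \boxtimes P \boxtimes K_{c'}$ with $\tw(J') \le t'$ an absolute constant; (v) apply Theorem~\ref{HWShallowMinors} with $r=1$, giving $H \subseteq J \boxtimes P \boxtimes K_{9c'}$ with $\tw(J) \le \binom{3+t'}{t'}-1$, an absolute constant. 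Setting $\TreewidthBound$ to this constant finishes the proof, and $f(d,r,s)$ is whatever bound on part sizes times $9c'$ the argument produces.

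The main obstacle is step~(ii)--(iii): producing a blocking partition of the \emph{host} $H_0 \boxtimes P \boxtimes K_{3d}$ (not of a planar graph) whose parts are both \emph{connected} and of size bounded by a function of $d$ and an absolute constant, while guaranteeing that after contracting the parts the treewidth of the non-path factor remains absolutely bounded. The subtlety is that contracting connected subsets can in principle blow up treewidth; here one must use that each part lies within a bounded number of consecutive layers of $P$ (so the contraction respects the layered/path structure) and within a bounded ``width'' in $H_0 \boxtimes K_{3d}$, so that the contracted graph still has a tree-decomposition of bounded width carried along the same path $P$. Making the layered structure of the blocking partition interact correctly with the path factor $P$ — so that the new path factor can still be taken to be $P$ itself (or a bounded quotient of it) — is the delicate point, and is presumably where the paper's notion of blocking partition is engineered to do exactly this job. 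I expect the quantitative bound $\TreewidthBound$ to emerge from unwinding $\binom{3+t'}{t'}-1$ where $t'$ is the treewidth after one round of contracting a bounded-size blocking partition of a bounded-treewidth-times-path-times-clique graph.
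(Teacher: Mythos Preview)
Your proposal has a genuine gap at the claim ``each $B_x$ has bounded size''. In the definition of an $(r,s)$-shallow model, the degree bound $s$ applies only to non-root vertices of $B_x$; the root $v_x$ may have arbitrarily many children. So $|B_x|$ is unbounded in terms of $d,r,s$, and you cannot turn $H$ into a $1$-shallow minor in a single contraction step with a bounded blow-up factor. Relatedly, the host $H_0\boxtimes P\boxtimes K_{3d}$ has neither bounded maximum degree (the graph $H_0$ from \cref{PGPST} has treewidth~$3$ but unbounded degree) nor bounded treewidth, so neither of the paper's blocking-partition results applies to it; your step~(ii) has no lemma to invoke.

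The paper's route is different and avoids both problems. It applies the blocking partition not to the product host but to a subgraph $G_0$ of the \emph{planar} graph $G$ itself: namely the union of the projected branch sets with the root projections removed. This $G_0$ has maximum degree at most $ds$ (precisely because the roots are excluded), so \cref{BlockingPlanar} yields an $\ell$-blocking partition of $G_0$ with width $f(ds)$. Contracting this partition produces a minor $G'$ of $G$ (still planar, so the class is closed under the operation) in which the radius of each branch-set tree drops by exactly one, not all the way to~$1$: any root-to-leaf path of length $r$ in $T_x'$ has its last $r-1$ vertices in $G_0$, hence hits some part twice, producing a shortcut. Iterating this step $r-\ell-2$ times brings the radius down to the fixed value $\ell+2$, with the clique factor growing at each step but absorbing all $d,r,s$ dependence. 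Only then does one apply \cref{HWShallowMinors}, now with the fixed radius $\ell+2$, giving $\tw(J)\le\binom{2\ell+5+t}{t}-1$ with $\ell=222$ and $t=3$, which is the stated absolute constant. The key structural point you missed is that one must iterate inside the minor-closed class rather than attempt a single contraction on the product.
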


\cref{ImprovedShallowMinorsPlanar} is useful since, as observed by \citet{HW24}, many non-minor-closed graph classes can be described as shallow minors of a strong product of a planar graph with a small complete graph. For example, for any graph $G$ with maximum degree $\Delta$, \citet{HW24} observed that $G^k$ is a $\floor{\tfrac{k}{2}}$-shallow minor of $G\boxtimes K_{\Delta^{\floor{k/2}+1}}$.
The proof is readily adapted\footnote{Let $D$ be the set of vertices with degree at most $d$ in $G$.  For each vertex $v\in V(G)$, let $B_v'$ be the subgraph of $G$ induced by the set of vertices $x\in V(G)$ for which there is a $vx$-path $P$ in $G$ of length at most $\floor{\frac{k}{2}}$ such that $V(P-v)\subseteq D$. So the radius of \(B_v'\) is at most $\floor{\frac{k}{2}}$ and there is an edge between \(V(B_u')\) and \(V(B_v')\) in \(G\) for each \(uv \in E(G^k)\). Furthermore, $|\{v\in V(G)\colon x\in V(B_v')\}|\leq d^0 + \cdots + d^{\floor{k/2}} \le d^{\floor{k/2}+1}$ for each $x\in V(G)$. 
So an arbitrary injective map from $\{v\in V(G)\colon x\in V(B_v')\}$ to $V(K_{d^{\floor{k/2}+1}})$ for each vertex $x\in V(G)$  defines a subgraph $B_v$ of $G\boxtimes K_{d^{\floor{k/2}+1}}$ such that the projection of $B_v$ onto $G$ is $B_v'$ and $V(B_v)\cap V(B_u)=\emptyset$ for all distinct $u,v\in V(G)$. So $(B_v\colon v\in V(G^k_d))$ defines a model of $G^k_d$ in $G\boxtimes K_{d^{\floor{k/2}}+1}$ where each $B_v$ has radius at most $\floor{\frac{k}{2}}$, as required. By construction, $G^k_d$ is in fact a $(\floor{\frac{k}{2}},d)$-shallow minor of $G\boxtimes K_{d^{\floor{k/2}+1}}$.} to show that 
$G^k_d$ is a $(\floor{\tfrac{k}{2}},d)$-shallow minor of $G\boxtimes K_{d^{\floor{k/2}+1}}$. Thus \cref{ImprovedShallowMinorsPlanar} implies \cref{kPowerPlanar}.

\cref{ImprovedShallowMinorsPlanar} can also be applied to several well-studied beyond planar graph classes, which we now introduce. See \citep{DLM19,HT20} for surveys on beyond planarity.

A graph $G$ is \defn{$k$-planar} if $G$ has a drawing in the plane in which each edge is involved in at most $k$ crossings, where no three edges cross at a single point; such graphs are widely studied, see \citep{PachToth97,DMW23,DEW17,DMW17} for example. \citet{DMW23} proved that every $k$-planar graph is contained in $H\boxtimes P\boxtimes K_{18k^2+48k+30}$ for some graph $H$ of treewidth $\binom{k+4}{3}-1$ and for some path $P$. \citet{DMW23} asked whether this bound on $\tw(H)$ could be made independent of $k$. In particular:

\begin{ques}[{\protect\citep{DMW23}}]
\label{kPlanar}
Is there a constant $t$ and a function $f$ such that every $k$-planar graph $G$ is contained in $H\boxtimes P\boxtimes K_{f(k)}$ for some graph $H$ with $\tw(H)\leq t$?
\end{ques}

\cref{ImprovedShallowMinorsPlanar} resolves this question.

\begin{cor}
\label{kPlanarTheorem}
There is a function $f$ such that every $k$-planar graph $G$ is contained in $H\boxtimes P\boxtimes K_{f(k)}$ for some graph $H$ with $\tw(H)\leq \TreewidthBound$. 
\end{cor}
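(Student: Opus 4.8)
The plan is to realise any $k$-planar graph $G$ as a shallow minor of the strong product of a planar graph with a small complete graph, and then invoke \cref{ImprovedShallowMinorsPlanar}. (Combining \cref{HWShallowMinors} with \cref{PGPST} instead would only bound $\tw(J)$ in terms of the shallowness parameter, hence in terms of $k$; avoiding this is exactly the point of \cref{ImprovedShallowMinorsPlanar}.) Fix a drawing of $G$ in the plane with at most $k$ crossings per edge and no three edges meeting at a common crossing point, as in the definition of $k$-planarity; we may also assume no edge crosses itself, since deleting a self-crossing loop only decreases the number of crossings. Let $G'$ be the \emph{planarization} of the drawing, obtained by placing a new \emph{dummy vertex} at each crossing point. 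Then $G'$ is planar, each edge $uv\in E(G)$ becomes a path $u,q_1,\dots,q_m,v$ in $G'$ with $m\le k$ and $q_1,\dots,q_m$ dummy vertices, and each dummy vertex lies on exactly two such paths and has degree $4$ in $G'$.

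Next I would exhibit $G$ as an $(r,s)$-shallow minor of $G'\boxtimes K_2$ with $r:=\ceil{k/2}$ and $s:=4$. Split each path $u,q_1,\dots,q_m,v$ of $G'$ at its midpoint into a $u$-part and a $v$-part that partition $\{q_1,\dots,q_m\}$, each containing at most $\ceil{k/2}$ dummy vertices, with the two parts adjacent in $G'$. For each $u\in V(G)$ let $B'_u$ be the subgraph of $G'$ consisting of $u$ together with the $u$-part of every edge $uv\in E(G)$. Then each $B'_u$ is connected (a union of paths through $u$), every vertex of $B'_u$ lies within distance $r$ of $u$ in $B'_u$, every vertex of $B'_u$ other than $u$ has degree at most $s=4$ in $B'_u$ (it lies on at most two of these parts, since it lies on at most two edges of $G$), and $B'_u$ is adjacent in $G'$ to $B'_v$ for every $uv\in E(G)$. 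Finally, each dummy vertex lies on exactly two edges of $G$ and so belongs to at most two of the subgraphs $B'_u$; hence, by the same bookkeeping as in the footnote above on powers of planar graphs, choosing for each vertex $x$ of $G'$ an injection from $\{u:x\in V(B'_u)\}$ into $V(K_2)$ lifts $(B'_u:u\in V(G))$ to a rooted model of $G$ in $G'\boxtimes K_2$ in which every non-root vertex of a branch set lies within distance $r$ of its root and has degree at most $s$ in that branch set. Thus $G$ is an $(r,s)$-shallow minor of $G'\boxtimes K_2$.

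Applying \cref{ImprovedShallowMinorsPlanar} with the planar graph $G'$, with $d:=2$, and with $r,s$ as above now gives that $G$ is contained in $J\boxtimes P\boxtimes K_{f(2,\ceil{k/2},4)}$ for some path $P$ and some graph $J$ with $\tw(J)\le\TreewidthBound$, which is the claimed bound after renaming $f$ as a function of $k$. The construction above is routine: essentially all of the difficulty is concentrated inside \cref{ImprovedShallowMinorsPlanar}, and the only minor points to handle are the (standard) reduction to a self-crossing-free drawing and the bookkeeping that distributes the two copies of each dummy vertex among the layers of the $K_2$ factor.
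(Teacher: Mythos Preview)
Your proof is correct and follows essentially the same approach as the paper: planarize the drawing, split each edge-path at its midpoint, lift to $G'\boxtimes K_2$, and apply \cref{ImprovedShallowMinorsPlanar}. The only difference is that the paper, citing \cite{HW24}, assigns the two layers of $K_2$ according to which of the two \emph{edges} through a dummy vertex is being traced (so each branch set is a subdivided star and one may take $s=2$), whereas your lifting assigns layers according to which $B'_u$ the dummy vertex lies in, yielding $s=4$; this is immaterial since $s$ is absorbed into $f(k)$.
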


\begin{proof}
\citet{HW24} observed that $G$ is a $\ceil{\tfrac{k}{2}}$-shallow minor of $H\boxtimes K_2$, where $H$ is the planar graph obtained from $G$ by adding a dummy vertex at each crossing point. A close inspection of their proof reveals that each branch set in the model of $G$ in $H\boxtimes K_2$ is a subdivided star rooted at the high degree vertex. So $G$ is a $(\ceil{\tfrac{k}{2}},2)$-shallow minor of $H\boxtimes K_2$. The claim then follows from \cref{ImprovedShallowMinorsPlanar}.
\end{proof}

 A \defn{string graph} is the intersection graph of a set of curves in the plane with no three curves meeting at a single point. Such graphs are widely studied; see \citep{FP10,Mat14,PachToth-DCG02,SS-JCSS04,FP12} for example. For an integer $\delta \geq 1$, if each curve is involved in at most $\delta$ intersections with other curves, then the corresponding string graph is called a \defn{$\delta$-string graph}. 

 \begin{cor}
 \label{StringProductStructure}
    There is a function $f$ such that every $\delta$-string graph $G$ is contained in $J \boxtimes P \boxtimes K_{f(\delta)}$ for some graph $J$ with $\tw(J)\leq \TreewidthBound$ and for some path $P$.
\end{cor}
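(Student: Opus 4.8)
The plan is to follow the proof of \cref{kPlanarTheorem}: I will show that every $\delta$-string graph $G$ is a $(\delta,2)$-shallow minor of $G'\boxtimes K_2$ for some planar graph $G'$, and then apply \cref{ImprovedShallowMinorsPlanar} with $d=2$, $r=\delta$ and $s=2$. Fix a representation of $G$ by curves $(c_v\colon v\in V(G))$ in the plane; as usual we may assume that the curves are simple arcs, that any two of them meet in finitely many points, and (as in the definition of a string graph) that no three of them pass through a common point. Since each curve is involved in at most $\delta$ intersections, the set $P_v$ of intersection points lying on $c_v$ has at most $\delta$ elements.

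Let $G'$ be the \emph{planarization} of this representation: the vertex set of $G'$ consists of all intersection points, together with one arbitrarily chosen point of $c_v$ for each $v$ with $P_v=\emptyset$; and for each $v$, the vertices of $G'$ lying on $c_v$, listed in the order in which they occur along $c_v$, are joined consecutively by edges, drawn along $c_v$. Distinct arcs of this drawing meet only at intersection points, which are vertices of $G'$, so this is a plane drawing and $G'$ is planar (after deleting parallel edges, which only helps). Now, for each intersection point $p$, assign the (at most two) curves through $p$ distinct labels from $\{1,2\}$, say according to a fixed linear order on $V(G)$, and assign label $1$ to the added points. For each $v\in V(G)$, let $B_v$ be the subgraph of $G'\boxtimes K_2$ whose vertices are the copies $(p,i)$, one for each vertex $p$ of $G'$ lying on $c_v$, where $i$ is the label assigned to $c_v$ at $p$, and whose edges join the copies of vertices that are consecutive along $c_v$; such copies are adjacent in $G'\boxtimes K_2$ because their $G'$-coordinates form an edge of $G'$, so $B_v$ is connected with maximum degree at most $2$.

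By construction the subgraphs $B_v$ are pairwise vertex-disjoint, since the two curves through a common intersection point receive different labels there; each $B_v$ has at most $\delta$ vertices; and whenever $uv\in E(G)$ there is an intersection point $p$ on both $c_u$ and $c_v$, so the copies $(p,1)$ and $(p,2)$ lie one in $B_u$ and one in $B_v$ and are adjacent in $G'\boxtimes K_2$. Hence $(B_v\colon v\in V(G))$ is a model of $G$ in $G'\boxtimes K_2$. Rooting each $B_v$ at an arbitrary vertex, every other vertex of $B_v$ lies within distance $\delta$ of the root and has degree at most $2$ in $B_v$, so this is a $(\delta,2)$-shallow model. \cref{ImprovedShallowMinorsPlanar}, applied to the planar graph $G'$, to $d=2$, and to this shallow model, now yields a graph $J$ with $\tw(J)\le\TreewidthBound$ and a path $P$ such that $G$ is contained in $J\boxtimes P\boxtimes K_{f(\delta)}$, where $f(\delta)$ is the value at $(2,\delta,2)$ of the function supplied by \cref{ImprovedShallowMinorsPlanar}. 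This proves \cref{StringProductStructure}. The argument is short, and the only points that need care are that the planarization is genuinely planar — which holds because curves meet only at vertices of $G'$ — and that the branch sets can be made simultaneously vertex-disjoint and connected; the latter is exactly where it matters that no three curves share a point, so that a single factor $K_2$ suffices, while curves that meet no other curve (isolated vertices of $G$) are accommodated by the extra vertices added to $G'$.
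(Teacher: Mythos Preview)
Your proof is correct and follows essentially the same approach as the paper: build the planarization, observe that each curve yields a path branch set in the $K_2$-blowup, and apply \cref{ImprovedShallowMinorsPlanar}. The only differences are cosmetic: the paper cites \citet{HW24} for the shallow-minor observation rather than spelling it out, and it roots each path at a midpoint to get radius $\lfloor\delta/2\rfloor$ instead of your $\delta$, but since the bound is absorbed into $f$ this makes no difference to the statement.
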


\begin{proof}
\citet{HW24} observed that $G$ is a  $\floor{\frac{\delta}{2}}$-shallow minor of $H \boxtimes K_2$, where $H$ is the planar graph obtained by adding a dummy vertex at each intersection point of two curves (and possibly adding isolated vertices). A close inspection of their proof reveals that each branch set of the model of $G$ in $H\boxtimes K_2$ is a path. So $G$ is a $(\floor{\frac{\delta}{2}},2)$-shallow minor of $H\boxtimes K_2$. The claim then follows from \cref{ImprovedShallowMinorsPlanar}.
\end{proof}

The following graph class was introduced by \citet{ABKKS18}. A \defn{fan-bundling} of a graph $G$ is an indexed  set $\mathcal{E}=(\mathcal{E}_v:v\in V(G))$ where $\mathcal{E}_v$ is a partition of the set of edges in $G$ incident to $v$. Each element of $\mathcal{E}_v$ is called  a \defn{fan-bundle}. For a fan-bundling $\mathcal{E}$ of $G$, let $G_\mathcal{E}$
be the graph whose vertices are the vertices of $G$ and the bundles of $\mathcal{E}$, where $vB$ is an edge of $G_\mathcal{E}$ whenever $v\in V(G)$ and $B\in \mathcal{E}_v$, and $B_1B_2$ is an edge of $G_\mathcal{E}$ whenever $vw\in E(G) $ and  $vw\in B_1\in\mathcal{E}_v$ and $vw\in B_2\in\mathcal{E}_w$. A graph $G$ is \defn{$k$-fan-bundle planar} if for some fan-bundling $\mathcal{E}$ of $G$, the graph $G_\mathcal{E}$ has a drawing in the plane such that each edge $B_1B_2\in E(G_\mathcal{E})$ is in no crossings, and 
each edge $vB\in E(G_\mathcal{E})$ is in at most $k$ crossings.
 
\begin{cor}
\label{FanBundleProductStructure}
    There is a function $f$ such that every $k$-fan-bundle planar graph $G$ is contained in $J \boxtimes P \boxtimes K_{f(k)}$ for some graph $J$ with $\tw(J)\leq \TreewidthBound$ and for some path $P$.
\end{cor}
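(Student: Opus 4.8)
The plan is to follow the same strategy as for \cref{kPlanarTheorem} and \cref{StringProductStructure}: realise $G$ as an $(r,s)$-shallow minor of $H\boxtimes K_2$ for a suitable planar graph $H$, and then apply \cref{ImprovedShallowMinorsPlanar}. Fix a fan-bundling $\mathcal{E}$ of $G$ and a drawing of $G_\mathcal{E}$ witnessing that $G$ is $k$-fan-bundle planar; we may assume that no three edges of $G_\mathcal{E}$ cross at a common point. Let $H$ be the planar graph obtained from this drawing by placing a dummy vertex at each crossing point. Every edge $B_1B_2$ of $G_\mathcal{E}$ (with $B_1,B_2$ fan-bundles) is in no crossings and hence is unchanged in $H$, while every edge $vB$ of $G_\mathcal{E}$ (with $v\in V(G)$ and $B\in\mathcal{E}_v$) is subdivided in $H$ into a path with at most $k$ internal (dummy) vertices.

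Next I would construct a rooted model of $G$ in $H\boxtimes K_2$. Since each fan-bundle lies in $\mathcal{E}_v$ for a unique $v\in V(G)$, the fan-bundles are distributed among the vertices of $G$. For $v\in V(G)$, let $B_v'$ be the subgraph of $H$ induced by $\{v\}$ together with the fan-bundles in $\mathcal{E}_v$ and the dummy vertices on the subdivided edges $vB$ with $B\in\mathcal{E}_v$; then $B_v'$ is connected, has radius at most $k+1$ with centre $v$, and every vertex of $B_v'$ other than $v$ has degree at most $4$ in $B_v'$ (it is essentially a subdivided star centred at $v$). For an edge $uv\in E(G)$, let $B_1\in\mathcal{E}_u$ and $B_2\in\mathcal{E}_v$ be the bundles containing $uv$; then $B_1\in V(B_u')$, $B_2\in V(B_v')$, and the uncrossed edge $B_1B_2$ of $H$ joins $B_u'$ to $B_v'$. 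So $(B_v':v\in V(G))$ would already be a model of $G$ in $H$, except that a dummy vertex can belong to two of these subgraphs. But every dummy $d$ lies on exactly two edges of $G_\mathcal{E}$, necessarily both of the form $vB$ (as $B_1B_2$-edges are uncrossed), say $uB_1$ and $u'B_1'$; hence $d$ lies in $B_u'$, lies in $B_{u'}'$ when $u\ne u'$, and lies in no other $B_v'$. So each dummy is shared by at most two parts, and these conflicts are resolved using the $K_2$ factor: for each shared dummy $d$, put $(d,1)$ into one of the two parts claiming it and $(d,2)$ into the other, and put every other vertex $x$ of each $B_v'$ into $B_v$ as $(x,1)$ (so $B_v$ is rooted at $(v,1)$). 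Since $xy\in E(H)$ implies that all copies of $x$ are adjacent to all copies of $y$ in $H\boxtimes K_2$, each $B_v$ is connected, has root $(v,1)$, radius at most $k+1$, and degree at most $4$ at non-root vertices; the edges $B_1B_2$ still give the required adjacencies, and the branch sets are now pairwise disjoint. Thus $G$ is a $(k+1,4)$-shallow minor of $H\boxtimes K_2$.

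Applying \cref{ImprovedShallowMinorsPlanar} with the planar graph $H$, with $d=2$, $r=k+1$ and $s=4$, we conclude that $G$ is contained in $J\boxtimes P\boxtimes K_{f(2,k+1,4)}$ for some graph $J$ with $\tw(J)\le\TreewidthBound$ and some path $P$; taking $f(k):=f(2,k+1,4)$ completes the proof. I expect the only real work to be the bookkeeping in the middle step --- checking that each $B_v'$ is connected with the claimed radius and bounded degree, and that every dummy is claimed by at most two parts so that a single $K_2$ factor suffices --- which is exactly why it is convenient to begin from a drawing in which no three edges meet at a point; conceptually this is a routine instance of the shallow-minor machinery already used for the previous corollaries.
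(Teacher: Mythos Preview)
Your proposal is correct and follows essentially the same route as the paper: realise $G$ as an $(r,s)$-shallow minor of $H\boxtimes K_2$ for the planarisation $H$ of $G_{\mathcal E}$, then invoke \cref{ImprovedShallowMinorsPlanar}. The only difference is bookkeeping: the paper cites \citet{HW24} for the model and observes that each branch set is a rooted subdivided star, yielding $s=2$, whereas you build the model explicitly and, allowing for the possibility that two edges $vB_1,vB_2$ with $B_1,B_2\in\mathcal E_v$ cross, take $s=4$; this affects only the hidden constant $f(k)$ and not the treewidth bound.
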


\begin{proof}
    \citet{HW24} showed that $G$ is a $(k+1)$-shallow minor of $H \boxtimes K_2$ for some planar graph $H$. A close inspection of their proof reveals that each branch set of the model of $G$ in $H\boxtimes K_2$ is a rooted subdivided star. So $G$ is a $(k+1,2)$-shallow minor of $H\boxtimes K_2$. The claim then follows from \cref{ImprovedShallowMinorsPlanar}.
\end{proof}

\subsection{Other Surfaces}
\label{SurfacesIntro}

We generalise all of the above results for graphs embeddable on any fixed surface as follows.  The \defn{Euler genus} of a surface with~$h$ handles and~$c$ cross-caps is~${2h+c}$. The \defn{Euler genus} of a graph~$G$ is the minimum integer $g\geq 0$ such that there is an embedding of~$G$ in a surface of Euler genus~$g$; see \cite{MoharThom} for background about graph embeddings in surfaces. \cref{ImprovedShallowMinorsPlanar} generalises as follows. 

\begin{thm}
\label{ImprovedShallowMinorsGenus}
There is a function $f$ such that for every graph $G$ of Euler genus $g$, every $(r,s)$-shallow minor $H$ of $G\boxtimes K_{d}$ is contained in $J \boxtimes P \boxtimes K_{f(d,r,s,g)}$ for some graph $J$ with $\tw(J)\leq \GenusTreewidthBound$.
\end{thm}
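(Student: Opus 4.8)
The plan is to reduce the bounded-genus case to the planar case (\cref{ImprovedShallowMinorsPlanar}) by peeling off handles and cross-caps one at a time, in the spirit of the standard proofs of product structure theorems for bounded-genus graphs. Let $G$ have Euler genus $g$ and fix an embedding of $G$ in a surface $\Sigma$ of Euler genus $g$; let $H$ be an $(r,s)$-shallow minor of $G \boxtimes K_d$, witnessed by a rooted shallow model $((B_x, v_x) : x \in V(H))$. First I would cut $\Sigma$ along a suitable system of at most $O(g)$ noncontractible curves (or equivalently, delete a bounded number of ``BFS layers'' around a root, as in the genus reduction of \citet{DJMMUW20}) so that $G$ decomposes into a bounded number of pieces $G_0, G_1, \dots, G_m$ with $m = O(g)$, where each $G_i$ is planar (after the cut) and the deleted vertices lie on a bounded number of geodesic paths. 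The key technical point is that the shallow model does not survive this cutting verbatim: a branch set $B_x$ may straddle several pieces. So the more robust approach is to first apply the vortex/handle removal argument to $G$ itself to write $G$ as a subgraph of $G' \boxtimes K_{O(g)}$ where $G'$ is planar --- this is exactly the bounded-genus product structure theorem of \citet{DJMMUW20} at the level of $G$, not of $H$. Then $G \boxtimes K_d \subseteq G' \boxtimes K_{O(g)} \boxtimes K_d = G' \boxtimes K_{O(gd)}$, and hence $H$, being an $(r,s)$-shallow minor of $G \boxtimes K_d$, is an $(r,s)$-shallow minor of $G' \boxtimes K_{O(gd)}$ with $G'$ planar.

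At this point \cref{ImprovedShallowMinorsPlanar}, applied with $d' := O(gd)$ in place of $d$, gives that $H$ is contained in $J \boxtimes P \boxtimes K_{f(d', r, s)} = J \boxtimes P \boxtimes K_{f'(d,r,s,g)}$ for some graph $J$ with $\tw(J) \le \TreewidthBound$. The only wrinkle is that the bounded-genus product structure theorem of \citet{DJMMUW20} produces $G' \boxtimes P' \boxtimes K_{O(g)}$ with $\tw(G') \le 3$ rather than a single planar factor; but $G' \boxtimes P'$ has treewidth at most $O(1)$ and --- more importantly --- is itself planar only up to the path factor. To keep everything planar one instead uses the formulation that every graph of Euler genus $g$ is a subgraph of $(H_0 \boxtimes K_{\max\{2g,3\}})$ for some graph $H_0$ of treewidth at most $4$ with $H_0$ planar, which follows from the handle-elimination argument (see also \cite{DHHW22}); alternatively, absorb the extra path factor $P'$ into the final path $P$ using the fact that $P' \boxtimes P$ has a Hamiltonian-path-like structure. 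Either way the treewidth of the final host graph $J$ does not increase beyond $\TreewidthBound$, and the discrepancy between $\TreewidthBound$ and $\GenusTreewidthBound$ is absorbed by whatever mild blow-up the handle-removal step forces on the treewidth bound in the planar reduction.

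The main obstacle I anticipate is making sure the shallow-model structure is genuinely preserved under the reduction $G \boxtimes K_d \subseteq G' \boxtimes K_{O(gd)}$: one must check that an $(r,s)$-shallow model in a strong product $A \boxtimes K_c$ projects to an $(r,s)$-shallow model in $A$ after identifying the $K_c$-coordinate appropriately, and that re-inflating along a new complete-graph factor does not increase the radius $r$ or the internal-degree bound $s$ of any branch set. This is where the precise bookkeeping lives: the branch sets $B_x$ must be re-embedded into $G' \boxtimes K_{O(gd)}$ so that their roots $v_x$ keep their identity and the non-root vertices keep degree at most $s$ within their branch set. Since $K_c$-factors only ever contribute ``parallel copies'' and never new paths, this should go through cleanly, but it requires stating a small lemma of the form ``if $H$ is an $(r,s)$-shallow minor of $A$ and $A \subseteq B \boxtimes K_c$ then $H$ is an $(r,s)$-shallow minor of $B \boxtimes K_c$'' (which is trivial) together with ``an $(r,s)$-shallow minor of $B \boxtimes K_c$ is an $(r,s)$-shallow minor of $B \boxtimes K_{c'}$ for $c' \ge c$'' --- all routine. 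Granting \cref{ImprovedShallowMinorsPlanar}, the genus case then costs only a factor depending on $g$ in the final complete-graph index and nothing essential in $\tw(J)$.
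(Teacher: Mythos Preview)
Your reduction hinges on the claim that every graph $G$ of Euler genus $g$ is contained in $G' \boxtimes K_{O(g)}$ for some \emph{planar} graph $G'$, so that you can feed $G' \boxtimes K_{O(gd)}$ directly into \cref{ImprovedShallowMinorsPlanar}. That claim is not established anywhere in your argument, and neither of the sources you invoke supplies it. The bounded-genus product structure theorems of \cite{DJMMUW20,DHHW22} give $G \subseteq H \boxtimes P \boxtimes K_{\max\{2g,3\}}$ with $\tw(H) \le 3$, but $H$ is \emph{not} asserted to be planar (the handle-removal step introduces apex-type structure), and $H \boxtimes P$ is certainly not planar. Your fallback of ``absorbing the extra path factor $P'$ into the final path $P$'' does not work either: $P' \boxtimes P$ is a king's grid, not a path, and cannot be collapsed into a single path factor without destroying the bounded-treewidth host. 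So the reduction to the planar case, as written, has no foundation, and the appeal to \cref{ImprovedShallowMinorsPlanar} is not justified.

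The paper's route is different in kind. Instead of trying to make $G$ planar before invoking the machinery, it proves directly that the minor-closed class of Euler-genus-$\le g$ graphs admits $\ell$-blocking partitions of bounded width for an absolute constant $\ell = \GenusBlockingNumber$ (\cref{BlockingGenus}); the handle-cutting happens at the level of blocking partitions (\cref{TreeGenus,GenusZ}), not at the level of product structure. Then \cref{MinorBlockingShallowProduct} is applied with this class, $\ell = \GenusBlockingNumber$, and the genus product-structure theorem supplying $t = 3$, $c = \max\{2g,3\}$. The larger bound $\GenusTreewidthBound = \binom{2\cdot\GenusBlockingNumber + 8}{3} - 1$ (versus $\TreewidthBound$ for planar) arises from the larger blocking constant, not from a ``mild blow-up'' in any planar reduction.
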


\cref{kPowerPlanar} generalises as follows.
The proof is directly analogous to the proof of \cref{kPowerPlanar}, using \cref{ImprovedShallowMinorsGenus}
instead of \cref{ImprovedShallowMinorsPlanar}.

\begin{cor}
\label{kPowerGenus}
There is a function $f$ such that for every graph $G$ of Euler genus $g$ and for any integers $k,d\geq 1$, the graph $G^k_d$ is contained in $H\boxtimes P\boxtimes K_{f(d,g,k)}$ for some graph $H$ with $\tw(H)\leq \GenusTreewidthBound$ and for some path $P$.
\end{cor}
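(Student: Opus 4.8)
The plan is to derive \cref{kPowerGenus} as a direct corollary of \cref{ImprovedShallowMinorsGenus}, mirroring exactly how \cref{kPowerPlanar} follows from \cref{ImprovedShallowMinorsPlanar}. First I would establish the shallow-minor statement: for every graph $G$ of Euler genus $g$ and integers $k,d\geq 1$, the graph $G^k_d$ is a $(\floor{\tfrac{k}{2}},d)$-shallow minor of $G\boxtimes K_{d^{\floor{k/2}+1}}$. This is proved by precisely the construction in the footnote after \cref{ImprovedShallowMinorsPlanar}, which never uses planarity: let $D$ be the set of vertices of degree at most $d$ in $G$; for each $v\in V(G)$ let $B_v'$ be the subgraph of $G$ induced by all $x$ for which some $vx$-path of length at most $\floor{\tfrac{k}{2}}$ has all internal vertices in $D$. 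Then $B_v'$ has radius at most $\floor{\tfrac{k}{2}}$, consecutive branch sets along any edge of $G^k_d$ are adjacent in $G$, and each vertex $x$ lies in at most $d^0+\dots+d^{\floor{k/2}}\le d^{\floor{k/2}+1}$ of the sets $B_v'$. Choosing an injection from $\{v : x\in V(B_v')\}$ into $V(K_{d^{\floor{k/2}+1}})$ for each $x$ lifts the $B_v'$ to pairwise-disjoint subgraphs $B_v$ of $G\boxtimes K_{d^{\floor{k/2}+1}}$ projecting onto $B_v'$, giving a model, and one checks the root (namely the copy of $v$) has every other vertex of $B_v$ at distance at most $\floor{\tfrac{k}{2}}$ and of degree at most $d$ inside $B_v$, so this is an $(\floor{\tfrac{k}{2}},d)$-shallow model.

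Next I would apply \cref{ImprovedShallowMinorsGenus} with this shallow minor. Set $r=\floor{\tfrac{k}{2}}$, $s=d$, and replace the base graph: since $G^k_d$ is an $(r,s)$-shallow minor of $G\boxtimes K_{d^{\floor{k/2}+1}}$, and $d^{\floor{k/2}+1}$ is a function of $d$ and $k$, I can view $G\boxtimes K_{d^{\floor{k/2}+1}}$ as $G\boxtimes K_{d'}$ with $d'=d^{\floor{k/2}+1}$. Then \cref{ImprovedShallowMinorsGenus} gives that $G^k_d$ is contained in $H\boxtimes P\boxtimes K_{f'(d',r,s,g)}$ for some graph $H$ with $\tw(H)\leq \GenusTreewidthBound$ and some path $P$. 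Since $d'$, $r$, and $s$ are all functions of $d$ and $k$ alone, the quantity $f'(d',r,s,g)$ is a function $f(d,g,k)$ of $d,g,k$, which yields the stated bound $K_{f(d,g,k)}$ and completes the proof.

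The proof is essentially bookkeeping, so there is no real obstacle beyond checking that the footnote construction is genuinely genus-agnostic — which it is, since it only manipulates paths, radii, and degrees in $G$ and never invokes an embedding. The only subtlety worth stating explicitly is that the $(r,s)$-shallow minor input to \cref{ImprovedShallowMinorsGenus} must be a shallow minor of $G\boxtimes K_d$ for some $d$, and here that $d$ is $d^{\floor{k/2}+1}$ rather than the original $d$; this is harmless because all we need is that it depends only on $d$ and $k$, and it is absorbed into the final function $f$. I would therefore present the corollary with a one-line proof: reproduce (or cite) the footnote argument to obtain the $(\floor{\tfrac{k}{2}},d)$-shallow minor of $G\boxtimes K_{d^{\floor{k/2}+1}}$, then invoke \cref{ImprovedShallowMinorsGenus}.
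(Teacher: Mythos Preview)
Your proposal is correct and follows exactly the paper's approach: the paper states that the proof is directly analogous to that of \cref{kPowerPlanar}, using \cref{ImprovedShallowMinorsGenus} in place of \cref{ImprovedShallowMinorsPlanar}, which is precisely what you do. Your observation that the footnote construction is genus-agnostic is the only thing that needs checking, and it is indeed immediate.
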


We generalise \cref{kPlanarTheorem} as follows, where a graph $G$ is \defn{$(g,k)$-planar} if $G$ has a drawing in a surface of Euler genus $g$ in which each edge is involved in at most $k$ crossings, where no three edges cross at a single point. Such graphs are widely studied~\citep{GB07,DEW17,DMW17,DMW23}. 
\citet{DMW23} proved that every $(g,k)$-planar graph is contained in $H\boxtimes P\boxtimes K_{\max\{2g,3\} (6k^2+16k+10)}$, for some graph $H$ of treewidth $\binom{k+4}{3}-1$ and for some path $P$. We improve the treewidth bound to an absolute constant. The proof is directly analogous to the proof of \cref{kPlanarTheorem}.

\begin{cor}
\label{gkPlanarTheorem}
There is a function $f$ such that every $(g,k)$-planar graph $G$ is contained in $H\boxtimes P\boxtimes K_{f(g,k)}$ for some graph $H$ with $\tw(H)\leq \GenusTreewidthBound$. 
\end{cor}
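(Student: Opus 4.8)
The plan is to mirror the proof of \cref{kPlanarTheorem} exactly, replacing the planar product structure input with the genus version. First I would recall the observation of \citet{HW24} (already invoked for \cref{kPlanarTheorem}) that if $G$ has a drawing in a surface of Euler genus $g$ with at most $k$ crossings per edge, then $G$ is a $\ceil{k/2}$-shallow minor of $H\boxtimes K_2$, where $H$ is obtained from the drawing by placing a dummy vertex at each crossing point; crucially, $H$ is embedded in the \emph{same} surface, so $H$ has Euler genus at most $g$. As in the planar case, each branch set of this model is a subdivided star rooted at the original high-degree vertex, so in fact $G$ is a $(\ceil{k/2},2)$-shallow minor of $H\boxtimes K_2$.

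Next I would apply \cref{ImprovedShallowMinorsGenus} with the graph $H$ of Euler genus at most $g$, with $d=2$, $r=\ceil{k/2}$ and $s=2$. This yields that $G$ is contained in $J\boxtimes P\boxtimes K_{f(2,\ceil{k/2},2,g)}$ for some graph $J$ with $\tw(J)\le\GenusTreewidthBound$ and some path $P$. Setting the new function $f'(g,k):=f(2,\ceil{k/2},2,g)$ gives exactly the claimed statement, with the same absolute treewidth bound $\GenusTreewidthBound$ as in \cref{ImprovedShallowMinorsGenus}.

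There is essentially no obstacle here beyond bookkeeping: the entire content is carried by \cref{ImprovedShallowMinorsGenus}, whose proof is the genuinely hard part of the paper. The only point requiring a moment's care is confirming that adding a dummy vertex at each crossing does not increase the Euler genus — this is immediate because the dummy vertices are inserted directly into the given drawing in the surface of Euler genus $g$, producing an embedding of $H$ in that same surface. One should also note that isolated vertices of $G$ (those drawn with no incident crossings, or not at all) can be handled trivially, exactly as in the string-graph corollary. Given all this, the proof is a two-line deduction, which is why the excerpt states that it is "directly analogous to the proof of \cref{kPlanarTheorem}."

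\begin{proof}
By the observation of \citet{HW24} used in the proof of \cref{kPlanarTheorem}, if $G$ is $(g,k)$-planar then $G$ is a $(\ceil{k/2},2)$-shallow minor of $H\boxtimes K_2$, where $H$ is obtained from a drawing of $G$ in a surface of Euler genus $g$ by adding a dummy vertex at each crossing point; since these dummy vertices are inserted into the given drawing, $H$ has Euler genus at most $g$. Applying \cref{ImprovedShallowMinorsGenus} with $d=2$, $r=\ceil{k/2}$ and $s=2$, we conclude that $G$ is contained in $J\boxtimes P\boxtimes K_{f(2,\ceil{k/2},2,g)}$ for some graph $J$ with $\tw(J)\le\GenusTreewidthBound$ and some path $P$. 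The result follows.
\end{proof}
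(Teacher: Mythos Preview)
Your proposal is correct and follows exactly the approach the paper intends: it is the direct analogue of the proof of \cref{kPlanarTheorem}, replacing the planar input with a graph of Euler genus at most $g$ and invoking \cref{ImprovedShallowMinorsGenus} in place of \cref{ImprovedShallowMinorsPlanar}. The only point worth noting is that you correctly justified why the dummy-vertex graph $H$ retains Euler genus at most $g$, which is precisely what is needed to apply \cref{ImprovedShallowMinorsGenus}.
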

 
We generalise \cref{StringProductStructure} as follows, where a
\defn{($g,\delta)$-string graph} is the intersection graph of a set of curves in a surface of Euler genus $g$, such that no three curves meet at a single point, and each curve is involved in at most $\delta$ intersections with other curves.
The proof of \cref{gStringProductStructure} is directly analogous to the proof of \cref{StringProductStructure}.

\begin{cor}
\label{gStringProductStructure}
There is a function $f$ such that every $(g,\delta)$-string graph $G$ is contained in $J \boxtimes P \boxtimes K_{f(\delta)}$ for some graph $J$ with $\tw(J)\leq \GenusTreewidthBound$ and for some path $P$.
\end{cor}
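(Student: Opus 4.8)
The plan is to follow the proof of \cref{StringProductStructure} essentially verbatim, with \cref{ImprovedShallowMinorsGenus} playing the role of \cref{ImprovedShallowMinorsPlanar}. Fix a $(g,\delta)$-string graph $G$, together with a realising family of curves $(\gamma_v\colon v\in V(G))$ drawn in a surface of Euler genus $g$, where no three curves meet at a single point and each curve meets the other curves in at most $\delta$ points.

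First I would build an auxiliary \emph{drawing graph} $H$: its vertices are the intersection points of the curves (plus one extra isolated vertex for each curve that meets no other curve), and two intersection points are adjacent in $H$ whenever they are consecutive along some curve $\gamma_v$. Placing each vertex of $H$ at its point and routing each edge along the corresponding sub-arc of $\gamma_v$ embeds $H$ in the same surface, so $H$ has Euler genus at most $g$. Each curve $\gamma_v$ traverses a set of at most $\delta$ intersection points, which form a path $P_v$ in $H$ (or a single isolated vertex).

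Next I would realise $G$ as an $(\floor{\delta/2},2)$-shallow minor of $H\boxtimes K_2$. Each intersection point $p$ lies on exactly two curves; fix an arbitrary ordering of the two and let $\lambda(v,p)\in\set{1,2}$ record the position of $\gamma_v$ whenever $p\in\gamma_v$. For $v\in V(G)$, let $B_v$ be the subgraph of $H\boxtimes K_2$ obtained from $P_v$ by replacing each vertex $p$ with $(p,\lambda(v,p))$ (or the single vertex $(v',1)$ if $\gamma_v$ is represented by the isolated vertex $v'$); this is a path, since consecutive vertices of $H$ remain adjacent in $H\boxtimes K_2$ under any choice of $K_2$-coordinates. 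The branch sets $B_v$ are pairwise disjoint: a common vertex $(p,a)$ would force $p$ to lie on two distinct curves $\gamma_u,\gamma_v$ with $\lambda(u,p)=a=\lambda(v,p)$, contradicting the ordering at $p$. If $uv\in E(G)$, then $\gamma_u$ and $\gamma_v$ cross at some point $p$, and $(p,\lambda(u,p))\in V(B_u)$ is adjacent in $H\boxtimes K_2$ to $(p,\lambda(v,p))\in V(B_v)$. Rooting each $B_v$ at a central vertex of the path, every branch set has radius at most $\floor{\delta/2}$ and every non-root vertex has degree at most $2$ in its branch set, giving the desired shallow model of $G$ in $H\boxtimes K_2$.

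Finally I would invoke \cref{ImprovedShallowMinorsGenus} with host graph $H$ (of Euler genus at most $g$), $d=2$, $r=\floor{\delta/2}$ and $s=2$ to conclude that $G$ is contained in $J\boxtimes P\boxtimes K_{f(\delta)}$ for some graph $J$ with $\tw(J)\le\GenusTreewidthBound$ and some path $P$, where $f(\delta)$ is the value at $(2,\floor{\delta/2},2,g)$ of the function supplied by \cref{ImprovedShallowMinorsGenus} (with $g$ fixed throughout). I do not expect any genuine obstacle here: all of the substance is already contained in \cref{ImprovedShallowMinorsGenus}, and the only step needing care is the splitting into the two layers of $K_2$, which works precisely because each intersection point lies on exactly two curves and because consecutive intersection points along a curve are already adjacent in $H$, so the labels $\lambda(\cdot,\cdot)$ may be chosen independently at each point.
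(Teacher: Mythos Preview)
Your proposal is correct and follows exactly the approach the paper intends: the paper simply states that the proof is ``directly analogous to the proof of \cref{StringProductStructure}'', and what you have written is precisely that analogous argument, replacing \cref{ImprovedShallowMinorsPlanar} by \cref{ImprovedShallowMinorsGenus} and noting that the drawing graph \(H\) inherits Euler genus at most \(g\). The additional detail you give on the \(\lambda\)-labelling and the branch sets just unpacks the observation the paper attributes to \citet{HW24}.
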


We generalise \cref{FanBundleProductStructure} as follows, where a graph $G$ is \defn{$(g,k)$-fan-bundle planar} if for some fan-bundling $\mathcal{E}$ of $G$, the graph $G_\mathcal{E}$ has a drawing in a surface of Euler genus $g$ such that each edge $B_1B_2\in E(G_\mathcal{E})$ is in no crossings, and each edge $vB\in E(G_\mathcal{E})$ is in at most $k$ crossings. The proof of \cref{gFanBundleProductStructure} is directly analogous to the proof of \cref{FanBundleProductStructure}.
 
\begin{cor}
\label{gFanBundleProductStructure}
There is a function $f$ such that every $(g,k)$-fan-bundle planar graph $G$ is contained in $J \boxtimes P \boxtimes K_{f(k)}$ for some graph $J$ with $\tw(J)\leq \GenusTreewidthBound$ and for some path $P$.
\end{cor}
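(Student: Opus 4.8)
The plan is to reduce \cref{gFanBundleProductStructure} to \cref{ImprovedShallowMinorsGenus} in exactly the same way that \cref{FanBundleProductStructure} is reduced to \cref{ImprovedShallowMinorsPlanar}, substituting ``planar'' by ``Euler genus $g$'' and quoting the corresponding genus-version hypothesis. Concretely, the first step is to invoke the result of \citet{HW24} that, for a $k$-fan-bundle planar graph $G$ witnessed by a fan-bundling $\mathcal{E}$ with a plane drawing of $G_{\mathcal{E}}$, the graph $G$ is a $(k+1)$-shallow minor of $H\boxtimes K_2$ where $H$ is a planar graph (obtained from the drawing of $G_{\mathcal{E}}$ by placing a dummy vertex at each crossing point). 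That same construction, applied to a drawing of $G_{\mathcal{E}}$ in a surface of Euler genus $g$ rather than in the plane, produces a host graph $H$ of Euler genus at most $g$: adding dummy vertices at crossing points and locally rerouting edges through them does not change the underlying surface, so $H$ embeds in the same surface. Hence $G$ is a $(k+1)$-shallow minor of $H\boxtimes K_2$ with $H$ of Euler genus $g$.

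The second step is the bounded-degree refinement: as noted in the proof of \cref{FanBundleProductStructure}, a close inspection of the \citet{HW24} construction shows that each branch set of the model of $G$ in $H\boxtimes K_2$ is a rooted subdivided star (the original vertex of $G$ is the root, and the subdivision vertices are the degree-$2$ dummy vertices and the bundle-vertices along the fan-bundle edges). Consequently every non-root vertex of a branch set has degree at most $2$ within that branch set, so the model is in fact a $(k+1,2)$-shallow model, and $G$ is a $(k+1,2)$-shallow minor of $H\boxtimes K_2 = H\boxtimes K_d$ with $d=2$. Since $H$ has Euler genus $g$, the final step is to apply \cref{ImprovedShallowMinorsGenus} with parameters $(r,s,d,g)=(k+1,2,2,g)$: this yields that $G$ is contained in $J\boxtimes P\boxtimes K_{f'(2,k+1,2,g)}$ for some graph $J$ with $\tw(J)\le\GenusTreewidthBound$ and some path $P$. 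Setting $f(k)\coloneqq f'(2,k+1,2,g)$ — which is a function of $k$ (with $g$ fixed as part of the class) — completes the proof.

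I do not anticipate a genuine obstacle: the argument is a routine transcription of the proof of \cref{FanBundleProductStructure} with the genus analogue substituted throughout, and the excerpt explicitly signals that this proof is ``directly analogous.'' The only point requiring mild care is the first step — verifying that the dummy-vertex construction of \citet{HW24} preserves the surface (i.e.\ takes an Euler genus $g$ drawing of $G_{\mathcal{E}}$ to an Euler genus $g$ graph $H$) rather than merely a planar drawing to a planar graph — but this is immediate because subdividing edges and introducing vertices at crossing points is a purely local modification of the drawing that does not alter the surface in which it lives.

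\begin{proof}
By the result of \citet{HW24} (and its proof), $G$ is a $(k+1)$-shallow minor of $H\boxtimes K_2$, where $H$ is obtained from a drawing of $G_{\mathcal{E}}$ in a surface of Euler genus $g$ by adding a dummy vertex at each crossing point; since this construction is a local modification of the drawing, $H$ also has Euler genus at most $g$. A close inspection of their proof reveals that each branch set of the model of $G$ in $H\boxtimes K_2$ is a rooted subdivided star, so every non-root vertex of a branch set has degree at most $2$ within that branch set. Hence $G$ is a $(k+1,2)$-shallow minor of $H\boxtimes K_2$. Applying \cref{ImprovedShallowMinorsGenus} with $d=2$, $r=k+1$ and $s=2$, we conclude that $G$ is contained in $J\boxtimes P\boxtimes K_{f'(2,k+1,2,g)}$ for some graph $J$ with $\tw(J)\le\GenusTreewidthBound$ and some path $P$, where $f'$ is the function from \cref{ImprovedShallowMinorsGenus}. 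Setting $f(k)\coloneqq f'(2,k+1,2,g)$ completes the proof.
\end{proof}
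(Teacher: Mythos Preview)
Your proposal is correct and follows essentially the same approach as the paper, which explicitly states that the proof of \cref{gFanBundleProductStructure} is directly analogous to that of \cref{FanBundleProductStructure}. Your verification that the dummy-vertex construction preserves the Euler genus is the only point needing comment beyond the planar case, and you handle it correctly.
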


%%%%%%%%%%%%%%%%%%%%%%%%%%%%%
\subsection{Application: Centred Colourings}
\label{SectionCentredColourings}

\citet{NesOdM-GradI} introduced the following definition. For an integer $p\geq 1$, a vertex colouring $\phi$ of a graph $G$ is \defn{$p$-centred }if, for every connected subgraph $X \subseteq G$, $|\{\phi(v):v \in V(X)\}|>p$ or there exists some $v \in V(X)$ such that $\phi(v)\neq \phi(w)$ for every $w\in V(X)\setminus\{v\}$. For an integer $p\geq 1$, the \defn{$p$-centred chromatic number} of a graph $G$, denoted by $\chi_p(G)$, is the minimum integer $c\geq 0$ such that $G$ has a $p$-centred $c$-colouring. Centred colourings are important within graph sparsity theory as they characterise graph classes with bounded expansion \cite{NesOdM-GradI}.

\citet{DFMS21} established that $\chi_p(G\boxtimes H)\leq \chi_p(G) \chi(H^p)$ for all graphs $G$ and $H$. \citet[Lemma~15]{PS21} showed that $\chi_p(G)\leq \binom{p+t}{t}$ for every graph $G$ with treewidth at most $t$. It follows that 
if $G \subseteq H \boxtimes P \boxtimes K_{\ell}$ and $\tw(H)\leq t$, then
\begin{equation}
\label{CenteredColouring}
\chi_p(G)\leq \ell (p+1) \chi_p(H) \leq 
\ell (p+1)\tbinom{p+t}{t}
\in O_\ell(p^{t+1}).
\end{equation}
Thus, \cref{kPowerPlanar,kPlanarTheorem,FanBundleProductStructure} imply:
\begin{itemize}
\item for every planar graph $G$ and any  integers $k,d\geq 1$,  $\chi_p(G^k_d)\in O_{k,d}(p^{\TreewidthBoundPlusOne})$;
\item for every $k$-planar graph $G$, $\chi_p(G)\in O_k(p^{\TreewidthBoundPlusOne})$;
\item for every $k$-fan-bundle graph $G$, $\chi_p(G)\in O_k(p^{\TreewidthBoundPlusOne})$.
\end{itemize}
Similarly, \cref{kPowerGenus,gkPlanarTheorem,gFanBundleProductStructure} imply:
\begin{itemize}
\item for every graph $G$ of Euler genus $g$ and for any integers $k,d\geq 1$, $\chi_p(G^k_d)\in O_{g,k,d}(p^{\GenusTreewidthBoundPlusOne})$;
\item for every $(g,k)$-planar graph $G$, $\chi_p(G)\in O_{g,k}(p^{\GenusTreewidthBoundPlusOne})$;
\item for every $(g,k)$-fan-bundle graph $G$, $\chi_p(G)\in O_{g,k}(p^{\GenusTreewidthBoundPlusOne})$.
\end{itemize}
For $k$-planar or $(g,k)$-planar graphs $G$, the best previously known bound was $\chi_p(G)\in O_{g,k}(p^{\binom{k+4}{3}})$, due to \citet{DMW23}. The above results significantly improve this bound (for large $k$). 

\subsection{Paper Outline}

It remains to prove \cref{ImprovedShallowMinorsPlanar,ImprovedShallowMinorsGenus}. The proofs of these results depend on the notion of a `blocking partition', which we believe is of independent interest. Following a section of preliminary definitions, \cref{KeyResult} introduces
and states our main results about blocking partitions: \cref{BlockingPlanar} for planar graphs and \cref{BlockingGenus} for graphs of Euler genus $g$. We then show how \cref{BlockingPlanar,BlockingGenus} imply  \cref{ImprovedShallowMinorsPlanar,ImprovedShallowMinorsGenus}. \cref{BlockingPlanar} is the heart of the paper, and is proved in \cref{ChordalPartitionSection,Refinement,Analysis}.  \cref{BlockingGenus} is then proved in \cref{Surfaces} as a corollary of \cref{BlockingPlanar}. 
\cref{Reflections} considers which graph classes admit blocking partitions of bounded width. We show that bounded maximum degree is necessary but not sufficient, and that bounded maximum degree and bounded treewidth are sufficient. \cref{OpenProblems} concludes by introducing some natural open problems that arise from this work. 

\section{Preliminaries}
\label{Preliminaries}

We consider simple, finite, undirected graphs~$G$ with vertex-set~${V(G)}$ and edge-set~${E(G)}$. See \citep{Diestel5} for graph-theoretic definitions not given here. Let~${\NN \coloneqq \{1,2,\dots\}}$ and~${\NN_0 \coloneqq \{0,1,\dots\}}$. A \defn{graph class} is a collection of graphs closed under isomorphism. 

We use the following notation for a graph $G$. For $v\in V(G)$, let $\mathdefn{N_G(v)}:=\{w\in V(G):vw\in E(G)\}$ and $\mathdefn{N_G[v]}:=N_G(v)\cup\{v\}$. For $S\subseteq V(G)$, let
$\mathdefn{N_G(S)}:=\bigcup_{v\in S}N_G(v)\setminus S$.

The \defn{length} of a path $P$ is the number of edges in $P$. 
Given a graph \(G\) and two subsets \(A, B \subseteq V(G)\),
a path \(P\) in \(G\)
is an \defn{\(A\)--\(B\) path} if
either \(P\) consists of only one vertex \(x \in A \cap B\),
or \(P\) has length at least \(1\), one end of \(P\) belongs to \(A\), the other belongs to \(B\), and no inner vertex belongs to \(A \cup B\).
For vertices \(x, y \in V(G)\), an \defn{\(x\)--\(y\) path} is an
\(\{x\}\)--\(\{y\}\) path.
For a tree \(T\) and \(x, y \in V(T)\), we denote the unique
\(x\)--\(y\) path in \(T\) by \(x T y\).

For two subsets \(U_1, U_2 \subseteq V(G)\), let \defn{\(\dist_G(U_1, U_2)\)} denote the \defn{distance} between \(U_1\) and \(U_2\) in $G$; that is, the length of a shortest
\(U_1\)--\(U_2\) path in $G$ (or \(+\infty\) if no such path exists).
In this notation, the role of  \(U_i\) can be played by a vertex \(x\), which is then interpreted as the singleton \(\{x\}\); for example, we write \(\dist_G(x, U)\) rather than \(\dist_G(\{x\}, U)\). Similarly, the role of \(U_i\) can be played by an edge \(x_1 x_2 \in E(G)\), which is then interpreted as the set \(\{x_1, x_2\}\), or by a set of edges \(M \subseteq E(G)\) which is interpreted as \(\bigcup_{xy\in M}\{x, y\}\). A path \(P\) in a graph \(G\) is \defn{geodesic} if it is a shortest path between its ends in \(G\), which implies \(\dist_P(x, y) = \dist_G(x, y)\) for any \(x, y \in V(P)\).

In a plane embedding of a graph $G$, a \defn{face} is a connected component of \(\mathbb{R}^2 - G\). We use \defn{closure} and \defn{boundary} in the topological sense. So the closure of a face $f$ is the union of $f$ and the boundary of $f$.
        
A \defn{tree-decomposition} of a graph~$G$ is a collection~${\WW = (W_x \colon x \in V(T))}$ of subsets of~${V(G)}$ indexed by the nodes of a tree~$T$ such that
(a) for every edge~${vw \in E(G)}$, there exists a node~${x \in V(T)}$ with~${v,w \in W_x}$; and 
(b) for every vertex~${v \in V(G)}$, the set~${\{ x \in V(T) \colon v \in W_x \}}$ induces a non-empty (connected) subtree of~$T$. 
Each set~$W_x$ in~$\WW$ is called a \defn{bag}. 
The \defn{width} of~$\WW$ is~${\max\{ \abs{W_x} \colon x \in V(T) \}-1}$. 
The \defn{treewidth~$\tw(G)$} of a graph~$G$ is the minimum width of a tree-decomposition of~$G$. 
Treewidth is the standard measure of how similar a graph is to a tree. 
Indeed, a connected graph has treewidth at most 1 if and only if it is a tree.

Let $G$ and $H$ be graphs. A \defn{partition} of $G$ is a collection $\PP$ of sets of vertices in $G$ such that each vertex of $G$ is in exactly one element of $\PP$. Each element of $\PP$ is called a \defn{part}. Empty parts are allowed. The \defn{width} of $\PP$ is the maximum number of vertices in a part. The \defn{quotient} of $\PP$ (with respect to $G$) is the graph, denoted by \defn{$G/\PP$}, whose vertices are the non-empty parts of $\PP$, where distinct non-empty parts $A,B\in \PP$ are adjacent in $G/\PP$ if and only if some vertex in $A$ is adjacent in $G$ to some vertex in $B$. The quotient is defined analogously when \(\PP\) is a set of vertex-disjoint subgraphs of \(G\) whose vertex-sets partition \(G\). Then the vertices of \(G/\PP\) are subgraphs of \(G\) instead of sets of vertices.
An \defn{$H$-partition} of $G$ is a partition $\PP$ of $G$ such that $G/\PP$ is contained in $H$. The following observation connects partitions and products.

\begin{obs}[\citep{DJMMUW20}]
\label{ObsPartitionProduct}
For all graphs $G$ and $H$ and any integer $p\geq 1$, $G$ is contained in $H\boxtimes K_p$ if and only if $G$ has an $H$-partition with width at most $p$.
\end{obs}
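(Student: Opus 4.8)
The plan is to prove both implications by setting up an explicit correspondence between the two coordinates of $V(H)\times V(K_p)$ and the pair consisting of the part of $\PP$ containing a vertex together with a label of that vertex within its part.

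For the ``only if'' direction, suppose $G$ is contained in $H\boxtimes K_p$, witnessed by an injective map $\psi\colon V(G)\to V(H)\times V(K_p)$ that preserves edges. Define $\PP\coloneqq\{\,\psi^{-1}(\{u\}\times V(K_p))\colon u\in V(H)\,\}$. Since the sets $\{u\}\times V(K_p)$ partition $V(H)\times V(K_p)$, the preimages partition $V(G)$, and each has at most $|V(K_p)|=p$ vertices because $\psi$ is injective. Mapping each non-empty part $\psi^{-1}(\{u\}\times V(K_p))$ to $u$ gives an injection from the non-empty parts to $V(H)$; I would check it shows $G/\PP$ is contained in $H$ as follows: if two distinct parts indexed by $u\ne w$ are adjacent in $G/\PP$, then some edge of $G$ has one end in each, so its $\psi$-image is an edge of $H\boxtimes K_p$ with first coordinates $u$ and $w$ respectively, and since $u\ne w$, the definition of the strong product forces $uw\in E(H)$. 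Hence $\PP$ is an $H$-partition of width at most $p$.

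Conversely, suppose $\PP$ is an $H$-partition of $G$ of width at most $p$, and fix an injection $\phi$ from the non-empty parts of $\PP$ to $V(H)$ with $\phi(A)\phi(B)\in E(H)$ whenever distinct parts $A,B$ are adjacent in $G/\PP$. For each part $A$, fix an injection $\lambda_A\colon A\to V(K_p)$, which exists as $|A|\le p$, and define $\Psi(v)\coloneqq(\phi(A),\lambda_A(v))$ where $A$ is the part of $\PP$ containing $v$. Then $\Psi$ is injective: distinct vertices of the same part get distinct second coordinates, while vertices of distinct parts get distinct first coordinates. For edge preservation, consider $vw\in E(G)$ with $v\in A$ and $w\in B$; if $A=B$ the images have equal first coordinate and distinct second coordinates, and if $A\ne B$ the first coordinates $\phi(A),\phi(B)$ are adjacent in $H$ because $A,B$ are adjacent in $G/\PP$. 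In every case the image pair is an edge of $H\boxtimes K_p$, so $G$ is contained in $H\boxtimes K_p$.

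No step is a genuine obstacle here; the only point requiring slight care is the edge-preservation check in the last paragraph when $A\ne B$, where one must notice that whether or not the chosen labels $\lambda_A(v)$ and $\lambda_B(w)$ happen to coincide, the image pair is still an edge of $H\boxtimes K_p$ — using the ``$v=w$ and $xy\in E(B)$'' clause when they differ and the ``$x=y$ and $vw\in E(A)$'' clause when they coincide. Everything else follows immediately by unpacking the definitions of strong product, quotient, containment, and width.
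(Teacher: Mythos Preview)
The paper does not supply its own proof of this observation; it is simply cited from \cite{DJMMUW20}. Your argument is the standard one and is correct in substance.

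There is one small slip in your final paragraph. In the case $A\ne B$ with $\lambda_A(v)\ne\lambda_B(w)$, you invoke the clause ``$v=w$ and $xy\in E(B)$'' of the strong-product definition, but that clause requires the \emph{first} coordinates to coincide, which they do not here. The correct clause is the third one, ``$vw\in E(A)$ and $xy\in E(B)$'': the first coordinates $\phi(A),\phi(B)$ are adjacent in $H$, and the distinct second coordinates are adjacent in $K_p$ since $K_p$ is complete. This does not affect the correctness of the proof, only the labelling of which case of the definition applies.
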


A partition of a graph $G$ is \defn{connected} if the subgraph induced by each part is connected. In this case, the quotient is the minor of $G$ obtained by contracting each part into a single vertex.

A partition $\PP$ of $G$ is \defn{chordal} if $\PP$ is connected and $G/\PP$ is \defn{chordal}. 

A \defn{tree-partition} is a $T$-partition for some tree $T$. Such a $T$-partition is \defn{rooted} if $T$ is rooted. 

Let $G$ and $H$ be graphs and let $Z$ be a subgraph of $G\boxtimes H$. The \defn{projection} of $Z$ onto $G$ is the subgraph $Z'$ of $G$ where $V(Z') :=\{v\in V(G)\colon \text{$(v,x)\in V(Z)$ for some $x\in V(H)$}\}$ and $E(Z'):=\{uv\in E(G)\colon \text{$(u,x)(v,y)\in E(Z)$ for some $x,y\in V(H)$} \}$. 

A \defn{\textsc{bfs}-layering} of a connected graph $G$ is an ordered partition $(V_0,V_1,\dots)$ of $V(G)$ where $V_0 = \{r\}$ for some vertex $r\in V(G)$ and $V_i=\{v\in V(G):\dist_G(v,r)=i\}$ for each $i\geq 1$. A path $P$ is \defn{vertical} with respect to $(V_0,V_1,\dots)$ if $|V(P)\cap V_i|\leq 1$ for all $i\geq 0$. Let $T$ be a spanning tree of $G$, where for each non-root vertex $v\in V_i$ there is a unique edge $vw$ in $T$ for some $w\in V_{i-1}$. Then $T$ is called a \defn{\textsc{bfs}-spanning tree} of $G$. 

%%%%%%%%%%%%%%%%%%%
\section{Blocking Partitions}
\label{KeyResult}

Let $G$ be a graph and $\RR$ be a connected partition of $V(G)$. A path $P$ in $G$ is \defn{$\RR$-clean} if $|V(P)\cap V|\leq 1$ for each part $V\in \RR$. We say that $\RR$ is  \defn{$\ell$-blocking} if every $\RR$-clean path in $G$ has length at most $\ell$, as illustrated in \cref{BlockingGrid}. 
\begin{figure}[h]
    \centering
    \includegraphics{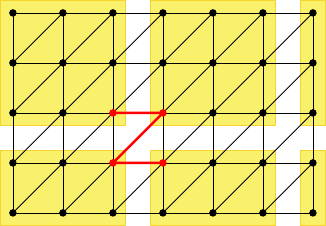}
    \caption{A \(3\)-blocking partition \(\RR\) of width \(9\). The red path is a longest \(\RR\)-clean path.}
    \label{BlockingGrid}
\end{figure}

The following result is the heart of this paper.

\begin{thm}\label{BlockingPlanar}
Every planar graph \(G\) with maximum degree $\Delta$ has a $\BlockingNumber$-blocking partition $\RR$ with width at most
$f(\Delta):=10\Delta^{80}(3612\,\Delta^{452}+900)$.
\end{thm}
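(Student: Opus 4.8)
The plan is to build the blocking partition from two ingredients that have already proven useful in the product-structure literature: a \textsc{bfs}-layering of $G$ together with a \textsc{bfs}-spanning tree $T$, and (an iterated application of) the planar product structure theorem \cref{PGPST} or, more likely, a chordal partition of bounded width in the spirit of the layered/chordal partitions used by \citet{DJMMUW20}. Start by fixing a \textsc{bfs}-layering $(V_0,V_1,\dots)$ rooted at an arbitrary vertex and a \textsc{bfs}-spanning tree $T$. The guiding principle is that a path in $G$ that stays ``clean'' (hits each part at most once) cannot wander too far either within a single layer or across many layers, so if we make the parts simultaneously control horizontal excursions (within few consecutive layers) and vertical excursions (along $T$-branches), every clean path must be short.

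The key steps, in order, would be: (1) Apply \cref{PGPST} to write $G$ as a subgraph of $H\boxtimes P\boxtimes K_3$ with $\tw(H)\le 3$; this gives a layering (from $P$) and an $H$-partition into ``columns'' of bounded-treewidth structure, each column meeting each layer in at most $3$ vertices. (2) Within each column, which is essentially a graph of bounded treewidth and bounded maximum degree, take a tree-partition or a \textsc{bfs}-based refinement so that each resulting part is connected, has size bounded in terms of $\Delta$, and has bounded ``radius'' along the column. Here the degree bound is essential: it lets us cap the number of vertices reachable in a bounded number of steps, which is exactly where the factor $\Delta^{80}$ and the polynomial in $\Delta$ will come from. (3) Refine once more so that the partition is \emph{connected} in $G$ (not just within columns) and so that consecutive parts overlap the layering in a controlled way; this is the step that forces any long path to either repeat a layer-index within one part or cross between boundedly many columns, and in either case to revisit some part. (4) Finally, do the counting: show that an $\RR$-clean path of length $>\BlockingNumber$ would have to pass through more than the allowed number of columns/branches without repetition, contradicting the bound on how a clean path can interact with the column structure; this pins down the concrete constant $\BlockingNumber=222$ and the width bound $f(\Delta)=10\Delta^{80}(3612\,\Delta^{452}+900)$.

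I expect the main obstacle to be step (3)–(4): arranging the refinement so that the partition is genuinely \emph{connected} in $G$ while still keeping parts of bounded size \emph{and} guaranteeing that every long clean path is blocked. Connectivity of parts fights against keeping parts small, and blocking long paths requires the parts to be ``spread out'' enough to intercept any long walk; balancing these is delicate, and is presumably why the authors devote three separate sections (\cref{ChordalPartitionSection,Refinement,Analysis}) to it. A secondary difficulty is bookkeeping the interaction between the horizontal (within-layer, bounded by $K_3$) and vertical (tree $T$) directions: a clean path can oscillate between layers, so one must argue that bounded oscillation combined with the bounded-degree tree structure still yields only boundedly many distinct parts before a repeat is forced. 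Once the right notion of refinement is in place, the length bound should follow from a fairly mechanical but lengthy case analysis on how a clean path traverses the column/branch decomposition, and the width bound from multiplying together the per-step branching bounds (powers of $\Delta$) accumulated over the boundedly many refinement rounds.
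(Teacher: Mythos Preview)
Your plan has a genuine gap at the point you yourself flag as the main obstacle, and the gap is structural rather than technical. Starting from \cref{PGPST} gives you an $H$-partition into ``columns'' with $\tw(H)\le 3$, and you can indeed chop each column into connected pieces of size bounded in $\Delta$. But nothing in that picture blocks a clean path that wanders \emph{across} columns: since $H$ has treewidth $3$ it contains arbitrarily long paths, so a path in $G$ can visit a new column at every step, hitting each refined part exactly once, and be as long as you like. Your step (4) asserts that such a path ``would have to pass through more than the allowed number of columns/branches without repetition'', but there is no such allowed number --- the column quotient $H$ simply does not have bounded diameter or bounded path-length. The layering controls vertical excursion, not horizontal excursion, and bounded treewidth of $H$ is far too weak to cap the latter.

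The paper does \emph{not} go through \cref{PGPST}. Instead it builds a bespoke chordal partition $T_1,\dots,T_m$ by iterating over bridges of the already-constructed forest, maintaining a planar-geometric invariant: each new bridge $B_j$ sees its earlier trees only through leaves lying on its \emph{outer face}, and at most four such ``outer'' attachment vertices $A_j^{\out}$ exist. This is what supplies the missing gateway structure: any clean path entering a later bridge $B_k$ is forced (Claim~\ref{BBkPathAkout}) to do so through $A_k^{\out}$, and hence through the nearby set $D_k\subseteq V(T_k^0)$, so it immediately hits two vertices of $T_k$. The refinement step (choosing the edge sets $M_j$) also uses planarity in an essential way: Claim~\ref{DistanceDIndependentSet} embeds an auxiliary planar graph $J^+$ and argues about which face contains a given subpath, in order to find cut-edges that are far from all previously chosen $M_i$. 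None of this is visible from the $H\boxtimes P\boxtimes K_3$ description; you need the bridge/outer-face machinery, not the product structure, to get blocking.
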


\cref{BlockingPlanar} is proved in \cref{ChordalPartitionSection,Refinement,Analysis}. 
\cref{Surfaces} proves the following extension of \cref{BlockingPlanar}.

\begin{thm}\label{BlockingGenus}
Every graph $G$ with Euler genus $g$ and maximum degree $\Delta$ has a $\GenusBlockingNumber$-blocking partition with width at most $f(\Delta,g):=\max\{10\Delta^{80}(3612\Delta^{452}+900),8950g^2 + 1796g\}$.
\end{thm}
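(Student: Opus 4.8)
The plan is to reduce the Euler genus $g$ case to the planar case (\cref{BlockingPlanar}) by the standard technique of cutting along a small number of geodesics to ``planarise'' $G$, then show that a blocking partition of the resulting planar graph, together with a few extra parts coming from the geodesics, gives a blocking partition of $G$. First I would fix an embedding of $G$ in a surface $\Sigma$ of Euler genus $g$; we may assume $G$ is connected (apply the result to each component, using empty parts to pad). The key tool is the fact that there is a set of $O(g)$ shortest paths (geodesics) $Q_1,\dots,Q_m$ in $G$ (for instance arising from a BFS tree plus $O(g)$ non-tree edges, as in the surface arguments of \citet{DJMMUW20} or \cite{DHHW22}) such that cutting $\Sigma$ along $\bigcup_i Q_i$ yields a disk; equivalently, $G - \bigcup_i V(Q_i)$ (or a suitable planarisation) is planar. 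Let $X := \bigcup_{i=1}^m V(Q_i)$.

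The next step is to handle the vertices in $X$. Each $Q_i$ is a geodesic, so it is an induced path and in particular has maximum degree $2$ inside $Q_i$; partition each $Q_i$ into sub-paths of length $\ell_0$ for a suitable constant $\ell_0$, giving connected parts. This contributes $O(|X|)$ parts, but crucially any $\RR$-clean path can use at most $\ell_0$ vertices from each such part's ``block'', and more importantly any path in $G$ that stays long must either avoid $X$ (and then lives in the planar remainder) or repeatedly return to $X$; since consecutive visits to the same sub-path block of a $Q_i$ are forbidden for a clean path, one bounds how a clean path interacts with $X$. Apply \cref{BlockingPlanar} to the planar graph $G' := G - X$ (which has maximum degree at most $\Delta$), obtaining a $222$-blocking partition $\RR'$ of $G'$ with width at most $10\Delta^{80}(3612\Delta^{452}+900)$. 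Take $\RR := \RR' \cup \{\text{the sub-path blocks of the } Q_i\}$. The width bound is then the maximum of the planar width bound and $\ell_0$; the $g$-dependent term $8950g^2+1796g$ is not a width bound on individual parts from the geodesics (those have size $\ell_0$) but rather reflects... wait — more likely the $g$-dependence enters because one does \emph{not} delete $X$ but instead contracts a BFS-tree-based structure, and the parts near the ``apex'' region of the surface decomposition have size $O(g^2)$; I would follow whichever of these two routes the cited surface product-structure machinery makes cleanest, but the deletion route with a post-hoc count is the conceptually simplest.

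The combinatorial heart of the argument is showing $\RR$ is $\ell$-blocking for $\ell = 894$. Let $P$ be an $\RR$-clean path in $G$. Decompose $P$ into maximal subpaths alternately inside $G - X$ and with at least one vertex in $X$. On each $G-X$ subpath, $\RR'$-cleanness and the $222$-blocking property of $\RR'$ bound its length by $222$. For the interaction with $X$: a clean path visits each sub-path block of each $Q_i$ at most once, and since $Q_i$ is geodesic, between two visits to the same $Q_i$ the path must either have length at least the block length $\ell_0$ along $Q_i$ or leave and come back; a counting argument over the $O(g)$ paths $Q_i$ and the bounded number of blocks a clean path can touch before being forced to repeat a part yields an $O(g)$ (or $O(g^2)$) bound on the total contribution of $X$-segments, and combining with finitely many $G-X$ segments of length $\le 222$ gives the constant $894$. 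I expect the \textbf{main obstacle} to be exactly this bookkeeping: making precise why a clean path cannot oscillate between the planar part and the geodesics indefinitely — one needs that each ``excursion'' into $X$ consumes a fresh part of $\RR$ and that the number of available parts along the cut structure that a single clean path can reach, before cleanness is violated, is bounded by a constant (independent of $g$, so that $\ell$ stays $894$ rather than growing with $g$). This is plausible because a clean path that reaches many distinct geodesic-blocks must traverse long stretches of the planar remainder between them, and those stretches are themselves length-bounded by the $222$-blocking property; chaining these forces a repeated part after boundedly many alternations. The width, by contrast, is allowed to depend on $g$, which is why the $8950g^2+1796g$ term is harmless there and only the length constant needs care.
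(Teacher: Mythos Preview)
Your high-level strategy --- cut along a small subgraph $Z$ to planarise, apply \cref{BlockingPlanar} to $G-V(Z)$, and combine --- is exactly what the paper does. But your proposed partition of $Z$ has a genuine gap that breaks the argument.

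You partition each geodesic $Q_i$ into sub-paths of bounded length $\ell_0$. With this partition there is \emph{no} bound, independent of $g$, on the number of $Z$-parts a clean path can touch. A clean path can visit one block of $Q_1$, traverse a planar segment of length $\le 222$, visit a block of $Q_2$, traverse another planar segment, visit a different block of $Q_1$, and so on. Nothing forces a repeated part: there are $2g$ geodesics, each with arbitrarily many blocks, and the path can pick a fresh block at every return to $Z$. Your sentence ``chaining these forces a repeated part after boundedly many alternations'' is simply false for this partition; the best you get is a blocking constant of order $g\cdot 222$, not $894$. You even concede this yourself (``an $O(g)$ (or $O(g^2)$) bound on the total contribution of $X$-segments'') before asserting the opposite a few lines later.

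The paper's fix is to partition $Z$ \emph{horizontally} rather than vertically. Take a BFS-layering $(V_0,V_1,\dots)$ of $G$ and let $T$ be the union of at most $2g$ vertical paths with $G-V(T)$ planar (this is \cref{TreeGenus}). Now group $V(T)$ into chunks $X_1,X_2,\dots$ according to consecutive ranges of layers, each range of length $\Theta(g\ell)$ (plus a few short connecting paths to keep the chunks connected); this is \cref{GenusZ}. The point is that any path of length at most $\ell$ in $G$ spans at most $\ell+1$ consecutive BFS layers, so it can meet at most three of the chunks $X_i$. Hence an $\RR$-clean path $P$ contains at most three vertices of $Z$, so $P-V(Z)$ has at most four components, each an $\RR'$-clean path in the planar graph $G-V(Z)$ of length at most $222$, giving $|E(P)|\le 4\cdot 222+6=894$.

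This also explains the width term you were puzzled by: each chunk $X_i$ consists of up to $2g$ vertical segments of length $O(g\ell)$ plus up to $2g$ connecting paths of length $\le\ell$, so $|X_i|\le 2g((5g+1)\ell+3)$; with $\ell=895$ this is exactly $8950g^2+1796g$, and it \emph{is} the width of those parts.
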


To show that \cref{BlockingPlanar,BlockingGenus}  imply our main results (\cref{ImprovedShallowMinorsPlanar,ImprovedShallowMinorsGenus}), we use the following lemma.

\begin{lem}
\label{MinorBlockingShallowProduct}
Let $\GG$ be a minor-closed class such that for some function $f$ and integers $\ell,t,c\geq 1$,  
\begin{itemize}
\item every graph \(G\in \GG\) has an $\ell$-blocking partition $\RR$ with width at most $f(\Delta(G))$;
\item  every graph $G\in\GG$ is contained in $H \boxtimes P \boxtimes K_c$  for some graph $H$ with $\tw(H)\leq t$ and for some path $P$. 
\end{itemize}
Then there is a function $g$ such that for any  integers $r\geq 0$ and $d,s\geq 1$, for every graph $G\in\GG$, every $(r,s)$-shallow minor of $G\boxtimes K_{d}$ is contained in $J \boxtimes P \boxtimes K_{g(d,r,s,\ell,c)}$ for some graph $J$ with $\tw(J)\leq \binom{2\ell+5+t}{t}-1$ and for some path $P$.
\end{lem}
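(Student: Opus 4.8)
The plan is to combine the blocking partition of the underlying graph $G$ with the given product structure of $G$, push both through the $(r,s)$-shallow model, and finally apply a local-structure argument analogous to the proof of \cref{HWShallowMinors}. Let $H_0$ be an $(r,s)$-shallow minor of $G \boxtimes K_d$, witnessed by a rooted model $((B_x, v_x) \colon x \in V(H_0))$ in $G \boxtimes K_d$, and let $B_x'$ be the projection of $B_x$ onto $G$. First I would note that each $B_x'$ has radius at most $r$ in $G$, is connected, and every $u \in V(B_x')$ with $u \neq v_x'$ (the projection of the root) has the property that the path from $v_x'$ to $u$ inside $B_x'$ has length at most $r$; moreover $|\{x \colon u \in V(B_x')\}| \le d$ for each $u \in V(G)$, since the $B_x$ are vertex-disjoint in $G \boxtimes K_d$.

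Next I would fix an $\ell$-blocking partition $\RR$ of $G$ with width at most $f(\Delta(G))$, and the product structure $G \subseteq H \boxtimes P \boxtimes K_c$ with $\tw(H) \le t$, which gives (via \cref{ObsPartitionProduct} applied to $H \boxtimes P$) an $(H \boxtimes P)$-partition $\QQ$ of $G$ of width at most $c$; write $L = (L_0, L_1, \dots)$ for the partition of $V(G)$ into the $P$-layers induced by $\QQ$ (so each part of $\QQ$ lies in one $L_i$, consecutive layers only are adjacent, and contracting each part gives something inside $H \boxtimes P$). The key combinatorial step is to define a partition $\SSS$ of $V(H_0)$ as follows: two vertices $x, y$ of $H_0$ are placed in the same part of $\SSS$ if $v_x'$ and $v_y'$ lie in the same part of $\QQ$ — i.e. $\SSS$ is the pull-back of $\QQ$ along the map $x \mapsv v_x'$. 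Since each part of $\QQ$ has at most $c$ vertices and each such vertex is the root of at most $d$ branch sets, each part of $\SSS$ has at most $cd$ vertices. One then checks that $\SSS$ is an $(H \boxtimes P)$-partition of a graph $H_0'$ obtained from $H_0$ by adding few edges: the quotient $H_0 / \SSS$ embeds in a bounded power of $H \boxtimes P$, because whenever $x y \in E(H_0)$ there is an edge of $G$ between $V(B_x')$ and $V(B_y')$, hence a $v_x'$–$v_y'$ path in $G$ of length at most $2r+1$, which in turn meets at most $2r+2$ consecutive $P$-layers and whose endpoints lie in parts of $\QQ$ at bounded distance in $H$. The role of the $\ell$-blocking hypothesis is to bound that distance in $H$: any such $v_x'$–$v_y'$ path of length at most $2r+1$ decomposes into at most $2r+1$ edges, and — this is where blocking is used — within each branch set $B_x'$ the root is joined to every vertex by a path of length $\le r$, so the relevant short paths in $G$ can be chosen to use few parts of $\RR$, bounding the number of $\QQ$-parts (hence $H$-vertices) they traverse. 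Concretely, $H_0 / \SSS$ is contained in $(H \boxtimes P)^{2\ell+5}$, so by \cref{HWShallowMinors}-style reasoning — or more directly, since the $(2\ell+5)$-th power of $H \boxtimes P$ is contained in $H' \boxtimes P'$ for a path $P'$ and a graph $H'$ with $\tw(H') \le \binom{2\ell+5+t}{t}-1$ — we get that $H_0$ has a $J \boxtimes P'$-partition of width at most $cd$, where $\tw(J) \le \binom{2\ell+5+t}{t}-1$. Applying \cref{ObsPartitionProduct} once more gives $H_0 \subseteq J \boxtimes P' \boxtimes K_{g(d,r,s,\ell,c)}$ with $g(d,r,s,\ell,c) = cd$ (absorbing the remaining bounded factors into $g$), which is the claim.

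I expect the main obstacle to be the precise bookkeeping in the step that bounds the distance, in the auxiliary graph $H$ and in the layering $P$, between the $\QQ$-parts containing $v_x'$ and $v_y'$ for an edge $xy$ of $H_0$ — in other words, pinning down the exponent $2\ell+5$. The subtlety is that a single $\QQ$-part can be wide (up to $c$ vertices) and can meet many parts of $\RR$, so one cannot naively say "short path in $G$ $\Rightarrow$ few $\QQ$-parts"; the blocking property controls $\RR$-clean subpaths but a short path may re-enter an $\RR$-part. The clean way around this, which I would adopt, is to first contract $\RR$: the $\ell$-blocking property says exactly that in the quotient $G/\RR$ every path is "short after suppressing repeats", more precisely that every path in $G$ of length $> \ell$ has two vertices in a common $\RR$-part, so a geodesic of length $\le 2r+1$ in $G$ projects to a walk in $G/\RR$ hitting at most $2r+2$ parts; combined with the fact that $\RR$ is a connected partition refining nothing in particular but with each part small, and with $\QQ$ of width $\le c$, one gets a bound of the form $O(\ell + r)$ on the $H$-distance, and tracking constants carefully yields $2\ell + 5$ (the "$+5$" and the coefficient $2$ are artifacts of also needing to handle the two branch sets and the layering). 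Everything else — the pull-back partition, the width bounds $cd$, and the final two invocations of \cref{ObsPartitionProduct} together with the standard fact that a bounded power of $H \boxtimes P$ has product structure with treewidth $\binom{2\ell+5+t}{t}-1$ — is routine.
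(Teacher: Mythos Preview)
Your proposal has a genuine gap at its central step. You want to show that the quotient \(H_0/\SSS\) is contained in \((H\boxtimes P)^{2\ell+5}\), i.e.\ that for every edge \(xy\in E(H_0)\) the \(\QQ\)-parts of \(v_x'\) and \(v_y'\) are at distance at most \(2\ell+5\) in \(H\boxtimes P\). But the only thing the model gives you is a \(v_x'\)--\(v_y'\) path in \(G\) of length at most \(2r+1\), which yields distance at most \(2r+1\) in \(H\boxtimes P\); this depends on \(r\), not on \(\ell\). Your attempt to improve this to \(2\ell+5\) via the blocking partition does not work, because \(\RR\) and \(\QQ\) are two \emph{unrelated} partitions of \(G\): the \(\ell\)-blocking property bounds the length of \(\RR\)-clean paths, and says nothing about how many \(\QQ\)-parts a short path traverses. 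Your ``clean way around this'' paragraph --- contracting \(\RR\) and observing that a path of length \(\le 2r+1\) projects to a walk in \(G/\RR\) --- still gives a walk of length \(\le 2r+1\), not \(\le 2\ell+5\); the blocking property does not force the geodesic distance in \(G/\RR\) down to \(O(\ell)\). A further symptom that something is wrong: your proposed \(g(d,r,s,\ell,c)=cd\) does not depend on \(r\) or \(s\) at all, and you never use the degree parameter \(s\) from the \((r,s)\)-shallow hypothesis.

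The paper's proof is structurally quite different. It does not combine the blocking partition and the product structure of the \emph{same} graph \(G\). Instead, it first uses the blocking partition to \emph{reduce the radius} of the shallow model: one shows (\cref{ShallowMinorsStep}) that if \(r>\ell+2\) then an \((r,s)\)-shallow minor of \(G\boxtimes K_d\) is an \((r-1,s')\)-shallow minor of \(G'\boxtimes K_{d'}\), where \(G'=G/\RR'\) is a minor of \(G\) obtained by contracting a blocking partition of the bounded-degree subgraph \(\bigcup_x(B_x'-v_x')\) (this is where \(s\) is used, to bound \(\Delta\le ds\) and hence the width \(f(ds)\) of the partition, which enters \(d'\)). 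Iterating this (\cref{ShallowMinorsIntermediate}) drives the radius down to \(\ell+2\); only then is the product-structure hypothesis applied, to the \emph{final} minor \(Q\in\GG\), followed by \cref{HWShallowMinors} with \(r=\ell+2\), yielding the treewidth bound \(\binom{2(\ell+2)+1+t}{t}-1=\binom{2\ell+5+t}{t}-1\). The dependence of \(g\) on \(r,s\) comes from the iteration, and is much worse than \(cd\).
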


For planar graphs, \cref{MinorBlockingShallowProduct} is applicable with $\ell=\BlockingNumber$ by \cref{BlockingPlanar} and with $t=c=3$ by \cref{PGPST}. \cref{MinorBlockingShallowProduct} thus proves \cref{ImprovedShallowMinorsPlanar} since $\tw(J)\leq \binom{2\ell+5+t}{t}-1=\binom{2\cdot \BlockingNumber+5+3}{3}-1=\TreewidthBound$. 
For graphs of Euler genus $g$, \cref{MinorBlockingShallowProduct} is applicable with $\ell=\GenusBlockingNumber$ by \cref{BlockingGenus} and with $t=3$ and $c=\max\{2g,3\}$ by a result of \citet{DHHW22}. \cref{MinorBlockingShallowProduct} thus proves \cref{ImprovedShallowMinorsGenus} since $\tw(J)\leq \binom{2\ell+5+t}{t}-1=\binom{2\cdot \GenusBlockingNumber+5+3}{3}-1=\GenusTreewidthBound$.

We now work towards proving \cref{MinorBlockingShallowProduct}.

\begin{lem}\label{ShallowMinorsStep}
    % Let \(G\) be a planar graph, let \(r > \ell + 2\)
    % where \(\ell = 222\) is from \cref{BlockingPlanar}, and let
    % \(s \ge 2\).
    % Then every \((r, s)\)-shallow minor of
    % \(G \boxtimes K_d\) is an \((r-1, s')\)-shallow
    % minor of \(G' \boxtimes K_{d'}\) where
    % \(s' = (ds)^r\),
    % \(G'\) is a minor of $G$ and thus is planar, and
    % \(d' = d\cdot f(ds)\) for the function \(f\) 
    % from \cref{BlockingPlanar}.
Let $\GG$ be a minor-closed class such that, for some function $f$ and integer $\ell\geq 1$,  every graph $G_0\in \GG$ has an $\ell$-blocking partition $\RR$ with width at most $f(\Delta(G_0))$. 
Then for any integers \(r > \ell + 2\) and \(s,d \ge 1\), 
for every graph $G\in \GG$, every \((r, s)\)-shallow minor $H$ of \(G \boxtimes K_d\) is an \((r-1, s')\)-shallow minor of \(G' \boxtimes K_{d'}\), where \(G'\) is a minor of $G$ and is thus in $\GG$, and \(s' = (ds)^r\) and \(d' = d\cdot f(ds)\).
\end{lem}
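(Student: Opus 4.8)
The plan is to take an $(r,s)$-shallow model $((B_x, v_x) \colon x \in V(H))$ of $H$ in $G \boxtimes K_d$ and ``contract away'' one layer of radius by using the blocking partition of $G$. First I would project: let $G_x$ be the projection of $B_x$ onto $G$, and let $u_x \in V(G)$ be the projection of the root $v_x$; these $G_x$ are connected subgraphs of $G$, vertex-disjoint after accounting for the $K_d$ coordinate (two branch sets may project to overlapping subgraphs of $G$, but no more than $d$ of them meet any given vertex). Now apply the hypothesis: $G$ has an $\ell$-blocking partition $\RR$ of width at most $f(\Delta(G))$. The key idea is that inside each branch set $B_x$, every path from the root $v_x$ has length at most $r$, hence its projection is a walk of length at most $r$ in $G$; since $\RR$ is $\ell$-blocking and $r > \ell+2$, any such path of length $> \ell$ must revisit a part of $\RR$. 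I would use this to argue that each $B_x$ can be ``reached'' from $v_x$ within the quotient-like structure using only $r-1$ steps once we pass to a coarser host graph.

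The construction of $G'$: I expect $G'$ to be the minor of $G$ obtained by contracting, within each relevant region, the parts of $\RR$ — more precisely, for each branch set we contract the $\RR$-parts that its projection meets, so that a path of length $r$ in $B_x$ becomes a path of length at most $r-1$ in the contracted graph (the blocking property guarantees that at least one ``$\RR$-clean segment'' of length $\le \ell$ along any root-path gets compressed, shaving off at least one from the radius). Since $\RR$ has bounded width, each contracted part has at most $f(\Delta(G))$ vertices of $G$, which when combined with the $K_d$ coordinate gives the factor $d' = d \cdot f(ds)$ on the clique; here $ds$ appears rather than $\Delta(G)$ because inside the product $G \boxtimes K_d$ the relevant degree bound on non-root branch-set vertices is $ds$ (degree $s$ in $B_x$ times the $d$ copies), which controls $\Delta$ of the minor we actually feed into the blocking hypothesis. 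The new degree bound $s' = (ds)^r$ arises because a vertex of the contracted branch set corresponds to an $\RR$-part, and the number of $G \boxtimes K_d$-vertices of $B_x$ mapping into one such part, together with their neighbours, is bounded by a ball of radius $r$ in a graph of maximum degree $ds$ — hence $(ds)^r$. I would set up the new rooted model $((B_x', v_x') \colon x \in V(H))$ accordingly and verify the $(r-1, s')$-shallow conditions: radius $\le r-1$ (from the blocking compression), degree $\le s'$ at non-root vertices (from the ball bound), and that $G'$ is a minor of $G$ hence in the minor-closed class $\GG$.

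The main obstacle I anticipate is bookkeeping the interaction between the $K_d$ coordinate and the blocking partition: the blocking partition lives on $G$, not on $G \boxtimes K_d$, so I must carefully track how branch sets of $H$ (which live in $G \boxtimes K_d$) interact with $\RR$-parts (which live in $G$) without losing vertex-disjointness or connectivity when passing to $G'$. Specifically, I need to ensure that after contracting $\RR$-parts the branch sets $B_x'$ remain vertex-disjoint in $G' \boxtimes K_{d'}$ — this is where the extra factor $f(ds)$ in $d'$ is spent, via an injective assignment of the (at most $d \cdot f(ds)$) pre-images of each $G'$-vertex into $V(K_{d'})$, exactly as in the footnote computation for $G^k_d$ earlier in the paper. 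A secondary subtlety is the precise amount of radius saved: I must confirm that one application of the $\ell$-blocking property along a longest root-path is enough to reduce radius from $r$ to $r-1$ (and not merely from $r$ to $r-\ell$ or worse), which should follow because contracting even a single repeated $\RR$-part along the path removes at least one edge; the condition $r > \ell + 2$ presumably guarantees the relevant path is long enough to contain such a repetition.
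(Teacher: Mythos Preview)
Your overall approach matches the paper's: project the branch sets to $G$, apply the blocking hypothesis to get $\RR$, form $G'$ by contracting the parts of $\RR$, and use the blocking property to shave one off the radius. Two points need sharpening, however, and the first is a genuine gap as stated.

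First, you cannot apply the blocking hypothesis to $G$: the width bound would be $f(\Delta(G))$, and $\Delta(G)$ is unbounded. The paper instead applies it to the subgraph $G_0 := \bigcup_{x} (B_x' - v_x')$ of $G$, where $B_x'$ is the projection of $B_x$ onto $G$ and $v_x'$ the projection of the root. Since every vertex of $G$ lies in at most $d$ sets $B_x'$ and non-root vertices of $B_x$ have degree at most $s$, we get $\Delta(G_0) \le ds$, whence width $\le f(ds)$. (You gesture at this later, but your opening line ``$G$ has an $\ell$-blocking partition of width $f(\Delta(G))$'' fails.) The partition is then extended to $G$ by singletons outside $G_0$, and $G' := G/\RR'$. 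Second, your explanation of $s' = (ds)^r$ is not how the paper obtains it, and your description (``a vertex of the contracted branch set corresponds to an $\RR$-part'') misidentifies the new branch sets. The paper keeps the same vertex set $V(B_x')$, now viewed inside $G' \boxtimes K_{f(ds)}$; the new branch set is the \textsc{bfs}-tree $T_x'$ of $B_x'$ together with all ancestor--descendant edges that exist in $G' \boxtimes K_{f(ds)}$ (precisely the shortcut edges created by the contraction). The degree of a non-root vertex at depth $i$ in this graph is then at most the number of its ancestors plus descendants, bounded by $\sum_{k=0}^{r-1}(ds)^k < (ds)^r$. The radius argument is as you say: a root--leaf path of length $r$ in $T_x'$ gives, after deleting the root, a path of length $r-1 > \ell$ in $G_0$, which cannot be $\RR$-clean, and a repeated part yields an ancestor--descendant edge in $G' \boxtimes K_{f(ds)}$, contradicting radius $r$.
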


\begin{proof}
Let $((B_x,v_x)\colon x\in V(H))$ be an $(r,s)$-shallow model of $H$ in $G\boxtimes K_d$. 
For each \(x \in V(H)\), let \(B_x'\) and \(v_x'\) be the projections of
\(B_x\) and \(v_x\), respectively, onto \(G\).
Observe that for each \(x \in V(H)\),
the maximum degree of each \(B_x - v_x\) is at most
\(s\) and each vertex in \(B_x'\) is at distance at most \(r\)
from \(v_x'\).
Let \(G_0 := \bigcup (B_x' - v_x' : x \in V(H))\),
which is a subgraph of $G$ and therefore in $\GG$. Since every vertex in $G_0$ has at most $d$ vertices mapped to it, the maximum degree of
\(G_0\) is at most $ds$.
By assumption, there is an $\ell$-blocking partition $\RR$ of $G_0$ with width at most $f(ds)$.

%Let $\RR$ be the $\ell$-blocking partition of $G_0$ with width at most $f(ds)$ obtained from \cref{BlockingPlanar}.

Let \(\RR' := \RR \cup \{\{v\} \colon v \in V(G) \setminus V(G_0)\}\), which is a partition of $G$. Define $G':=G/\RR'$.
Since $\RR'$ is a connected partition, $G'$ is a minor of $G$ and is therefore in $\GG$. 
The width of \(\RR'\) is at most \(f(ds)\),
so \(G\) is contained in \(G' \boxtimes K_{f(ds)}\) by \cref{ObsPartitionProduct}.
By slightly abusing the notation, we identify the graph \(G\)
with the isomorphic subgraph of \(G' \boxtimes K_{f(ds)}\).
So the graphs \(B_x'\) are subgraphs of \(G' \boxtimes K_{f(ds)}\),
and each vertex of \(G' \boxtimes K_{f(ds)}\)
belongs to at most \(d\) graphs \(B_x'\).

For each \(x\in V(H)\), let \(T_x'\) be a \textsc{bfs}-spanning tree of
\(B_x'\) rooted at \(v_x'\).
Hence, the maximum degree of \(T_x' - v_x'\) is at most \(ds\)
and each vertex is at distance at most \(r\) from the root \(v_x'\) in \(T_x'\). 
So each component of \(T_x' - v_x'\) has at most \((ds)^0 + \ldots + (ds)^{r-1} < (ds)^r\) vertices'. Let \(\overline{T_x'}\) denote the graph obtained from
\(T_x'\) by adding each edge of
\(G' \boxtimes K_{f(ds)}\) that joins a vertex of \(T_x'\) 
to one of its descendants. Then \(\overline{T_x'} - v_x'\) has maximum degree at most $(ds)^r$.
%'}

Below we show that the maximum degree of \(\overline{T_x'} - v_x'\)
is at most \(s'\)
and each vertex in \(\overline{T_x'}\) is at distance at most
\(r-1\) from \(v_x'\). This implies that \(H\) is an \((r-1, s')\)-shallow
minor of \(G' \boxtimes K_{f(ds)} \boxtimes K_d\),
where an appropriate model
can be defined by choosing for each \(v \in V(G' \boxtimes K_{f(ds)})\) an injective map
from \(\{x \in V(H): v \in V(B_x')\}\) to \(V(K_d)\). 
Since \(G' \boxtimes K_{f(ds)} \boxtimes K_d\) is isomorphic
to \(G' \boxtimes K_{d'}\), the lemma will follow.

First we estimate the maximum degree of \(\overline{T_x'} - v_x'\).
Consider a vertex \(v \in V(T_x')\) at distance \(i \ge 1\) from the root \(v_x'\) in \(T_x'\). Then, for each
\(j \in \{1, \ldots, i - 1\}\), the vertex \(v\) has only one ancestor at distance \(j\) from \(v_x'\) in \(T_x'\).
Since the maximum degree of \(T_x' - v_x'\) is at most \(ds\),
for each \(j \in \{i+1, \ldots, r\}\), there are at most
\((ds)^{j-i}\) descendants of \(v\) at distance \(j\) from
\(v_x'\).
Therefore, \(v\) has at most \((ds)^{j-1}\)
neighbours in \(\overline{T_x'} - v_x'\) which are at distance
\(j\) from \(v_x'\) in \(T_x'\).
Hence, the degree of \(v\) in \(\overline{T_x'} - v_x'\)
is at most \((ds)^0 + \ldots + (ds)^{r-1} < (ds)^r\),
so the maximum degree of \(\overline{T_x'} - v_x'\) is at most
\((ds)^r\).

It remains to show that in each \(\overline{T_x'}\),
every vertex is at distance at most \(r-1\) from \(v_x'\).
Suppose to the contrary that some vertex \(u\) is at distance
at least \(r\) from \(v_x'\) in \(\overline{T_x'}\).
Since \(T_x' \subseteq \overline{T_x'}\), and
in \(T_x'\) every vertex is at distance at most \(r\) from \(v_x'\),
the vertex \(u\) must be at distance exactly
\(r\) from \(v_x'\) in \(T_x'\) and \(\overline{T_x'}\).
Let \(P=(u_0, \ldots, u_r)\) be the unique path between
\(v_x'\) and \(u\) in \(T_x'\) where \(u_0 = v_x'\) and
\(u_r = u\). Let $P'=(x_1, \ldots, x_r)$ be the projection of $P$ onto $G_0$. Then $P'$ is a path in \(G_0\) with length \(r-1 \ge \ell+1\), so it contains two vertices \(x_\alpha\) and \(x_\beta\) with \(1 \le \alpha < \beta\) that belong to the same part in \(\RR\). Thus the projection of \(u_\alpha\) and \(u_\beta\) (in  $G' \boxtimes K_{f(ds)}$) are the same vertex in \(G'\) and so, by the definition of the strong product, $u_\beta u_{\alpha-1}\in E(G' \boxtimes K_{f(ds)})$. Hence the distance between \(v_x'\) and \(u\) in \(\overline{T_x'}\) is less than \(r\), a contradiction.
%'}
\end{proof}

We prove the next lemma by iteratively applying \cref{ShallowMinorsStep}.

%There is a function $g$ such that for every planar graph $G$ and every $(r,s)$-shallow minor $H$ of $G\boxtimes K_d$, $H$ is an $(\ell+2)$-shallow minor of  $Q \boxtimes K_{g(d,r,s)}$ for some minor $Q$ of $G$, where $\ell=222$ is from \cref{BlockingPlanar}.

\begin{lem}
\label{ShallowMinorsIntermediate}
Let $\GG$ be a minor-closed class such that, for some function $f$ and integer $\ell\geq 1$,  every graph \(G\in \GG\) has an $\ell$-blocking partition with width at most  $f(\Delta(G))$. Then there is a function $h$ such that for any  integers \(r \geq 0\) and \(s,d \ge 1\), for every graph $G\in \GG $, every $(r,s)$-shallow minor $H$ of $G\boxtimes K_d$ is an $(\ell+2)$-shallow minor of  $Q \boxtimes K_{h(d,r,s,\ell)}$ for some minor $Q$ of $G$.
\end{lem}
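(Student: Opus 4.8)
The plan is to iterate Lemma \ref{ShallowMinorsStep} exactly $r - (\ell+2)$ times (assuming $r > \ell + 2$; if $r \le \ell+2$ the statement is trivial with $Q = G$ and $h(d,r,s,\ell) = d$). Start with the given $(r,s)$-shallow minor $H$ of $G \boxtimes K_d$, with $G \in \GG$. Applying Lemma \ref{ShallowMinorsStep} once yields that $H$ is an $(r-1, s_1)$-shallow minor of $G_1 \boxtimes K_{d_1}$, where $G_1$ is a minor of $G$ (hence $G_1 \in \GG$), and $s_1 = (ds)^r$, $d_1 = d \cdot f(ds)$. Note the crucial point that the new parameters $s_1, d_1$ are bounded by a function of $d, s$ (and $r$, $\ell$, via $f$), so the class $\GG$ membership is preserved and the hypothesis of Lemma \ref{ShallowMinorsStep} applies again. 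Repeating, after $i$ steps $H$ is an $(r-i, s_i)$-shallow minor of $G_i \boxtimes K_{d_i}$ for some minor $G_i$ of $G$, with $s_i$ and $d_i$ governed by the recurrences $s_{i+1} = (d_i s_i)^{r-i}$ and $d_{i+1} = d_i \cdot f(d_i s_i)$ (the condition $r - i > \ell + 2$ needed to invoke Lemma \ref{ShallowMinorsStep} holds for all $i < r - (\ell+2)$). Stopping at $i = r - (\ell + 2)$ gives that $H$ is an $(\ell+2, s^*)$-shallow minor of $Q \boxtimes K_{d^*}$, where $Q := G_{r-\ell-2}$ is a minor of $G$, and $s^*, d^*$ are determined by the recurrences.

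The only remaining task is to package the bookkeeping: since each recurrence step is a function of the previous parameters and of $r$ (and $\ell$, through the fixed function $f$ supplied by the hypothesis), a routine induction shows $s^*$ and $d^*$ are bounded by a single function of $(d, r, s, \ell)$; call the resulting bound $h(d,r,s,\ell) := \max\{s^*, d^*\}$. Absorbing the $s^*$ bound into the $K$-factor (an $(\ell+2, s^*)$-shallow minor of $Q \boxtimes K_{d^*}$ is in particular an $(\ell+2, h(d,r,s,\ell))$-shallow minor of $Q \boxtimes K_{h(d,r,s,\ell)}$, after padding $d^*$ up to $h(d,r,s,\ell)$ by adding dummy coordinates) yields exactly the claimed conclusion.

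I do not anticipate a genuine obstacle here — this lemma is essentially an inductive repackaging of Lemma \ref{ShallowMinorsStep}, and the content has already been done in that lemma. The one point requiring a little care is the base/boundary case: Lemma \ref{ShallowMinorsStep} requires $r > \ell + 2$ to fire, so the iteration must stop precisely when the shallow-minor radius drops to $\ell + 2$, not before, and one should handle $r \le \ell + 2$ separately (taking $Q = G$, $d^* = d$, $s^* = s$, and noting an $(r,s)$-shallow minor is an $(\ell+2, s)$-shallow minor when $r \le \ell+2$). A secondary point worth a sentence is that at every stage the ground graph remains in $\GG$ — this is guaranteed because $\GG$ is minor-closed and each $G_{i+1}$ is a minor of $G_i$, which is exactly what Lemma \ref{ShallowMinorsStep} provides.
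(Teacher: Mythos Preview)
Your proposal is correct and follows essentially the same approach as the paper: handle the base case $r \le \ell+2$ trivially with $Q=G$, then iterate Lemma~\ref{ShallowMinorsStep} exactly $r-(\ell+2)$ times, tracking the recurrences for $s_i$ and $d_i$. One minor simplification: the conclusion only asks for an $(\ell+2)$-shallow minor (not an $(\ell+2,s^*)$-shallow minor), so the final ``absorbing $s^*$'' step is unnecessary and you can simply take $h(d,r,s,\ell) := d_{r-\ell-2}$ as the paper does.
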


\begin{proof}
   If $r\leq\ell+2$ then $H$ is an $(\ell+2)$-shallow minor of $Q \boxtimes K_{h(d,r,s,\ell)}$, where $Q=G$ and $h(d,r,s,\ell)=d$, and we are done. Now assume that $r>\ell+2$. Thus $r-\ell-2\geq 1$. 
%%
%  By \cref{ShallowMinorsStep}, $H$ is an \((r-1, s_1)\)-shallow minor of \(G_1 \boxtimes K_{d_1}\), where \(G_1\) is a minor of $G$ and thus in $\GG$, and \(s_1 = (ds)^r\) and \(d_1 = d\cdot f(ds)\).
 %%%  
%By \cref{ShallowMinorsStep}, $H$ is an \((r-2, s_2)\)-shallow minor of \(G_2 \boxtimes K_{d_1}\), where \(G_2\) is a minor of $G_1$ and thus in $\GG$, and \(s_2 = (d_1s_1)^{r-1}\) and \(d_2 = d_1\cdot f(d_1s_1)\).
%%%
Let $d_0:=d$ and $s_0:=s$.
Iteratively applying  \cref{ShallowMinorsStep}, we obtain a sequence $G_1,G_2,\dots,G_{r-\ell-2}$ of 
minors of $G$, such that for each $i\in\{1,\dots,r-\ell-2\}$, $H$ is an \((r-i, s_i)\)-shallow minor of \(G_i \boxtimes K_{d_i}\), where \(s_i = (d_{i-1}s_{i-1})^{r-i+1}\) and \(d_i = d_{i-1}\cdot f(d_{i-1}s_{i-1})\).
In particular (with $i=r-\ell-2$), 
$H$ is an \((\ell+2)\)-shallow minor of \(G_{r-\ell-2} \boxtimes K_{d_{r+\ell-2}}\). The result follows with $Q:=G_{r-\ell-2}$ and $h(d,r,s,\ell):=d_{r-\ell-2}$.
\end{proof}

\begin{proof}[Proof of \cref{MinorBlockingShallowProduct}] 
Let $G\in \GG$ and let $G'$ be an $(r,s)$-shallow minor of $G\boxtimes K_{d}$. By \cref{ShallowMinorsIntermediate}, 
$G'$ is an $(\ell+2)$-shallow minor of  $Q \boxtimes K_{h(d,r,s,\ell)}$ for some minor $Q$ of $G$. Thus $Q\in \GG$. By assumption, $Q$ is contained in $H \boxtimes P \boxtimes K_c$  for some graph $H$ with $\tw(H)\leq t$. Hence 
$G'$ is an $(\ell+2)$-shallow minor of 
$H \boxtimes P \boxtimes K_{c\,h(d,r,s,\ell)}$. 
By \cref{HWShallowMinors}, $G'$ is contained in $ J \boxtimes P \boxtimes K_{c(2(\ell+2)+1)^2\cdot g(d,r,s,\ell)}$ for some graph $J$ with $\tw(J)\leq \binom{2(\ell+2)+1+t}{t}-1$. The result follows with 
$g(d,r,s,\ell,c) := c(2(\ell+2)+1)^2\cdot h(d,r,s,\ell)$. 
\end{proof}

%%%%%%%%%%%%%%%%%%%%%%%%%%%%%%%%%
\section{The Chordal Partition}
\label{ChordalPartitionSection}

Our focus now is the proof of \cref{BlockingPlanar}, which is inspired by the construction of a chordal partition of a planar triangulation by \citet{HOQRS17}. They showed that every planar triangulation $G$ has a partition $\PP$ 
into paths \(P_1, \ldots, P_n\), such that for each \(i \in \{1, \ldots, n - 1\}\), the path \(P_{i+1}\) is geodesic in \(G - (V(P_1) \cup \cdots \cup V(P_i))\), 
\(P_{i+1}\) is adjacent to at most two of the paths \(P_1, \ldots, P_i\), and if $P_{i+1}$ is adjacent to $P_a$ and $P_b$ with $1\leq a<b\leq i$, then $P_a$ is adjacent to $P_b$. Then the quotient $G/\PP$ is chordal with treewidth 2.

Our \(\ell\)-blocking partition of a planar graph \(G\) will be obtained from a partition of $G$ into subtrees \(T_1, \ldots, T_n\) with similar properties:
for each \(i \in \{1, \ldots, n - 1\}\),
the tree \(T_{i+1}\) is adjacent to at most two
of the trees \(T_1, \ldots, T_i\),
and if \(T_{i+1}\) is adjacent to two of those trees, then they are adjacent to each other.
The final partition is then obtained by appropriately breaking each \(V(T_i)\)
into connected parts of bounded size.

Fix a planar graph \(G\) of maximum degree \(\Delta\)
and any planar embedding of \(G\).
This section constructs a \(6\)-blocking%
\footnote{Actually, the partition is \(4\)-blocking, but for simplicity we prove a weaker bound.}
chordal partition \(\TT\)
of \(G\).
Later sections refine this partition into a connected
(non-chordal) partition \(\RR\) with 
width bounded in terms of \(\Delta\),
and show that \(\RR\) is \(222\)-blocking, which will prove \cref{BlockingPlanar}.
Since \cref{BlockingPlanar} is trivial when \(\Delta \leq 2\), we assume that
\(\Delta \ge 3\).

Our construction of the
\(6\)-blocking
chordal partition is parameterised by
a positive integer \(\tau\). Ultimately, we 
will fix \(\tau = 37\),
but it will be easier to visualise the
construction for smaller values of \(\tau\).

We use the notion of \(F\)-bridges, as illustrated in \cref{fig:Bridges}. For a subgraph $F$ of $G$, an
\defn{\(F\)-bridge} is either a length-\(1\) path in \(G\)
that is edge-disjoint from \(F\) and is between two vertices in \(V(F)\) 
(such an \(F\)-bridge is \defn{trivial}), 
or a graph obtained
from a component \(C\) of \(G - V(F)\) by adding all vertices
in \(N_G(V(C))\) and all edges of \(G\) between \(V(C)\) and
\(N_G(V(C))\) (such an \(F\)-bridge is \defn{non-trivial}).
Observe that each edge of \(G\) outside 
\(F\) belongs to exactly one
\(F\)-bridge. In an \(F\)-bridge \(B\),
the set \(V(B) \cap V(F)\) is
the \defn{attachment-set}, and its 
elements are the \defn{attachment-vertices} of \(B\). Hence, if \(B\) is non-trivial
with attachment-set \(A\), then
\(B - A\) is a component of
\(G - V(F)\).

\begin{figure}[!ht]
    \centering
    \includegraphics{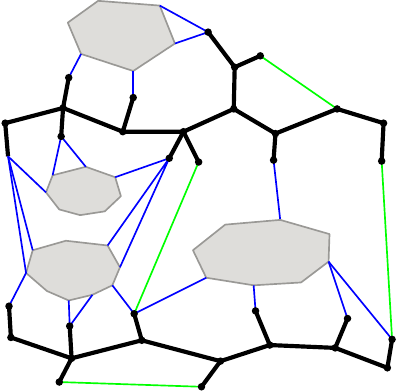}
    \caption{A graph \(G\)
    with a distinguished sub-forest \(F\) with two components (black). 
    There are four trivial
    \(F\)-bridges (green) and four non-trivial
    \(F\)-bridges (each of them is obtained from a component \(C\) of \(G - V(F)\) (gray) by adding all blue edges incident with a vertex of \(C\) (and their ends outside \(C\)))}
    \label{fig:Bridges}
\end{figure}

We will inductively define a sequence of vertex-disjoint trees
\(T_1, \ldots, T_m\) whose vertex-sets will form the chordal
partition of \(G\).
For each \(j \in \{0, \ldots, m\}\), we denote the forest
\(\bigcup_{i<j} T_i\) by \(F_j\)
(in particular, \(F_0\) is empty). For each
\(j \in \{0, \ldots, m\}\),
we maintain the following invariant. 
\begin{inv*}
  For every non-trivial \(F_j\)-bridge \(B\), the following hold:
  \begin{enumerate}[(\roman*)]
      \item \(B\) has attachment-vertices on at most two components of \(F_j\);
      \item for every component \(T_i\) of \(F_j\) that contains an attachment-vertex   of \(B\),
        the tree \(T_i\) is contained in the closure of the outer-face of \(B\), and the attachment-vertices of \(B\) in \(V(T_i)\) are leaves of \(T_i\);
      \item if \(B\) has attachment-vertices on two distinct components \(T_i\) and \(T_{i'}\) of \(F_j\),
        then \(T_i\) is contained in the closure of the outer-face of \(T_{i'} \cup B\), and 
        \(T_{i'}\) is contained in the closure of the outer-face of \(T_i \cup B\).
  \end{enumerate}
\end{inv*}

Our invariant implies the following.

\begin{claim}\label{Invariant}
    Suppose that the invariant is satisfied for some \(j \in \{0, \ldots, m\}\), and let
    \(B\) be a nontrivial \(F_j\)-bridge with at least one attachment-vertex.
    Let \(J\) be the union of \(B\) and all components \(T_i\) of \(F_j\) that contain
    an attachment-vertex of \(B\). Then, for each component \(T_i\) contained in \(J\),
    at least one and at most two attachment-vertices of \(B\) on \(T_i\) are on the boundary of the outer-face of \(J\).
\end{claim}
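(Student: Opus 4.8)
The plan is to reason separately about each component $T_i$ appearing in $J$, using parts (ii) and (iii) of the invariant to control the position of $T_i$ relative to the rest of $J$, and then a planarity/connectivity argument to pin down which attachment-vertices lie on the outer boundary. Fix a component $T_i$ of $F_j$ contained in $J$ and let $A_i := V(B)\cap V(T_i)$ be the (nonempty, by hypothesis) set of attachment-vertices of $B$ on $T_i$; by invariant~(ii) every vertex of $A_i$ is a leaf of $T_i$. There are two cases according to whether $B$ attaches to one or two components of $F_j$ (invariant~(i) guarantees there are no other cases).

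First I would handle the case where $T_i$ is the only component of $F_j$ meeting $B$, so $J = T_i\cup B$. By invariant~(ii), $T_i$ lies in the closure of the outer-face of $B$; equivalently, $B - A_i$ (a single component $C$ of $G-V(F_j)$, together with its attachment edges) lies inside a single face of $T_i$, and since $T_i$ is a tree this is the unique face of $T_i$. Consider the outer-face of $J = T_i \cup B$. Since $T_i$ is a tree, its boundary walk traverses each edge twice; attaching $B$ inside the outer face of $T_i$ `cuts out' a sub-arc of this walk. Concretely, walking along the boundary of the outer face of $J$, one traverses a portion of the boundary walk of $T_i$, then leaves $T_i$ at some attachment-vertex $a\in A_i$, travels around $B$, and re-enters $T_i$ at some attachment-vertex $a'\in A_i$ (possibly $a=a'$ if $B$ attaches at a single vertex). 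These are the attachment-vertices of $B$ on $T_i$ that lie on the boundary of the outer face of $J$: there is at least one (namely $a$) and at most two (namely $a$ and $a'$). The key point that makes `at most two' work is that $B-A_i$ is connected and drawn inside one face of $T_i$, so in the boundary walk of $J$ the detour through $B$ happens along a single contiguous arc, hence is entered and left exactly once each.

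Next, the case where $B$ attaches to two components, say $T_i$ and $T_{i'}$, so $J = T_i\cup T_{i'}\cup B$. Fix $T_i$; I would apply invariant~(iii) with the roles as follows: $T_i$ is contained in the closure of the outer-face of $T_{i'}\cup B$. Thus, from the point of view of $T_i$, the subgraph $T_{i'}\cup B$ behaves exactly like a single `bridge-like' object attached to $T_i$ at the vertices $A_i$ and drawn inside the unique face of $T_i$ (using also invariant~(ii), which says $T_i$ sits in the outer face of $B$, so $B$, and hence $T_{i'}\cup B$, meets $T_i$ only along $A_i\subseteq$ leaves of $T_i$, all incident to that one face). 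The same boundary-walk argument as in the first case now applies: walking the boundary of the outer face of $J$, one follows part of the boundary walk of $T_i$, detours (once) through $T_{i'}\cup B$ via some $a\in A_i$ and returns via some $a'\in A_i$, giving at least one and at most two vertices of $A_i$ on the outer boundary of $J$. By symmetry (invariant~(iii) is stated symmetrically in $T_i$ and $T_{i'}$) the same conclusion holds for $T_{i'}$. This proves the claim.

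The main obstacle I anticipate is making the `boundary-walk detour' argument fully rigorous: one must argue that $B-A_i$ (respectively $(T_{i'}\cup B) - A_i$) lies in a single face of $T_i$ and therefore contributes a single contiguous arc to the boundary walk of the outer face of $J$, so that it is entered and exited exactly once. This is where invariants~(ii) and~(iii) do the real work — (ii) ensures the attachment-vertices are leaves lying on the relevant face of $T_i$, and (ii)/(iii) together ensure the rest of $J$ is confined to that one face — and the planarity of the fixed embedding of $G$ is what forbids a second independent detour. If one wanted to avoid talking about boundary walks explicitly, an alternative is to contract $T_i$ to a single vertex and observe that $J$ becomes a planar graph in which $B - A_i$ (or $(T_{i'}\cup B)-A_i$) is a single `bridge' over a path/tree structure, and invoke the standard fact that a bridge of a plane subgraph meets the outer face of the union in at most two of its attachment-vertices; but spelling out the first argument directly is cleaner given the setup already in place.
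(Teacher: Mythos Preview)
Your argument is correct and follows essentially the same route as the paper's proof. The only cosmetic difference is one of perspective: the paper argues that the $T_i$-portion of the outer facial walk of $J$ is a single contiguous subwalk $W$ (so it has at most two terminal vertices, and the leaf condition from invariant~(ii) forces any boundary attachment-vertex to be such a terminal), whereas you argue dually that the non-$T_i$ portion --- namely $B$ or $B\cup T_{i'}$ --- contributes a single contiguous detour, entered and exited once each. Both versions rest on exactly the same use of invariants~(ii) and~(iii) to place $T_i$ in the closure of the outer face of the rest of $J$, and your case split (one vs.\ two components meeting $B$) is just an unpacking of what the paper handles uniformly in one paragraph.
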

\begin{proof}
  By invariant (i), \(B\) has attachment-vertices on at most one component of \(F_j\) distinct from \(T_i\).
  By invariant (ii), \(T_i\) is contained in the closure of the outer-face of \(B\).
  Moreover, by invariant (iii), if \(B\) has an attachment-vertex on a tree \(T_{i'}\)
  distinct from \(T_i\), then \(T_i\) is contained in the closure of the outer-face of
  \(B \cup T_{i'}\).
  Therefore, in the facial walk along the outer-face of \(J\), the vertices and edges that belong to
  \(T_i\) appear consecutively, forming a (possibly closed) sub-walk \(W\).
  By invariant (ii), the attachment-vertices of \(B\) in \(V(T_i)\) are leaves of \(T_i\), 
  so only the terminal vertices of \(W\) are attachments of \(B\) in \(V(T_i)\)
  which lie on the boundary of the outer-face of \(J\).
  At most two vertices are terminal vertices of \(W\), so the claim holds.
\end{proof}

% Consider aonly onn \(F_j\)-bridge \(B\) with at least one attachment-vertex, and let 
% \(J\) denote the union of \(B\) and all components of \(F_j\) that contain an attachment-vertex of \(B\).
% By our invariant, \(B\) has attachment-vertices on at most two components of \(F_j\),
% and each such a component \(T_i\) intersects the boundary of the
% outer-face of \(J\).
% Moreover, since \(T_i\) intersects \(B\) only in some of its leaves,
% either one or two attachment-vertices of \(B\) on \(T_i\)
% are on the boundary of the outer-face of \(J\).

% See \cref{Invariant}.
% \begin{figure}
%     \centering
%     \includegraphics{?}
%     \caption{Three possible ways in which the non-trivial \(F_j\)-bridges can intersect \(F_j\)}
%     \label{Invariant}
% \end{figure}

For \(j = 0\), the invariant is satisfied
because \(F_0\) is empty, so the \(F_0\)-bridges have no attachment-vertices.

Together with each tree \(T_j\) we will define a 
tuple \((B_j, A_j, X_j, A_j^{\out}, D_j,  T_j^0)\),
where \(T_j^0 \subseteq T_j \subseteq B_j \subseteq G\),
\(A_j^{\out} \subseteq A_j \subseteq V(F_{j-1})\),
\(X_j \subseteq \{1, \ldots, j-1\}\),
and \(D_j \subseteq V(T_j^0)\). 

Let \(j \ge 1\) be an integer, and  
suppose that we have already defined the trees \(T_1,\dots,T_{j-1}\), 
and thus the forest \(F_{j-1}\) is defined.
If \(V(F_{j-1}) = V(G)\), then terminate the construction
with a sequence of length \(j-1\).
Otherwise, let \(B_j\) be any
non-trivial \(F_{j-1}\)-bridge.
Let \(A_j\) denote the attachment-set of
\(B_j\), and let \(X_j\) denote the set of all
\(i \in \{1, \ldots, j-1\}\) such that \(B_j\)
has an attachment-vertex in \(V(T_i)\). By invariant (i),
we have \(|X_j| \le 2\). 

Let \(J := B_j \cup \bigcup_{i \in X_j} T_i\).
Define \(A_j^{\out}\) to be the set of attachment-vertices \(x \in A_j\) that lie on the boundary of the outer-face of \(J\).
By \cref{Invariant}, the set \(A_j^{\out}\) contains one or two vertices of each \(T_i\)
with \(i \in X_j\), so \(|A_j^{\out}| \le 4\) and
if \(A_j \neq \emptyset\), then \(A_j^{\out} \neq \emptyset\).

Define a non-empty subset
\(D_j \subseteq V(B_j - A_j)\) as follows.
If \(A_j = \emptyset\), then let \(D_j\)
be a set consisting of one arbitrary vertex
on the boundary of the outer-face of \(B_j\).
If \(A_j \neq \emptyset\) (and thus \(A_j^{\out} \neq \emptyset\)),
then let \(D_j\) denote the set of all vertices
\(x \in V(B_j - A_j)\) such that
\(\dist_G(x, A_j^{\out}) \le \tau\).
In \(B_j\), every vertex from \(A_j^{\out}\)
has a neighbour in \(V(B_j - A_j)\)
and such a neighbour belongs to \(D_j\)
(recall that \(\tau \ge 1\)). Hence, \(D_j\) is non-empty.

Let \(T_j^0\) be a tree in \(B_j - A_j\) that
contains all vertices in \(D_j\) and has the
smallest possible number of edges, and let
\(T_j\) be a tree obtained from \(T_j^0\)
by attaching each vertex
\(x \in N_{B_j - A_j}(V(T_j^0))\)
with
any edge of \(G\) between \(x\) and
\(V(T_j^0)\).
See \cref{FigChordalPartition}.
\begin{figure}
     \centering
     \includegraphics{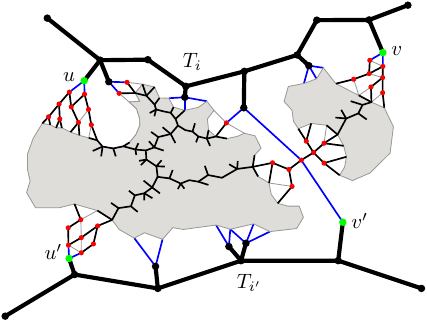}
        \caption{A possible situation in the construction of the tree \(T_j\) for \(\tau=3\), where \(B_j\)
        has attachment-vertices on the trees \(T_i\) and
        \(T_{i'}\).
        Here, \(A_j^{\out} = \{u, v, u', v'\}\), the vertices from \(D_j\) are red, and the tree \(T_j\) consists of the red and black edges .}
        \label{FigChordalPartition}
\end{figure}

We now verify that for such \(T_j\), the invariant is satisfied.
Let \(B\) be a non-trivial \(F_j\)-bridge.
If \(B\) has no attachment-vertex on \(T_j\), then \(B\) is an
\(F_{j-1}\)-bridge distinct from
\(B_j\), and the invariant is satisfied by induction.
Hence, we assume that \(B\) has an attachment-vertex
on \(T_j\). Since every component
of \(G - V(F_j)\) which is adjacent in \(G\)
to \(T_j\) is a component of
\((B_j - A_j) - V(T_j)\),
the \(F_j\)-bridge \(B\) is contained
in \(B_j\).
Note that every attachment-vertex of \(B\)
that is not on \(T_j\) lies on a tree
\(T_i\) with \(i < j\), and thus, is an
attachment-vertex of \(B_j\).

Suppose first that \(X_j = \emptyset\).
Then \(B\) has all its attachment-vertices on \(T_j\).
The only vertex \(x\) of \(D_j\) is on the boundary of the outer-face of \(B_j\).
The tree \(T_j\) is a star with a centre at \(x\) and whose leaves are the neighbours of \(x\)
in \(G\). Therefore \(B\) can intersect \(T_j\) only in its leaves, and \(T_j\)
is in the closure of the outer-face of \(B\), so the invariant holds.

Now suppose that \(X_j \neq \emptyset\),
and let \(i \in X_j\).
By induction, the tree \(T_i\) intersects the boundary of the
outer-face of \(J\), and we can write the facial 
walk along the outer-face of \(J\)
as \(W = v_0 e_0 v_1 e_1 \cdots e_{n-1} v_n\) where \(v_0 = v_n\) and for some
\(s \in \{0, \ldots, n-1\}\) we have \(V(W) \cap V(T_i) = \{v_0, \ldots, v_s\}\)
and \(E(W) \cap E(T_i) = \{e_0, \ldots, e_{s-1}\}\).
We have \(A_j^{\out} \cap V(T_i) = \{v_0, v_s\}\) (possibly \(v_0 = v_s\)).
Each of the edges \(e_{n-1} = v_0 v_{n-1}\) and \(e_s = v_s v_{s+1}\)
has an end in \(V(T_i)\) but does not
belong to \(T_i\). Hence both of
these edges are edges of \(B_j\), so
\(\{v_{n-1}, v_{s+1}\} \subseteq V(B_j - A_j)\).
Since \(\{v_0, v_s\} \subseteq A_j^{\out}\), we have
\(\{v_{n-1}, v_{s+1}\} \subseteq D_j \subseteq V(T_j^0)\).

We now show that \(B\) has attachment-vertices on at most two components of \(F_j\).
Every attachment-vertex of \(B\) that is not in $V(T_j)$, is an attachment-vertex of \(B_j\).
Hence, if \(X_j = \{i\}\), then \(B\)
can only have attachment-vertices
on \(T_j\) and \(T_i\).
Therefore, suppose that \(X_j = \{i, i'\}\) with \(i' \neq i\).
We need to show that \(B\) has an attachment-vertex on at most one of the trees \(T_i\) and \(T_{i'}\).
By our invariant, \(T_{i'}\) is in the closure of the outer-face of \(T_i \cup B\).
Therefore, \(T_{i'}\) intersects \(\{v_{s+2}, \ldots, v_{n-2}\}\).
Since the vertices \(v_0, \ldots, v_s\) belong to \(T_i\),
the path \(v_{n-1} T_j^0 v_{s+1}\) separates the trees \(T_i\) and \(T_{i'}\)
in \(J\). Since \(T_j^0 \subseteq T_j\), every component of \((B_j - A_j) - V(T_j)\)
is adjacent to at most one of the trees \(T_i\) and \(T_{i'}\).
Since \(B\) is a non-trivial \(F_j\)-bridge contained in \(B_j\),
this means that \(B\) has attachment-vertices in at most one of the trees \(T_i\) and \(T_{i'}\),
as required. Hence, \(B\) has attachment-vertices
on at most two components of \(F_j\).

Assume without loss of generality that
every attachment-vertex of $B$ that does not lie on $T_j$ belongs to $T_i$.

Next, we show that every attachment-vertex of
\(B\) is a leaf of \(T_i\) or \(T_j\).
Every attachment-vertex of \(B\) on \(T_i\)
is an attachment-vertex of \(B_j\), and by 
induction, is a leaf of \(T_i\).
Since the tree \(T_j\) was obtained from
\(T_j^0\) by attaching all adjacent 
vertices in $B_j-A_j$ as leaves, all attachment-vertices of
\(B\) on \(T_j\) belong to
\(V(T_j) \setminus V(T_j^0)\), and therefore are leaves of 
\(T_j\).

Finally, we argue that the tree
\(T_i\) is in the
closure of the outer-face of
\(T_j \cup B\), and the tree
\(T_j\) is in the
closure of the outer-face of
\(T_i \cup B\). This will imply
that the trees \(T_i\) and \(T_j\)
are in the closure of the outer-face of
\(B\), which will complete the proof
of the invariant.
By induction, the tree \(T_i\)
is in the closure of the outer-face of
\(B_j\). Since
\(T_j \cup B \subseteq B_j\), the tree
\(T_i\) is in the closure of the
outer-face of \(T_j \cup B\).
The vertex \(v_{s+1} \in V(T_j^0)\)
is on the boundary of the outer-face of
\(J\). Since \(T_j^0\) and
\(T_i \cup B\) are disjoint subgraphs of
\(J\), the tree \(T_j^0\) is on the
outer-face of \(T_i \cup B\).
The tree \(T_j\) is obtained from
\(T_j^0\) by attaching leaves, so
it is contained in the closure
of the outer-face of \(T_i \cup B\).
This completes the proof of the invariant
and the inductive construction.

From now on, we assume that \(\TT = \{T_1, \ldots, T_m\}\) is a fixed partition obtained by our construction for \(\tau = 37\), 
with a tuple \((B_j, A_j, X_j, A_j^{\out}, D_j, T_j^0)\) associated to each tree \(T_j\).

For later reference,
we make explicit some implications
of the inductive construction.

\begin{claim}\label{VTj0Separates}
    For each \(j \in \{1, \ldots, m\}\),
    if \(X_j = \{i, i'\}\) with
    \(i \neq i'\),
    then the tree \(T_j^0\) separates
    the trees \(T_i\) and \(T_{i'}\)
    in the graph
    \(T_i \cup T_{i'} \cup B_j\).
    Consequently, 
    the tree \(T_j^0\) separates the sets
    \(A_j \cap V(T_i)\) and \(A_j \cap V(T_{i'})\) in the graph \(B_j\).
\end{claim}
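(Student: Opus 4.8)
The plan is to observe that the first statement was, in effect, already proved during the inductive construction of $T_j$, and that the ``consequently'' part then follows by a short argument passing from separation in $J$ to separation in the subgraph $B_j$.

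First I would set up notation exactly as in the construction for the case $X_j = \{i, i'\}$: the graph $J := B_j \cup \bigcup_{i \in X_j} T_i$ equals $T_i \cup T_{i'} \cup B_j$, and the facial walk along the outer-face of $J$ is written as $W = v_0 e_0 v_1 \cdots e_{n-1} v_n$ with $v_0 = v_n$, where $V(W) \cap V(T_i) = \{v_0, \ldots, v_s\}$, $E(W)\cap E(T_i) = \{e_0,\ldots,e_{s-1}\}$, and $A_j^{\out}\cap V(T_i) = \{v_0, v_s\}$. As shown there, $\{v_{s+1}, v_{n-1}\} \subseteq D_j \subseteq V(T_j^0)$, and the $v_{n-1}$--$v_{s+1}$ path inside $T_j^0$ separates $T_i$ from $T_{i'}$ in $J$. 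Since that path lies entirely in $T_j^0$, and $T_j^0 \subseteq T_j$ is vertex-disjoint from $T_i$ and from $T_{i'}$ (the trees $T_1, \ldots, T_m$ being pairwise vertex-disjoint), I conclude that $V(T_j^0)$ itself separates $V(T_i)$ and $V(T_{i'})$ in $J$, which is the first assertion.

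For the consequence, I would use that $B_j$ is a subgraph of $J$, that $A_j \cap V(T_i) \subseteq V(T_i)$ and $A_j \cap V(T_{i'}) \subseteq V(T_{i'})$, and that $A_j \subseteq V(F_{j-1})$ is disjoint from $V(T_j) \supseteq V(T_j^0)$. Hence any $(A_j \cap V(T_i))$--$(A_j \cap V(T_{i'}))$ path in $B_j$ is a $V(T_i)$--$V(T_{i'})$ path in $J$ having no endpoint in $V(T_j^0)$, so it must meet $V(T_j^0)$; since $T_j^0 \subseteq B_j$, this establishes the claimed separation inside $B_j$.

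I do not anticipate a genuine obstacle: the argument is pure bookkeeping built on the construction. The only points needing care are (a) upgrading ``separated by a path contained in $T_j^0$'' to ``separated by $V(T_j^0)$'', which relies on the pairwise disjointness of the trees $T_1,\ldots,T_m$; and (b) checking that the two attachment-sets and $V(T_j^0)$ all lie in $V(B_j)$ and are suitably disjoint, so that separation in $J$ restricts to separation in the subgraph $B_j$.
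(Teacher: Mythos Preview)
Your proposal is correct and follows exactly the paper's approach: the paper states this claim without proof, explicitly as a restatement of what was already established during the inductive construction (the path \(v_{n-1}T_j^0 v_{s+1}\) separating \(T_i\) from \(T_{i'}\) in \(J\)). Your write-up simply makes explicit the short bookkeeping (upgrading from the path to \(V(T_j^0)\) via disjointness of the trees, and restricting the separation from \(J\) to its subgraph \(B_j\)) that the paper leaves to the reader.
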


\begin{claim}\label{NoAttachmentOnTj0}
    For each \(j \in \{1, \ldots, m\}\),
    no non-trivial \(F_j\)-bridge has
    an attachment-vertex on \(T_j^0\).
\end{claim}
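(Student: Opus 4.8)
The plan is to argue by contradiction, exploiting the single fact that $T_j$ was obtained from $T_j^0$ \emph{precisely} by attaching, as new leaves, every vertex of $B_j - A_j$ adjacent to $V(T_j^0)$. So suppose some non-trivial $F_j$-bridge $B$ has an attachment-vertex $x \in V(T_j^0)$. Since $V(T_j^0) \subseteq V(T_j)$, the bridge $B$ has an attachment-vertex on $T_j$, and thus, exactly as observed in the verification of the invariant, $B$ is contained in $B_j$ and its ``interior'' $C := B - (V(B) \cap V(F_j))$ — the underlying component of $G - V(F_j)$ — is a component of $(B_j - A_j) - V(T_j)$. In particular $V(C) \subseteq V(B_j - A_j) \setminus V(T_j)$.

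Next I would pick a neighbour $y \in V(C)$ of the attachment-vertex $x$, which exists because $x$ is an attachment-vertex of the non-trivial bridge $B$. By the previous paragraph $y \in V(B_j - A_j) \setminus V(T_j)$, and since $T_j^0 \subseteq T_j$ this gives $y \notin V(T_j^0)$. Also $x \in V(T_j^0) \subseteq V(B_j - A_j)$; identifying $V(B_j - A_j)$ with the vertex set of the component $C'$ of $G - V(F_{j-1})$ that gave rise to $B_j$ (so that $C'$ is an induced subgraph of $G$ with both $x$ and $y$ among its vertices), the edge $xy$ is an edge of $C'$, hence an edge of $B_j$, and neither end lies in $A_j$, so $xy$ is an edge of $B_j - A_j$. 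Therefore $y \in N_{B_j - A_j}(V(T_j^0))$.

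Finally, by the construction of $T_j$ every vertex in $N_{B_j - A_j}(V(T_j^0))$ is attached to $T_j^0$ as a leaf, so $y \in V(T_j)$, contradicting $y \notin V(T_j)$. This contradiction completes the proof.

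The only real care needed is the bookkeeping in the second paragraph: confirming the identification $V(B_j - A_j) = V(C')$ and that the edge $xy$ genuinely survives into $B_j - A_j$ (it does, being an edge of the induced subgraph $C'$ with both ends outside the attachment-set $A_j$). This is the one place a reader might want a sentence of justification; everything else is immediate from the construction. Note also that the degenerate case $X_j = \varnothing$, where $T_j^0$ is a single vertex $x$ and $T_j$ is the star centred at $x$, needs no separate argument — the reasoning above applies verbatim.
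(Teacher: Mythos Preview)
Your proof is correct and is precisely the argument the paper has in mind: the claim is listed in the paper as one of several ``implications of the inductive construction'' stated without proof, and your write-up just unpacks the construction of $T_j$ from $T_j^0$ in the obvious way. The one sentence of justification you flag (that $B_j - A_j$ is the induced component $C'$ of $G - V(F_{j-1})$, so the edge $xy$ survives) is exactly the right thing to note, and nothing more is needed.
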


\begin{claim}\label{TiAdjacentToTj}
    For any \(j \in \{1, \ldots, m\}\)
    and \(i \in X_j\),
    the graph \(B_j\) contains an edge
    between \(A_j^{\out} \cap V(T_i)\)
    and \(D_j\). In particular,
    \(T_i\) is adjacent to \(T_j\)
    in \(G\).
\end{claim}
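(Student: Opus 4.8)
The plan is to exhibit one explicit edge and check that both its endpoints land where the claim wants them. Fix $j \in \{1,\dots,m\}$ and $i \in X_j$. Since $i \in X_j$, the bridge $B_j$ has an attachment-vertex on $T_i$, so $B_j$ has at least one attachment-vertex and \cref{Invariant} applies to $B_j$ (as already observed in the construction): the set $A_j^{\out}$ contains at least one vertex of $T_i$. Fix $x \in A_j^{\out} \cap V(T_i)$. In particular $A_j \neq \emptyset$, so $D_j$ is the set of vertices $z \in V(B_j - A_j)$ with $\dist_G(z, A_j^{\out}) \le \tau$.

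The only real step is to produce a neighbour of $x$ inside $D_j$. Recall that $B_j$, being a non-trivial $F_{j-1}$-bridge, is obtained from a component $C$ of $G - V(F_{j-1})$ by adjoining all of $N_G(V(C))$ together with every $G$-edge between $V(C)$ and $N_G(V(C))$; hence $V(B_j - A_j) = V(C)$ and $A_j = N_G(V(C))$. Since $x \in A_j$, the vertex $x$ has a neighbour $y \in V(C) = V(B_j - A_j)$, and the edge $xy$ lies in $B_j$. As $x \in A_j^{\out}$ and $\tau \ge 1$, we get $\dist_G(y, A_j^{\out}) \le 1 \le \tau$, so $y \in D_j$. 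Thus $xy \in E(B_j)$ has one end $x \in A_j^{\out} \cap V(T_i)$ and the other end $y \in D_j$ (and $y \notin A_j$, so these are genuinely distinct sets), which is the first assertion. For the ``in particular'' part, $D_j \subseteq V(T_j^0) \subseteq V(T_j)$ by construction, so $xy$ is a $G$-edge with one end in $V(T_i)$ and the other in $V(T_j)$; hence $T_i$ is adjacent to $T_j$ in $G$.

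I do not expect a genuine obstacle here: the statement is a direct unwinding of the definitions of $A_j^{\out}$, $D_j$, and $T_j$ together with \cref{Invariant}. The one point worth double-checking is that the neighbour $y$ of $x$ can be taken inside $V(B_j - A_j)$ rather than inside $A_j$; this holds because $B_j - A_j$ is exactly the component $C$ from which $B_j$ was formed and $x \in A_j = N_G(V(C))$ forces $x$ to have a neighbour in $C$. This is in fact precisely the observation already used in the construction to show that $D_j$ is non-empty, so I would simply quote it.
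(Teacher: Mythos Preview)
Your proof is correct and is essentially the same argument the paper uses. The paper does not give a separate proof of this claim; it is listed as an explicit implication of the construction, and the relevant observation (``In \(B_j\), every vertex from \(A_j^{\out}\) has a neighbour in \(V(B_j - A_j)\) and such a neighbour belongs to \(D_j\)'') already appears verbatim in the construction when showing \(D_j \neq \emptyset\), exactly as you note.
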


We will also use the following simple
properties of our construction.
\begin{claim}\label{FjBridgeInBj}
    For \(j \in \{1, \ldots, m\}\),
    an \(F_j\)-bridge \(B\)
    has an attachment-vertex on \(T_j\)
    if and only if \(B \subseteq B_j\).
\end{claim}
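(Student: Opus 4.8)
The statement is an equivalence, so the plan is to prove the two implications separately, after first pinning down the structure of $B_j$. Write $C_j := B_j - A_j$; by construction this is the subgraph induced by a component of $G - V(F_{j-1})$, so $C_j$ is connected, $A_j = N_G(V(C_j)) \subseteq V(F_{j-1})$, $V(B_j) = V(C_j) \cup A_j$, and $E(B_j)$ is exactly the set of edges of $G$ with at least one end in $V(C_j)$ -- in particular $B_j$ contains no edge with both ends in $A_j$. Also $V(T_j) \subseteq V(C_j)$ by construction. Combining these with $V(F_j) = V(F_{j-1}) \cup V(T_j)$, with $A_j \cap V(C_j) = \emptyset$, and with $V(C_j) \cap V(F_{j-1}) = \emptyset$, I would first record the two identities $V(F_j) \cap V(B_j) = A_j \cup V(T_j)$ (a disjoint union) and $V(B_j) \setminus V(F_j) = V(C_j) \setminus V(T_j)$, which carry most of the argument.

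For the implication ``$B \subseteq B_j$ $\Rightarrow$ $B$ has an attachment-vertex on $T_j$'', I would split on whether $B$ is trivial. If $B$ is trivial, say $E(B) = \{xy\}$ with $x,y \in V(F_j)$, then $x,y \in V(F_j) \cap V(B_j) = A_j \cup V(T_j)$; since $B_j$ has no edge with both ends in $A_j$, the ends $x,y$ cannot both lie in $A_j$, so one of them lies in $V(T_j)$, and since $x$ and $y$ are the attachment-vertices of $B$, this gives an attachment-vertex on $T_j$. If $B$ is non-trivial, let $A$ be its attachment-set and $C := B - A$, a component of $G - V(F_j)$; then $\emptyset \ne V(C) \subseteq V(B_j) \setminus V(F_j) = V(C_j) \setminus V(T_j)$, so $V(C) \subsetneq V(C_j)$. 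Connectivity of $C_j$ yields an edge $cv \in E(G)$ with $c \in V(C)$ and $v \in V(C_j) \setminus V(C)$; this edge lies in $B$, so $v$ is an attachment-vertex of $B$. Finally, since $v \notin V(C)$ is adjacent to $c \in V(C)$ and $C$ is a whole component of $G - V(F_j)$, necessarily $v \in V(F_j)$; as also $v \in V(C_j)$ and $V(F_j) \cap V(C_j) = V(T_j)$, we get $v \in V(T_j)$, as required.

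For the converse, suppose $B$ is an $F_j$-bridge with an attachment-vertex $t \in V(T_j) \subseteq V(C_j)$. If $B$ is trivial, write $E(B) = \{xt\}$ with $x \in V(F_j)$; then $x$ is a neighbour of $t \in V(C_j)$, hence $x \in V(C_j) \cup N_G(V(C_j)) = V(B_j)$, and the edge $xt$ has an end in $V(C_j)$, so $xt \in E(B_j)$ and $B \subseteq B_j$. If $B$ is non-trivial, let $A$ be its attachment-set and $C := B - A$, again a component of $G - V(F_j)$, and pick $c \in V(C)$ adjacent to $t$. Since $c \notin V(F_{j-1})$ and $C_j$ is a component of $G - V(F_{j-1})$, we have $c \in V(C_j)$, and propagating this along paths of the connected graph $C$ -- whose vertices all avoid $V(F_{j-1})$ -- gives $V(C) \subseteq V(C_j)$. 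Then $N_G(V(C)) \subseteq V(C_j) \cup A_j = V(B_j)$, so $V(B) \subseteq V(B_j)$, and every edge of $B$ has an end in $V(C) \subseteq V(C_j)$, hence lies in $E(B_j)$; therefore $B \subseteq B_j$.

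There is no genuine obstacle here -- the claim is entirely a matter of unwinding the definition of $F$-bridges -- but two points need care. In the non-trivial case of the first implication it is tempting to conclude only $v \in V(C_j)$; one must invoke that $C$ is a \emph{whole} component of $G - V(F_j)$ (not merely a connected subgraph of $C_j$) to push $v$ all the way into $V(T_j)$ rather than into $V(C_j) \setminus V(T_j)$. Secondly, the fact that $E(B_j)$ omits the edges internal to $A_j$ is precisely what prevents a trivial $F_j$-bridge joining two attachment-vertices of $B_j$ from being a subgraph of $B_j$; without that observation the ``only if'' direction would be false.
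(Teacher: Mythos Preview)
Your proof is correct and follows essentially the same approach as the paper's own proof: both split on whether the $F_j$-bridge is trivial and use, in the non-trivial case, the connectedness of $B_j - A_j$ together with the fact that the attachment-set $A_j$ is independent in $B_j$. Your write-up is more explicit (recording the identities $V(F_j)\cap V(B_j)=A_j\cup V(T_j)$ and $V(B_j)\setminus V(F_j)=V(C_j)\setminus V(T_j)$ and spelling out why the neighbour $v$ must lie in $V(F_j)$), but the underlying argument is the same.
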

\begin{proof}
    Suppose that \(B\) has an attachment-vertex
    on \(T_j\). If \(B\) is trivial,
    then it consists of one edge with
    an end in the component
    \(B_j - A_j\) of \(G - V(F_{j-1})\),
    and hence \(B \subseteq B_j\).
    If \(B\) is non-trivial, then
    it is obtained by adding attachment-vertices
    to a component of \(G - V(F_{j})\)
    adjacent to \(T_j\).
    That component is contained in
    \(B_j - A_j\), so again
    \(B \subseteq B_j\). 

    Now suppose that \(B \subseteq B_j\).
    If \(B\) is trivial, then its two
    attachment-vertices belong to
    \(A_j \cup V(T_j)\).
    Since \(A_j\) is an independent set in
    \(B_j\), at least one attachment-vertex of
    \(B\) is on \(T_j\).
    If \(B\) is non-trivial, then
    since \(B \subseteq B_j\), it
    is obtained from a component
    of \((B_j - A_j) - V(T_j)\) by
    adding all vertices adjacent to
    it as attachment-vertices.
    Since \(B_j - A_j\) is connected,
    at least one of these attachment-vertices will
    lie on \(T_j\).
\end{proof}

\begin{claim}\label{FiBridgeIsBj}
    For \(j \in \{1, \ldots, m\}\),
    every non-trivial \(F_j\)-bridge \(B\)
    is equal to \(B_k\) for some
    \(k \in \{j+1, \ldots, m\}\).
\end{claim}
\begin{proof}
    The vertex-sets of the
    trees \(T_1, \ldots, T_m\)
    partition \(V(G)\), so
    there exists the least
    \(k \in \{j+1, \ldots, m\}\)
    that contains a non-attachment-vertex of
    \(B\). Hence, \(B\) intersects
    \(F_{k-1}\) only in its attachment-vertices,
    and they all belong to \(F_j\),
    so \(B\) is an \(F_{k-1}\)-bridge
    that intersects \(T_k\), and
    therefore \(B = B_k\).
\end{proof}

Although we do not use this in our proof, we now show that \(\TT\) is a chordal partition.

\begin{claim}\label{ChordalPartition}
    \(\TT\) is a chordal partition with
    \(\tw(G/\TT) \le 2\).
\end{claim}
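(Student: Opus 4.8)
I want to show that $\TT = \{T_1, \ldots, T_m\}$ is a connected partition of $G$ whose quotient $G/\TT$ is chordal with treewidth at most $2$. Connectedness is immediate since each $T_i$ is a tree, so the content is entirely about the quotient. The natural approach is to build an explicit tree-decomposition of $G/\TT$ of width $2$ whose underlying tree has $V(G/\TT)$ as its node set, with each bag consisting of the corresponding tree $T_j$ together with the (at most two) trees that it attaches to; this simultaneously shows both chordality and the treewidth bound, since a graph has a tree-decomposition of this special ``vertex-indexed, width $\le k$'' shape precisely when it is $k$-tree-like, and in particular chordal when we can realise it as a subgraph of a $k$-tree.

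\textbf{Key steps.} First, for each $j \in \{2, \ldots, m\}$ record that $|X_j| \le 2$ by invariant (i), and that whenever $X_j = \{i, i'\}$ with $i \ne i'$, the trees $T_i$ and $T_{i'}$ are adjacent in $G$ — this follows from \cref{TiAdjacentToTj} applied at the earlier step at which the later of $T_i, T_{i'}$ was created, or more directly from the invariant together with \cref{FiBridgeIsBj}: $B_j$ is a non-trivial $F_{j-1}$-bridge, and since it has attachment vertices on both $T_i$ and $T_{i'}$, one of these two trees, say $T_{i'}$ with $i' > i$, was created as $T_{i'}$ from a bridge $B_{i'} = $ (some $F_{i'-1}$-bridge) having an attachment vertex on $T_i$, so $i \in X_{i'}$ and \cref{TiAdjacentToTj} gives the edge. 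Second, describe the edges of $G/\TT$: I claim $T_i$ is adjacent to $T_j$ in $G/\TT$ (say $i < j$) only if $i \in X_j$ or $j \in X_i$ — indeed an edge of $G$ between $V(T_i)$ and $V(T_j)$ lies in some $F$-bridge, and chasing which bridge shows the later tree was carved out of a bridge attaching to the earlier one. Third, define the tree $S$ on node set $\{1, \ldots, m\}$ by making $j$ (for $j \ge 2$) a child of $\min X_j$ if $X_j \ne \emptyset$, and (say) of $1$ — or of whichever earlier tree — if $X_j = \emptyset$; more carefully, when $X_j = \emptyset$ the tree $T_j$ has all its $G$-neighbours inside $B_j - A_j$, hence inside trees created later, so $T_j$ is $G/\TT$-adjacent only to later trees and can be attached arbitrarily, e.g.\ make it a child of $j-1$. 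Set the bag $W_j := \{T_i : i \in X_j\} \cup \{T_j\}$, so $|W_j| \le 3$. Fourth, verify the tree-decomposition axioms: each edge $T_iT_j$ of $G/\TT$ with $i < j$ has both endpoints in $W_j$ (since then $i \in X_j$ by the second step — here I need the case $X_j = \emptyset$ to genuinely have no $G/\TT$-neighbours among $\{1,\dots,j-1\}$), and the bags containing a fixed tree $T_i$ form a subtree of $S$: $T_i$ lies in $W_i$ and in $W_j$ for those $j$ with $i \in X_j$, and each such $j$ is a child of $i$ in $S$ (when $i = \min X_j$) or a child of the other element $i'$ of $X_j$, in which case $i \in X_{i'}$-adjacency and the invariant structure force $i'$ to also be on the path — this is the delicate bookkeeping point. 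To keep it clean I would instead route $j$ to be a child of $\max X_j$ when $|X_j| = 2$, and argue that if $X_j = \{i, i'\}$ with $i < i'$ then $i \in X_{i'}$ (shown above), so the node $i$ appears in the bags $W_i, W_{i'}, W_j$ which are consecutive along $S$. Finally, conclude: a graph admitting a tree-decomposition of width $\le 2$ is a partial $2$-tree, equivalently is chordal with clique number $\le 3$ — but in fact the cleanest statement is just that $\tw(G/\TT) \le 2$ and $G/\TT$ is chordal because it is a subgraph of the $2$-tree obtained by making every bag a clique, which is chordal.

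\textbf{Main obstacle.} The one genuinely fiddly point is the interval (connected-subtree) condition in the tree-decomposition: I must choose the parent pointers so that for each $i$, all $j$ with $T_i \in W_j$ are consecutive with $W_i$ in $S$. The resolution is the observation that $X_j = \{i, i'\}$ with $i < i'$ forces $i \in X_{i'}$, which lets me always route $j$ through $i'$ and have $i' $'s bag separate $i$'s bag from $j$'s bag; establishing ``$i \in X_{i'}$'' rigorously from the invariant and \cref{FiBridgeIsBj,TiAdjacentToTj} is where the real work sits. Everything else — connectedness, the width count $|W_j| \le 3$, and that $G/\TT$ is a subgraph of the filled-in tree-decomposition — is routine once that is in hand.
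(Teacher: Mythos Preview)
Your proposal correctly identifies and proves the two key facts: (a) if $T_i$ and $T_j$ are adjacent in $G/\TT$ with $i<j$ then $i\in X_j$, and (b) if $X_j=\{i,i'\}$ with $i<i'$ then $i\in X_{i'}$ (so $T_i$ and $T_{i'}$ are adjacent by \cref{TiAdjacentToTj}). These are exactly the facts the paper proves, and your argument for (b) via $B_j\subseteq B_{i'}$ is the paper's argument.

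Where you diverge is in the packaging. The paper simply observes that (a) and (b) together say that in the ordering $T_1,\ldots,T_m$ each vertex of $G/\TT$ has at most two earlier neighbours and they form a clique; this is precisely the statement that $T_m,T_{m-1},\ldots,T_1$ is a perfect elimination ordering with simplicial vertices of degree at most $2$, which simultaneously gives chordality and $\tw\le 2$. You instead build an explicit tree-decomposition, which is more work (the interval condition you flag as the ``main obstacle'' is extra bookkeeping that the elimination-ordering phrasing sidesteps entirely).

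More importantly, your final deduction of chordality is wrong as stated. You write that $G/\TT$ ``is chordal because it is a subgraph of the $2$-tree obtained by making every bag a clique''; but being a subgraph of a chordal graph does not make a graph chordal (take $C_4\subseteq K_4$). Likewise, ``treewidth $\le 2$'' is not equivalent to ``chordal with clique number $\le 3$''. The correct conclusion from your setup is that each bag $W_j=\{T_j\}\cup\{T_i:i\in X_j\}$ is \emph{already} a clique in $G/\TT$ (this follows from \cref{TiAdjacentToTj} together with your fact (b)), so your tree-decomposition is a clique tree, and having a clique tree characterises chordal graphs. Equivalently, filling in the bags adds no new edges, so $G/\TT$ equals the chordal fill-in rather than merely being a subgraph of it. Once you say this, your argument is complete---but at that point you have effectively rederived the elimination-ordering argument, which is why the paper's route is cleaner.
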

\begin{proof}
Clearly $\TT$ is a connected partition since each part has a spanning subtree.
    Let \(j \in \{1, \ldots, m\}\).
    If \(T_j\) is adjacent in \(G\)
    to a tree \(T_i\) with \(i < j\),
    then, since
    \(T_j \subseteq B_j - A_j\),
    the \(F_{j-1}\)-bridge \(B_j\) has
    an attachment-vertex on \(T_i\), that is,
    \(i \in X_j\).
    Since \(|X_j| \le 2\), the tree
    \(T_j\) can be adjacent to at most
    two of the trees
    \(T_1, \ldots, T_{j-1}\).
    It remains to show that
    if \(T_j\) is adjacent to
    two trees \(T_i\) and \(T_{i'}\)
    with \(i < i' < j\), then
    \(T_i\) is adjacent to \(T_{i'}\)
    in \(G\).
    Since \(T_j\) is adjacent to \(T_{i'}\),
    we have \(i' \in X_j\), and therefore,
    by \cref{FjBridgeInBj}, we have
    \(B_j \subseteq B_{i'}\).
    The attachment-vertices of \(B_j\) on \(T_i\)
    are thus attachment-vertices of \(B_{i'}\),
    so \(i \in X_{i'}\).
    By \cref{TiAdjacentToTj},
    \(T_i\) is adjacent to \(T_{i'}\).
\end{proof}

The following property of our chordal partition
will play a key role in the proof.
\begin{claim}\label{BBkPathAkout}
    Let \(j, k \in \{1, \ldots, m\}\) be such that \(B_k\) is an \(F_j\)-bridge
    contained in \(B_j\), let
    \(B\) be a (possibly trivial) \(F_j\)-bridge contained in \(B_j\)
    that is distinct from \(B_k\)
    and has an attachment-vertex in \(A_j\), and let \(Q\) be a \(V(B)\)--\(V(B_k)\) path in 
    \(B_j - V(T_j^0)\).
    Then the end of \(Q\) in \(V(B_k)\) belongs to \(A_k^{\out}\).  
\end{claim}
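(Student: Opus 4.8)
The plan is to argue by contradiction, supposing the end $w$ of $Q$ in $V(B_k)$ is not in $A_k^{\out}$, and derive a contradiction with the separation property recorded in Claim~\ref{VTj0Separates} together with the construction of $A_k^{\out}$ and $D_k$. First I would set up the picture: since $B_k \subseteq B_j$ and $B_k$ is an $F_j$-bridge, its attachment-set $A_k$ is contained in $V(F_j)$, and because $B_k$ has an attachment-vertex on $T_j$ (this is where $k > j$ forces $B_k \subseteq B_j$ via Claim~\ref{FjBridgeInBj}), we know $j \in X_k$. By Claim~\ref{Invariant} applied at step $j$, the set $A_k^{\out}$ consists of the attachment-vertices of $B_k$ lying on the boundary of the outer-face of $B_k \cup \bigcup_{i \in X_k} T_i$; these are, on each $T_i$ with $i \in X_k$, the one or two ``terminal'' attachment-vertices, and the remaining attachment-vertices of $B_k$ are separated from the outer-face by the surrounding trees.

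The key step is the following. The path $Q$ lies in $B_j - V(T_j^0)$, starts at a vertex of $B$ (which has an attachment-vertex in $A_j$, hence meets $F_j$), and ends at $w \in V(B_k)$. Consider the first vertex of $Q$ (from the $B$-end) that lies in $V(B_k)$; by replacing $Q$ with its initial segment I may assume $Q$ meets $B_k$ only at $w$, and meets $T_j$ (if at all) only among its interior vertices that are attachment-vertices of $B$ or $B_k$. Now I claim $w$ must lie on the outer-face of $B_k \cup \bigcup_{i\in X_k} T_i$: indeed, $Q$ provides a route from $B$ to $w$ that avoids $T_j^0$, and the trees $T_i$ for $i \in X_k \setminus \{j\}$ together with $T_j$ bound the region in which the non-outer attachment-vertices of $B_k$ sit; a careful planarity argument (using invariant~(iii) at step $j$, i.e.\ that the trees containing attachments of $B_k$ each lie in the closure of the outer-face of the union of $B_k$ with the other such tree) shows that any vertex of $B_k$ reachable from outside $B_k$ within $B_j - V(T_j^0)$ must be a boundary vertex of the outer-face of that union. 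If additionally $w \in V(T_j)$, then since the attachments of $B_k$ on $T_j$ are leaves of $T_j$ adjacent to $T_j^0$ and the path $v_{n-1} T_j^0 v_{s+1}$ from the construction separates the other trees from the outer region across $T_j^0$, $w$ is one of the two terminal attachment-vertices on $T_j$, hence $w \in A_k^{\out}$. If instead $w$ lies on some $T_i$ with $i \in X_k$, $i \ne j$, the same terminal-vertex reasoning via Claim~\ref{Invariant} (the facial walk along the outer-face of $B_k \cup \bigcup T_i$ visits $T_i$ in a single consecutive subwalk whose endpoints are the attachments in $A_k^{\out} \cap V(T_i)$) forces $w$ to be terminal, so again $w \in A_k^{\out}$.

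The main obstacle I anticipate is the planarity/topological bookkeeping in the key step: cleanly justifying that a path in $B_j - V(T_j^0)$ from another bridge $B$ (touching $A_j$) into $B_k$ can only enter $B_k$ through a boundary vertex of the outer-face of $B_k \cup \bigcup_{i \in X_k} T_i$. One has to be careful that $B$ itself, while sitting inside $B_j$, does not ``wrap around'' $B_k$ in a way that lets $Q$ sneak into an interior attachment; this is precisely ruled out by the fact that $B$ has an attachment in $A_j \subseteq V(F_j)$ and $F_j$ together with $T_j^0$ separates the faces, combined with Claim~\ref{VTj0Separates} when $X_k$ has two elements besides $j$ — but spelling this out rigorously with the facial walks is the delicate part. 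Everything else — the reductions on $Q$, the identification of $A_k^{\out}$ with terminal vertices of facial subwalks — is routine once the separation is in place.
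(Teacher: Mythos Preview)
Your high-level intuition is right --- the end $w$ must lie on the boundary of the outer-face of $J_k = B_k \cup \bigcup_{i \in X_k} T_i$ --- but there is a genuine gap in your ``key step'', and it is precisely the part you flag as delicate.

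You never establish \emph{which} trees lie in $X_k$. You know $j \in X_k$, but the second element of $X_k$ (if any) is a priori an arbitrary tree in $X_j$, possibly different from the tree $T_i$ carrying the attachment of $B$ in $A_j$. Until you pin this down, your assertion that ``any vertex of $B_k$ reachable from outside $B_k$ within $B_j - V(T_j^0)$ must be a boundary vertex of the outer-face of $J_k$'' has no justification: invariant~(iii) tells you how $T_j$ and the other tree in $X_k$ sit relative to $B_k$, but says nothing about where $B$ sits relative to $J_k$ if $B$ attaches to a \emph{different} tree in $X_j$. The paper closes this gap first, and does so not with facial-walk bookkeeping but with Claim~\ref{VTj0Separates} applied to $T_j^0$: the union $B' \cup Q \cup B_k$ (where $B' = B - V(T_j^0)$) is a connected subgraph of $B_j - V(T_j^0)$ meeting $T_i$, so every attachment of $B_k$ in $A_j$ must also lie on $T_i$. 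Thus $X_k \subseteq \{i, j\}$. Only then does the planarity argument work: one shows directly that the unique face of $T_i \cup T_j \cup B_k$ whose boundary meets both $T_i$ and $T_j$ is the outer-face, hence $B$ (which attaches to both) lies in its closure, and $Q$ --- which avoids $T_i^0 \cup T_j^0$ and meets $B_k$ only at $w$ --- stays in that closure with $B$. No contradiction or case split on which tree contains $w$ is needed.

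A minor confusion confirming the gap: you write ``when $X_k$ has two elements besides $j$'', but $|X_k| \le 2$ and $j \in X_k$, so there is at most one other element; and your invocation of Claim~\ref{VTj0Separates} there seems aimed at $T_k^0$ rather than $T_j^0$, which is the wrong level --- the separation you need happens inside $B_j$, not inside $B_k$.
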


\begin{proof}
  By \cref{FjBridgeInBj},
  each of the \(F_j\)-bridges
  \(B\) and \(B_k\) has an
  attachment-vertex on \(T_j\).
  The \(F_j\)-bridge \(B\) has attachment-vertices
  on at most two components of \(F_j\),
  so the attachment-vertices of \(B\) in \(A_j\)
  must belong to one tree \(T_i\) with \(i < j\).
  We show that every attachement-vertex of $B_k$ that does not lie on $T_j$
  belongs to $T_i$.
  By \cref{NoAttachmentOnTj0}, the non-trivial \(F_j\)-bridge \(B_k\) is  disjoint from \(T_j^0\). Likewise, if \(B\) is non-trivial, then it is is disjoint from \(T_j^0\). Otherwise \(B\) is trivial and it can have attachments on \(T_j^0\). 
  Let \(B' := B - (V(B) \cap V(T_j^0))\). Hence, \(B'\) is a connected graph
  that contains all attachment-vertices of \(B\) on \(T_i\) and an end of \(Q\).
  The graph \(B_k \cup Q \cup B'\) is therefore
  a connected subgraph of \(B_j - V(T_j^0)\)
  that intersects \(T_i\).
  Therefore, by \cref{VTj0Separates},
  the graph
  \(B_k \cup Q \cup B'\) intersects \(A_j\) only in vertices belonging to \(T_i\),
  so indeed any attachment-vertices of \(B_k\) not on \(T_j\) must lie on \(T_i\).
  See \cref{FigAkout}.
  \begin{figure}
      \centering
      \includegraphics{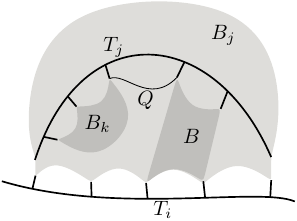}
      \caption{Illustration of \cref{BBkPathAkout}. The end of \(Q\) in \(B_k\) must belong to \(A_k^{\out}\)}
      \label{FigAkout}
  \end{figure}

  We claim that the only face of \(T_i \cup T_j \cup B_k\) whose boundary intersects
  both \(T_i\) and \(T_j\) is the outer-face.
  By our invariant, this is true when \(B_k\) has attachment-vertices in both \(T_i\) and \(T_j\),
  so suppose that \(B_k\) has attachment-vertices only on \(T_j\).
  Then \(T_i \cup T_j \cup B_k\) has two
  components \(T_i\) and \(T_j \cup B_k\).
  The graph \(T_j\) is in the closure of the outer-face of \(B_k\),
  and the graph \(T_i\) is in the closure of the outer-face of \(B_j\).
  Since \(T_j \cup B_k \subseteq B_j\), this means that \(T_i\) is on the outer-face of
  \(T_j \cup B_k\), and the only face of \(T_i \cup T_j \cup B_k\) whose
  boundary intersects \(T_i\) and \(T_j\) is the outer-face.
  Therefore, \(B\) is contained in the closure of the
  outer-face of \(T_i \cup T_j \cup B_k\).
  By \cref{NoAttachmentOnTj0}, \(B_j\) is disjoint from \(T_i^0\).
  Since \(Q \subseteq B_j - V(T_j^0)\), this means that
  \(Q\) is disjoint from \(T_i^0\) and \(T_j^0\), and thus
  \(Q\) can intersect the trees \(T_i\) and \(T_j\) only in their leaves.
  Furthermore, \(Q\) intersects \(B_k\) only in one end, so the path \(Q\)
   belongs to the closure of the outer-face of \(T_i \cup T_j \cup B_k\) together with
  \(B\). Hence, the path \(Q\) intersects \(B_k\) in a vertex on the boundary of the
  outer-face of \(T_i \cup T_j \cup B_k\). That vertex is an attachment-vertex of
  \(B_k\) in \(A_k^{\out}\).
\end{proof}

The following claim, while not used in the main proof, provides helpful intuition for the more complicated proof that follows. The proof of this claim does not rely on the value of \(\tau\), and even works with
\(\tau = 1\). Also, the trees \(T_j^0\)
do not need to minimise the number of edges for
this proof to work. These
properties will be useful later,
in the proof of \cref{BlockingPlanar}.

\begin{claim}
  The partition \(\mathcal{T}\) is 6-blocking.
\end{claim}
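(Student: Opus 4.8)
Let $P=(p_0,\ldots,p_\ell)$ be a $\TT$-clean path; I must show $\ell\le 6$. Since $P$ meets each tree at most once, write $p_t\in V(T_{j_t})$, so $j_0,\ldots,j_\ell$ are pairwise distinct. Let $T_q$ be the part of $\TT$ of smallest index meeting $P$, say $p_b\in V(T_q)$. Every vertex of $P$ lies in a tree of index $\ge q$, so $P$ is disjoint from $V(F_{q-1})\supseteq A_q$. As $B_q$ is non-trivial, $A_q$ separates $V(B_q-A_q)$ from the rest of $G$, and $p_b\in V(T_q)\subseteq V(B_q-A_q)$, so in fact $V(P)\subseteq V(B_q-A_q)$: every clean path is confined to the interior of a single bridge, and $T_q$ is the unique smallest-index part it meets.

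\textbf{The recursion.} The vertex $p_b$ is the only vertex of $P$ on $T_q$, so $P-p_b$ is a union of at most two subpaths $P_L=(p_0,\ldots,p_{b-1})$ and $P_R=(p_{b+1},\ldots,p_\ell)$, each contained in $(B_q-A_q)-V(T_q)$. Each component of this graph, together with its $G$-neighbourhood, is a non-trivial $F_q$-bridge, which by \cref{FiBridgeIsBj} equals $B_k$ for some $k>q$; hence $P_L$ and $P_R$ are again clean paths, confined to strictly deeper bridges $B_{k_L},B_{k_R}$, with $p_b$ an attachment-vertex of each on $T_q$. This gives the recursion $|V(P)|=1+|V(P_L)|+|V(P_R)|$, in which the enclosing bridge strictly shrinks at every step.

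\textbf{The key short-subpath lemma.} I would then prove: if a nonempty clean subpath $P'$ is confined to $B_k-A_k$ and its endpoint $p'$ that is adjacent to the rest of $P$ is joined to a vertex $a\in A_k^{\out}$, then $|V(P')|=1$. Indeed, since $\tau\ge 1$ and $a\in A_k^{\out}$, the vertex $p'\in V(B_k-A_k)$ lies in $D_k\subseteq V(T_k^0)$ (this uses only $\tau\ge1$ and not minimality of $T_k^0$). If $P'$ had a second vertex $p''$, then $p''$ is adjacent to $p'\in V(T_k^0)$; since $P'$ meets $T_k$ only at $p'$ and $P'$ is confined to trees of index $\ge k$, we get $p''\notin V(F_k)$, so the edge $p'p''$ lies in an $F_k$-bridge with an attachment on $T_k^0$. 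By \cref{NoAttachmentOnTj0} that bridge is trivial, forcing $p''\in V(F_k)$, a contradiction. Combining this with the recursion: a subpath can only fail to be a single vertex if it enters its enclosing bridge at a non-outer attachment, and \cref{BBkPathAkout} (applied at a suitable index $j=j_c$ along $P$, using that $p_{c-1}\in A_j$ whenever $j_{c-1}<j_c$, so that an $F_j$-bridge along $P$ witnesses the hypothesis of that claim) forces, after the path has descended once past $T_q$, that every further descent into a deeper bridge occurs at a vertex of $A_{\cdot}^{\out}$. Hence the recursion stops branching after a constant number of rounds, and counting the vertices produced gives $|V(P)|\le 7$, i.e.\ $\ell\le 6$.

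\textbf{Main obstacle.} Steps~1--2 are bookkeeping with the bridge structure, and the short-subpath lemma is clean. The real work is the last step: selecting the right $F_j$-bridges along $P$ to feed into \cref{BBkPathAkout}, and organising the case analysis --- in particular whether $p_b$ lies on $T_q^0$ or is a leaf of $T_q$ outside it, and whether $P_L$ and $P_R$ fall into the same or distinct child bridges --- so as to certify that the descent is bounded. (With more care one gets $\ell\le 4$; the value $6$ comes from deliberately loose counting, as the footnote in the text indicates.)
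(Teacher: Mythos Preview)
Your setup and your short-subpath lemma are both correct, and the latter is exactly the contradiction the paper derives at the end. The genuine gap is the step you yourself flag as the main obstacle, and it is not merely a case analysis waiting to be filled in: your recursive framework, which repeatedly descends via the \emph{smallest}-index tree, does not produce the configuration needed for \cref{BBkPathAkout}. After the initial split at $T_i$ to obtain a half-path $Q=(x_0,\ldots,x_p)$ with $V(Q)\cap V(F_i)=\{x_0\}$, the paper does \emph{not} recurse again. It instead fixes the \emph{largest} $j$ with $Q\subseteq B_j$. This maximality is the missing idea: it guarantees that $Q$ is not contained in any single $F_j$-bridge properly inside $B_j$, so (in the case $Q\cap V(T_j^0)=\emptyset$) the path $Q$ must leave the $F_j$-bridge $B$ containing $x_0x_1$ and re-enter a distinct $F_j$-bridge $B_k\subseteq B_j$, while $B$ carries the attachment $x_0\in A_j$. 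That is precisely the hypothesis of \cref{BBkPathAkout}; the paper then splits on whether $Q$ meets $T_j^0$, and in each case the claim hands back an entry into some $B_k$ via $A_k^{\out}$, after which your short-subpath lemma (two more vertices would force two on $T_k$) finishes.

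By contrast, each step of your recursion places a subpath into some $B_k-A_k$, entered from an attachment of $B_k$ on the tree just removed, with no reason why this attachment should lie in $A_k^{\out}$; the observation ``$p_{c-1}\in A_{j_c}$ whenever $j_{c-1}<j_c$'' is true but does not supply two distinct $F_j$-bridges inside a common $B_j$ joined by a path avoiding $T_j^0$. Without the largest-$j$ device (or an equivalent mechanism forcing $Q$ to exit its first $F_j$-bridge), the recursion has no bounded depth and the proof does not close.
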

\begin{proof}
Consider a \(\mathcal{T}\)-clean path \(P\) in \(G\). 
We now show that the length of \(P\) is at most
\(6\). Let \(T_i\) be the tree that intersects \(P\) and has the
smallest \(i\).
Since \(P\) is \(\mathcal{T}\)-clean, it intersects
\(T_i\) in only one vertex, which splits \(P\) into
two edge-disjoint paths, each intersecting \(T_i\)
only in one of its ends.
Therefore, it suffices to show that if
\(Q = (x_0, \ldots, x_p)\) is a \(\mathcal{T}\)-clean
path such that \(V(Q) \cap V(F_i) = \{x_0\} \subseteq V(T_i)\), then
\(p \le 3\).

Suppose towards a contradiction that \(p \ge 4\).
Since \(V(Q) \cap V(F_i) = \{x_0\}\),
the path
\(Q\) is contained in a non-trivial \(F_i\)-bridge.
By \cref{FiBridgeIsBj},
that \(F_i\)-bridge is equal to \(B_j\)
for some \(j \in \{i+1, \ldots, m\}\).
Fix the largest \(j \in \{i+1, \ldots, m\}\)
such that \(Q \subseteq B_j\).
We split the argument into two cases based on whether
the path \(Q\) intersects \(T_j^0\) or
not.

Suppose first that \(x_\alpha \in V(T_j^0)\) for some
\(\alpha \in \{1, \ldots, p\}\).
Since \(Q\) is \(\mathcal{T}\)-clean, \(x_\alpha\)
is the only vertex of \(Q\) on \(T_j\).
In particular,
the vertex \(x_{\alpha-1}\)
is adjacent to \(T_j^0\)
in \(B_j\) and does not belong to
\(T_j\), so \(x_{\alpha-1} \in A_j\).
The path \(x_0 Q x_{\alpha-1}\) is disjoint from
\(V(T_j^0)\), so by
\cref{VTj0Separates}
it contains attachment-vertices of \(B_j\) on at most one
component of \(F_{j-1}\).
Since \(x_0 \in V(T_i)\) and \(x_{\alpha-1} \in A_j\),
this implies that \(x_{\alpha-1} \in V(T_i)\), and
therefore \(\alpha - 1 = 0\), that is \(\alpha = 1\).

The vertex \(x_0\) is the only vertex of \(Q\) on \(T_i\),
and the vertex \(x_1\) is the only vertex of \(Q\) on \(T_j\).
We have \(x_1 \in V(T_j^0)\), so by definition of \(T_j\)
the vertex \(x_2\) is an attachment-vertex of \(B_j\)
on a tree \(T_{i'}\) distinct from \(T_i\).
By our choice of \(i\), we have \(i < i' < j\).
Hence, \(B_j\) has attachment-vertices only on \(T_i\) and \(T_{i'}\).
Since \(Q \subseteq B_j\), and \(Q\) is \(\TT\)-clean,
this implies \(V(Q) \cap V(F_j) = \{x_0, x_1, x_2\}\).
Since \(p \ge 4\), the path \(x_2 Q x_p\) is contained in a
non-trivial \(F_j\)-bridge which,
by \cref{FiBridgeIsBj} is equal to \(B_k\)
for some \(k \in \{j+1, \ldots, m\}\).
Since \(Q \subseteq B_j\), we have \(B_k \subseteq B_j\).
The edge \(x_1 x_2\) is a trivial \(F_j\)-bridge
contained in \(B_k\)
that contains an attachment-vertex in \(A_j\), and
its attachment-vertex \(x_2\) belongs to \(B_k\)
(see \cref{6BlockingA}).
Hence, by \cref{BBkPathAkout} applied to
the trivial path consisting of the vertex
\(x_2\) alone, we have \(x_2 \in A_k^{\out}\).

We have \(B_k \subseteq B_j\),
so by \cref{FjBridgeInBj},
the \(F_j\)-bridge \(B_k\) has an attachment-vertex on \(T_j\).
The vertex \(x_2\) is an attachment-vertex of \(B_k\)
on \(T_{i'}\), so
\(B_k\) has attachment-vertices only on \(T_{i'}\)
and \(T_j\).
Since \(V(Q) \cap V(T_{i'}) = \{x_2\}\)
and \(V(Q) \cap V(T_j) = \{x_1\}\),
we have \(x_3 Q x_p \subseteq B_k - A_k\).
Since \(x_2 \in A_k^{\out}\) this implies that
\(x_3 \in D_k \subseteq V(T_k^0)\), and thus
\(x_4 \in V(T_k)\). Hence \(\{x_3, x_4\} \subseteq V(T_k)\), contrary to the assumption that
\(Q\) is \(\TT\)-clean.

Now consider the case when
\(Q\) is disjoint from \(T_j^0\).
We have \(x_0 \in V(T_i)\) and \(x_1 \not \in V(T_i)\),
so \(x_0 x_1 \not \in E(F_j)\). Let \(B\) be the
\(F_j\)-bridge containing the edge
\(x_0 x_1\).
Because \(x_0 x_1 \in E(B_j)\), we have \(B \subseteq B_j\),
so by \cref{FjBridgeInBj},
\(B\) has an attachment-vertex on \(T_j\). 
The vertex \(x_0\) is an attachment-vertex of \(B\) on \(T_i\),
so \(B\) has attachment-vertices only on \(T_i\) and \(T_j\). 
Observe that \(Q \not \subseteq B\); indeed,
if \(B\) is trivial, this follows from the fact that
\(p \ge 4\), and if \(B\) is a non-trivial
\(F_j\)-bridge, then by
\cref{FiBridgeIsBj}, we have \(B = B_k\) for some
\(k \in \{j+1, \ldots, m\}\), and
\(Q \not \subseteq B_k\) by our choice of \(j\).
Since \(Q \not \subseteq B\) and \(x_0 x_1 \in E(B)\),
\(Q\)  contains a vertex \(x_\alpha\) that is
an attachment-vertex of \(B\) distinct from \(x_0\).
Since \(B\) has attachment-vertices only on \(T_i\) and
\(T_j\) and \(Q\) is \(\TT\)-clean, the vertex
\(x_{\alpha}\) is the only vertex of \(Q\)
on \(T_j\).

We claim that \(\alpha \le 2\).
If \(B\) is trivial, then \(\alpha = 1 \le 2\), so suppose
that \(B\) is non-trivial, and thus \(B = B_k\)
for some \(k \in \{j+1, \ldots, m\}\).
We have \(x_0 \in V(T_i)\) and \(x_\alpha \in V(T_j)\),
so by \cref{VTj0Separates},
the path \(x_0 Q x_\alpha\) must intersect
\(T_k^0\) in a vertex \(x_{\alpha'}\)
with \(0 < \alpha' < \alpha\). Since \(Q\) is
\(\TT\)-clean, the vertex \(x_{\alpha'}\)
is the only vertex of \(Q\) in \(V(T_k)\).
Hence, by definition of
\(T_k\), the vertices
\(x_{\alpha'-1}\) and \(x_{\alpha'+1}\) are
attachment-vertices of \(B_k\), and therefore belong to
\(V(T_i) \cup V(T_j)\).
The only vertex of \(Q\) in \(V(T_i)\) is \(x_0\), and
the only vertex of \(Q\) in \(V(T_j)\) is \(x_\alpha\),
so \(x_{\alpha'-1} = x_0\) and \(x_{\alpha'+1} = x_\alpha\).
Hence \(\alpha' = 1\) and \(\alpha = 2\). 
This proves \(\alpha \le 2\).

Since \(Q \subseteq B_j - V(T_j^0)\),
\cref{VTj0Separates} implies that
the only component of \(F_{j-1}\) intersected by
\(Q\) is \(T_i\).
Hence, \(V(Q) \cap V(F_j) = \{x_0, x_\alpha\}\).
Since \(\alpha \le 2\) and \(p \ge 4\),
the path \(x_\alpha Q x_p\) is contained in a
non-trivial \(F_j\)-bridge, which equals \(B_k\)
for some \(k \in \{j+1, \ldots, m\}\).
See \cref{6BlockingB}.
The \(F_j\)-bridge \(B\) is contained in \(B_j\) and
has an attachment-vertex in \(A_j\),
and the vertex \(x_\alpha\) is an attachment-vertex of
\(B_{k}\) in \(V(T_j)\).
Hence, by \cref{BBkPathAkout} applied to the trivial
path consisting of the vertex \(x_\alpha\) alone,
we have \(x_{\alpha} \in A_{k}^{\out}\).
By \cref{NoAttachmentOnTj0}, 
\(B_{k}\) is disjoint from \(T_j^0\), and it
is contained in the same component of \(B_j - V(T_j^0)\) as \(Q\). Hence,
by \cref{VTj0Separates}, \(B_{k}\) can only have
attachment-vertices in \(V(T_i)\) and \(V(T_j)\).
Since \(Q\) is \(\TT\)-clean with \(x_0 \in V(T_i)\)
and \(x_\alpha \in V(T_j)\),
we have \(x_{\alpha+1} Q x_p \subseteq B_{k} - A_{k}\).
Since \(x_\alpha \in A_{k}^{\out}\), this implies
\(x_{\alpha+1} \in D_{k} \subseteq V(T_{k}^0)\), and thus
\(x_{\alpha+2} \in V(T_{k})\).
Therefore,
\(\{x_{\alpha+1}, x_{\alpha+2}\} \subseteq V(T_{k})\),
contrary to the assumption that \(Q\) is \(\TT\)-clean.
\begin{figure}
     \centering
     \begin{subfigure}[b]{0.49\textwidth}
         \centering
         \includegraphics{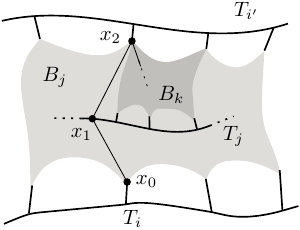}
         \caption{$V(Q) \cap V(T_j^0) \neq \emptyset$}
         \label{6BlockingA}
     \end{subfigure}
     \hfill
     \begin{subfigure}[b]{0.49\textwidth}
         \centering
         \includegraphics{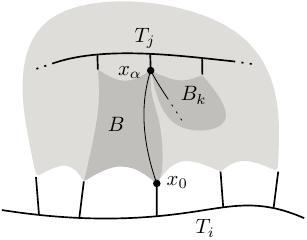}
         \caption{$V(Q) \cap V(T_j^0) = \emptyset$}
         \label{6BlockingB}
     \end{subfigure}
        \caption{The two cases in the proof that the length of \(Q\)
        is at most \(3\).}
\end{figure}
\end{proof}

Although the partition \(\TT\) is \(6\)-blocking, its parts can be arbitrarily large. The next
step of our construction refines the chordal partition.

\section{Refinement of the Chordal Partition}
\label{Refinement}

In order to define our refinement
of the chordal
partition \(\TT\), we need to study its properties
in more detail.

\begin{claim}\label{DjSize}
  For each \(j \in \{1, \ldots, m\}\), \(|D_j| < \Delta^{40}\).
\end{claim}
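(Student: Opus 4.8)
The plan is a short counting argument. By construction, $D_j$ is either a single vertex (when $A_j=\emptyset$) or the set of all vertices of $B_j-A_j$ that lie within distance $\tau=37$ in $G$ of the small set $A_j^{\out}$. Since $|A_j^{\out}|\le 4$ and $G$ has maximum degree at most $\Delta$, this set is contained in a union of four balls of radius $37$ in $G$, whose total size I will bound above by $\Delta^{40}$.

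First I would dispose of the case $A_j=\emptyset$, where by definition $|D_j|=1<\Delta^{40}$ (recall $\Delta\ge 3$). Then I would treat $A_j\neq\emptyset$, so that $A_j^{\out}\neq\emptyset$ and $|A_j^{\out}|\le 4$: invariant~(i) gives $|X_j|\le 2$, and \cref{Invariant} shows that for each $i\in X_j$ at most two attachment-vertices of $B_j$ on $T_i$ lie on the boundary of the outer-face of $J$, hence at most four in total. Since $D_j=\{x\in V(B_j-A_j):\dist_G(x,A_j^{\out})\le\tau\}$, we have $D_j\subseteq\{x\in V(G):\dist_G(x,A_j^{\out})\le\tau\}$.

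To bound the latter set, I would run a breadth-first search in $G$ from each vertex $v\in A_j^{\out}$: the number of vertices of $G$ at distance at most $\tau$ from $v$ is at most $1+\Delta+\Delta^2+\cdots+\Delta^{\tau}\le\Delta^{\tau+1}$, where the last inequality uses $\Delta\ge 2$. Summing over the at most four vertices of $A_j^{\out}$ yields $|D_j|\le 4\Delta^{\tau+1}=4\Delta^{38}$, and since $\Delta\ge 3$ we have $4<\Delta^2$, hence $|D_j|\le 4\Delta^{38}<\Delta^{40}$, as required. I do not anticipate any genuine obstacle; the only points needing care are the precise ball-size estimate and keeping the final inequality strict, both of which have ample slack (a noticeably smaller bound is available, but $\Delta^{40}$ is the clean form that later claims will plug in).
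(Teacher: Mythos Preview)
Your proposal is correct and follows essentially the same argument as the paper: handle the trivial case $A_j=\emptyset$, use $|A_j^{\out}|\le 4$, and bound $|D_j|$ by four radius-$\tau$ balls to obtain $|D_j|\le 4\Delta^{\tau+1}<\Delta^{\tau+3}=\Delta^{40}$. The only cosmetic difference is that the paper writes the chain $|D_j|\le|A_j^{\out}|(\Delta^0+\cdots+\Delta^{\tau})<4\Delta^{\tau+1}<\Delta^{\tau+3}$ directly, without explicitly invoking \cref{Invariant} for the bound on $|A_j^{\out}|$.
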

\begin{proof}
    If \(B_j\) has no attachment-vertices, then
    \(|D_j| = 1 \le \Delta^{40}\), so suppose that
    \(B_j\) has some attachment-vertices.
    \(B_j\) has attachment-vertices in at most two components of \(F_{j-1}\), and
    on each of them \(A_j^{\out}\)
    has one or two vertices, so \(|A_j^{\out}| \le 4\).
    Since each vertex in \(D_j\) is at distance at most
    \(\tau\) from a vertex in \(A_j^{\out}\), 
    \begin{equation*}
    |D_j| \le
    |A_j^{\out}|(\Delta^0 + \ldots + \Delta^{\tau}) <
    4\Delta^{\tau+1} < 
    \Delta^{\tau+3} = 
    \Delta^{40}.\qedhere
    \end{equation*}
\end{proof}

Two paths in a graph are \defn{internally disjoint} if none of them contains an inner vertex of another, and a path is
\defn{internally disjoint} from a set of vertices \(D\)
if no inner vertex of the path belongs to \(D\). 

\begin{claim}
\label{Steiner}
For each \(j \in \{1, \ldots, m\}\), the tree \(T_j^0\) is the union of a family \(\PP_j\) of at most \(2\Delta^{40}\) geodesic paths  which are pairwise internally disjoint and internally disjoint from $D_j$.
\end{claim}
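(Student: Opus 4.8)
The plan is to analyze the structure of $T_j^0$ directly from its definition as a \emph{minimum-edge} tree in $B_j - A_j$ containing the set $D_j$, i.e.\ a Steiner tree for $D_j$. The starting point is the standard observation that in a Steiner tree, the vertices of degree at least $3$ (the \emph{branch vertices}) together with the terminals $D_j$ induce a ``topological structure'' with few edges: a tree with $|D_j|$ leaves has at most $|D_j| - 1$ branch vertices, hence at most $2|D_j| - 1$ vertices of degree $\neq 2$, and therefore $T_j^0$ decomposes into at most $2|D_j| - 2$ internally disjoint paths, each of whose internal vertices have degree $2$ in $T_j^0$ and each of whose endpoints lies in $D_j$ or is a branch vertex. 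Since $|D_j| < \Delta^{40}$ by \cref{DjSize}, this already gives the bound $2\Delta^{40}$ on the number of paths in the family $\PP_j$, and each such path is internally disjoint from $D_j$ (every internal vertex has degree $2$ in $T_j^0$, while — as I would note — every vertex of $D_j$ of degree $2$ can be taken to be an endpoint of the relevant path when we cut $T_j^0$ at all of $D_j$ and at all branch vertices).

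The remaining, and main, point is that each path $P$ in this family is \emph{geodesic in $G$}. Here is where the minimality of $T_j^0$ does the work. Suppose some $P = (y_0, \dots, y_q)$ in the family were not a shortest $y_0$–$y_q$ path in $G$; I would like to replace $P$ by a shorter path to contradict minimality of $T_j^0$. The subtlety is that a shorter path in $G$ need not lie inside $B_j - A_j$, so a naive swap is not legal. To handle this I would argue that in fact $P$ is geodesic in $B_j$ — and even in $G$ — by exploiting how $D_j$ was constructed: every vertex of $D_j$ is within distance $\tau$ of $A_j^{\out}$, and more importantly the \emph{internal} vertices of $P$ lie on $T_j^0$, on which (by \cref{NoAttachmentOnTj0}) no non-trivial $F_j$-bridge has an attachment vertex; combined with the fact that $A_j$ is an independent set in $B_j$, any detour through $V(G) \setminus V(B_j - A_j)$ would have to pass through $A_j$ and re-enter, and re-entry points are neighbours of attachment vertices, which cannot shorten a path between two vertices of $T_j^0$. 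Thus a shortest $y_0$–$y_q$ path in $G$ may be taken inside $B_j - A_j$, and minimality of $T_j^0$ (as a minimum-edge Steiner tree) forces $P$ to already have that length.

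I expect the geodesic claim to be the only real obstacle; the counting of paths is routine once \cref{DjSize} is in hand. One technical care point: I should phrase the decomposition so that the paths are genuinely \emph{pairwise} internally disjoint and internally disjoint from $D_j$ simultaneously — this is achieved by cutting $T_j^0$ at the union of $D_j$ with the set of branch vertices, so that each resulting path has all internal vertices of $T_j^0$-degree exactly $2$ and lying outside $D_j$, while endpoints are shared only at cut vertices. Finally I would double-check the arithmetic: a subtree of $T_j^0$ with terminal set of size $< \Delta^{40}$ has fewer than $\Delta^{40}$ leaves, hence fewer than $\Delta^{40}$ branch vertices, hence the number of edges in the ``reduced'' tree (paths after suppressing degree-$2$ vertices) is less than $2\Delta^{40}$, giving $|\PP_j| \le 2\Delta^{40}$ as claimed.
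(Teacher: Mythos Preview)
Your decomposition of \(T_j^0\) into paths by cutting at \(D_j \cup S\) (where \(S\) is the set of branch vertices) and your counting are exactly the paper's approach, and that part is fine.

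The gap is in the geodesic step. You read ``geodesic'' as ``geodesic in \(G\)'' and then build an argument to show that a shortest \(y_0\)--\(y_q\) path in \(G\) can be rerouted inside \(B_j - A_j\). That argument does not work: the claim that a detour through \(A_j\) ``cannot shorten a path between two vertices of \(T_j^0\)'' is unjustified, and in fact such detours \emph{can} be shorter. Nothing prevents two endpoints of a long path in \(T_j^0\) from each being adjacent to attachment vertices that are close to one another in some \(T_i\) with \(i \in X_j\), giving a much shorter route in \(G\). Your invocation of \cref{NoAttachmentOnTj0} is also a red herring: that claim concerns \(F_j\)-bridges (i.e.\ the picture \emph{after} \(T_j\) has been built) and says nothing about shortcuts through \(F_{j-1}\).

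The fix is simply to read the claim correctly: ``geodesic'' here means geodesic in \(B_j - A_j\), the graph in which \(T_j^0\) is defined as a minimum-edge Steiner tree (and this is how the claim is used downstream, e.g.\ in \cref{DistanceDIndependentSet} and the proof of \cref{SetsMi}). With that reading the argument is one line: if some \(P \in \PP_j\) is not geodesic in \(B_j - A_j\), replace it by a shorter \(B_j - A_j\)-path \(P'\) between the same endpoints; any spanning tree of \(P' \cup \bigcup_{Q \in \PP_j \setminus \{P\}} Q\) still contains \(D_j\) and has fewer edges than \(T_j^0\), contradicting minimality. Note you do need to pass to a spanning tree here, since \(P'\) may intersect other paths in \(\PP_j\).
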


\begin{proof}
Let \(S\) denote the set of all
vertices of \(T_j^0\)
with degree at least \(3\).
Since \(T_j^0\) is the tree in \(B_j - A_j\)
which contains all vertices in \(D_j\) and has
the smallest possible number of edges, 
every leaf of \(T_j^0\) belongs to \(D_j\),
so \(T_j^0\) has at most \(|D_j|\) leaves, and 
thus  \(|S| \le |D_j|\).
The tree \(T_j^0\) is a subdivision of a tree
with vertex-set \(S \cup D_j\).
Therefore, \(T_j^0\) is the union of a set
\(\PP\) of at most \(|S \cup D_j|\)
pairwise internally
disjoint paths such that each \(P \in \PP\)
has its ends in \(S \cup D_j\) and
is internally disjoint from \(S \cup D_j\).
We have \(|\PP| \le |S \cup D_j| \le 2|D_j|\),
so by \cref{DjSize}, \(|\PP| \le 2\Delta^{40}\). Suppose towards a contradiction that some path \(P \in \PP\) is not
geodesic in \(B_j - A_j\), and let \(P'\) be a geodesic path in
\(B_j - A_j\) between the
ends of \(P\). Hence, \(P'\) has less edges than \(P\), so any spanning tree of \(P'\cup \bigcup_{Q \in \PP \setminus \{P\}}Q\) has less edges than \(T\) and contains all vertices in \(D_j\), which is a contradiction. 
\end{proof}

An important property of geodesic paths is that
the distances between vertices are preserved in
them. We show that the tree \(T_j^0\)
`approximates' the distances between
its vertices in \(B_j - A_j\).

\begin{claim}\label{ApproxGeodesicTj0}
  Let \(j \in \{1, \ldots, m\}\),
  and let \(x, y \in V(T_j^0)\).
  Then
  \[
    \dist_{T_j^0}(x, y) <
      \Delta^{40} \dist_{B_j - A_j}(x, y).
  \]
\end{claim}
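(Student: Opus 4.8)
My plan is to read off the needed comparison from the structural description of \(T_j^0\) provided by \cref{Steiner}. Write \(G':=B_j-A_j\), \(T:=T_j^0\), and let \(\PP_j\) be the family of \(G'\)-geodesic paths with \(T=\bigcup\PP_j\), pairwise internally disjoint, internally disjoint from \(D_j\), and \(\abs{\PP_j}\le 2\abs{D_j}\) (this last bound is established inside the proof of \cref{Steiner}). We may assume \(x\ne y\), so \(\dist_{G'}(x,y)\ge 1\). The engine of the argument will be the following single‑piece cut estimate, which is where the minimality of \(T_j^0\) among trees in \(G'\) spanning \(D_j\) enters: let \(Q\in\PP_j\) have ends \(a,b\); each internal vertex of \(Q\) lies in no other piece, hence has degree \(2\) in \(T\), so deleting the internal vertices of \(Q\) from \(T\) leaves exactly two components \(T_a\ni a\) and \(T_b\ni b\); since the internal vertices of \(Q\) avoid \(D_j\) we get \(D_j\subseteq V(T_a)\cup V(T_b)\). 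If some \(V(T_a)\)--\(V(T_b)\) path \(\pi\subseteq G'\) had fewer than \(\abs{E(Q)}\) edges, then \(T_a\cup\pi\cup T_b\) would be a connected subgraph of \(G'\) containing \(D_j\) whose spanning tree has fewer than \(\abs{E(T)}\) edges, contradicting the choice of \(T_j^0\); hence \(\dist_{G'}(V(T_a),V(T_b))\ge\abs{E(Q)}\).

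Next I would decompose the path \(R:=xTy\). Since \(R\) meets each piece of \(\PP_j\) in a subpath of \(R\), it is a concatenation \(R=R_1R_2\cdots R_t\) of nonempty subpaths with \(R_i\) contained in a piece \(Q_{(i)}\), the \(Q_{(i)}\) pairwise distinct, so \(t\le\abs{\PP_j}\le 2\abs{D_j}\). Let \(z_0=x,z_1,\dots,z_t=y\) be the successive endpoints. Each \(R_i\) is a subpath of the geodesic \(Q_{(i)}\), hence geodesic, so \(\abs{E(R_i)}=\dist_{G'}(z_{i-1},z_i)\). It therefore suffices to prove \(\dist_{G'}(z_{i-1},z_i)\le\dist_{G'}(x,y)\) for every \(i\): then \(\dist_T(x,y)=\abs{E(R)}=\sum_i\abs{E(R_i)}\le t\cdot\dist_{G'}(x,y)\le 2\abs{D_j}\dist_{G'}(x,y)\), and since the proof of \cref{DjSize} gives \(\abs{D_j}<4\Delta^{\tau+1}\) we have \(2\abs{D_j}<8\Delta^{\tau+1}\le\Delta^2\cdot\Delta^{\tau+1}=\Delta^{\tau+3}=\Delta^{40}\) using \(\Delta\ge 3\); as \(\dist_{G'}(x,y)\ge 1\) this yields \(\dist_{T_j^0}(x,y)<\Delta^{40}\dist_{B_j-A_j}(x,y)\).

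It remains to bound each segment. For \(2\le i\le t-1\), both \(z_{i-1}\) and \(z_i\) are shared with neighbouring pieces, hence by pairwise internal disjointness they are exactly the two ends of \(Q_{(i)}\); thus \(R_i=Q_{(i)}\) and the \(x\)--\(y\) path crosses \(Q_{(i)}\), so \(x\) and \(y\) lie in the two distinct components obtained by deleting the internal vertices of \(Q_{(i)}\) from \(T\), and the cut estimate gives \(\abs{E(R_i)}=\abs{E(Q_{(i)})}\le\dist_{G'}(x,y)\). The same works when \(t=1\) (then \(\abs{E(R_1)}=\dist_{G'}(x,y)\) directly) and when \(i\in\{1,t\}\) and that segment is a whole piece. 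The only remaining possibility is \(i=1\) with \(x\) an internal vertex of \(Q_{(1)}\) (symmetrically \(i=t\) with \(y\) internal to \(Q_{(t)}\)). Let \(c,z_1\) be the ends of \(Q_{(1)}\). Since \(R\) leaves \(x\) towards \(z_1\), the vertex \(c\) does not lie on \(R\), so \(y\ne c\); and in the graph obtained from \(T\) by deleting the internal vertices of \(Q_{(1)}\), the vertex \(c\) is in one component while \(z_1\) and \(y\) are in the other. The cut estimate gives \(\dist_{G'}(c,y)\ge\abs{E(Q_{(1)})}\); and since \(Q_{(1)}\) is geodesic with \(x\) on it, \(\abs{E(Q_{(1)})}=\dist_{G'}(c,x)+\dist_{G'}(x,z_1)\). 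Combining with \(\dist_{G'}(c,y)\le\dist_{G'}(c,x)+\dist_{G'}(x,y)\) gives \(\abs{E(R_1)}=\dist_{G'}(x,z_1)\le\dist_{G'}(x,y)\), as needed.

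The hard part is this last paragraph, and within it the two ``partial'' segments \(R_1,R_t\): there \(x\) (resp.\ \(y\)) may sit in the interior of a piece rather than on a side of the relevant cut, so the clean ``\(x\) and \(y\) on opposite sides'' reasoning breaks, and one must instead combine the cut estimate with the geodesic property of the piece and the triangle inequality. A secondary point is quantitative: bounding \(\dist_T(x,y)\) by (number of segments) times (largest segment length) only produces the constant \(2\abs{D_j}\), so getting below \(\Delta^{40}\) relies on the sharper estimate \(\abs{D_j}<4\Delta^{\tau+1}\) from the proof of \cref{DjSize} together with the standing assumption \(\Delta\ge 3\).
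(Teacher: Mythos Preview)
Your proof is correct and takes a somewhat different route from the paper's. Both arguments segment the tree path \(xT_j^0y\) and bound each segment by \(\dist_{B_j-A_j}(x,y)\) via the edge-minimality of \(T_j^0\), but they invoke minimality differently. The paper fixes a geodesic \(x\)--\(y\) path \(P\) in \(B_j-A_j\), first reduces (by splitting \(P\) at its intersections with \(T_j^0\)) to the case that \(P\) is internally disjoint from \(T_j^0\), then breaks \(xT_j^0y\) at vertices in \(D_j\cup\{x,y\}\) or of degree \(\ge 3\) (obtaining fewer than \(|D_j|\) segments), and swaps each segment with \(P\) directly. You instead break \(xT_j^0y\) at the boundaries of the Steiner pieces in \(\PP_j\) and bound each segment via a cut estimate: removing an entire piece splits \(T_j^0\) into two components, and minimality forces the \(G'\)-distance between them to be at least the piece length. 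Your packaging avoids the paper's reduction step entirely, at the cost of the extra argument for the end-segments \(R_1,R_t\) where \(x\) or \(y\) may lie in the interior of a piece (your use of the triangle inequality together with the cut estimate there is neat). The trade-off on constants is that you get \(\le 2|D_j|\) segments rather than the paper's \(<|D_j|\), which forces you to reach inside the proofs of \cref{DjSize} and \cref{Steiner} for the sharper bound \(2|D_j|<8\Delta^{\tau+1}<\Delta^{40}\) rather than using the stated \(|\PP_j|\le 2\Delta^{40}\).
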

\begin{proof}
  Let \(P\) be a
  geodesic
  \(x\)--\(y\) path in \(B_j - A_j\).
  We have
  \(\dist_P(x, y) = \dist_{B_j - A_j}(x, y)\),
  so we need to show that
  \(\dist_{T_j^0}(x, y) < \Delta^{40}\dist_P(x, y)\).

First consider the case when \(P\)
is internally disjoint from \(V(T_j^0)\).

  Let \(z_0, \ldots, z_s\) denote the sequence of all vertices of the path
  \(xT_j^0y\) that belong to \(D_j \cup \{x, y\}\)
  or have degree at least \(3\) in \(T\),
  ordered by increasing distance from \(x\) (so that \(z_0 = x\) and \(z_s = y\)).

  We show that \(s < |D_j|\) by associating a distinct vertex
  \(z_i' \in D_j\) to each \(z_i\). Let $i\in \{0,\dots,s\}$.
  If \(z_i \in D_j\), then let \(z_i' := z_i\).
  Otherwise $z_i \not \in D_j$, and either $z_i$ is an end of \(x T_j^0 y\) but not a leaf of \(T_j^0\), or \(z_i\) has degree at least $3$ in \(T_j^0\).
  In both cases, there exists a leaf $z_i'$ in $T_j^0$ such that $z_i$ is adjacent to the component of \(T_j^0 - V(xT_j^0y)\) that contains $z_i'$. By our choice of $T_j^0$, we have $z_i'\in D_j$.
  Clearly, the vertices \(z_0', \ldots, z_s'\)
  are distinct, so \(s < |D_j|\).

  For each \(i \in \{0, \ldots, s-1\}\), let \(T(i)\)
  denote the graph obtained from \(T_j^0\) by removing all edges and inner vertices of \(z_i T_j^0 z_{i+1}\) and adding the path \(P\).
  The path \(P\) has ends in \(z_0\) and \(z_s\), and is
  internally disjoint from
  \(V(T_j^0)\), so
  \(T(i)\) is a tree.
  This tree still contains \(D\), so by definition of \(T_j^0\), 
  \[
    |E(T_j^0)| \le |E(T(i))| = |E(T_j^0)| - |E(z_iT_j^0z_{i+1})|+|E(P)|,
  \]
  so \(\dist_{T_{j}^0}(z_i, z_{i+1}) = |E(z_iT_j^0z_{i+1})|\le |E(P)|\). Therefore,
  \begin{align*}
      \dist_{T_{j}^0}(x, y)
      =\sum_{i=0}^{s-1}|E(z_{i}T_j^0z_{i+1})|
      \le s\cdot|E(P)|
      < |D_j|\cdot\dist_P(x, y)
      \le \Delta^{40}\cdot\dist_P(x, y),
  \end{align*}
  where the last inequality follows from
  \cref{DjSize}.

  It remains to consider the case when
  \(P\) has at least one
  inner vertex in \(V(T_j^0)\).
  Let \(w_0, \ldots, w_n\) denote the vertices
  in \(V(P) \cap V(T_j^0)\) ordered by increasing distance from
  \(x\) in \(P\) (so that \(w_0 = x\) and \(w_n = y\)).
  For each \(i \in \{0, \ldots, n-1\}\), the path \(w_i P w_{i+1}\)
  has no inner vertices in \(V(T_j^0)\), so \(\dist_{T_j^0}(w_i, w_{i+1}) <
  \Delta^{40}\cdot\dist_{P}(w_i, w_{i+1})\), and thus
  \[
  \dist_{T_j^0}(x, y) \le
  \sum_{i=0}^{n-1}\dist_{T_j^0}(w_i, w_{i+1})
  < \sum_{i=0}^{n-1}\Delta^{40}\cdot\dist_P(w_i, w_{i+1}) =
  \Delta^{40}\cdot\dist_P(x, y). \qedhere
  \]
\end{proof}

Let $c\in \NN$.
For any vertex
\(x \in V(G)\), the number of
vertices \(x' \in V(G)\) with
\(\dist_G(x, x') \le c\) is at most
\(\sum_{i=0}^c\Delta^i\), and therefore
less than \(\Delta^{c+1}\).
Therefore, for any edge \(e \in E(G)\),
the number of edges \(e' \in E(G)\) with
\(\dist_G(e, e') \le c\) is less than
\(2\Delta^{c+2}\)
(since any such \(e'\) is incident to a vertex
at distance at most \(c\) from one of the
two endpoints of \(e\)).
We use these bounds implicitly
in the following part of this section.

In a graph \(J\),
we say that a set of edges \(M \subseteq E(J)\) is \defn{\(d\)-independent} if for any pair of distinct edges
\(e_1, e_2 \in M\) we have
\(\dist_J(e_1, e_2) > d\).
We aim to refine the partition \(\TT\)
be removing a set of edges \(M_j \subseteq E(T_j^0)\)
from each \(T_j\), and letting the
components of
the resulting forests be the parts of
the partition.
The precise description of the desired properties
of the sets \(M_j\)
will be given in \cref{SetsMi}.
Roughly speaking, we want the edges in each \(M_j\) to be far away from each other, from other sets \(M_{j'}\), and from the set \(D_j\), while ensuring that the components of \(T_j - M_j\) have bounded size. In order to formalise being far away, we need the following definition. Let \(i \in \{1, \ldots, m\}\),
and suppose that the set \(M_i \subseteq E(T_i^0)\) is already defined.
Let \(S\) be a set of vertices or a set of edges in \(B_i - V(T_i^0)\).
The \defn{mixed distance} of \(S\) from \(M_i\) is
\[
\mathdefn{\mdist_i(S)} := \min \{
\dist_{B_i - A_i}(M_i, v) + \dist_{B_i - V(T_i^0)}(v, S):
v \in V(T_i) \setminus V(T_i^0)\}.
\]
Our goal is to construct the sets \(M_j\) so that for an appropriate constant \(c\) (specified in the next section), for each \(j \in \{1, \ldots, m\}\) with \(X_j \neq \emptyset\), we have
\(\mdist_i(M_j) > c\) for all \(i \in X_j\).

The sets \(M_j\) will be constructed
one-by-one, where each set \(M_j\) is obtained from \(T_j^0\) by selecting an appropriate set of edges from each geodesic path in $\PP_j$, using the following claim, which exploits the fact that $G$ is a plane graph.

\begin{claim}\label{DistanceDIndependentSet}
    Let \(c \ge 1\), 
    let \(d := (8c+12)\Delta^{c+2}\), and
    let \(n_0 \ge d + 2c\).
    Let \(P\) be a geodesic path in \(B_j - A_j\)
    for some \(j \in \{1, \ldots, m\}\) with $X_j\neq \emptyset$,
    and suppose that for
    each \(i \in X_j\) we are given a set \(M_i \subseteq E(T_i^0)\)
    that is
    \((d+2c)\)-independent in \(B_i - A_i\),
    Then there exists a set \(M_P \subseteq E(P)\)
    that is \((d+2c)\)-independent in \(B_j - A_j\)
    such that each component of \(P - M_P\)
    has length at least \(\min\{n_0, |E(P)|\}\)
    and less than \(5n_0\), and for each \(i \in X_j\)
    we have \(\mdist_i(M_P) > c\).
\end{claim}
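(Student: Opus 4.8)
The plan is as follows. Since $P$ is a geodesic path in $B_j - A_j$, its internal structure is essentially that of a shortest path, so balls in $B_j-A_j$ intersect $P$ in short intervals. I would first handle the trivial case $|E(P)| \le n_0$ by taking $M_P = \emptyset$: then $P-M_P = P$ is a single component of length $|E(P)| = \min\{n_0,|E(P)|\} \le n_0 < 5n_0$, the empty set is vacuously $(d+2c)$-independent, and $\mdist_i(\emptyset) = +\infty > c$ for each $i \in X_j$. So assume $|E(P)| > n_0 \ge d+2c$.

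The core idea is to walk along $P$ from one end and greedily select edges to put into $M_P$, roughly every $n_0$ steps, while \emph{locally perturbing} each chosen edge to dodge two obstructions: (a) edges already committed to $M_i$ for $i \in X_j$ (so that $M_P \cup M_i$ stays $(d+2c)$-independent), and (b) vertices/edges that would make $\mdist_i(M_P) \le c$. More precisely, I would partition $E(P)$ into consecutive blocks of length roughly $2n_0$ (the last one possibly longer, absorbed to avoid a short leftover), and in each block select exactly one edge. Within a block of length $\approx 2n_0$, the candidate window for the selected edge is a sub-interval of length $\ge d+2c$ (say the middle portion), chosen so that any two selected edges from distinct blocks are at distance $> d+2c$ \emph{along $P$}; because $P$ is geodesic, distance along $P$ equals $\dist_P$, which dominates $\dist_{B_j-A_j}$ only in one direction — so here I must be careful: I actually need the \emph{reverse}, that the selected edges are far apart in $B_j-A_j$. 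This is the point where geodesy is used correctly: two edges of $P$ at $P$-distance $> d+2c$ are also at $B_j-A_j$-distance... no. Let me restate: I need them far in $B_j-A_j$; since $B_j-A_j$ distances are at most $P$-distances, being far in $P$ does \emph{not} suffice. Instead, for each candidate edge I explicitly delete from consideration all edges $e'$ of $P$ with $\dist_{B_j-A_j}(e,e') \le d+2c$; there are at most $2\Delta^{d+2c+2}$ such $e'$ globally... which is too many. The right fix is that balls of radius $d+2c$ in $B_j-A_j$, intersected with the geodesic $P$, form an interval of length $\le 2(d+2c)+1$ on $P$ — this \emph{is} a consequence of $P$ being geodesic (if $x,y \in V(P)$ then $\dist_P(x,y)=\dist_{B_j-A_j}(x,y)$). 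So a $B_j-A_j$-ball of radius $d+2c$ around a vertex of $P$ meets $P$ in a $P$-interval of length $\le 2(d+2c)$. Hence "far in $B_j-A_j$" and "far along $P$" coincide up to a factor of $2$ for vertices of $P$, which is exactly what makes the greedy argument work.

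With that in hand, the selection within each block of length $\approx 5n_0$ proceeds as: the forbidden edges coming from each $M_i$ ($i\in X_j$, at most two such sets) are, within the block's candidate window, confined to at most a bounded number of short $P$-intervals — here I'd use that $M_i$ is $(d+2c)$-independent in $B_i - A_i$ together with a bound relating distances in $B_i-A_i$ and $B_j-A_j$ along the shared structure, to argue each $M_i$ contributes $O(\text{window length}/(d+2c))$ forbidden positions, each blowing up to an interval of length $O(d)$; the $\mdist_i$ constraint similarly forbids a bounded-measure set. Choosing the window length (hence the block length) a fixed constant multiple of $d$ — concretely $\approx 5n_0$, using $n_0 \ge d+2c$ and $d = (8c+12)\Delta^{c+2}$ — guarantees a surviving candidate edge by a counting/pigeonhole argument: the total forbidden length is strictly less than the window length. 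After selecting one edge per block, $P - M_P$ has components of length between (block length $-$ window offsets) $\ge n_0$ and (two consecutive block lengths) $< 5n_0$; adjust the block boundaries slightly so these bounds are exactly $\min\{n_0,|E(P)|\}$ and $5n_0$. Finally $M_P$ is $(d+2c)$-independent in $B_j-A_j$ by construction (consecutive selected edges are at $P$-distance, hence $B_j-A_j$-distance via the geodesic interval bound, $> d+2c$), and $\mdist_i(M_P) > c$ for each $i \in X_j$ since we explicitly excluded the offending positions.

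The main obstacle I expect is the bookkeeping in step where the sets $M_i$ ($i\in X_j$) are "translated" into forbidden positions on $P \subseteq B_j-A_j$: the sets $M_i$ live in the edge set of $T_i^0$, which sits inside $B_i-A_i$, a \emph{different} subgraph, and $P$ lies in $B_j - A_j$ with $B_j \subseteq B_i$; I must bound, for each $i \in X_j$, how much of a given $P$-interval can be within $B_j-A_j$-distance $\le d+2c$ of $M_i$, and this requires carefully combining the $(d+2c)$-independence of $M_i$ in $B_i-A_i$ with the fact that such a path from $P$ to $M_i$, if short, either stays in $B_j-A_j$ or passes through $T_j^0$ (handled via the $\mdist_i$ definition and \cref{VTj0Separates}). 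Getting the constant $d=(8c+12)\Delta^{c+2}$ to come out exactly right — so that the forbidden length is provably less than the window length — is where the stated numerology has to be checked, and is the part I would expect to consume most of the effort.
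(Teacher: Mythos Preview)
Your overall architecture---take short windows $P_\alpha$ of length $d$ along $P$, use geodesy of $P$ to translate $(d+2c)$-independence in $B_j-A_j$ into $P$-distance separation, and in each window pick an edge avoiding the forbidden positions---matches the paper exactly. Where your proposal has a real gap is the step you yourself flag as the ``main obstacle'': bounding, for each $i\in X_j$, the number of edges $e$ of a window with $\mdist_i(e)\le c$.

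Your proposed mechanism, namely that the $(d+2c)$-independence of $M_i$ in $B_i-A_i$ limits the number of $M_i$-edges that can interact with a given window to $O(\text{window}/(d+2c))$, does not work by itself. The mixed distance $\mdist_i$ is built from two pieces, $\dist_{B_i-A_i}(M_i,v)+\dist_{B_i-V(T_i^0)}(v,e)$, and the \emph{second} piece is taken in $B_i-V(T_i^0)$, which is allowed to pass through $A_i$. Hence an edge $e'\in M_i$ can be arbitrarily far from the window in $B_i-A_i$ (so independence of $M_i$ gives you nothing about it) and still satisfy $\mdist_i(e)\le c$ via a short detour through $A_i$. A priori, unboundedly many such $e'$ could contribute forbidden positions on a single window, and no purely metric/counting argument rules this out.

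The paper closes this gap with a genuinely topological step that your proposal does not contain. For each $e'\in M_i$ far from $P_\alpha$ in $B_i-A_i$, it grows a radius-$c$ tree $U_{e'}$ and, for every point $z\in A_i$ adjacent to $U_{e'}$, records a short $V(T_i)$--$A_i$ path $Q(e',z)$; these paths are pairwise consistent. Adding a dummy vertex on the outer face adjacent to all such $z$ yields a planar graph $J^+$, and the window $P_\alpha$ sits inside a single face $f$ of $J^+$. Any witness that $\mdist_i(e)\le c$ via some $e'\in M_i''$ must cross the boundary of $f$, which---by consistency of the paths $Q(e',z)$ and planarity---is covered by at most two such paths, hence by a set $S$ with $|S|\le 2c+2$. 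Together with the single possible close edge in $M_i'$ this gives at most $2c+3$ ``sources'', each forbidding fewer than $2\Delta^{c+2}$ edges of $P_\alpha$; summing over $|X_j|\le 2$ yields fewer than $(8c+12)\Delta^{c+2}=d$ forbidden edges, which is exactly why $d$ has the value it does. This planar face argument is the missing idea; without it the pigeonhole you sketch cannot be made to terminate with the stated constants (or any constants).
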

\begin{proof}
  We may assume that the length of \(P\) is at least
  \(5n_0\), as otherwise
  the lemma is satisfied by \(M_P = \emptyset\).
  Let \(x\) and \(y\) denote the ends of \(P\).
  Let \(\{P_1, \ldots, P_t\}\) be an inclusion-maximal family of pairwise
  vertex-disjoint subpaths of \(P\) each of length
  \(d\) such that 
  \(\dist_P(V(P_\alpha), V(P_\beta)) \ge n_0\)
  for distinct \(\alpha,\beta \in \{1, \ldots, t\}\),
  and \(\dist_P(V(P_\alpha), \{x, y\}) \ge n_0\) for
  every \(\alpha \in \{1, \ldots, t\}\).
  Since \(n_0 > d\) and
  the length of \(P\) is at least \(5n_0\),
  we have \(t > 0\).
  Consider any maximal subpath \(P'\subseteq P\)
  internally disjoint from each
  of the paths \(P_1,\ldots,P_t\).
  Then the length of \(P'\) is at least \(n_0\).
  Since our family of paths is inclusion-maximal,
  the length of \(P'\) is less than
  \(d + 2n_0\) as otherwise we would
  be able to extend our family with a path of
  length \(d\) obtained from \(P'\) by removing
  at least \(n_0\) vertices from each side.
  Since \(d  + 2n_0 < 3n_0\), we conclude that
  the length of any
  such \(P'\) is at least \(n_0\) and less than
  \(3n_0\).

  We claim that each path \(P_\alpha\) contains an
  edge \(e_\alpha\) such that for every \(i \in X_j\)
  we have \(\mdist_i(e_\alpha) > c\).
  Since \(|X_j| \le 2\), it suffices to show that
  for each \(i \in X_j\), there is less than
  \(d/2\) edges \(e \in E(P)\) with
  \(\mdist_i(e) \le c\).
  Fix \(\alpha \in \{1, \ldots, t\}\) and \(i \in X_j\).
  Partition \(M_i\) into two sets \(M_i'\) and \(M_i''\)
  by assigning each edge \(e' \in M_i\) to \(M_i'\)
  if \(\dist_{B_i - A_i}(e', V(P_\alpha)) \le c\), and to
  \(M_i''\) if \(\dist_{B_i - A_i}(e', V(P_\alpha)) > c\).
  Since
  \(M_i\) is \((d+2c)\)-independent in
  \(B_i - A_i\), and the length of
  \(P_\alpha\) is at most \(d\), the set
  \(M_i'\) contains at most one edge.

  For every \(e' \in M_i''\), let \(U_{e'}\)
  denote a subtree of \(G\) on all vertices at distance at
  most \(c\) from \(e'\) in \(B_i - A_i\) such
  that each \(u \in V(U_{e'})\) has the same
  distance from \(e'\) in \(U_{e'}\) as in
  \(B_i - A_i\) (one can think of \(U_{e'}\) as
  a ``\textsc{bfs}-spanning tree rooted at the edge \(e'\)'') 
  Since the set \(M_i\) is \((d+2c)\)-independent
  in \(B_i - A_i\), the trees \(U_{e'}\) are
  pairwise vertex-disjoint, and by definition
  of \(M_i''\) the trees \(U_{e'}\) are disjoint
  from the path \(P_\alpha\).
  For each \(e' \in M_i''\), let
  \(Z_{e'} := N_{B_i}(V(U_{e'})) \cap A_i\).
  For each \(z \in Z_{e'}\), define a
  \(V(T_i)\)--\(A_i\) path
  \(Q(e', z)\) as follows. 
  Let
  \(y\) be a vertex of \(U_{e'}\) adjacent to \(z\) in \(B_i\) which minimises $\dist_{U_{e'}}(e',y)$ (and thus also minimises $\dist_{B_i - A_i}(e',y)$). Let \(x\) be the vertex on the path between \(y\) and \(e'\) which lies on \(V(T_i)\)  and minimises $\dist_{U_{e'}}(x,y)$ (this is well defined since  \(e'\) has both ends in \(V(T_i)\)).
  Then the path \(Q(e', z)\) is obtained
  from \(x U_{e'} y\) by adding the vertex \(z\)
  attached to \(y\).
  Each pair of distinct paths
  \(Q_1 = Q(e_1', z_1)\)
  and \(Q_2 = Q(e_2', z_2)\)
  is \defn{consistent},
  meaning that if 
  their intersection \(Q_1 \cap Q_2\) is
  not empty, then \(Q_1 \cap Q_2\) is a 
  path with an end in a common end of \(Q_1\) 
  and \(Q_2\). 

  Let \(J\) denote the union of \(T_i\)
  and the paths
  \(Q(e', z)\) for all \(e' \in M_i''\) and
  \(z \in Z_{e'}\).
  The graph \(J\) is a subgraph of \(B_i\)
  that intersects \(A_i\) only in the sets \(Z_{e'}\).
  Thus, all vertices in the sets \(Z_{e'}\)
  are incident with the outer face of \(B_i\).
  Let \(J^+\) denote the planar graph obtained from
  \(J\) by adding a new vertex \(w_+\)
  on the outer-face of \(B_i\) and making it adjacent to all vertices in the sets
  \(Z_{e'}\).
  See \cref{MjClaim}.
  \begin{figure}
      \centering
      \includegraphics{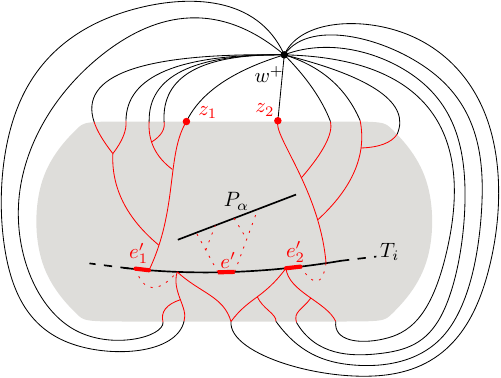}
      \caption{Illustration of a possible scenario in \cref{DistanceDIndependentSet}. The set \(M_i'\) consists of the
      edge \(e'\), and the boundary of the face \(f\) contains two paths
      \(Q(e'_1, z_1)\) and \(Q(e'_2, z_2)\). Hence,
      \(S = \{e'\} \cup E(Q(e'_1, z_1)) \cup E(Q(e'_2, z_2))\).}
      \label{MjClaim}
  \end{figure}
  Since \(P_\alpha \subseteq B_i - A_i\) and
  \(\dist_{B_i - A_i}(V(P_\alpha), M_i'') > c\), the path
  \(P_\alpha\) is disjoint from \(J^+\) and therefore
  the path \(P_\alpha\) is contained in a face \(f\) of \(J^+\). 
  Let \(S\) denote the set of all edges in
  \(E(J) \setminus E(T_i)\) on the boundary of \(f\).
  Since the paths \(Q(e', z)\) are pairwise consistent
  \(V(T_i)\)--\(A_i\) paths,
  the edges in \(S\) can be covered by 
  the union of at most two paths of the form
  \(Q(e', z)\).
  In particular, \(|S| \le 2c+2\).

  We claim that for every \(e \in E(P_\alpha)\)
  with \(\mdist_i(e) \le c\) we have
  \(\dist_{B_i}(S \cup M_i', e) \le c\).
  Suppose that \(\mdist_i(e) \le c\).
  Hence, there exist \(e' \in M_i\) and \(v \in V(T_i) \setminus V(T_i^0)\)
  that satisfy
  \(\dist_{B_i - A_i}(e', v) + \dist_{B_i - V(T_i^0)}(v, e) \le c\).
  If \(e' \in M_i'\), then we indeed have
  \(\dist_{B_i}(S \cup M_i', e) \le c\), so we
  assume that \(e' \in M_i''\).
  Let \(R\) be a shortest path between \(v\) and \(e\) in \(B_i - V(T_i^0)\).
  Since \(\dist_{B_i - A_i}(e, e') > c\), the path \(R\) must intersect
  \(A_i\). Let \(z\) be the
  vertex of \(R\) that belongs to \(A_i\) and is closest to \(v\) on \(R\).
  Hence, \(z \in Z_{e'}\).
  Since \(J^+\) contains \(Q(e', z)\),
  the path \(Q(e', z)\) is disjoint from the interior of \(f\).
  Therefore, the subpath of \(R\) between \(z\) and \(e\) must intersect
  the boundary of \(f\) in a vertex \(u\),
  and since \(R\) is disjoint from \(T_i^0\),
  the vertex \(u\) is an end of an edge in \(S\).
  Therefore, \(\dist_{B_i}(S, e) \le c\).
  This completes the proof that for every \(e \in E(P_\alpha)\),
  if \(\mdist_i(e) \le c\), then \(\dist_{B_i}(S \cup M_i', e) \le c\).
  Since \(|S \cup M_i'| \le |S| + |M_i'| \le 2c+3\),
  for each \(i \in X_j\) there exist less than \((4c+6)\Delta^{c+2}\)
  edges \(e \in E(P_\alpha)\) with \(\mdist_i(e) \le c\).
  Since \(|X_j| \le 2\) and the length of \(P_\alpha\) is
  \((8c+12)\Delta^{c+2}\),
  for each \(\alpha \in \{1, \ldots, t\}\) there exists an edge
  \(e_\alpha \in E(P_\alpha)\) such that
  \(\mdist_i(e_\alpha) > c\) for all \(i \in X_j\).
  
  Let \(M_P := \{e_1, \ldots, e_t\}\).
  Thus, \(\mdist_i(M_P) > c\)
  for each \(i \in X_j\).
  Since the distance between any two of the
  paths \(P_1, \ldots, P_t\) is at least
  \(n_0\) on \(P\), the set \(M_P\)
  is \(n_0\)-independent in \(P\).
  Because \(P\) is geodesic in \(B_j - A_j\)
  and \(n_0 \ge d + 2c\), the set \(M_P\) is
  \((d+2c)\)-independent in \(B_j - M_j\).

  It remains to show that the components
  of \(P - M_P\) have appropriate sizes.
  Let \(Q\) be a component of \(P - M_P\).
  Since \(M_P\) contains one edge from each of the subpaths \(P_1, \ldots, P_t\),
  the path
  \(Q\) intersects at most two of
  the paths \(P_1, \ldots, P_t\),
  and the total number of edges of \(Q\) shared with \(P_1, \ldots, P_t\) is at most \(2d\), and thus less than \(2n_0\).
  The edges of \(Q\) that do not belong to any of
  the paths \(P_1, \ldots, P_t\)
  induce a maximal subpath  of \(P\)
  internally disjoint from each of the
  paths \(P_1, \ldots, P_t\),
  which thus has length at least \(n_0\)
  and less than \(3n_0\).
  Hence, the length of \(P'\) is at least
  \(n_0\) and less than \(5n_0\).
\end{proof}

Finally, we are ready to construct the sets \(M_j\).

\begin{claim}\label{SetsMi}
Let \(c \ge 1\) and \(d := (8c+12)\Delta^{c+2}\).
There exists a family \(\{M_j \subseteq E(T_j^0) : j \in \{1, \ldots, m\}\}\) such that for every \(j \in \{1, \ldots, m\}\):
  \begin{enumerate}[(\alph*)]
    \item\label{DistanceDIndependent}
    $M_j$ is $(d + 2c)$-independent in
    \(B_j - A_j\),
    \item\label{DistanceDiMi} \(\dist_{T_j^0}(D_j, M_j) \ge 2\Delta^{40}\),
    \item\label{DistanceMiMj} %for every \(i \in X_j\) we have \(M_j \subseteq E(B_i - A_i)\) and
    %\(\dist_{B_i - A_i}(M_i, M_j) > c\),
    for each \(i \in X_j\), \(\mdist_i(M_j) > c\),
    \item\label{ComponentSize} each component of \(T_j^0 - M_j\) has 
    at most
    \(10\Delta^{80}(d+2c)\) vertices, and
    \item\label{LocalGeodesicity} for any pair of vertices \(x, y \in V(T_j^0)\) satisfying
    \(\dist_{B_j - A_{j}}(x, y) \le d+2c\) and \(E(x T_j^0 y) \cap M_j \neq \emptyset\), we have
    \(\dist_{T_j^0}(x, y) = \dist_{B_j - A_j}(x, y)\).
  \end{enumerate}
\end{claim}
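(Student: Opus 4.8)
The plan is to construct the sets $M_j$ greedily in order $j = 1, 2, \dots, m$, so that when we construct $M_j$ all sets $M_i$ with $i \in X_j$ (which satisfy $i < j$) are already available. For a fixed $j$, we first apply \cref{Steiner} to write $T_j^0$ as the union of a family $\PP_j$ of at most $2\Delta^{40}$ pairwise-internally-disjoint geodesic paths in $B_j - A_j$, each internally disjoint from $D_j$. We then feed each path $P \in \PP_j$ into \cref{DistanceDIndependentSet} with the present value of $c$ (so $d = (8c+12)\Delta^{c+2}$) and with $n_0$ chosen to be exactly $d + 2c$ (which satisfies the hypothesis $n_0 \ge d + 2c$ of that claim), obtaining a set $M_P \subseteq E(P)$ that is $(d+2c)$-independent in $B_j - A_j$, whose removal splits $P$ into components of length at least $\min\{n_0, |E(P)|\}$ and less than $5n_0$, and with $\mdist_i(M_P) > c$ for all $i \in X_j$. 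Before doing this, though, we trim each $P$: delete from the end(s) of $P$ that meet $D_j$ a prefix of length $2\Delta^{40}$ of vertices, so that the edges we are allowed to select lie at $T_j^0$-distance at least $2\Delta^{40}$ from $D_j$; since $M_P$ only uses edges interior to the (trimmed) path and the paths in $\PP_j$ are internally disjoint from $D_j$, this guarantees \cref{DistanceDiMi}. Finally we set $M_j := \bigcup_{P \in \PP_j} M_P$.

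Now I verify each of the five properties. For \cref{DistanceDIndependent}: each $M_P$ is $(d+2c)$-independent in $B_j - A_j$ by \cref{DistanceDIndependentSet}; the issue is independence \emph{across} different paths $P, P' \in \PP_j$. Here I use the structure of $T_j^0$ more carefully — since the paths in $\PP_j$ are pairwise internally disjoint, any short path in $B_j - A_j$ between an edge of $M_P$ and an edge of $M_{P'}$ either stays inside $T_j^0$ (and then must pass through one of the at most $|D_j|$ branch vertices / elements of $D_j$, but the chosen edges are far — distance $\ge 2\Delta^{40}$ — from $D_j$ inside $T_j^0$, contradicting \cref{ApproxGeodesicTj0} which bounds $\dist_{T_j^0}$ by $\Delta^{40} \dist_{B_j - A_j}$), or it leaves $T_j^0$, in which case it is long because $B_j - A_j$-distances are only a $\Delta^{40}$-factor shrinkage of $T_j^0$-distances for vertices of $T_j^0$. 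Making the trim length and the $n_0$ parameter large enough — which is why the bound in \cref{DistanceDIndependentSet} was stated with a generous constant — forces $\dist_{B_j-A_j}(M_P, M_{P'}) > d+2c$. For \cref{DistanceMiMj}: $\mdist_i(M_j) = \min_{P}\mdist_i(M_P) > c$ directly. For \cref{ComponentSize}: a component $C$ of $T_j^0 - M_j$ is a subtree; each maximal path of $C$ lying inside some $P \in \PP_j$ has length less than $5n_0 = 5(d+2c)$; since $C$ meets at most $|\PP_j| \le 2\Delta^{40}$ of these paths, $C$ has fewer than $2\Delta^{40} \cdot 5(d+2c) = 10\Delta^{40}(d+2c)$ edges — wait, the claimed bound is $10\Delta^{80}(d+2c)$, so in fact one accounts for the branch vertices too and the looser power $\Delta^{80}$ absorbs the $2|D_j| < 2\Delta^{40}$ count of pieces together with their individual lengths; I would state the count as: $C$ is covered by at most $2\Delta^{40}$ sub-paths each with at most $5(d+2c)$ vertices, giving at most $10\Delta^{40}(d+2c) \le 10\Delta^{80}(d+2c)$ vertices. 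For \cref{LocalGeodesicity}: if $x,y \in V(T_j^0)$ with $\dist_{B_j - A_j}(x,y) \le d+2c$ and $E(xT_j^0y) \cap M_j \ne \emptyset$, pick an edge $e \in E(xT_j^0y) \cap M_j$, say $e \in M_P$; since $M_P$ is $(d+2c)$-independent in $B_j - A_j$ and (by construction via the subpaths $P_1,\dots,P_t$ of \cref{DistanceDIndependentSet}) each component of $P - M_P$ is long, a geodesic $x$--$y$ path $R$ in $B_j - A_j$ of length $\le d+2c$ cannot "shortcut" around $e$; tracing $R$, either $R$ agrees with $xT_j^0y$ (done), or $R$ leaves $T_j^0$, and then combining the $\Delta^{40}$-distortion bound of \cref{ApproxGeodesicTj0} with the fact that $e$ lies deep in a long geodesic segment of $P$ forces $\dist_{B_j-A_j}(x,y) > d+2c$, a contradiction. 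So $R \subseteq xT_j^0y$ and hence $\dist_{T_j^0}(x,y) = \dist_{B_j-A_j}(x,y)$.

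The main obstacle, and the step I would spend the most care on, is \cref{DistanceDIndependent} across different paths of $\PP_j$ and — relatedly — property \cref{LocalGeodesicity}: both require ruling out "shortcuts" through $B_j - A_j$ (or even back through other pieces of $T_j^0$) that would bring two chosen edges closer than $d+2c$, or would give a shorter $x$--$y$ route avoiding the tree. This is exactly where \cref{ApproxGeodesicTj0} is used as a black box: it says $B_j - A_j$-distances between tree vertices shrink by at most a $\Delta^{40}$ factor relative to $T_j^0$-distances, so choosing $n_0$ (and the $D_j$-trim length) on the order of $\Delta^{40}(d+2c)$ rather than $d+2c$ makes all these comparisons go through. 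I would organize the argument so that \cref{DistanceDIndependentSet} is invoked with a value of $n_0$ already inflated by a $\Delta^{40}$ factor (still a valid choice since its only constraint is $n_0 \ge d+2c$), and then the cross-path and local-geodesicity verifications become short distance computations using \cref{ApproxGeodesicTj0} and \cref{DjSize}.
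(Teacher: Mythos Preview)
Your approach is the paper's: build the $M_j$ inductively, decompose $T_j^0$ via \cref{Steiner}, apply \cref{DistanceDIndependentSet} to each $P \in \PP_j$, and control cross-path interactions with \cref{ApproxGeodesicTj0}. The paper simply sets $n_0 := \Delta^{40}(d+2c)$ from the outset --- exactly the inflation you reach in your final paragraph --- and with that choice no ``trimming'' of $P$ near $D_j$ is needed: \cref{DistanceDIndependentSet} already ensures every component of $P - M_P$ has length at least $\min\{n_0,|E(P)|\}$, so each edge of $M_P$ is at $T_j^0$-distance at least $n_0 \ge 2\Delta^{40}$ from the ends of $P$, and the only $D_j$-vertices on $P$ are its ends. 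This also explains the $\Delta^{80}$ in \ref{ComponentSize}: a component meets at most $2\Delta^{40}$ paths, each contributing at most $5n_0 = 5\Delta^{40}(d+2c)$ vertices.

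Your sketch for \ref{LocalGeodesicity} has a genuine gap. You follow an external geodesic $R$ in $B_j - A_j$ and try to argue it cannot ``shortcut'' around $e$, but that reasoning does not close: there is no a priori reason a short $R$ must agree with $xT_j^0y$. The correct move is to argue about the tree path itself. From \cref{ApproxGeodesicTj0} one gets $\dist_{T_j^0}(x,y) < \Delta^{40}(d+2c) = n_0$; since $e \in E(xT_j^0y) \cap M_P$ and every inner vertex of $P$ has degree $2$ in $T_j^0$, the tree path $xT_j^0y$ would have to contain a full component of $P - M_P$ (of length $\ge n_0$) before it could leave $P$, which it is too short to do. Hence $xT_j^0y$ is a subpath of the geodesic $P$, and $\dist_{T_j^0}(x,y) = \dist_{B_j - A_j}(x,y)$ follows immediately. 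The same ``too short to contain a full component of $P - M_P$'' observation also gives the clean version of \ref{DistanceDIndependent}: if $\dist_{B_j-A_j}(e_1,e_2) \le d+2c$ then $\dist_{T_j^0}(e_1,e_2) < n_0$, yet the $T_j^0$-path between $e_1$ and $e_2$ must contain a component of $P - M_P$ (where $e_1 \in M_P$) and hence has length $\ge n_0$.
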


\begin{proof}
  %Fix \(c \ge 1\), let \(d = (8c+12)\Delta^{c+2}\)
  %\david{delete ``Let \(c \ge 1\), let \(d := (8c+12)\Delta^{c+2}\)'' since they are already defined.}
  Let \(n_0 := \Delta^{40}(d+2c)\). 
  
  We construct the sets \(M_j\) by induction on
  \(j\).
  Let \(j \in \{1, \ldots, m\}\), and
  suppose that the sets \(M_i\) with \(i < j\) have 
  already been constructed. In particular,
  each \(M_i\) is $(d+2c)$-independent in
  \(B_i - A_i\).
  We now construct \(M_j\).
  
By \cref{Steiner} there is a family \(\PP_j\) of at most \(2\Delta^{40}\) pairwise
internally disjoint geodesic paths in
\(B_j - A_j\) whose union is \(T_j^0\),
and which are internally disjoint from
\(D_j\).
Observe that every inner vertex of a path \(P \in \PP_j\) has degree two in \(T_j^0\).
  
  For each \(P \in \PP_j\), let \(M_P \subseteq E(P)\) be the subset of edges obtained by applying \cref{DistanceDIndependentSet} to \(c\), \(n_0\) and \(P\). Thus, \(M_P\) is \((d+2c)\)-independent in
  \(B_j - A_j\), 
  \(\mdist_i(M_P) > c\) for each
  \(i \in X_j\), and each component of \(P - M_P\) is a path of length
  at least \(\min\{n_0, |E(P)|\}\) and
  less than \(5n_0\).

  We show that the set
  \(M_j := \bigcup\{M_P : P \in \PP_j\}\) satisfies the claim. For the proof of \cref{DistanceDIndependent},
  we need to show that \(M_j\) is
  \((d+2c)\)-independent in \(B_j - A_j\).
  Suppose towards a contradiction that
  there are distinct \(e_1, e_2 \in M_j\) with
  \(\dist_{B_j - A_j}(e_1, e_2) \le d + 2c\).
  By \cref{ApproxGeodesicTj0}, 
  \[
    \dist_{T_j^0}(e_1, e_2) <
    \Delta^{40}\dist_{B_j - A_j}(e_1,e_2) \le 
    \Delta^{40}(d+2c) = n_0.
  \] 
  However, if \(P \in \PP_j\) is the path containing \(e_1\), then
  the shortest path between \(e_1\) and \(e_2\) in \(T_j^0\) contains a component
  of \(P - M_P\), and therefore has length at least \(\min\{n_0, |E(P)|\}=n_0\) (since $e_1\in E(P)$); that is, \(\dist_{T_j^0}(e_1, e_2) \ge n_0\), a contradiction.

  For any \(P \in \PP_j\) and \(e \in M_P\),
  the distance between \(e\) and the ends of \(P\)
  is at least \(n_0 = \Delta^{40}(d+2c)\), and therefore at least \(2\Delta^{40}\).
  Since the paths in \(\PP_j\) are
  pairwise internally disjoint, and
  internally disjoint from \(D_j\),
  this implies that \(\dist_{T_j^0}(D_j, M_j) \ge 2\Delta^{40}\). Therefore \cref{DistanceDiMi} is satisfied.

  By definition of the sets \(M_P\),
  for any \(P \in \PP_j\) and
  \(i \in X_j\) we have
  \(\mdist_i(M_P) > c\),
  and therefore for each \(i \in X_j\)
  we have \(\mdist_i(M_j) > c\).
  This proves \cref{DistanceMiMj}.

  For \cref{ComponentSize}, observe that if a component
  \(T'\) of \(T_j^0 - M_j\) intersects a path \(P \in \PP_j\),
  then \(T' \cap P\) is a component of \(P - M_P\), so it
  has less than \(5n_0\) edges,
  and therefore at most \(5n_0\) vertices.
  Hence,
  \begin{align*}
    |V(T')| &\le |\PP_j|\cdot5n_0
      \le 2\Delta^{40}\cdot5\Delta^{40}(d+2c)
      = 10\Delta^{80}(d+2c).
  \end{align*}

  Finally, for the proof of \cref{LocalGeodesicity},
  let \(x,y \in V(T_j^0)\) be vertices satisfying
  \(\dist_{B_j - A_j}(x, y) \le d+2c\) and \(E(x T_j^0 y) \cap M_j \neq \emptyset\). Let \(e \in E(x T_j^0 y) \cap M_j\),
  and let \(P \in \PP_j\) be the path containing \(e\).
  By \cref{ApproxGeodesicTj0}, 
  \[
    \dist_{T_j^0}(x, y) <
    \Delta^{40}\cdot\dist_{B_j - A_j}(x,y) \le 
    \Delta^{40}\cdot(d+2c)=
    n_0.
  \]
  Since \(M_P \neq \emptyset\),
  every component of \(P - M_P\) has length at least \(n_0\),
  so the path \(x T_j^0 y\) does not contain a component of \(P - M_P\).
  Since \(E(x T_j^0 y) \cap M_P \neq \emptyset\)
  and all inner vertices of \(P\) have degree
  two in \(T_j^0\), this implies that \(x T_j^0 y\)
  is a subpath of \(P\),
  and since \(P\) is geodesic in \(B_j - A_j\), we have
  \(\dist_{T_j^0}(x, y) = \dist_{B_j - A_j}(x, y)\).
\end{proof}

\section{Analysis of the Partition}
\label{Analysis}

Let \(\ell := 222\), and let \(c := 2\ell+6=450\).
Fix a family
\(\{M_j : j \in \{1, \ldots, m\}\}\)
satisfying \cref{SetsMi} for our value of \(c\).
Let \(\RR\) denote the partition of \(V(G)\)
where each part is the vertex-set of a
component of \(\bigcup_{j=1}^m (T_j - M_j)\). By \cref{SetsMi}\ref{ComponentSize},
for each
\(j \in \{1, \ldots, m\}\), the size of every component of \(T_j^0 - M_j\) is at most
\(10\Delta^{80}((8c+12)\Delta^{c+2} + 2c)=
10\Delta^{80}(3612\,\Delta^{452}+900)\).
Since each component of \(T_j - M_j\)
can be obtained from a component of \(T_j^0 - M_j\) by attaching at most \(\Delta\) vertices
to each vertex of the component,
the width of \(\RR\) is at most
\((\Delta+1)\cdot10\Delta^{80}(3612\,\Delta^{452}+900)\).

To complete the proof of \cref{BlockingPlanar},
we show that \(\RR\) is \(\ell\)-blocking; 
that is, no
\(\RR\)-clean path in \(G\) has length greater than \(\ell\).
Since a subpath of an \(\RR\)-clean path is \(\RR\)-clean,
it suffices to show that there is no \(\RR\)-clean path of
length exactly \(\ell+1\), so in our analysis we focus
only on paths of length at most \(\ell+1\).

We start by proving some
properties of \(\RR\)-clean paths.

\begin{claim}\label{Distance}
  Let \(j \in \{1, \ldots, m\}\), and let
  \(Q = (x_0,\ldots,x_q)\) be an \(\RR\)-clean
  path in \(B_j - A_j\) with \(\{x_0, x_q\} \subseteq V(T_j)\) and \(q \in \{1, \ldots, \ell+1\}\).
  Then \(E(x_0 T_j x_q) \cap M_j \neq \emptyset\), and
  for each \(e \in E(x_0 T_j x_q) \cap M_j\) we have
  \(\dist_{T_j}(e, x_\alpha) \le q + 2\) for \(\alpha \in \{0, q\}\).
  In particular,
  \[
  \dist_{T_j}(M_j, x_\alpha) \le q + 2\quad
  \textrm{for \(\alpha \in \{0, q\}\)}.
  \]
\end{claim}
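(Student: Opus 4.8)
The plan is to prove the two assertions of the claim in order: first that the tree-path $x_0 T_j x_q$ must contain an edge of $M_j$, and then that any such edge lies within $T_j$-distance $q+2$ of each of $x_0$ and $x_q$, the latter via the local geodesicity property \cref{SetsMi}\ref{LocalGeodesicity}. The point needing care throughout is that $x_0$ and $x_q$ need not lie in $V(T_j^0)$ --- they may be attached leaves of $T_j$ --- so I would first fix projections $x_0', x_q' \in V(T_j^0)$: set $x_\alpha' := x_\alpha$ if $x_\alpha \in V(T_j^0)$, and otherwise let $x_\alpha'$ be the unique neighbour of $x_\alpha$ in $T_j$, which lies in $V(T_j^0)$ since $T_j$ is obtained from $T_j^0$ by attaching leaves. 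In particular $\dist_{T_j}(x_\alpha, x_\alpha') \le 1$, and as $B_j - A_j$ is an induced subgraph of $G$ containing $T_j$, also $\dist_{B_j - A_j}(x_\alpha, x_\alpha') \le 1$.

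For the first assertion I would argue by contradiction: if $E(x_0 T_j x_q) \cap M_j = \emptyset$ then, since $q \ge 1$ gives $x_0 \ne x_q$, the path $x_0 T_j x_q$ witnesses that $x_0$ and $x_q$ lie in the same component of $T_j - M_j$, hence in a single part of $\RR$, contradicting that $Q$ is $\RR$-clean. For the distance bound, the length-$q$ path $Q$ together with the estimates above yields $\dist_{B_j - A_j}(x_0', x_q') \le q + 2$. Writing $x_0 T_j x_q$ as the concatenation of $x_0 T_j x_0'$, then $x_0' T_j^0 x_q'$ (which is also the $x_0'$--$x_q'$ path in $T_j$), then $x_q' T_j x_q$, the two outer pieces are single edges incident to a leaf of $T_j$ (or empty), so they contain no edge of $M_j \subseteq E(T_j^0)$; hence $\emptyset \ne E(x_0 T_j x_q) \cap M_j = E(x_0' T_j^0 x_q') \cap M_j$, which in particular forces $x_0' \ne x_q'$. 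Since $q + 2 \le \ell + 3 \ll d + 2c$, \cref{SetsMi}\ref{LocalGeodesicity} applies to $x_0', x_q'$ and gives $\dist_{T_j^0}(x_0', x_q') = \dist_{B_j - A_j}(x_0', x_q') \le q + 2$. Finally, for any $e \in E(x_0' T_j^0 x_q') \cap M_j$: since $e$ lies on the $T_j^0$-geodesic $x_0' T_j^0 x_q'$ we get $\dist_{T_j^0}(x_\alpha', e) \le \dist_{T_j^0}(x_0', x_q') - 1$, and therefore $\dist_{T_j}(x_\alpha, e) \le \dist_{T_j}(x_\alpha, x_\alpha') + \dist_{T_j^0}(x_\alpha', e) \le 1 + (q+1) = q+2$ for $\alpha \in \{0, q\}$; the ``in particular'' statement then follows from $\dist_{T_j}(M_j, x_\alpha) \le \dist_{T_j}(e, x_\alpha)$.

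The main obstacle here is just the bookkeeping between $T_j$ and its subtree $T_j^0$ --- tracking whether an endpoint is an attached leaf, checking that those ``leaf edges'' cannot belong to $M_j$, and verifying the hypotheses of \cref{SetsMi}\ref{LocalGeodesicity}. The required numerical inequality $q + 2 \le d + 2c$ is immediate from the chosen constants ($q \le \ell + 1 = 223$, whereas $d + 2c = 3612\,\Delta^{452} + 900$), so no genuine estimation is involved.
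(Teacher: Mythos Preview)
Your proof is correct and follows essentially the same approach as the paper's: both introduce the projections $x_\alpha'\in V(T_j^0)$, use $\RR$-cleanness to force an $M_j$-edge on the tree path, bound $\dist_{B_j-A_j}(x_0',x_q')\le q+2$, invoke \cref{SetsMi}\ref{LocalGeodesicity} to turn this into $\dist_{T_j^0}(x_0',x_q')\le q+2$, and then read off $\dist_{T_j}(e,x_\alpha)\le (q+1)+1$. The only cosmetic difference is that you argue $E(x_0T_jx_q)\cap M_j\neq\emptyset$ directly (since otherwise $x_0,x_q$ share a component of $T_j-M_j$), whereas the paper first shows $E(x_0'T_j^0x_q')\cap M_j\neq\emptyset$ via the fact that $x_\alpha'$ lies in the same $\RR$-part as $x_\alpha$; both routes are equivalent.
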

\begin{proof}
  For each \(\alpha \in \{0, q\}\), let
  \(x_\alpha'\) denote the vertex \(x_\alpha\) if
  \(x_\alpha \in V(T_j^0)\), or the vertex in
  \(V(T_j^0)\) that is adjacent to \(x_\alpha\)
  in \(T_j\) if \(x_\alpha \not \in V(T_j^0)\).
  Hence, \(x_\alpha'\) is
  in the same part of \(\RR\) as \(x_\alpha\)
  and \(\dist_{T_j}(x_\alpha', x_\alpha) \le 1\).
  In order to apply \cref{SetsMi}\ref{LocalGeodesicity} 
  to \(x_0'\) and \(x_q'\), observe that
  \begin{align*}
    \dist_{B_j - A_j}(x_0', x_q')&\le
    \dist_{T_j}(x_0', x_0) +
    \dist_Q(x_0, x_q) +
    \dist_{T_j}(x_q, x_q')\\
    &\le q + 2\\
    &\le \ell+3\\
    &< (8c+12)\Delta^{c+2} + 2c.
  \end{align*}
  Furthermore, since \(Q\) is \(\RR\)-clean,
  the part of \(\RR\) containing \(x_0\) and \(x_0'\)
  is distinct from the part containing
  \(x_q\) and \(x_q'\), so
  \(E(x_0'T_j^0 x_q') \cap M_j \neq \emptyset\).
  Therefore, by \cref{SetsMi}\ref{LocalGeodesicity},
  \[
  \dist_{T_j^0}(x_0', x_q') =
  \dist_{B_j - A_j}(x_0', x_q') \le
    q+2.
  \]
  Since \(M_j \subseteq E(T_j^0)\), we have
  \(E(x_0 T_j x_q) \cap M_j = E(x_0' T_j^0 x_q') \cap M_j
  \neq \emptyset\).
  Let \(e \in E(x_0' T_j^0 x_q') \cap M_j\).
  The length of the path \(x_0' T_j^0 x_q'\) is at most
  \(q+2\), so for each \(\alpha \in \{0, q\}\) we have
  \(\dist_{T_j^0}(e, x_\alpha') \le q + 1\),
  and therefore
  \[
    \dist_{T_j}(e, x_\alpha) =
    \dist_{T_j}(e, x_\alpha') +
    \dist_{T_j}(x_\alpha', x_\alpha)
    \le (q+1) + 1
    = q + 2.\qedhere
  \]
\end{proof}
\begin{claim}\label{IntersectionTi}
    Let \(j \in \{1, \ldots, m\}\), and let \(Q = (x_0, \ldots, x_q)\) be an
    \(\RR\)-clean path in \(B_j\)
    with \(q \in \{0, \ldots, \ell+1\}\)
    that is internally disjoint from \(A_j\).
    Then \(|V(Q) \cap V(T_j)| \le 2\).
\end{claim}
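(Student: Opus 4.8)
The strategy is to argue by contradiction: assuming $|V(Q)\cap V(T_j)|\ge 3$, I will extract three edges of $M_j$, one lying on each of three tree-paths inside $T_j$, show that these three edges are pairwise close in $B_j-A_j$, and then invoke the $(d+2c)$-independence of $M_j$ (\cref{SetsMi}\ref{DistanceDIndependent}) to force all three to be a single edge $e$ — which is impossible, since no edge of a tree can simultaneously lie on all three of the paths $u\,T_j\,v$, $v\,T_j\,w$ and $u\,T_j\,w$ for distinct vertices $u,v,w$.

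First I would reduce to a convenient form. If $q\le 1$ then $Q$ has at most two vertices and there is nothing to prove, so assume $q\ge 2$. Delete from $Q$ each endpoint lying in $A_j$, obtaining a (possibly shorter) subpath $Q^-$; since $Q$ is internally disjoint from $A_j$ and $V(T_j)\cap A_j=\emptyset$ (recall $T_j\subseteq B_j-A_j$), we have $V(Q^-)\subseteq V(B_j-A_j)$ and $V(Q)\cap V(T_j)=V(Q^-)\cap V(T_j)$, so it suffices to bound $|V(Q^-)\cap V(T_j)|$. If this is at least $3$, then $Q^-$ has at least three vertices, hence length between $2$ and $\ell+1$, and there are indices $a<b<c$ with $x_a,x_b,x_c\in V(T_j)$ (choose $a,c$ extremal among such indices, so some hit $x_b$ lies strictly between). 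Each of $x_aQx_b$, $x_bQx_c$, $x_aQx_c$ is then an $\RR$-clean subpath of $Q^-$ lying in $B_j-A_j$, with both ends in $V(T_j)$ and length in $\{1,\dots,\ell+1\}$; note also that $T_j\subseteq B_j-A_j$, so $\dist_{B_j-A_j}(\cdot,\cdot)\le\dist_{T_j}(\cdot,\cdot)$ for subgraphs of $T_j$.

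Applying \cref{Distance} to these three subpaths yields edges $e_{ab}\in E(x_aT_jx_b)\cap M_j$, $e_{bc}\in E(x_bT_jx_c)\cap M_j$ and $e_{ac}\in E(x_aT_jx_c)\cap M_j$ together with the bounds $\dist_{T_j}(e_{ab},x_a),\dist_{T_j}(e_{ab},x_b)\le(b-a)+2$, and the analogous bounds for $e_{bc}$ (through $x_b,x_c$) and for $e_{ac}$ (through $x_a,x_c$). The triangle inequality through the shared endpoints then gives $\dist_{T_j}(e_{ab},e_{ac}),\dist_{T_j}(e_{bc},e_{ac})\le 2(\ell+3)=2\ell+6=c$ and $\dist_{T_j}(e_{ab},e_{bc})\le(c-a)+4\le\ell+5\le c$, so all three edges are pairwise at distance at most $c\le d+2c$ in $B_j-A_j$. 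By \cref{SetsMi}\ref{DistanceDIndependent}, $M_j$ is $(d+2c)$-independent in $B_j-A_j$, so $e_{ab}=e_{bc}=e_{ac}=:e$. But then $e$ lies on each of $x_aT_jx_b$, $x_bT_jx_c$ and $x_aT_jx_c$; deleting $e$ leaves exactly two components of $T_j$, and each of these three membership statements would separate the corresponding pair among $x_a,x_b,x_c$ into distinct components, which is impossible by pigeonhole. This contradiction proves $|V(Q)\cap V(T_j)|\le 2$.

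The main obstacle is not conceptual but organisational: one must carefully check that the three subpaths extracted from the trimmed path $Q^-$ genuinely satisfy the hypotheses of \cref{Distance} (contained in $B_j-A_j$, nontrivial, $\RR$-clean, ends in $V(T_j)$), and that the resulting estimates — which involve only the absolute constants $\ell$ and $c$ — remain far below $d+2c=3612\Delta^{452}+900$. The point that makes the argument run is that a third vertex of $Q$ on $T_j$ forces a third edge of $M_j$ onto a tree-geodesic incompatible with the other two, so the bounded independence of $M_j$ does the work.
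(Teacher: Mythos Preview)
Your proof is correct and follows essentially the same approach as the paper's: apply \cref{Distance} to subpaths of $Q$ between vertices of $V(T_j)$, extract edges of $M_j$ that are pairwise at distance at most $c$ in $B_j-A_j$, and contradict \cref{SetsMi}\ref{DistanceDIndependent}. The paper's version is slightly more economical---it extracts only two edges (observing that after deleting the first edge $e$, some pair among $y_1,y_2,y_3$ lies in the same component of $T_j-e$, so the second edge is automatically distinct from $e$) rather than three followed by a pigeonhole argument---and you should rename your index $c$ (for the third hit on $T_j$) to avoid clashing with the constant $c=2\ell+6$.
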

\begin{proof} 
  Suppose to the contrary that there exist distinct vertices
  \(y_1, y_2, y_3 \in V(Q) \cap V(T_j)\).
  Since \(T_j \subseteq B_j - A_j\) and \(Q\) is internally
  disjoint from \(A_j\), each subpath of \(Q\)
  between two of the vertices \(y_1,y_2,y_3\)
  is contained in \(B_j - A_j\).
  By \cref{Distance} applied to \(y_1 Q y_2\),
  there exists an edge \(e \in E(y_1 T_j y_2) \cap M_j\)
  with \(\dist_{B_j - A_j}(e, y_1) \le q+2\) and
  \(\dist_{B_j - A_j}(e, y_2) \le q+2\). 
  Without loss of generality, \(y_3\) belongs to the same
  component of \(T_j - e\) as \(y_1\), and therefore
  \(e \not \in E(y_1 T_j y_3)\).
  By \cref{Distance} applied to \(y_1 Q y_3\),
  there exists an edge \(e' \in E(y_1 T_j y_3) \cap M_j\)
  with \(\dist_{B_j - A_j}(e', y_1) \le q + 2\).
  Therefore, \(e \neq e'\), and
  \[
    \dist_{B_j - A_j}(e, e') \le
    \dist_{B_j - A_j}(e, y_1) +
    \dist_{B_j - A_j}(y_1, e') \le
    2(q+2) \le 2\ell+6=c,
  \]
  which contradicts \cref{SetsMi}\ref{DistanceDIndependent}.
\end{proof}

\begin{claim}\label{IntersectionNear}
    Let \(j \in \{1, \ldots, m\}\), and let
    \(Q = (x_0, \ldots, x_q)\) be an \(\RR\)-clean path
    in \(B_j\) with \(q \in \{0, \ldots, \ell+1\}\)
    that is internally disjoint from \(A_j\),
    such that \(x_0\) is an attachment-vertex of \(B_j\)
    on a tree \(T_i\) with \(i \in X_j\) and
    \(\dist_{B_i - A_i}(x_0, M_i) \le \ell+3\).
    Then \(|V(Q) \cap V(T_j)| \le 1\).
\end{claim}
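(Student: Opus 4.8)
The plan is a proof by contradiction whose punchline contradicts the mixed‑distance lower bound in \cref{SetsMi}\ref{DistanceMiMj}. Suppose $|V(Q)\cap V(T_j)|\ge 2$; by \cref{IntersectionTi} it equals exactly $2$. Since $x_0\in A_j$ and $T_j\subseteq B_j-A_j$, the vertex $x_0$ does not lie on $T_j$, so the two vertices of $V(Q)\cap V(T_j)$ occur among the internal vertices of $Q$; write them as $y_1,y_2$ in the order in which they appear along $Q$ (in particular $q\ge 2$ and $y_1$ is an internal vertex of $Q$). The goal is to show $\mdist_i(M_j)\le 2\ell+6=c$.

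The witness vertex in the definition of $\mdist_i$ will be $v:=x_0$ itself. The first thing to check is that $x_0\in V(T_i)\setminus V(T_i^0)$: it lies on $T_i$ because it is an attachment‑vertex of $B_j$ on $T_i$, and it avoids $V(T_i^0)$ because, being an attachment‑vertex of $B_j$, it is an attachment‑vertex of the non‑trivial $F_i$‑bridge containing $B_j$, so \cref{NoAttachmentOnTj0} forbids it from lying on $T_i^0$. With this choice of $v$, the first summand in $\mdist_i(M_j)$, namely $\dist_{B_i-A_i}(M_i,x_0)$, is at most $\ell+3$ directly by hypothesis.

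To bound the second summand $\dist_{B_i-V(T_i^0)}(x_0,M_j)$, I would exhibit a short walk $W$ from $x_0$ to an endpoint of some edge $e\in M_j$. First apply \cref{Distance} to the subpath $y_1Qy_2$ — it is $\RR$‑clean, contained in $B_j-A_j$, has both ends on $T_j$, and has length in $\{1,\dots,\ell+1\}$ — to obtain $e\in E(y_1T_jy_2)\cap M_j$ with $\dist_{T_j}(y_1,e)\le |E(y_1Qy_2)|+2$. Take $W$ to be $x_0Qy_1$ followed by the $T_j$‑path from $y_1$ to the nearer endpoint of $e$; its length is $|E(x_0Qy_1)|+\dist_{T_j}(y_1,e)\le |E(x_0Qy_2)|+2\le q+2\le\ell+3$. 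It remains to verify that $W$ lies in $B_i-V(T_i^0)$. Since $i\in X_j$, one has $B_j\subseteq B_i$ by \cref{FjBridgeInBj} (the same deduction used in the proof of \cref{ChordalPartition}), and in particular $T_j\subseteq B_j\subseteq B_i$, so every vertex and edge of $W$ lies in $B_i$. Moreover $x_0\notin V(T_i^0)$; the internal vertices of $x_0Qy_1$ lie in $V(B_j-A_j)$, hence avoid $V(F_{j-1})\supseteq V(T_i^0)$ (using that $Q$ is internally disjoint from $A_j$); and the remaining vertices of $W$ lie on $T_j$, which is disjoint from $T_i$. Hence $\dist_{B_i-V(T_i^0)}(x_0,M_j)\le\ell+3$, and adding the two summands yields $\mdist_i(M_j)\le(\ell+3)+(\ell+3)=2\ell+6=c$, contradicting \cref{SetsMi}\ref{DistanceMiMj}.

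The only genuinely delicate point I foresee is the containment bookkeeping in the last step — confirming that the walk $W$ really stays inside $B_i$ and avoids $V(T_i^0)$ — which is a matter of tracking which of the trees $T_1,\dots,T_m$ each vertex encountered along $W$ can belong to, exactly as in the analogous computations already carried out for \cref{ChordalPartition} and in the proof that $\TT$ is $6$‑blocking. All remaining work is the elementary distance arithmetic displayed above.
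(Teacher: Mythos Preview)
Your proof is correct and follows essentially the same strategy as the paper: assume two vertices of $Q$ lie on $T_j$, apply \cref{Distance} to the subpath between them to locate an edge of $M_j$ nearby, and then bound $\mdist_i(M_j)\le 2\ell+6=c$ via the witness $v=x_0$, contradicting \cref{SetsMi}\ref{DistanceMiMj}. The only presentational difference is that the paper observes once and for all that $B_j\subseteq B_i-V(T_i^0)$ (via \cref{FjBridgeInBj} and \cref{NoAttachmentOnTj0}), whereas you verify the containment of your walk $W$ in $B_i-V(T_i^0)$ piece by piece; the arithmetic and the underlying ideas coincide. One tiny slip: you assert both $y_1,y_2$ are internal to $Q$, but in fact $y_2$ could equal $x_q$ when $x_q\notin A_j$; this is harmless since your argument only uses that $y_1$ is internal and that $|E(x_0Qy_2)|\le q$.
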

\begin{proof}
    Suppose to the contrary that
    \(|V(Q) \cap V(T_j)| > 1\).
    Hence, by \cref{IntersectionTi}, we have
    \(|V(Q) \cap V(T_j)| = 2\), say
    \(V(Q) \cap V(T_j) = \{x_\alpha, x_\beta\}\)
    for some \(\alpha,\beta\in\{0, \ldots, q\}\)
    with \(\alpha < \beta\).
    Since \(x_0 \in V(T_i)\), we have
    \(\alpha \ge 1\).
    Since \(\{x_\alpha, x_\beta\} \subseteq V(T_j)
    \subseteq B_j - A_j\)
    and \(Q\) is internally disjoint from \(A_j\),
    we have \(x_1 Q x_\beta \subseteq B_j - A_j\).
    By \cref{Distance} applied to \(x_\alpha Q x_\beta\), we have
    \[
      \dist_{B_j - A_j}(x_\alpha, M_j) \le
      \beta - \alpha + 2
      \le q - \alpha + 2 \le \ell - \alpha+3.
    \]

    The vertex \(x_0\) is an attachment-vertex of \(B_j\) in \(V(T_i)\), \(i \in X_j\),
    and by \cref{FjBridgeInBj}, we have
    \(B_j \subseteq B_i\), and
    by \cref{NoAttachmentOnTj0},
    \(B_j\) has no attachment-vertices
    in \(V(T_i^0)\), so
    \(B_j - A_j \subseteq
    B_j \subseteq B_i - V(T_i^0)\).
    Therefore, 
    \begin{align*}       
    \mdist_i(M_j) &\le \dist_{B_i - A_i}(M_i, x_0) +
    \dist_{B_i - V(T_i^0)}(x_0, M_j)\\
    &\le (\ell+3) + (\dist_{B_j}(x_0, x_\alpha) + \dist_{B_j - A_j}(x_\alpha, M_j))\\
    &\le (\ell+3)+\alpha+(\ell-\alpha+3)
    = 2\ell+6=c.
    \end{align*}
    This contradicts \cref{SetsMi}\ref{DistanceMiMj}.
\end{proof}

Next, we bound the length of \(\RR\)-clean paths in some special cases. Recall that $F_j=\bigcup_{i<j} T_i$.

\begin{claim}\label{LengthTiTj}
  Let \(i, j \in \{1, \ldots, m\}\) with \(i < j\),
  and let \(Q = x_0 \cdots x_q\) be an \(\RR\)-clean path with
  \(q \in \{1, \ldots, \ell+1\}\),
  \(x_0 \in V(T_i)\), \(x_q \in V(T_j)\), and
  \(V(Q) \cap V(F_j) = \{x_0, x_q\}\).
  Then
  \begin{enumerate}[(\alph*)]
    \item\label{TiTj2} if \(\dist_{B_i - A_i}(x_0, M_i) \le \ell+3\) or \(\dist_{B_j - A_j}(x_q, M_j) \le
    \ell+3\), then \(q \le 2\);
    \item\label{TiTj4} otherwise, \(q \le 4\).
  \end{enumerate}
\end{claim}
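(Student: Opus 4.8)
The plan is to locate $Q$ inside a single bridge of the chordal partition, then use the separation property of the trees $T^0_\bullet$ together with the counting Claims~\cref{IntersectionTi,IntersectionNear} to determine where $Q$ can meet the ``new'' tree of that bridge, and finally read off the length bounds. First I would dispose of $q=1$, where both parts are vacuous, and assume $q\ge2$. Since the interior of $Q$ avoids $V(F_j)$, the vertices $x_1,\dots,x_{q-1}$ lie in one component $C$ of $G-V(F_j)$, so $Q$ is contained in the non-trivial $F_j$-bridge $B$ with body $C$, and $x_0\in V(T_i)$ and $x_q\in V(T_j)$ are attachment-vertices of $B$. As $B$ has an attachment-vertex on $T_j$, \cref{FjBridgeInBj} gives $B\subseteq B_j$ and \cref{FiBridgeIsBj} gives $B=B_k$ for some $k>j$. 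Since $B_k$ attaches to the two distinct trees $T_i$ and $T_j$, invariant~(i) forces $X_k=\{i,j\}$, so $A_k\subseteq V(T_i)\cup V(T_j)$; in particular $Q$ is internally disjoint from $A_k$, whence \cref{IntersectionTi} gives $|V(Q)\cap V(T_k)|\le2$, and \cref{VTj0Separates} (at level $k$) shows $T_k^0$ separates $x_0$ from $x_q$ in $B_k$, so $V(Q)\cap V(T_k^0)\ne\varnothing$. Note also $V(Q)\cap(V(T_i)\cup V(T_j))=\{x_0,x_q\}$.

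The workhorse I would isolate is a \emph{neighbour lemma}: whenever $x_\gamma\in V(T_k^0)$ and $x_{\gamma\pm1}$ is not one of the at most two vertices of $V(Q)\cap V(T_k)$, the edge $x_\gamma x_{\gamma\pm1}$ cannot lie in a non-trivial $F_k$-bridge — such a bridge would attach to $T_k^0$, contradicting \cref{NoAttachmentOnTj0} — so $x_{\gamma\pm1}\in V(B_k)$ lies on one of $T_1,\dots,T_k$; within $B_k$ those trees are met only in $V(T_k)$ (the body) and in $A_k\subseteq V(T_i)\cup V(T_j)$, and $x_{\gamma\pm1}\notin V(T_k)$, so $x_{\gamma\pm1}\in V(Q)\cap(V(T_i)\cup V(T_j))=\{x_0,x_q\}$. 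For part~(a), after possibly reversing $Q$ one of the two distance hypotheses makes \cref{IntersectionNear} applicable (with starting attachment $x_0$ on $T_i\in X_k$, or $x_q$ on $T_j\in X_k$), forcing $|V(Q)\cap V(T_k)|=1$; the unique such vertex $x_\gamma$ lies in $T_k^0$, and applying the neighbour lemma to $x_{\gamma-1}$ and $x_{\gamma+1}$ yields $\gamma=1$ and $\gamma=q-1$, hence $q=2$.

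For part~(b) we have only $|V(Q)\cap V(T_k)|\le2$; if it equals $1$ the argument of~(a) again gives $q=2$. If it equals $2$, write $V(Q)\cap V(T_k)=\{x_\alpha,x_\beta\}$ with $\alpha<\beta$ (so $q\ge3$). The neighbour lemma gives $x_\alpha\in V(T_k^0)\Rightarrow\alpha=1$ and $x_\beta\in V(T_k^0)\Rightarrow\beta=q-1$, and at least one of $x_\alpha,x_\beta$ lies in $T_k^0$. If both do, then $\alpha=1$ and $\beta=q-1$; if moreover $q\ge4$ then $x_2\notin V(T_k)$ is an interior vertex, and the neighbour lemma puts $x_2\in\{x_0,x_q\}$, which is impossible, so $q=3$. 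If only $x_\alpha$ lies in $T_k^0$ — so $\alpha=1$ and $x_\beta$ is a leaf of $T_k$ attached to $T_k^0$ — the same $x_2$-argument forces $\beta=2$; then \cref{Distance} applied to the single edge $x_1x_2$ at level $k$ gives $\dist_{B_k-A_k}(x_2,M_k)\le3\le\ell+3$, and a fresh component analysis one level deeper puts the tail $R:=x_2Qx_q$ inside a bridge $B_{k'}$ with $k'>k$, where $x_2\in A_{k'}\cap V(T_k)$ and $x_q\in A_{k'}\cap V(T_j)$. Now \cref{IntersectionNear} applies to $R$ (starting attachment $x_2$ on $T_k\in X_{k'}$, at distance $\le\ell+3$ from $M_k$), forcing $|V(R)\cap V(T_{k'})|=1$, and the neighbour lemma at level $k'$ pins the unique vertex of $V(R)\cap V(T_{k'}^0)$ to be simultaneously $x_3$ and $x_{q-1}$, so $q=4$. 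The sub-case in which only $x_\beta$ lies in $T_k^0$ is symmetric, run from the $x_q$-end, so in every case $q\le4$.

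The hard part will be this last sub-case: once $Q$ touches $T_k$ at a leaf rather than inside $T_k^0$, one must realise that the remaining tail has re-entered a strictly deeper bridge with an endpoint that — crucially, via \cref{Distance} — lies within distance $\ell+3$ of $M_k$ in the tree-metric, which is precisely the hypothesis that lets \cref{IntersectionNear} be invoked a second time and caps $q$ at $4$. Keeping straight which bridge contains which piece of $Q$, and extracting the quantitative bound $\le\ell+3$ rather than mere finiteness, are the delicate points; everything else is a short chase through the structural claims of \cref{ChordalPartitionSection,Refinement}.
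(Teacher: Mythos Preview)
Your proposal is correct and follows essentially the same approach as the paper: locate $Q$ in a non-trivial $F_j$-bridge $B_k$, use \cref{VTj0Separates} to force $Q$ through $T_k^0$, and then analyse the neighbours of that vertex via the definition of $T_k$ (your ``neighbour lemma'') together with \cref{IntersectionTi,IntersectionNear}. The one organisational difference is in part~\ref{TiTj4}: once you reach $V(Q)\cap V(T_k)=\{x_1,x_2\}$ (or $\{x_{q-2},x_{q-1}\}$) and deduce $\dist_{B_k-A_k}(x_2,M_k)\le 3$, the paper simply applies the already-proved part~\ref{TiTj2} to the subpath $x_2Qx_q$ (with the roles of $i,j$ played by $j,k$), immediately yielding length at most $2$ and hence $q\le 4$; you instead unfold this into an explicit descent to a deeper bridge $B_{k'}$ and rerun the neighbour-lemma argument there. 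Your route works, but the recursive application of~\ref{TiTj2} is shorter and avoids the separate ``both in $T_k^0$'' case split.
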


\begin{proof}
Since \(V(Q) \cap V(F_j) = \{x_0, x_q\}\), the path
\(Q\)
is contained in some \(F_j\)-bridge.
If that \(F_j\)-bridge is trivial, then
\(q = 1\) and the claim follows.
Hence, \(Q\) is contained in a non-trivial
\(F_j\)-bridge, which equals \(B_k\) for some
\(k > j\) by \cref{FiBridgeIsBj}.
By \cref{VTj0Separates},
the set \(V(T_k^0)\) separates the vertices
\(x_0\) and \(x_q\) in \(B_k\), 
so some inner vertex of \(Q\) must lie on \(T_k^0\).

Let \(x_{\alpha}\) be an inner vertex of
\(Q\) in \(V(T_k^0)\).
Since the
vertices \(x_{\alpha-1}\) and \(x_{\alpha+1}\) are adjacent to \(V(T_k^0)\) in \(B_k\), they
 belong to \(A_k \cup V(T_k)\).
Since the only attachment-vertices of \(B_k\) on
\(Q\) are \(x_0 \in V(T_i)\) and \(x_q \in V(T_j\)),
we conclude that \(x_{\alpha-1} \in \{x_0\} \cup V(T_k)\)
and \(x_{\alpha+1} \in \{x_q\} \cup V(T_k)\).
By \cref{IntersectionTi},
we have \(|V(Q) \cap V(T_k)| \le 2\).
Since \(x_\alpha \in V(T_k^0) \subseteq V(T_k)\),
at most one of the vertices \(x_{\alpha-1}\)
and \(x_{\alpha+1}\) lies on \(T_k\).
In particular, \(x_{\alpha-1} = x_0\) or
\(x_{\alpha+1} = x_q\), so \(\alpha \in \{1, q-1\}\).
If \(x_{\alpha-1}=x_0\) and \(x_{\alpha+1} = x_q\),
then \(\alpha=1\) and \(q=2\), and the claim holds.
Hence we may assume that one of \(x_{\alpha-1}\) and \(x_{\alpha+1}\) lies on \(T_k\), and
therefore
\(V(Q) \cap V(T_k) = \{x_1, x_2\}\) or
\(V(Q) \cap V(T_k) = \{x_{q-2}, x_{q-1}\}\).
Thus, there exists \(\beta \in \{1, q-2\}\) such that
\(V(Q) \cap V(T_k) = \{x_\beta, x_{\beta+1}\}\).
By \cref{Distance} applied to the path
\(x_\beta Q x_{\beta+1}\), we have
\(\dist_{B_k - A_k}(x_\beta, M_k) \le 3\) and
\(\dist_{B_k - A_k}(x_{\beta+1}, M_k) \le 3\).

For the proof of \cref{TiTj2}, suppose that
\(\dist_{B_i - A_i}(x_0, M_i) \le \ell+3\) or
\(\dist_{B_j - A_j}(x_q, M_j) \le \ell+3\).
Hence, by \cref{IntersectionNear},
we have \(|V(Q) \cap V(T_k)| = 1\), so
\(x_{\alpha-1} = x_0\) and \(x_{\alpha+1} = x_q\), and therefore
\(q = 2\).
This proves \cref{TiTj2}.

Next, we show \cref{TiTj4}.
Suppose that \(\beta = 1\).
Then
\(x_q \in V(T_j)\), \(x_2 \in V(T_k)\),
\(V(x_2 Q x_q) \cap V(F_k) = \{x_q, x_2\}\),
and \(\dist_{B_k - A_k}(x_2, M_k) \le 3 < \ell+3\).
Hence, by \ref{TiTj2}, the length of \(x_2 Q x_q\)
is at most \(2\), and therefore \(q \le 4\).
The case when \(\beta = q-2\) is similar:
We have \(x_0 \in V(T_i)\), \(x_{q-2} \in V(T_k)\),
\(V(x_0 Q x_{q-2}) \cap V(F_k) = \{x_0, x_{q-2}\}\),
and \(\dist_{B_k - A_k}(x_{q-2}, (M_k)) \le 3 < \ell+3\).
Hence, by \ref{TiTj2}, the length of \(x_0 Q x_{q-2}\)
is at most \(2\), so \(q \le 4\).
This completes the proof of \ref{TiTj4}.
\end{proof}

\begin{claim}\label{LengthTiTi}
    Let \(i \in \{1, \ldots, m\}\),
    and let \(Q = x_0 \cdots x_q\) be an \(\RR\)-clean path
    with \(q \in \{0, \ldots, \ell+1\}\) and
    \(\{x_0, x_q\} \subseteq 
    V(Q) \cap V(F_i) \subseteq V(T_i)\).
    Then \(q \le 4\).
\end{claim}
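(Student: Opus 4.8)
The plan is to follow the proof of \cref{LengthTiTj} closely, wrapped inside an induction on $i$ run in the order $i=m,m-1,\dots,1$. Write $\ell=222$ and $c=2\ell+6$, $d=(8c+12)\Delta^{c+2}$ as in the surrounding analysis, and assume $q\ge 2$ (otherwise the claim is trivial). First I would confine $Q$ to a single bridge of $T_i$. Since $V(Q)\cap V(F_i)\subseteq V(T_i)$, the path $Q$ avoids $T_1,\dots,T_{i-1}$; hence every $F_i$-bridge met by $Q$ has an attachment-vertex on $T_i$ and, by \cref{FjBridgeInBj}, is contained in $B_i$, so $Q\subseteq B_i$; and since $A_i\subseteq V(F_{i-1})$ is disjoint from $Q$, in fact $Q\subseteq B_i-A_i$. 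Now \cref{IntersectionTi} (with $j=i$) gives $\abs{V(Q)\cap V(T_i)}\le 2$, so $V(Q)\cap V(T_i)=\{x_0,x_q\}$; thus $V(Q)\cap V(F_i)=\{x_0,x_q\}$ and the interior of $Q$ is disjoint from $V(F_i)$. Finally, \cref{Distance} (with $j=i$) gives $E(x_0 T_i x_q)\cap M_i\ne\emptyset$ and $\dist_{B_i-A_i}(x_0,M_i)\le q+2\le\ell+3$, and likewise $\dist_{B_i-A_i}(x_q,M_i)\le\ell+3$. So both ends of $Q$ lie within $\ell+3$ of $M_i$, which is exactly the hypothesis needed later for \cref{IntersectionNear} and for part \ref{TiTj2} of \cref{LengthTiTj}.

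Let $B_k$ be the unique non-trivial $F_i$-bridge containing $Q$; by \cref{FiBridgeIsBj} we have $k>i$, and since $Q$ can leave $B_k$ only at an $F_i$-attachment of $B_k$ — necessarily one of $x_0,x_q$, as $V(Q)\cap V(F_i)=\{x_0,x_q\}$ — we get $Q\subseteq B_k$ and $i\in X_k$. An interior vertex of $Q$ lying in $A_k$ must lie in $A_k\cap V(T_{i'})$ for some $i'\in X_k$ with $i'>i$ (because the interior of $Q$ avoids $V(F_i)\supseteq A_k\cap V(T_{i''})$ for $i''\le i$). So the following two cases are exhaustive. \textbf{Case A: $Q$ is internally disjoint from $A_k$.} Then \cref{IntersectionNear} with $j=k$ and root $x_0$ gives $\abs{V(Q)\cap V(T_k)}\le 1$. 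If $V(Q)\cap V(T_k)=\{x_\beta\}$, then $x_0 Q x_\beta$ and $x_\beta Q x_q$ are $\RR$-clean paths from $V(T_i)$ to $V(T_k)$ meeting $V(F_k)$ only at their ends, each with an end within $\ell+3$ of $M_i$, so part \ref{TiTj2} of \cref{LengthTiTj} gives $\beta\le 2$ and $q-\beta\le 2$, hence $q\le 4$. If $V(Q)\cap V(T_k)=\emptyset$, then $x_1 Q x_{q-1}\subseteq(B_k-A_k)\setminus V(T_k)\subseteq G-V(F_k)$ lies in a single $F_k$-bridge $B_{k_2}$ with $k_2>k$, $B_{k_2}\subseteq B_k$, $Q\subseteq B_{k_2}$, $X_{k_2}=\{i,k\}$, hence $A_{k_2}\subseteq V(T_i)\cup V(T_k)$, so $Q$ is again internally disjoint from $A_{k_2}$; repeating with $k_2$ in place of $k$, and noting that the indices $k<k_2<\cdots$ strictly increase and are at most $m$, we eventually reach the ``$\abs{V(Q)\cap V(T_{k_r})}=1$'' subcase and conclude $q\le 4$.

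\textbf{Case B: $X_k=\{i,i'\}$ with $i'>i$ and $Q$ has an interior vertex in $A_k$.} Let $x_a$ be the first such vertex; then $x_a\in A_k\cap V(T_{i'})$ and $x_0 Q x_a$ is internally disjoint from $A_k$. As $x_0 Q x_a$ runs from $A_k\cap V(T_i)$ to $A_k\cap V(T_{i'})$ it meets $V(T_k^0)$ by \cref{VTj0Separates}; combining this with \cref{IntersectionNear} applied to $x_0 Q x_a$ forces $a=2$, $x_1\in V(T_k^0)$ and $x_2\in A_k\cap V(T_{i'})$, so $\abs{E(x_0 Q x_2)}=2$. It remains to bound $\abs{E(x_2 Q x_q)}$. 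Splitting $x_2 Q x_q$ at the last visit of $Q$ to $T_{i'}$: the final piece meets $V(F_{i'})$ only at its ends, hence has length at most $2$ by part \ref{TiTj2} of \cref{LengthTiTj} (using $\dist_{B_i-A_i}(x_q,M_i)\le\ell+3$), while the initial piece meets $V(F_{i'})$ only inside $V(T_{i'})$, hence has bounded length by \cref{LengthTiTi} applied with $i'$ in place of $i$ (this is where the induction is used). A careful combination of these pieces — using that the excursion of $Q$ into $T_{i'}$ is very tightly constrained — yields $q\le 4$. The base case $i=m$ of the induction is immediate: then $V(F_m)=V(G)$, so $Q\subseteq T_m$, and an $\RR$-clean path inside a single tree has every edge in $M_m$, which is impossible for $q\ge 2$ since $M_m$ is $(d+2c)$-independent by \cref{SetsMi}\ref{DistanceDIndependent}.

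Everything outside \textbf{Case B} is routine bookkeeping. The main obstacle is \textbf{Case B}: one must pin down the local structure of $Q$ near $x_0$ via \cref{IntersectionNear} and \cref{VTj0Separates}, and then assemble the pieces of $x_2 Q x_q$ so that the total length does not exceed $4$ — this is where the planar separation facts recorded in \cref{VTj0Separates} and \cref{NoAttachmentOnTj0}, together with the distance control of \cref{Distance}, have to be used most delicately.
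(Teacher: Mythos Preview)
The paper's proof is considerably simpler than yours. Its key move is to let $j$ be the \emph{smallest} index such that an \emph{inner} vertex of $Q$ lies in $T_j$; then $Q\subseteq B_j$ (the $F_{j-1}$-bridge), $i\in X_j$, and \cref{IntersectionNear} immediately gives $|V(Q)\cap V(T_j)|=1$, after which a single application of \cref{LengthTiTj}\ref{TiTj2} to each of the two halves yields $q\le 2+2=4$. No induction, no iteration, no case split.

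Your approach instead takes $B_k$ to be the $F_i$-bridge containing $Q$. This is a weaker choice: there is no guarantee that $T_k$ meets $Q$, which is exactly what forces the iteration in your sub-case A.2. That iteration, once the indexing is repaired (you should be passing to an $F_{k+1}$-bridge, not an $F_k$-bridge, since $T_k\not\subseteq F_k$; as written the ``next'' bridge is $B_k$ again and the iteration does not progress), does eventually land on the same $j$ the paper picks directly---so Case A can be made to work, but it is a roundabout reconstruction of the paper's one-line choice.

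More importantly, your Case B is vacuous. By (the proof of) \cref{FiBridgeIsBj}, the $F_i$-bridge $B_k$ has the \emph{same} attachment set whether viewed as an $F_i$-bridge or as an $F_{k-1}$-bridge; hence $A_k$ lies entirely among the trees already in $F_i$, and every index in $X_k$ is at most $i$. So there is no $i'\in X_k$ with $i'>i$, and since the interior of $Q$ avoids $T_1,\dots,T_i$ (by hypothesis together with \cref{IntersectionTi}), the interior of $Q$ is automatically disjoint from $A_k$. Thus the case you flag as ``the main obstacle'' and handle only by a sketch (``a careful combination of these pieces \dots yields $q\le 4$'') never arises; the downward induction on $i$ is unnecessary, and the hand-waving in Case B is not an actual gap---but failing to notice this obscures what is really a short argument.
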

\begin{proof}
  If \(q = 0\), then the claim holds trivially, so we
  assume that \(x_0 \neq x_q\).
  By \cref{IntersectionTi}, we have
  \(|V(Q) \cap V(F_i)| \le 2\), so
  \(V(Q) \cap V(F_i) = \{x_0, x_q\}\).
  Since $(V(T_j)\colon j\in \{1,\dots,m\})$ is a partition of $V(G)$, each inner vertex of \(Q\) belongs to some tree \(T_j\) with \(j > i\). 
  Let \(T_j\) be the tree
  containing an inner vertex
  of \(Q\) with the smallest
  \(j\).  
  Thus, \(Q\) intersects \(F_{j-1}\) only in its ends, and \(B_j\) is the \(F_{j-1}\)-bridge
  containing \(Q\). 
  By \cref{FjBridgeInBj},
  \(B_j \subseteq B_i\).
  Since \(B_j\) has attachment-vertices in \(V(T_i)\), we have \(i \in X_j\).
  By \cref{Distance}, we have
  \(\dist_{T_i}(x_0, M_i) \le q + 2 \le \ell+3\) and
  \(\dist_{T_i}(x_q, M_i) \le q + 2 \le \ell+3\).
  Hence, by \cref{IntersectionNear}, we have
  \(|V(Q) \cap V(T_j)| = 1\), say
  \(V(Q) \cap V(T_j) = \{x_\alpha\}\). 
  By \cref{LengthTiTj}\cref{TiTj2}, each of the paths \(x_0 Q x_\alpha\) and
  \(x_\alpha Q x_q\) has length at most \(2\), so
  \(q\le 4\).
\end{proof}

For each \(j \in \{1, \ldots, m\}\),
the graph \(B_j\) intersects at most
three components of \(F_j\), namely,
\(T_j\) and at most two components of
\(F_{j-1}\) on which \(B_j\) has attachment-vertices.
We aim to show that every \(\RR\)-clean
path in \(B_j\) with both ends on
\(F_j\) has length at most \(36\)
(the value \(\tau = 37\) 
%in our construction of the chordal partition 
was chosen to be greater than this bound).
We first prove the following helper
claim.

\begin{claim}\label{Length8a}
    Let \(i, j \in \{1, \ldots, m\}\)
    with \(i < j\),
    let \(Q = (x_0, \ldots, x_q)\) be
    an \(\RR\)-clean path in \(B_j\)
    with \(q \in \{0, \ldots, 
    \ell+1\}\),
    \(V(Q) \cap V(T_i) = \{x_0\}\)
    and \(x_q \in V(F_j)\).
    Then \(q \le 8(a-1)\),
    where 
    \(a \in \{1, 2, 3\}\)
    is the number of components of
    \(F_j\) that intersect \(Q\).
\end{claim}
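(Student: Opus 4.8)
The plan is to induct on $a$. First note that since $Q \subseteq B_j$ and $x_0 \in V(T_i) \cap V(B_j)$ with $i < j$, the vertex $x_0$ lies in $A_j$, i.e.\ it is an attachment-vertex of $B_j$; so $i \in X_j$, and by invariant (i) the only components of $F_j$ that $Q$ can meet are $T_j$, $T_i$, and at most one further attachment-component $T_{i'}$ of $F_{j-1}$ with $i' \in X_j$. Hence $a \le 3$. When $a = 1$ the only component of $F_j$ meeting $Q$ is $T_i$; since $x_q$ lies in a component met by $Q$ and $V(Q) \cap V(T_i) = \{x_0\}$, we get $x_q = x_0$ and $q = 0 = 8(a-1)$.

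For $a \ge 2$ the target $8(a-1) = 4 \cdot 2(a-1)$ suggests the following scheme: show that $Q$ has at most two vertices on each component of $F_j$ it meets other than $T_i$, so that $\abs{V(Q) \cap V(F_j)} \le 1 + 2(a-1) = 2a-1$; then cut $Q$ at these at most $2a-1$ vertices (which include both ends $x_0,x_q$) into at most $2a-2$ edge-disjoint subpaths, each with both ends on $F_j$ and interior disjoint from $F_j$; and finally bound the length of each such subpath by $4$ using \cref{LengthTiTj} when its ends lie in distinct trees, or \cref{LengthTiTi} when they lie in the same tree, obtaining $q \le 4(2a-2) = 8(a-1)$. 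For the component $T_{i'}$ the `at most two' bound is obtained by passing to the larger bridge: one checks $B_j \subseteq B_{i'}$ and that $Q$ meets no tree $T_k$ with $k<i'$ except possibly at its end $x_0$, so $Q$ is a path in $B_{i'}$ whose only possible vertex in $A_{i'}$ is $x_0$; hence $Q$ is internally disjoint from $A_{i'}$ and \cref{IntersectionTi} (applied with $i'$ in place of $j$) gives $\abs{V(Q) \cap V(T_{i'})} \le 2$. For $T_j$ in the case $a = 2$, the path never reaches $T_{i'}$, so $x_0$ is its only vertex in $A_j$; then $x_0 Q x_q$ with its first vertex removed is internally disjoint from $A_j$, and \cref{IntersectionTi} gives $\abs{V(Q) \cap V(T_j)} \le 2$ directly. (That the first vertex of $Q$ after $x_0$ lying on $F_j$ is necessarily on $T_j$ rather than on $T_{i'}$ follows from \cref{VTj0Separates}: a sub-path from $x_0 \in T_i$ to a vertex of $T_{i'}$ whose interior avoids $F_j$ would avoid $T_j^0 \subseteq T_j$, which is impossible.)

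The remaining, genuinely delicate point is the `at most two vertices on $T_j$' bound when $a = 3$, since now $Q$ may cross repeatedly between the `$T_i$-side' and the `$T_{i'}$-side' of $T_j^0$. I would attack it in the spirit of the proof of \cref{IntersectionTi}: cut $Q$ at its at most two vertices on $T_{i'}$ into at most three $A_j$-free sub-paths, apply \cref{IntersectionTi} to each, and then use \cref{Distance} together with the $(d+2c)$-independence of $M_j$ from \cref{SetsMi} — and the fact, read off from \cref{NoAttachmentOnTj0}, that every $B_j$-neighbour of $V(T_j^0)$ lies in $V(T_j)$ — to show that these sub-paths cannot all enter $T_j$ far from $M_j$: two such entries would produce two edges of $M_j$ at distance at most $c$ in $B_j - A_j$, contradicting \cref{SetsMi}. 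This simultaneous bookkeeping across the three sub-paths (and the routine-but-fiddly handling of trivial $F_j$-bridges touching $T_j^0$) is the main obstacle; once $\abs{V(Q) \cap V(T_j)} \le 2$ is established, the decomposition above closes the argument, and the constant $8(a-1)$ is not optimised.
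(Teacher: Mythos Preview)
Your scheme for $a\le 2$ is fine and coincides with the paper's in spirit, but your $a=3$ route genuinely diverges from the paper and the gap you flag there is real. The bound $|V(Q)\cap V(T_j)|\le 2$ that your decomposition needs is not delivered by the sketch: after cutting at the (at most two) $T_{i'}$-vertices you get up to three sub-paths internally disjoint from $A_j$, and \cref{IntersectionTi} gives at most two $T_j$-vertices on each. To glue these via \cref{Distance} and the $(d+2c)$-independence of $M_j$ you would need, for any two $T_j$-vertices $y,y'$ of $Q$, a short $y$--$y'$ path inside $B_j - A_j$; but when $y$ and $y'$ lie in different sub-paths, the only short route you have between them is along $Q$, which passes through $A_j$. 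In particular, a sub-path containing a single $T_j$-vertex produces no $M_j$-edge via \cref{Distance}, so your ``two such entries'' contradiction does not fire; even if two sub-paths each contribute a pair of $T_j$-vertices and hence an $M_j$-edge, you have no bound on $\dist_{B_j-A_j}$ between those edges. So neither the count $|V(Q)\cap V(T_j)|\le 2$ nor the resulting bound $q\le 16$ follows from your outline.

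The paper avoids this entirely with a short recursion. It does \emph{not} bound $|V(Q)\cap V(T_j)|$. Instead it lets $T_{i'}$ be the component of $F_j$ meeting $Q$, distinct from $T_i$, with \emph{smallest} index $i'$, and takes $x_\alpha,x_\beta$ to be the first and last vertices of $Q$ on $T_{i'}$. Then \cref{LengthTiTj} gives $|x_0Qx_\alpha|\le 4$ and \cref{LengthTiTi} gives $|x_\alpha Qx_\beta|\le 4$, so $\beta\le 8$. If $i'=j$ this already forces $x_q\in V(T_j)$ and $q=\beta\le 8$. Otherwise $i'\in X_j$, and \cref{VTj0Separates} shows $Q$ also meets $T_j$, so $a=3$; now the suffix $x_\beta Qx_q$ lies in $B_j$, meets $T_{i'}$ only at $x_\beta$ and avoids $T_i$, hence meets at most two components of $F_j$, and the already-proven $a\le 2$ case bounds its length by $8$, giving $q\le 16$. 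No control on $|V(Q)\cap V(T_j)|$ is needed.
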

\begin{proof}
    If \(a = 1\), then since
    \(V(Q) \cap V(T_i) = \{x_0\}\), the
    only component of \(F_j\) intersecting
    \(Q\) is \(T_i\), and thus \(x_q = x_0\),
    so \(q=0=8(a-1)\). 
    Hence, we assume that
    \(a \ge 2\).
    Let \(T_{i'}\) be the component of
    \(F_j\) that contains a vertex
    of \(Q\), is distinct from \(T_i\)
    and has \(i'\) as small as possible.
    Let \(x_\alpha\) and \(x_\beta\)
    denote respectively the first and
    the last vertex of \(Q\) in \(V(T_{i'})\).
    By \cref{LengthTiTj}, the length of
    \(x_0 Q x_\alpha\) is at most \(4\),
    and by \cref{LengthTiTi}, the length
    of \(x_\alpha Q x_{\beta}\) is also
    at most \(4\). Hence, \(\beta \le 8\).
    If \(i' = j\), then by our choice of \(i'\), we have \(V(Q) \cap V(F_j) \subseteq \{x_0\} \cup V(T_j)\),
    so \(x_q \in V(T_j)\), and therefore \(q = \beta \le 8 \le 8(a-1)\).
    Hence, assume that \(i' \neq j\) which means that \(i' \in X_j\).
    The path \(Q\) intersects \(T_i\) and \(T_{i'}\),
    and by \cref{VTj0Separates} it intersects also
    \(T_j\), so \(a = 3\).
    We have \(V(x_\beta Q x_q) \cap V(T_{i'}) = \{x_\beta\}\),
    and the path \(x_\beta Q x_q\) intersects at most
    two components of
    \(F_j\), so we already know that 
    its length is at most \(8(2-1) = 8\),
    so \(q \le \beta + 8 \le 16 = 8(a-1)\).
\end{proof}
\begin{claim}\label{Length36}
    Let \(j \in \{1, \ldots, m\}\), and let
    \(Q = (x_0, \ldots, x_q)\) be an
    \(\RR\)-clean path in \(B_j\) with
    \(p \in \{0, \ldots, \ell+1\}\) 
    such that
    \(\{x_0, x_p\} \subseteq V(F_j)\).
    Then \(q \le 36\).
\end{claim}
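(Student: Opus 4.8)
The plan is to analyse how $Q$ meets the (at most three) components of $F_j$ that $B_j$ can intersect, and to combine the length bounds of \cref{LengthTiTj,LengthTiTi,Length8a}. Recall that $B_j$ meets at most three components of $F_j$ --- the tree $T_j$ and the at most two trees $T_i$ with $i\in X_j$ --- that every vertex of $Q$ lying on a tree $T_i$ with $i\in X_j$ belongs to $A_j$, and that $A_j$ is independent in $B_j$ with, by \cref{VTj0Separates}, $T_j^0\subseteq T_j$ separating $A_j\cap V(T_i)$ from $A_j\cap V(T_{i'})$ whenever $X_j=\{i,i'\}$ and $i\neq i'$. If $X_j=\emptyset$, then $Q$ meets only $T_j$ among the components of $F_j$, so $Q$ is disjoint from $A_j$; \cref{IntersectionTi} gives $|V(Q)\cap V(T_j)|\le 2$, hence $V(Q)\cap V(F_j)=\{x_0,x_q\}$, and \cref{LengthTiTi} (with $i=j$) yields $q\le 4$. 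So I would assume $X_j\neq\emptyset$.

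Let $x_{\gamma_1},\dots,x_{\gamma_r}$ be the vertices of $V(Q)\cap V(F_j)$ in their order along $Q$, so $\gamma_1=0$ and $\gamma_r=q$. Each link $L_t:=x_{\gamma_t}Qx_{\gamma_{t+1}}$, for $1\le t\le r-1$, is internally disjoint from $V(F_j)$, so its only $F_j$-vertices are its two ends, each on one of $T_j,T_i,T_{i'}$. These two ends cannot lie on the two distinct side-trees $T_i$ and $T_{i'}$: by \cref{VTj0Separates} every $T_i$--$T_{i'}$ path in $B_j$ meets $T_j^0\subseteq T_j$, which would put an internal vertex of $L_t$ on $F_j$. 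Hence the ends of $L_t$ lie on a single tree of $F_j$, giving $|L_t|\le 4$ by \cref{LengthTiTi}, or on $T_j$ and a side-tree $T_i$ with $i<j$, giving $|L_t|\le 4$ by \cref{LengthTiTj}. Therefore $q=\sum_{t=1}^{r-1}|L_t|\le 4(r-1)$, and it remains to bound $r=|V(Q)\cap V(F_j)|$.

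Since $V(T_j),V(T_i),V(T_{i'})$ are pairwise disjoint, $r=|V(Q)\cap V(T_j)|+\sum_{i\in X_j}|V(Q)\cap V(T_i)|$, and I would bound each term by a constant; this is the heart of the matter, and the only place the refinement \cref{SetsMi} is used. To bound $|V(Q)\cap V(T_j)|$: if $Q$ had three vertices on $T_j$, then --- splitting $Q$ at those vertices and at its (isolated, by independence of $A_j$) visits to $A_j$, and using \cref{IntersectionNear} to forbid a sub-path starting at an $A_j$-vertex near some $M_i$ that then revisits $T_j$ --- one finds two of the three $T_j$-vertices joined by a sub-path of $Q$ contained in $B_j-A_j$. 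Then \cref{Distance} places each of them within $T_j$-distance $\le\ell+3$ of an edge of $M_j$ on the $T_j$-path between them, producing two distinct edges of $M_j$ at distance at most $2(\ell+3)=2\ell+6=c<d+2c$ in $B_j-A_j$, contradicting \cref{SetsMi}\ref{DistanceDIndependent}. An analogous argument carried out inside $B_i-A_i$ --- using \cref{FjBridgeInBj} so that $B_j\subseteq B_i$, \cref{NoAttachmentOnTj0} to keep the relevant sub-paths off $T_i^0$, and the $B_i$-instances of \cref{Distance,IntersectionNear} and of \cref{SetsMi}\ref{DistanceDIndependent} for $M_i$ --- bounds $|V(Q)\cap V(T_i)|$ for each $i\in X_j$, while \cref{Length8a} controls the tail of $Q$ past its last visit to a side-tree. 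Tracking the resulting constants gives a bound on $r$, and hence $q\le 4(r-1)$, of at most $36$.

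The main obstacle is this last paragraph: because $Q$ may oscillate between $T_j$ and the side-trees by passing through $A_j$, a sub-path of $Q$ joining two consecutive visits to a fixed tree need not lie in the subgraph ($B_j-A_j$, respectively $B_i-A_i$) in which \cref{Distance} and \cref{SetsMi}\ref{DistanceDIndependent} are available; reconciling this requires combining \cref{IntersectionTi} and \cref{IntersectionNear} to show that every such oscillation either stays inside the correct subgraph or is cut off after at most one further step, so that the $M$-independence bounds can be brought to bear.
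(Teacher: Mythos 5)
Your proposal does not complete the proof. You decompose $Q$ into ``links'' between consecutive $F_j$-vertices, correctly argue (via \cref{VTj0Separates}) that no link joins the two side-trees, and so each link has length at most $4$ by \cref{LengthTiTi} or \cref{LengthTiTj}. This reduces the problem to showing $r = |V(Q)\cap V(F_j)|\le 10$. But then you explicitly flag the heart of that step as unresolved: paths that oscillate between $T_j$ and the side-trees through $A_j$ produce sub-paths that are not confined to $B_j - A_j$ (or $B_i - A_i$), so \cref{IntersectionTi}, \cref{Distance} and \cref{SetsMi}\ref{DistanceDIndependent} do not apply directly. Your final paragraph describes what ``would need'' to be shown, not a proof of it. That is a genuine gap: without a concrete bound on $r$ the argument does not close, and it is not evident that the bound $r\le 10$ is attainable by your route (the link bounds alone only give a count of links, not of $F_j$-visits).

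The paper proves the claim by a different and cleaner decomposition that avoids counting $F_j$-visits altogether. It takes the smallest index $i$ of a tree meeting $Q$, bounds the middle segment between the first and last $T_i$-visit by $4$ using \cref{LengthTiTi}, and then bounds each of the two remaining tails by $16$ using \cref{Length8a} --- a helper claim specifically engineered to control the length of an $\RR$-clean path whose intersection with $T_i$ is a single endpoint, as a function of how many components of $F_j$ it meets. This yields $16+4+16 = 36$ directly. You mention \cref{Length8a} only as an afterthought (``controls the tail of $Q$ past its last visit to a side-tree'') but do not integrate it into your decomposition; it is in fact the load-bearing lemma here, precisely because it sidesteps the oscillation difficulty you identify.
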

\begin{proof}
  Let \(T_i\) be the tree 
  intersecting \(Q\) with the smallest \(i\). 
  So $i\in X_j\cup \{j\}$.
  Let \(x_\alpha\) and \(x_\beta\)
  denote the first and the last vertex of
  \(Q\) in \(V(T_i)\).
  By \cref{LengthTiTi}, the length of
  \(x_\alpha Q x_\beta\) is at most \(4\).
  If \(i = j\), then \(\alpha = 0\) and
  \(\beta = q\), so \(q \le 4\).
  Therefore, we assume that \(i < j\).
  We have
  \(V(x_0 Q x_{\alpha}) \cap V(T_i)=
  \{x_\alpha\}\) and
  \(V(x_\beta Q x_q) \cap V(T_i) = \{x_\beta\}\).
  Hence, by \cref{Length8a}, each of the paths \(x_0 Q x_\alpha\) and
  \(x_\beta Q x_q\) has length at most \(16\),
  which implies that \(q \le 16 + 4 + 16 = 36\).
\end{proof}

We proceed to the main part of the proof of \cref{BlockingPlanar}.
Towards a contradiction, assume that
\(\RR\) is not \(\ell\)-blocking. Hence, there
exists an \(\RR\)-clean path \(P = (x_0, \ldots, x_p)\)
with \(p > \ell\).
Every subpath of an \(\RR\)-clean path is \(\RR\)-clean,
so we may assume without loss of generality that the length of \(P\)
is exactly \(\ell+1\).
Let
\(T_i\) be the tree
intersecting \(Q\)
that has the smallest \(i\).
Let \(x_\alpha\) and
\(x_\beta\) denote respectively the first and the last vertex of \(Q\)
belonging to \(T_i\).
By \cref{LengthTiTi}, the length of
\(x_\alpha P x_\beta\) is at most \(4\).
Hence, there exists \(Q \in \{x_0 P x_\alpha, x_\beta P x_p\}\)
with length at least
\(\lceil((\ell+1)-4)/2\rceil = 110\).
The path \(Q\) intersect \(V(F_i)\) only in one of its ends,
and that end lies on \(T_i\).
Therefore, to reach a contradiction and complete the proof
it suffices to show the following claim.

\begin{claim}\label{LengthRooted}
    Let \(i \in \{1, \ldots, m\}\), and let \(Q = (x_0, \ldots, x_q)\) be an
    \(\RR\)-clean path with
    \(q \in \{0, \ldots, \ell+1\}\) and
    \(V(Q) \cap V(F_i) = \{x_0\} \subseteq V(T_i)\).
    Then \(q \le 109\).
\end{claim}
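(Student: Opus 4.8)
The plan is to derive \cref{LengthRooted} from the earlier length bounds \cref{Length8a,Length36,LengthTiTi,LengthTiTj}, together with a companion statement that handles the case where the root of the path lies close to the relevant set $M_j$. First dispose of $q\le 1$. For $q\ge 2$ the edge $x_0x_1$ lies in a non-trivial $F_i$-bridge, which by \cref{FiBridgeIsBj} equals $B_k$ for some $k>i$; viewed as an $F_i$-bridge, $B_k$ has all attachment-vertices in $V(F_i)$, and since $Q$ meets $V(F_i)$ only at $x_0$, the walk $Q-x_0$ cannot leave the component $B_k-A_k$ of $G-V(F_{k-1})$. Hence $Q\subseteq B_k$, $x_0\in A_k$, $i\in X_k$, and $Q$ is internally disjoint from $A_k$. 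If $V(Q)\cap V(T_k)=\emptyset$, then $Q-x_0$ lies in a single $F_k$-bridge strictly contained in $B_k$ and the argument repeats with a strictly smaller bridge; this descent terminates, and when it does either $x_q$ lies on the current tree $T_k$ -- in which case \cref{Length8a}, applied with parameter $a=2$ (since $Q$ meets $T_1,\dots,T_k$ only at $x_0\in T_i$ and on $T_k$), gives $q\le 8$ and we are done -- or $V(Q)\cap V(T_k)\ne\emptyset$ and $x_q\notin V(T_k)$, which I assume from now on.

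By \cref{IntersectionTi}, $|V(Q)\cap V(T_k)|\le 2$; let $x_a$ and $x_b$ be the first and last vertices of $Q$ on $T_k$, so $1\le a\le b<q$. The prefix $x_0Qx_b$ is an $\RR$-clean path in $B_k$ with both ends in $V(F_k)$, so \cref{Length36} gives $b\le 36$. The suffix $x_bQx_q$ is itself a rooted $\RR$-clean path, now rooted at $x_b\in T_k$ and meeting $T_1,\dots,T_k$ only at $x_b$; moreover, when $a<b$, \cref{Distance} applied to $x_aQx_b\subseteq B_k-A_k$ yields $\dist_{B_k-A_k}(x_b,M_k)\le(b-a)+2\le\ell+3$, so the new root $x_b$ lies within distance $\ell+3$ of $M_k$. (The degenerate case $a=b$ is handled separately, by tracing through the auxiliary tree $T_K$, $K>k$, on whose $T_K^0$ the path $x_0Qx_b$ must land, which is produced inside the proof of \cref{LengthTiTj}.) It therefore suffices to prove the companion statement: \emph{every rooted $\RR$-clean path $R=(y_0,\dots,y_r)$ with $r\le\ell+1$, meeting $T_1,\dots,T_j$ only at $y_0\in V(T_j)$, and with $\dist_{B_j-A_j}(y_0,M_j)\le\ell+3$, has length $r\le 73$}; combined with $b\le 36$ this gives $q\le 36+73=109$.

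For the companion statement the near-$M$ hypothesis activates \cref{IntersectionNear}: descending as above to a bridge $B_{j_1}\subseteq B_j$ that $R$ actually meets on $T_{j_1}$, one gets $|V(R)\cap V(T_{j_1})|\le 1$, say $V(R)\cap V(T_{j_1})=\{u_1\}$, so $y_0Ru_1$ has length at most $2$ by \cref{LengthTiTj}\cref{TiTj2}, and $u_1$ is a leaf of $T_{j_1}$. Iterating produces nested bridges $B_{j_1}\supsetneq B_{j_2}\supsetneq\cdots$; by \cref{NoAttachmentOnTj0} each $B_{j_{t+1}}$ avoids $V(T_{j_t}^0)$, hence $B_{j_{t+1}}\subseteq B_{j_t}-V(T_{j_t}^0)$, so the mixed-distance separation $\mdist_{j_t}(M_{j_{t+1}})>c=2\ell+6$ from \cref{SetsMi}\ref{DistanceMiMj} translates into a lower bound on $\dist_{B_{j_{t+1}}}(u_t,M_{j_{t+1}})$; since the remaining piece of $R$ has length at most $\ell+1$, this together with \cref{Distance} shows the chain of tree-hits cannot continue past a bounded stage. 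Adding up the length-$\le 2$ segments gives $r\le 73$, completing both the companion statement and \cref{LengthRooted}.

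The main obstacle is precisely this last step: excluding a long chain of nested bridges through which $R$ threads, meeting each $T_{j_t}$ in a single vertex. Orchestrating \cref{IntersectionNear}, the mixed-distance separation \cref{SetsMi}\ref{DistanceMiMj}, \cref{NoAttachmentOnTj0} and the distance estimate \cref{Distance} to control this descent is the crux, and it is also where the numerical value $109$ -- and hence the choice $\ell=222$ -- gets pinned down. A lesser technical nuisance is the degenerate case $a=b$, where the estimate $\dist_{B_k-A_k}(x_b,M_k)\le\ell+3$ does not follow directly from \cref{Distance} and one must instead use the internal structure of the proof of \cref{LengthTiTj}.
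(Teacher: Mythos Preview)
Your high-level plan---strip off a bounded prefix to reach a ``near-$M$'' root, then bound the remaining suffix---matches the paper's strategy. But the two places you yourself flag as the crux are genuine gaps, not just technicalities.

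\textbf{The companion statement.} You want to show that a rooted $\RR$-clean path $R$ with $\dist_{B_j-A_j}(y_0,M_j)\le\ell+3$ has length at most~$73$, and you propose an iteration: descend to $B_{j_1}$, use \cref{IntersectionNear} to get $|V(R)\cap V(T_{j_1})|\le 1$, step to $u_1$, and repeat. The problem is that after the first step the near-$M$ hypothesis is \emph{lost}: you have no control over $\dist_{B_{j_1}-A_{j_1}}(u_1,M_{j_1})$, so \cref{IntersectionNear} is no longer available at $u_1$. Your appeal to $\mdist_{j_t}(M_{j_{t+1}})>c$ does give information, but it bounds a sum involving $M_{j_t}$ and $M_{j_{t+1}}$, not the quantity you need to keep the iteration going; and even if the chain of single tree-hits terminates, you have not bounded the length of the \emph{final} piece of $R$ after the last hit. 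No mechanism in your sketch produces the number~$73$.

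The paper avoids the iteration entirely. It proves a single auxiliary claim (\cref{Reduction}) with two parts: (a) any long rooted path yields another rooted path $Q'$, at most $36$ shorter, whose root $x_0'$ satisfies $\mdist_{i'}(x_0')\le 39$; (b) any long rooted path admits some $j>i$ with $i\in X_j$ and $\dist_{B_i-V(T_i^0)}(x_0,M_j)\le 42$. Applying (a) once and then (b) once to the resulting $Q'$ gives $\mdist_{i'}(M_{j'})\le 39+42=81<c$, contradicting \cref{SetsMi}\ref{DistanceMiMj}. So the contradiction comes after \emph{two} applications, not an unbounded descent, and the arithmetic $110=36+74$ and $74$ is exactly the threshold needed to invoke both parts of \cref{Reduction}. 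What you are missing is precisely part~(b): from a long rooted path, locate an $M_j$ near the root. The proof of (b) is where the real case analysis happens (the paper splits on whether $Q$ meets $T_j^0$ for the maximal $j$ with $Q\subseteq B_j$, and uses \cref{VerySpecialCase}); your sketch does not supply this.

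\textbf{The degenerate case $a=b$.} When $Q$ meets $T_k$ only once, \cref{Distance} is not available and you cannot conclude $\dist_{B_k-A_k}(x_b,M_k)\le\ell+3$. Your suggestion to ``trace through the auxiliary tree $T_K$ produced inside the proof of \cref{LengthTiTj}'' is not a proof: that lemma's internal structure gives you a vertex on $T_K^0$ but no distance bound to $M_k$. The paper sidesteps this by working with the \emph{maximal} $j$ such that $Q\subseteq B_j$ and splitting on whether $Q$ meets $T_j^0$; in the case it does, two consecutive vertices $x_\alpha,x_{\alpha+1}$ of $Q$ lie on $T_j$, and \cref{Distance} applied to that length-$1$ subpath gives $\dist_{B_j-A_j}(x_{\alpha+1},M_j)\le 3$.

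A minor point: your application of \cref{Length36} to $x_0Qx_b$ is not quite right, since $x_b\in V(T_k)$ lies in $V(F_{k+1})$, not $V(F_k)$. The bound is easily recovered (indeed improved to $b\le 8$) via \cref{LengthTiTj} for $x_0Qx_a$ and \cref{LengthTiTi} for $x_aQx_b$.
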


\cref{LengthRooted} is a consequence of the following technical claim.
\begin{claim}\label{Reduction}
  Let \(i \in \{1, \ldots, m\}\),
  let \(Q = (x_0, \ldots, x_q)\) be an \(\RR\)-clean path contained in an \(F_i\)-bridge such that \(q \in \{74, \ldots, \ell+1\}\) 
  and \(V(Q) \cap V(T_i) = \{x_0\}\).
  Then
  \begin{enumerate}[(\alph*)]
      \item\label{ReductionA}
      there exist \(i' \in \{1, \ldots, m\}\)
      and an \(\RR\)-clean path \(Q' = (x_0', \ldots, x_{q'}')\)
      contained in an \(F_{i'}\)-bridge
      such that \(q' \in \{q-36, \ldots, \ell+1\}\), 
      \(V(Q') \cap V(T_{i'}) = \{x_0'\}\) and
      \(\mdist_{i'}(x_0') \le 39\), and
      \item\label{ReductionB}
      there exist \(j \in \{i+1, \ldots, m\}\) such that
      \(i \in X_j\) and
      \(\dist_{B_i - V(T_{i}^0)}(x_0, M_j) \le 42\).
     % \item\label{ReductionC}\(q \le 109\)
  \end{enumerate}
\end{claim}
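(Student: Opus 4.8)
The plan is to trace $Q$ one level down the bridge hierarchy of $\TT$. Since $q\ge 74>1$, the $F_i$-bridge containing $Q$ is non-trivial, so by \cref{FiBridgeIsBj} it equals $B_j$ for some $j\in\{i+1,\dots,m\}$; since $x_0\in V(T_i)\cap V(B_j)$ and $T_i$ is a component of $F_{j-1}$, the vertex $x_0$ is an attachment-vertex of $B_j$, so $i\in X_j$. The argument used for \cref{NoAttachmentOnTj0}, applied to $T_i$ and the bridge $B_j\subseteq B_i$, shows $A_j\cap V(T_i)$ is disjoint from $V(T_i^0)$ --- indeed $V(B_j)\cap V(T_i^0)=\emptyset$ --- so $x_0\in V(T_i)\setminus V(T_i^0)$ and every subgraph of $B_j$ used below is disjoint from $T_i^0$. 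This $j$ will be the index claimed in part \textup{(b)}.

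Next I would perform the descent. Inside $B_j$, every vertex of $Q$ on $V(F_j)$ lies on $T_j$ or on one of the at most two attachment-components $T_i,T_{i'}$ of $B_j$. Let $x_\gamma$ be the \emph{last} vertex of $Q$ on $V(F_j)$, write $i''\in\{i,i',j\}$ for the index with $x_\gamma\in V(T_{i''})$, and set $Q':=x_\gamma Qx_q$. Applying \cref{Length8a} to $x_0Qx_\gamma$ when $i''\ne i$, and \cref{Length36} when $i''=i$, gives $\gamma\le 36$, so $q':=q-\gamma\in\{q-36,\dots,\ell+1\}$. By \cref{FiBridgeIsBj}, $Q'$ lies in a non-trivial $F_j$-bridge $B_k$ (some $k>j$) having $x_\gamma$ as an attachment-vertex, so $V(Q')\cap V(T_{i''})=\{x_\gamma\}$, and by \cref{NoAttachmentOnTj0} (or its variant for $i''\in\{i,i'\}$ noted above) $x_\gamma\in V(T_{i''})\setminus V(T_{i''}^0)$. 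Part \textup{(a)} will then follow with $i':=i''$ and $x_0':=x_\gamma$, once the mixed distance $\mdist_{i''}(x_\gamma)$ is bounded. (When $\gamma=0$, i.e.\ $x_0$ is already an attachment-vertex of a deeper bridge, one simply replaces $B_j$ by that deeper bridge and repeats; this iterates only finitely often and without any loss of length, so afterwards we may assume $\gamma\ge 1$.)

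The core of the proof --- and the step I expect to be hardest --- is the bound $\mdist_{i''}(x_\gamma)\le 39$ together with the companion bound $\dist_{B_i-V(T_i^0)}(x_0,M_j)\le 42$ for part \textup{(b)}. Since $x_\gamma\in V(T_{i''})\setminus V(T_{i''}^0)$, the choice $v:=x_\gamma$ in the definition of $\mdist_{i''}$ reduces the first bound to $\dist_{B_{i''}-A_{i''}}(M_{i''},x_\gamma)\le 39$. I would obtain this by producing a second occurrence of $T_{i''}$ on $Q$ together with an $\RR$-clean subpath of $Q$ joining the two that is contained in $B_{i''}-A_{i''}$: then \cref{Distance} applied to this subpath (of length at most $\gamma\le 36$) gives $\dist_{T_{i''}}(M_{i''},x_\gamma)\le 38$, hence $\dist_{B_{i''}-A_{i''}}(M_{i''},x_\gamma)\le 38$, and --- following $Q$ from $x_0$ to $x_\gamma$ and then inside $T_{i''}$ towards $M_{i''}$, all of which stays in $B_i$ and avoids $T_i^0$ since $Q,T_j,B_j,B_k$ do --- also the bound needed for part \textup{(b)}. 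When $i''=i$ the second occurrence is $x_0$ itself; when $i''=j$ it is the last vertex of $Q$ on $T_j$ preceding $x_\gamma$. The delicate point --- which I would handle exactly as in the earlier $6$-blocking proof --- is showing that the joining subpath can be taken inside $B_{i''}-A_{i''}$: one splits on whether $Q$ meets $V(T_j^0)$, using \cref{VTj0Separates} to confine $Q$ to the side of $T_i$ when it does not, and to flank any crossing point of $T_j^0$ by attachment-vertices of $B_j$ when it does. The remaining configurations (no second occurrence of $T_{i''}$ on $Q$, or the natural joining subpath passing through the second attachment-component) are resolved by one further controlled descent into $B_k$ --- the suffix $Q'$ still has length at least $38$, so it re-reaches $V(F_k)$ within $36$ more steps by \cref{Length8a}/\cref{Length36} --- together with \cref{LengthTiTj} (which bounds a subpath between two of the trees once an endpoint is within $\ell+3$ of its marking set) and \cref{BBkPathAkout} (which forces re-entry into a child bridge to land in $A^{\out}$). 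The constants $39$ and $42$ are then the cumulative additive slack (a $+2$ from \cref{Distance}, a $+1$ to step from a leaf of $T_{i''}$ to $T_{i''}^0$) accrued over the bounded descent, the overall length loss remaining at most $36$.
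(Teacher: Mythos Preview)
Your broad outline---descend into the bridge hierarchy, locate the last vertex of $Q$ on $V(F_j)$, and invoke \cref{Distance} to bound the mixed distance---is the right shape, but there are two genuine gaps that the paper's proof fills with specific devices you do not mention.

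First, the choice of $j$. You take the $F_i$-bridge containing $Q$ and call it $B_j$; the paper instead fixes the \emph{largest} $j$ with $Q\subseteq B_j$. Your iteration ``replace $B_j$ by a deeper bridge when $\gamma=0$'' only guarantees $\gamma\ge 1$, which is strictly weaker: it does not prevent $Q$ from lying entirely inside a single $F_j$-bridge $B_k$. The maximality of $j$ is exactly what forces $Q$ to exit and re-enter the child bridge $B_k$ in Case~1 ($Q$ disjoint from $T_j^0$), producing the two $T_j$-attachments $x_\alpha,x_\beta$ that feed \cref{Distance}. Without maximality your ``second occurrence of $T_{i''}$'' may simply not exist, and your proposed fallback (``one further controlled descent into $B_k$'') does not explain how the bound on $\dist_{B_{i''}-A_{i''}}(M_{i''},x_\gamma)$ would then be recovered.

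Second, you never invoke \cref{VerySpecialCase}, which the paper uses three times: to force $x_q$ into the same $F_j$-bridge as the edge $x_0x_1$ in Case~1; to rule out $x_{\alpha+1}\in A_j$ after the last $T_j^0$-vertex in Case~2; and to force $x_q\in V(B)$ in Subcase~2.2. Each use eliminates a configuration in which $Q$ could wander off beyond the $36$-step horizon without providing the needed pair of $T_j$-vertices. Your appeal to \cref{BBkPathAkout} and \cref{LengthTiTj} in the ``remaining configurations'' paragraph does not substitute for this: \cref{BBkPathAkout} needs $Q\subseteq B_j-V(T_j^0)$, so it is unavailable precisely when $Q$ meets $T_j^0$.

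Finally, part~\ref{ReductionB} asks for a bound on $\dist_{B_i-V(T_i^0)}(x_0,M_j)$ for a \emph{fixed} $j$ with $i\in X_j$, independent of where $x_\gamma$ lands. Your argument yields this only when $i''=j$; in the case $i''=i'$ (the paper's Subcase~2.3) you obtain a bound on $M_{i'}$, not $M_j$. The paper handles this by establishing~\ref{ReductionB} \emph{before} the subcase split for~\ref{ReductionA}, using the pair $x_\alpha,x_{\alpha+1}\in V(T_j)$ at the last crossing of $T_j^0$.
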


Before proving \cref{Reduction}, we show how it implies \cref{LengthRooted}.

\begin{proof}[Proof of \cref{LengthRooted} assuming \cref{Reduction}]    
  Towards a contradiction, suppose that \(q \ge 110\).
  We will apply \cref{Reduction}\cref{ReductionA} to \((i, Q)\) to obtain
  a pair \((i', Q')\), and then we will
  apply \cref{Reduction}\cref{ReductionB}
  to \((i', Q')\) to obtain an index \(j'\)
  with contradictory properties.

  We have \(q \ge 110\), so
  in particular, \(q \in \{74, \ldots, \ell+1\}\).
  Since \(V(Q) \cap V(F_i) = \{x_0\} \subseteq V(T_i)\),
  we have \(V(Q) \cap V(T_i) = \{x_0\}\) and the path \(Q\) is contained
  in an \(F_i\)-bridge, so \(i\) and \(Q\) satisfy the preconditions
  of \cref{Reduction}.
  By \cref{Reduction}\ref{ReductionA}, there exist
  \(i' \in \{1, \ldots, m\}\) and an \(\RR\)-clean path
  \(Q' = (x_0', \ldots, x_{q'}')\)
  contained in an \(F_{i'}\)-bridge such that
  \(q' \in \{q-36, \ldots, q\} \subseteq \{74, \ldots, \ell+1\}\),
  \(V(Q) \cap V(T_{i'}) = \{x_0'\}\) and \(\mdist_{i'}(x_0') \le 39\).
  Hence, \(i'\) and \(Q'\) satisfy the preconditions
  of \cref{Reduction}.
  By \cref{Reduction}\ref{ReductionB} applied to \(i'\) and \(Q'\), there exist
  \(j' \in \{i'+1, \ldots, m\}\) such that
  \(i' \in X_{j'}\) and
  \(\dist_{B_{i'} - V(T_{i'}^0)}(x_0', M_j) \le 42\).
  Therefore,
  \begin{align*}
    \mdist_{i'}(M_{j'}) \le \mdist_{i'}(x_0')
    + \dist_{B_{i'} - V(T_{i'}^0)}(x_0', M_{j'}) \le
    39 + 42 = 81,
  \end{align*}
  which contradicts \cref{SetsMi}\ref{DistanceMiMj} since $c=450>81$.
\end{proof}

The proof of \cref{Reduction} makes use of the following claim:
\begin{claim}\label{VerySpecialCase}
  Let \(j \in \{1, \ldots, m\}\),
  let \(Q = (x_0, \ldots, x_q)\) be an 
  \(\RR\)-clean path in \(B_j - V(T_j^0)\) with
  \(q \in \{0, \ldots, \ell+1\}\) such that
  \(x_0 \in A_j\),
  and there exists an \(F_j\)-bridge \(B\) contained
  in \(B_j\) with \(x_0 \in V(B)\) and \(x_q \not \in V(B)\).
  Then \(q \le 37\).
\end{claim}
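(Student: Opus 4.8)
The plan is to argue by cases on the location of \(x_q\): \cref{Length36} controls subpaths of \(Q\) whose two ends both lie on \(F_j\); \cref{BBkPathAkout} controls how \(Q\) can enter a fresh bridge; and the inclusion \(D_k \subseteq V(T_k^0)\) forces \(Q\) to stop almost immediately once it has been pushed onto that bridge.

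I would first record two easy reductions. The hypotheses force \(q \ge 1\) (otherwise \(x_q = x_0 \in V(B)\)), and they are vacuous unless \(x_0 \in A_j \setminus A_j^{\out}\): if \(x_0 \in A_j^{\out}\) then, since \(A_j\) is independent in \(B_j\) and \(Q \subseteq B_j\), the vertex \(x_1\) lies in \(B_j - A_j\) with \(\dist_G(x_1, A_j^{\out}) \le 1 \le \tau\), so \(x_1 \in D_j \subseteq V(T_j^0)\), contradicting \(Q \subseteq B_j - V(T_j^0)\). I then split into two cases. If \(x_q \in V(F_j)\), then \(Q\) is an \(\RR\)-clean path in \(B_j\) of length \(q \le \ell+1\) with both ends on \(F_j\) (recall \(x_0 \in A_j \subseteq V(F_j)\)), so \cref{Length36} gives \(q \le 36\), which suffices.

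The substantive case is \(x_q \notin V(F_j)\). Then \(x_q\) lies in some component of \(G - V(F_j)\), whose corresponding non-trivial \(F_j\)-bridge equals \(B_k\) for some \(k > j\) by \cref{FiBridgeIsBj}; one checks that \(B_k \subseteq B_j\) and \(B_k \neq B\). Let \(x_a\) be the last vertex of \(Q\) on \(V(B)\); since \(x_{a+1} \notin V(B)\), the vertex \(x_a\) is an attachment-vertex of \(B\), hence \(x_a \in V(F_j)\). Let \(x_c\) be the first vertex of \(Q\) on \(V(B_k)\) with index at least \(a\). Then \(x_a Q x_c\) is a \(V(B)\)--\(V(B_k)\) path in \(B_j - V(T_j^0)\), and \(B\) has the attachment-vertex \(x_0 \in A_j\), so \cref{BBkPathAkout} gives \(x_c \in A_k^{\out}\). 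In particular \(x_c \in V(F_j)\), so \cref{Length36} applied to \(x_0 Q x_c\) gives \(c \le 36\). It remains to prove \(q - c \le 1\), after which \(q \le 37\) follows. The key point is that \(Q\) cannot take two consecutive steps inside \(V(B_k - A_k)\): if \(x_t, x_{t+1} \in V(B_k - A_k)\) with \(t - c \le \tau\), then both are within \(G\)-distance \(\tau\) of \(A_k^{\out}\) (through \(x_c\)), hence in \(D_k \subseteq V(T_k^0)\); the edge \(x_t x_{t+1}\) lies in the induced subgraph \(B_k - A_k\), so \cref{ApproxGeodesicTj0} gives \(\dist_{T_k^0}(x_t, x_{t+1}) < \Delta^{40}\); by \cref{SetsMi}\ref{DistanceDiMi} every vertex of the \(T_k^0\)-path from \(x_t\) to \(x_{t+1}\) is within \(T_k^0\)-distance \(< \Delta^{40} < 2\Delta^{40}\) of \(D_k\) and so is incident to no edge of \(M_k\); hence \(x_t\) and \(x_{t+1}\) lie in one component of \(T_k^0 - M_k\), i.e.\ one part of \(\RR\), contradicting that \(Q\) is \(\RR\)-clean. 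Combining this with the fact that, after reaching \(A_k^{\out}\), the path can leave \(B_k\) only through \(A_k\) — whose vertices lie on at most two trees, one being \(T_j\), which \(Q\) meets only outside \(T_j^0\) — together with the separating properties in \cref{VTj0Separates} and \cref{NoAttachmentOnTj0}, one concludes that \(Q\) has at most one vertex beyond \(x_c\), so \(q \le c+1 \le 37\).

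I expect the last step to be the main obstacle: excluding every way \(Q\) could still wander after being forced onto a vertex of \(A_k^{\out}\) — in particular an excursion into some tree \(T_l\) with \(j < l < k\) followed by a return to \(B_k\) — requires a careful use of the planar embedding and of the separation statements \cref{VTj0Separates} and \cref{NoAttachmentOnTj0}. Everything else reduces cleanly to \cref{Length36}, \cref{BBkPathAkout}, \cref{ApproxGeodesicTj0}, and \cref{SetsMi}.
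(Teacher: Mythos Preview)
Your proof uses the right ingredients (\cref{Length36}, \cref{BBkPathAkout}, \cref{ApproxGeodesicTj0}, \cref{SetsMi}\ref{DistanceDiMi}) and the argument that two consecutive vertices of \(Q\) in \(D_k\) must lie in one part of \(\RR\) is exactly the mechanism the paper exploits. The gap you yourself identify is real, and it is caused by a suboptimal choice of anchor vertex: you take \(x_c\) to be the \emph{first} vertex of \(Q\) (after \(x_a\)) on \(V(B_k)\), which leaves open the possibility that beyond \(x_c\) the path bounces between \(A_k\) and \(B_k - A_k\), or even leaves \(B_k\) through an attachment on some \(T_{i'}\) and returns later. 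Your concluding paragraph is not a proof of \(q - c \le 1\), and the appeals to \cref{VTj0Separates} and \cref{NoAttachmentOnTj0} do not by themselves rule out such excursions.

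The paper avoids this difficulty entirely by a different choice: let \(x_\alpha\) be the \emph{last} vertex of \(Q\) in \(V(F_j)\). Then \(\alpha \le 36\) by \cref{Length36} (applied to \(x_0 Q x_\alpha\)), and if \(q > \alpha\) the path \(x_\alpha Q x_q\) lies in a single non-trivial \(F_j\)-bridge \(B_k\). Now every vertex \(x_{\alpha+1}, \ldots, x_q\) is automatically in \(V(B_k - A_k)\), because none of them is in \(V(F_j)\supseteq A_k\). Taking \(x_{\alpha'}\) as the first vertex of \(Q\) on \(V(B_k)\), \cref{BBkPathAkout} (applied to \(x_0 Q x_{\alpha'}\)) gives \(x_{\alpha'} \in A_k^{\out}\); since \(\alpha' \le \alpha\) and \(\tau = 37\), both \(x_{\alpha+1}\) and \(x_{\alpha+2}\) lie in \(D_k\), and your own ``two consecutive vertices in \(D_k\)'' argument then yields the contradiction directly. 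The reduction to \(x_0 \notin A_j^{\out}\), while correct, is unnecessary.
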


\begin{proof}[Proof]
  Let \(x_\alpha\) be the last vertex
  on \(Q\) that belongs to \(V(F_j)\).
  By \cref{Length36}, we have
  \(\alpha \le 36\).
  Unless \(q = \alpha \le 36\), the path
  \(x_\alpha Q x_q\) is contained in a
  non-trivial \(F_k\)-bridge which equals
  \(B_k\) for some \(k \in \{j+1, \ldots, m\}\) by \cref{FiBridgeIsBj}.
  Let \(x_{\alpha'}\) be the first vertex
  of \(Q\) that belongs to \(V(B_k)\).
  By \cref{BBkPathAkout}, we have
  \(x_{\alpha'} \in A_k^{\out}\).
  Towards a contradiction, suppose that
  \(q \ge 38\), and thus \(q \ge \alpha+2\).
  Since
  \(\dist_G(x_{\alpha'}, x_\alpha) \le 36\), we have 
  \(x_{\alpha+1} \in D_k \subseteq V(T_k^0)\),
  and therefore \(x_{\alpha+2} \in V(T_k)\).
  By definition of \(T_k\), there exists a
  vertex \(x_{\alpha+2}' \in V(T_k^0)\) 
  that belongs
  to the same component of \(T_k - M_k\) as \(x_{\alpha+2}\) and satisfies
  \(\dist_{T_k}(x_{\alpha+2}, x_{\alpha+2}') \le 1\). In particular,
  \(\dist_{B_k - A_k}(x_{\alpha+1}, x_{\alpha+2}') \le 2\).
  By \cref{ApproxGeodesicTj0}, the length
  of the path \(x_{\alpha+1} T_k^0 x_{\alpha+2}'\) is less than
  \(2\Delta^{40}\).
  Since \(Q\) is \(\RR\)-clean,
  we have \(E(x_{\alpha+1}T_k^0x_{\alpha+2}') \cap M_k \neq \emptyset\), and therefore
  \(\dist_{T_j^0}(D_k, M_k) < 2\Delta^{40}\),
  which contradicts
  \cref{SetsMi}\ref{DistanceDiMi}.
\end{proof}

It remains to prove \cref{Reduction}.

\begin{proof}[Proof of \cref{Reduction}]
  Since \(V(Q) \cap V(F_i) = \{x_0\}\) and \(q \ge 74 > 1\),
  \(Q\) is contained in a non-trivial \(F_i\)-bridge,
  so by \cref{NoAttachmentOnTj0}, we have
  \(x_0 \not \in V(T_i^0)\), and by
  \cref{FiBridgeIsBj}, there exists \(j \in \{i+1, \ldots, m\}\)
  with \(Q \subseteq B_j\).
  Fix the largest \(j \in \{i+1, \ldots, m\}\) with \(Q \subseteq B_j\).
  We split the argument into two cases based on
  whether \(Q\) intersects \(T_j^0\) or not.

  \textit{Case 1.} \(V(Q) \cap V(T_j^0) = \emptyset\).
  Let \(B\) be the \(F_j\)-bridge containing the edge \(x_0 x_1\). Hence,
  \(B \subseteq B_j\),
  and \(B\) has attachment-vertices in \(V(T_i)\) and
  \(V(T_j)\).
  We have \(x_q \in V(B)\) because otherwise
  \cref{VerySpecialCase} would imply
  \(q \le 37\) contrary to our assumption that \(q \ge 74\).
  Therefore \(\{x_0, x_1, x_q\} \subseteq V(B)\),
  and in particular, \(B\) is non-trivial.
  By \cref{FiBridgeIsBj}, we have \(B = B_k\) for some \(k \in \{j+1, \ldots, m\}\).
  By our choice of \(j\), we have
  \(Q \not \subseteq B_k\).
  Hence there exist
  \(\alpha, \beta \in \{0, \ldots, q\}\)
  with \(\alpha < \beta\) 
  such that
  \(\{x_\alpha, x_\beta\} \subseteq V(B_k)\), \(x_\alpha Q x_\beta\) is
  edge-disjoint from \(B_k\) and
  \(x_\beta Q x_q \subseteq B_k\).
  Since \(x_0 x_1 \in E(B_k)\), we have
  \(\alpha > 0\).
  The vertices \(x_\alpha\) and \(x_\beta\)
  are attachment-vertices of \(B_k\).
  Since \(x_0\) is the only vertex of \(Q\)
 in \(V(T_i)\), the vertices \(x_\alpha\) and
  \(x_\beta\) lie on \(T_j\).
  By \cref{LengthTiTi}, the length of
  \(x_\alpha Q x_\beta\) is at most \(4\),
  and we have
  \(\dist_{B_j - A_j}(x_\beta, M_j) \le 6\)
  by \cref{Distance}.
  In particular, \(\mdist_{j}(x_\beta) \le 6\).
  By \cref{Length36}, we have \(\beta \le 36\),
  so \ref{ReductionA} is satisfied by
  \(i' = j\) and \(Q' = x_\beta Q x_q\).
  Furthermore, 
  \(\dist_{B_i - V(T_i^0)}(x_0, M_j) \le \beta + 6 \le 42\), so \(j\) satisfies \ref{ReductionB}.

  \textit{Case 2.} \(V(Q) \cap V(T_j^0) \neq \emptyset\).
  Let \(x_\alpha\) be the last vertex of \(Q\)
  in \(V(T_j^0)\).
  By \cref{Length36}, we have \(\alpha \le 36\).
  Since \(x_{\alpha} \in V(T_j^0)\), we 
  have
  \(x_{\alpha+1} \in A_j \cup V(T_j)\).
  Suppose towards a contradiction,
  that \(x_{\alpha+1} \in A_j\). 
  By \cref{Length36}, we have \(\alpha + 1 \le 36\).
  Then \(x_\alpha x_{\alpha+1}\) is a trivial
  \(F_j\)-bridge contained in \(B_j\)
  that contains \(x_{\alpha+1}\) and does not
  contain \(x_q\). By \cref{VerySpecialCase} applied
  to \(x_{\alpha+1} Q x_q\), the length of
  \(x_{\alpha+1} Q x_q\) is at most \(37\), so
  \(q \le (\alpha + 1) + 37 \le 36 + 37 < 74\),
  a contradiction. Therefore,
  \(x_{\alpha+1} \not \in A_j\), so
  \(x_{\alpha+1} \in V(T_j)\).
  By \cref{Distance} applied to \(x_\alpha Q x_{\alpha+1}\), we have
  \(\dist_{B_j - A_j}(x_{\alpha + 1}, M_j) \le 3\),
  and thus \(\dist_{B_i - V(T_i^0)}(x_0, M_j) \le
  (\alpha+1) + 3 \le 36 + 3 = 39\).
  This proves \ref{ReductionB}.
  
  For the proof of \ref{ReductionA},
  let \(x_\beta\) and \(x_\gamma\) denote the
  last two vertices of \(Q\) in \(V(F_j)\) where
  \(\beta < \gamma\).
  Since \(\{x_\alpha, x_{\alpha+1}\} \subseteq V(T_j)\), we have \(\beta \ge \alpha\),
  and by \cref{Length36}, we have \(\gamma \le 36\).
  We have \(\{x_\beta, x_\gamma\} \subseteq V(B_j) \cap V(F_j) = A_j \cup V(T_j)\).
  We consider three subcases.
  
  \textit{Subcase 2.1.} \(x_\gamma \in V(T_j)\).
  By definition of \(x_\alpha\), the path
  \(x_{\alpha+1} Q x_\gamma\) is disjoint from
  \(T_j^0\), so \(x_{\alpha+1} \in V(T_j) \setminus V(T_j^0)\) and \cref{ReductionA} is satisfied by
  \(i' = j\) and \(Q' = x_\gamma Q x_q\) since
  \[
  \mdist_j(x_\gamma) \le
  \dist_{B_j - A_j}(M_j, x_{\alpha+1}) +
  \dist_{B_j - V(T_j^0)}(x_{\alpha+1}, x_\gamma)
  \le \alpha + 1 + 3 \le 36 + 3 = 39.
  \]

  \textit{Subcase 2.2.} \(x_\gamma \in A_j\) and
  \(x_\beta \in V(T_j)\).
  The path \(x_\beta Q x_\gamma\) is internally
  disjoint from \(F_j\), so it is contained
  in an \(F_j\)-bridge \(B\) such that
  \(B \subseteq B_j\).
  We have \(x_q \in V(B)\), since otherwise by
  \cref{VerySpecialCase} applied to
  \(x_\gamma Q x_q\) the length of \(x_\gamma Q x_q\)
  is at most \(37\), so \(q \le \gamma + 37 \le 36 + 37 < 74\), which is a contradiction.
  Hence, \(x_q \in V(B)\).
  Therefore, \(B\) is non-trivial, and by
  \cref{NoAttachmentOnTj0}, we have
  \(B \subseteq B_j - V(T_j^0)\).
  Since \(\gamma < q\), \(x_q\) is not an
  attachment-vertex of \(B\), and we have
  \(x_\gamma Q x_q \subseteq B\), so
  \(x_\beta Q x_q \subseteq B \subseteq B_j - V(T_j^0)\).
  Thus, \cref{ReductionA} is satisfied by
  \(i' = j\) and \(Q' = x_\beta Q x_q\)
  since
  \[
  \mdist_j(x_\beta) \le 
  \dist_{B_j - A_j}(M_j, x_{\alpha+1}) +
  \dist_{B_j - V(T_j^0)}(x_{\alpha+1}, x_\beta)
  \le \alpha + 1 + 3 \le 36 + 3 = 39.
  \]

  \textit{Subcase 2.3} \(x_\gamma \in A_j\) and
  \(x_\beta \in A_j\).

  By the invariant from the
  construction of the forests \(F_j\), 
  the \(F_{j-1}\)-bridge \(B_j\) has attachments
  on exactly one tree \(T_{i'}\) with \(i' \neq i\),
  and the vertices \(x_\gamma\) and \(x_\beta\)
  lie on that tree \(T_{i'}\)
  because the only vertex of \(Q\) on \(T_i\) is \(x_0\).
  By \cref{LengthTiTi}, the length of
  \(x_\beta Q x_\gamma\) is at most \(4\), and
  by \cref{Distance}, we have
  \(\dist_{B_{i'} - A_{i'}}(M_{i'}, x_\gamma) \le 6\).
  In particular, \(\mdist_{i'}(x_\gamma) \le 6\).
  Hence, \ref{ReductionA} is satisfied by \(i'\)
  and \(Q' = x_\gamma Q x_q\).
  This completes the proof.
\end{proof}

\section{Graphs on Surfaces}
\label{Surfaces}

This section proves \cref{BlockingGenus} which lifts our result for blocking partitions of planar graphs (\cref{BlockingPlanar}) to graphs on surfaces. We need the following folklore lemma (implicit in \cite{DMW17,DHHW22} for example).

\begin{lem}\label{TreeGenus}
    For every connected graph $G$ with Euler genus $g$ and for every
    \textsc{bfs}-layering $(V_0,V_1,\dots)$ of $G$, $G$ contains a tree $T$ that is the union of at most $2g$ vertical paths with respect to $(V_0,V_1,\dots)$ such that $G-V(T)$ is planar.
\end{lem}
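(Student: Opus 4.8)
The plan is to combine a \textsc{bfs}-spanning tree with the classical tree--cotree decomposition of a cellularly embedded graph. We may assume $g\ge 1$, since otherwise $G$ is planar and we may take $T$ empty. Fix a \textsc{bfs}-spanning tree $T_0$ of $G$ that is compatible with the given \textsc{bfs}-layering $(V_0,V_1,\dots)$ and rooted at the vertex $r\in V_0$. The key observation is that for every $v\in V(G)$ the path $r T_0 v$ meets each layer $V_0,\dots,V_{\dist_G(r,v)}$ in exactly one vertex, and hence is vertical. Consequently, any subtree of $T_0$ obtained as a union of at most $2g$ such root-paths is automatically a union of at most $2g$ vertical paths, so it remains only to choose those root-paths so that deleting their vertex set makes $G$ planar.

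To that end, fix an embedding of $G$ in a surface $\Sigma$ of Euler genus $g$; since $G$ is connected we may assume $\Sigma$ has minimum Euler genus, so that this embedding is cellular. Apply the tree--cotree decomposition to $\Sigma$, $G$ and $T_0$: there is a spanning tree $T^*$ of the dual graph $G^*$ with $E(T^*)\cap E(T_0)=\emptyset$, and by Euler's formula the remaining edge set $X:=E(G)\setminus(E(T_0)\cup E(T^*))$ has exactly $g$ edges, say $X=\{u_1v_1,\dots,u_gv_g\}$. Let $T$ be the union of the paths $r T_0 u_1, r T_0 v_1,\dots,r T_0 u_g, r T_0 v_g$. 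Then $T$ is a subtree of $T_0$ (connected, since all these paths contain $r$), it is a union of at most $2g$ vertical paths, and $E(T)\subseteq E(T_0)$ is disjoint from $X$.

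The crux is the topological claim that $\Sigma$ cut along the subgraph $T\cup X$ is a disk. For the full spanning tree this is precisely the defining feature of the tree--cotree decomposition: since $T^*$ is a spanning tree of the dual, $T_0\cup X$ is a cut graph, so $\Sigma$ cut along $T_0\cup X$ is a disk. Passing from $T_0\cup X$ down to $T\cup X$ only deletes the forest $F:=T_0-E(T)$, each component of which is a subtree of $T_0$ attached to $T$ at a single vertex; since cutting a surface (further) along such boundary-attached trees does not change its homeomorphism type, $\Sigma$ cut along $T\cup X$ is still a disk $D$. Now the vertex set of $T\cup X$ is exactly $V(T)$, so every vertex of $G-V(T)$ lies in $\Sigma\setminus(T\cup X)$, and likewise every edge of $G-V(T)$ is disjoint from $T\cup X$ in the embedding; hence the cutting map sends $G-V(T)$ into the interior of $D$, which embeds in the sphere, and so $G-V(T)$ is planar.

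The step that needs the most care is this last topological claim: we must argue that $\Sigma$ cut along $T\cup X$ is an honest disk rather than merely a planar surface with more boundary, or worse a surface of positive genus, and this is exactly where the tree--cotree decomposition (as opposed to a naive handle count) is essential, together with the observation that trimming $T_0$ down to $T$ keeps the ``disk'' conclusion. The remaining ingredients --- existence of a \textsc{bfs}-spanning tree compatible with the layering, cellularity of a minimum-Euler-genus embedding of a connected graph, and the Euler-formula identity $|X|=g$ --- are standard and only need to be cited.
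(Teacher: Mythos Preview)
The paper does not prove this lemma; it is stated as folklore with references to \cite{DMW17,DHHW22}. Your argument via the tree--cotree decomposition is exactly the standard one implicit in those references, and it is correct.

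One phrasing issue: when you pass from $T_0\cup X$ down to $T\cup X$ you speak of ``cutting a surface (further) along such boundary-attached trees'', but the direction of the argument is the reverse (you know the smaller cut $T_0\cup X$ gives a disk and want the same for the coarser cut $T\cup X$). What you really use is that cutting $\Sigma$ along $T\cup X$ and then additionally along the forest $F=T_0\setminus E(T)$ yields the same disk, and that this second cut does not change the homeomorphism type because each component of $F$ meets the boundary of the already-cut surface in a single point. This is true, but note that after cutting along $T\cup X$ a vertex $a_i\in V(T)$ may appear several times on the boundary, so a component of $F$ may split into several trees, each attached at one boundary point; the conclusion still holds. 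A cleaner way to nail this step is to observe directly that $\Sigma$ cut along $T\cup X$ is connected (it contains the connected set $\Sigma\setminus(T_0\cup X)$ and every extra point lies in the closure of a face), has non-empty boundary, and has Euler characteristic $\chi(\Sigma)-\chi(T\cup X)=(2-g)-(1-g)=1$, hence is a disk.
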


The next lemma is stated in terms of the following subgraph variant of clean paths: Let $G$ be a graph and $\mathcal{Z}$ be a connected partition of a subgraph $Z$ of $G$. A path $P$ in $G$ is \defn{$\mathcal{Z}$-clean} if $|V(P)\cap V|\leq 1$ for each $V\in \mathcal{Z}$. 

\begin{lem}
\label{GenusZ}
For any integers $g\geq 0$ and $\ell\geq 1$, every connected graph $G$ with Euler genus $g$ has a connected subgraph $Z$ such that $G-V(Z)$ is planar and $Z$ has a connected partition $\mathcal{Z}$ with width at most $2g((5g+1)\ell+3)$ such that every  $\mathcal{Z}$-clean path of length at most $\ell$ in $G$ intersects at most three parts in $\mathcal{Z}$. 
\end{lem}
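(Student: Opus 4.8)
The plan is to extract a bounded-width tree from the genus using \cref{TreeGenus}, and then refine that tree into a partition $\mathcal{Z}$ whose parts are short pieces of vertical paths, chosen so that any short clean path can enter $Z$ and leave it only by crossing at most a bounded number of these pieces. First I would fix a \textsc{bfs}-layering $(V_0, V_1, \dots)$ of $G$ and apply \cref{TreeGenus} to obtain a tree $T \subseteq G$ which is the union of at most $2g$ vertical paths $P_1, \dots, P_{2g}$ and satisfies that $G - V(T)$ is planar. Set $Z := T$, which is connected, and note $G - V(Z) = G - V(T)$ is planar as required. The key point about vertical paths is that consecutive vertices lie in consecutive \textsc{bfs}-layers, so a $\mathcal{Z}$-clean path of length at most $\ell$ can only ``travel'' a bounded number of layers; this is what will bound how much of each $P_a$ it can meet.

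Next I would chop each vertical path $P_a$ into consecutive subpaths (``blocks'') each of length exactly $L := (5g+1)\ell + 2$ (with one possibly shorter leftover block at the end), and let $\mathcal{Z}$ consist of the vertex-sets of all these blocks, over all $a \in \{1, \dots, 2g\}$. Since $T$ is a union of $2g$ such paths, each block has at most $L + 1 = (5g+1)\ell + 3$ vertices, and a single part could a priori be the union of up to... here I need to be careful: the paths $P_a$ may share vertices, so I would instead first take a spanning tree of $T$ and decompose it, or argue that each part — being contained in one $P_a$ — has at most $L+1$ vertices, and the width bound $2g((5g+1)\ell + 3)$ comes from the fact that a connected component of the union of blocks across different $P_a$'s meeting at shared vertices involves at most $2g$ blocks. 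So each part of $\mathcal{Z}$ (taken as a connected component of $\bigcup$ of the blocks) has at most $2g(L+1) = 2g((5g+1)\ell+3)$ vertices, giving the stated width; connectedness of each part is immediate since blocks are subpaths and we take connected components.

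The heart of the argument is showing every $\mathcal{Z}$-clean path $R$ of length at most $\ell$ in $G$ meets at most three parts of $\mathcal{Z}$. Write $R = (y_0, \dots, y_r)$ with $r \le \ell$. The vertices of $R$ that lie in $V(Z) = V(T)$ lie in the layers $V(R)$ occupies, which span an interval of at most $\ell + 1$ consecutive layers (consecutive vertices of $R$ differ by at most one in layer index). Each $P_a$ is vertical, so within any interval of $\ell+1$ consecutive layers, $P_a$ contributes a subpath of length at most $\ell$, hence is entirely inside at most $\big\lceil (\ell+1)/L \big\rceil + 1 \le 2$ consecutive blocks of $P_a$; more carefully, choosing $L > \ell$ ensures $V(P_a) \cap V(R)$ touches at most two consecutive blocks of $P_a$. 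Summing over the $2g$ paths this would give $4g$, which is not yet $\le 3$ — so I would instead exploit that $R$ is $\mathcal{Z}$-clean to get a stronger statement: if $R$ met two distinct blocks of the \emph{same} $P_a$, those blocks are at $P_a$-distance $\ge L > \ell \ge r$ from each other, contradicting that the two vertices would then be at distance $> r$ within $R$ (since a vertical path is geodesic between its endpoints within the \textsc{bfs}-layering, the detour through $R$ cannot be shorter — this needs the geodesic property of vertical \textsc{bfs}-paths). Hence $R$ meets at most one block per path $P_a$. To push from ``at most $2g$ blocks'' down to ``at most three parts'', I would use that the parts of $\mathcal{Z}$ are connected \emph{components} of the union of blocks: blocks of different $P_a$ that share a vertex merge into one part, and the subtree of $T$ that $R$ can reach (being confined to $\le \ell+1$ layers and connected through $R$) lies in very few components. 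The careful bookkeeping here — reconciling the ``one block per vertical path'' bound with the component-merging to land exactly on $3$ — is the main obstacle, and I expect the constant $(5g+1)\ell + 3$ in the width is tuned precisely so that this counting works; I would set $L$ to exactly the value forced by this argument and verify the clean-path-meets-three-parts claim block by block.
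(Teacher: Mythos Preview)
Your proposal has a genuine gap that cannot be repaired within the block-chopping framework. The step where you upgrade ``at most two blocks of $P_a$'' to ``at most one block of $P_a$'' is wrong: you claim that if a clean path $R$ meets two distinct blocks of $P_a$ then those blocks are at $P_a$-distance $\ge L$, but this fails for \emph{consecutive} blocks, whose boundary vertices are at $P_a$-distance $1$. Cleanness only says $R$ has at most one vertex in each block; it says nothing about how far apart those vertices sit on $P_a$. So you are stuck at $\le 2$ blocks per vertical path, hence $\le 4g$ blocks in total, and your appeal to ``component-merging'' to collapse this to $3$ has no mechanism behind it: blocks from different $P_a$'s need not share vertices, so in general they form distinct parts. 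The constant $(5g+1)\ell+3$ is not there to make a block-counting argument close; it comes from a different construction.

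The idea you are missing is that the paper does \emph{not} take $Z = T$. It builds $Z$ slab by slab along the \textsc{bfs}-layering: for each slab of roughly $3g\ell$ consecutive layers it starts with the portion of $T$ in that slab (at most $2g$ components), and then iteratively \emph{absorbs} into the current $X_i$ any path of length $\le \ell$ in $G - V(Z_{i-1})$ that joins two of its components. Each absorption drops the component count, so after at most $2g$ rounds no short path connects two components; the parts $\mathcal{X}_i$ are these final components. This absorption is why $Z$ may properly contain $T$ and why the width picks up the factor $(5g+1)\ell+3$. The ``three parts'' bound then follows cleanly: a clean path of length $\le \ell$ spans few enough layers that it only touches parts from two consecutive slabs $\mathcal{X}_{i-1}$ and $\mathcal{X}_i$; by cleanness it has at most one vertex in $\mathcal{X}_{i-1}$, and deleting that vertex leaves at most two subpaths in $G - V(Z_{i-1})$, each of which (by the absorption property) meets at most one part of $\mathcal{X}_i$.
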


\begin{proof}
The $g=0$ case holds trivially with $Z$ the empty graph and $\mathcal{Z}$ the empty set. Now assume that $g\geq 1$. Let $(V_0,V_1,\dots)$ be a
\textsc{bfs}-layering of $G$ where $V_0=\{r\}$ for some $r\in V(G)$. By \cref{TreeGenus}, $G$ contains a tree $T$ that is the union of at most $2g$ vertical paths such that $G':=G-V(T)$ is planar. For $a,b\in \NN_0$ where $a \leq b$, let $V_{[a,b]}:=\bigcup(V_j \colon j\in\{a,\dots,b\})$ and $T_{[a,b]}:=T[V(T)\cap V_{[a,b]}]$. For $i\in\NN_0$, we inductively construct a sequence of tuples $(x_i,X_i,Z_i, \mathcal{X}_i, \mathcal{Z}_i)$ with the following properties:
      \begin{enumerate}[(1)]
          \item\label{Tuple1} $x_0=0$ and $x_i\in \{x_{i-1}+3g\ell+1,\dots, x_{i-1}+ 5g\ell+1\}$ for all $i\geq 1$;    
          \item\label{Tuple2}  $X_i$ is an induced subgraph of $G$ with $V(T_{[x_{i-1}+1,x_i]}) \subseteq V(X_i)\subseteq V_{[x_{i-2}+\ell+1,x_i]}$;
          \item\label{Tuple3}  $Z_i$ is an induced subgraph of $G$ with $V(T_{[0,x_i]}) \subseteq V(Z_i)\subseteq V_{[0,x_i]}$; 
          \item\label{Tuple4}  $X_i=Z_i-V(Z_{i-1})$;
          \item\label{Tuple5}  $\mathcal{X}_i$ is a connected partition of $X_i$ of width at most $2g((5g+1)\ell+3)$; 
          \item\label{Tuple6}  $\mathcal{Z}_i$ is a connected partition of $Z_i$ of width at most $2g((5g+1)\ell+3)$ where $\mathcal{Z}_{i}=\mathcal{Z}_{i-1}\cup \mathcal{X}_i$; 
          \item\label{Tuple7}  Every path in $G-V(Z_{i-1})$ of length at most $\ell$ intersects at most one part in $\mathcal{X}_i$; and
          \item\label{Tuple8}  Every $\mathcal{Z}_i$-clean path in $G$ of length at most $\ell$ intersects at most three parts in $\mathcal{Z}_i$.
      \end{enumerate}
      Note that when $i:=|V(G)|$, we have $x_i\geq |V(G)|$ and $T\subseteq Z_i$, which implies that $(Z_i,\mathcal{Z}_i)$ satisfies the lemma statement. 
      
     For $i=0$, such a tuple exists with $X_i=G[\{r\}]$, $Z_i=X_i$, $\mathcal{X}_i=(\{r\})$, and $\mathcal{Z}_i=\mathcal{X}_i$. Now assume that $i\geq 1$ and such a tuple exists for $i-1$.

      Let $x_{i,1}:=x_{i-1}+3g\ell+1$ and $X_{i,1}:= T_{[x_{i-1}+1,x_{i,1}]}$. Then $X_{i,1}$ is the union of at most $2g$ vertical paths, and thus 
      has at most $2g$ components. Suppose $G-V(Z_{i-1})$ contains a path $P$ of length at most $\ell$ that intersects at least two components of $X_{i,1}$. Let $x_{i,2}:=\max\{\{j\colon V(P)\cap V_j\neq \emptyset\}\cup x_{i,1}\}$ and  $X_{i,2}:= G[V(T_{[x_{i-1}+1,x_{i,2}]})\cup V(P)]$. Then $X_{i,2}$ has at most $2g-1$ components. Moreover, since $P$ has length at most $\ell$, it follows that  $x_{i,2}\in \{x_{i,1},\dots, x_{i,1}+\ell \}$ and  $V(P)\subseteq V_{[x_{i-1}-\ell+1,x_{i,1}]}$, so $V(X_{i,2})\subseteq V_{[x_{i-1}-\ell+1,x_{i,1}]}$. Iterate the above procedure until there is no path of length at most $\ell$ that intersects two components of $X_{i,j}$. Such a process must terminate within at most $2g$ iterations, since no path can exist if $X_{i,j}$ has only one component. As such, there exists $j\in \{1,\dots,2g\}$ such that $x_{i,j}\in \{x_{i-1}+3g\ell+1,\dots, x_{i-1}+5g\ell+1\}$, $V(X_{i,j})\subseteq V_{[x_{i-1}-2g\ell+1,x_{i,1}]}\subseteq V_{[x_{i-2}+\ell+1,x_{i,1}]}$ and every path in $G-V(Z_{i-1})$ of length at most $\ell$ intersects at most one component of $X_{i,j}$. Set $x_i:=x_{i,j}$, $X_i:=G[V(X_{i,j})]$, and $Z_i:=G[Z_{i-1}\cup X_i]$.  Let $\mathcal{X}_i$ be the connected partition of $X_i$ where each part induces a component of $X_i$ and $\mathcal{Z}_i:=\mathcal{Z}_{i-1}\cup X_{i,j}$. We now show that the construction satisfies the desired properties.
        
      By construction, \ref{Tuple1}, \ref{Tuple2}, \ref{Tuple3}, \ref{Tuple4} and \ref{Tuple7} hold clearly. For \ref{Tuple5}, since $X_{i}$ is the union of at most $2g$ vertical paths of length at most $5g\ell+1$ together with the union of at most $2g$ paths of length at most $\ell$, it follows that $|V(X_{i})|\leq  2g((5g+1)\ell+3)$. Thus \ref{Tuple5} holds and so, by induction, \ref{Tuple6} holds. It remains to show \ref{Tuple8}. Let $P$ be a $\mathcal{Z}_i$-clean path in $G$ of length at most $\ell$. If $V(P)\subseteq V_{[0,x_{i-2}+\ell]}$, then the claim follows by induction. So assume that $V(P)\cap V_{[x_{i-2}+\ell+1,x_i]}\neq \emptyset$. Since $V(Z_{i-2}) \subseteq V_{[0, x_{i-2}]}$, this implies $V(P)\cap V(Z_{i-2})=\emptyset$. Thus $P$ only intersects parts in $\mathcal{X}_{i-1}\cup \mathcal{X}_i$. As $P$ has length at most $\ell$, it follows by \ref{Tuple7} that $P$ only intersects at most one part of $\mathcal{X}_{i-1}$. Let $W:=V(P)\cap V(Z_{i-1})$. Since $P$ is $\mathcal{Z}_i$-clean, it follows that $|W|\leq 1$. So $P-W$ consists of at most two components that are $\mathcal{Z}_i$-clean paths in $G-V(Z_{i-1})$. By \ref{Tuple7}, each component of $P-W$ intersects at most one part in $\mathcal{X}_i$. So $P$ intersects at most three parts in $\mathcal{Z}_i$, as required.
  \end{proof}  

\begin{proof}[Proof of \cref{BlockingGenus}]
    Without loss of generality, we may assume that $G$ is connected. By \cref{GenusZ} with $\ell=895$, $G$ contains a subgraph $Z$ such that $G':=G-V(Z)$ is planar and $Z$ has a connected partition $\mathcal{Z}$ with width at most $8950g^2 + 1796g$ such that every path of length at most $895$ in $G$ intersects at most three parts in $\mathcal{Z}$. By \cref{BlockingPlanar}, $G'$ has a $\BlockingNumber$-blocking partition $\mathcal{R}'$ with width at most $10\Delta^{80}(3612\,\Delta^{452}+900)$. Let $\mathcal{R}:=\mathcal{R}'\cup \mathcal{Z}$, which is a connected partition of $G$ with width at most $\max\{10\Delta^{80}(3612\,\Delta^{452}+900),8950g^2 + 1796g\}$. We claim that $\mathcal{R}$ is $894$-blocking. Consider an $\mathcal{R}$-clean path $P$ in $G$. Then $P$ intersects at most three parts in $\mathcal{Z}$. Let $W:=V(P)\cap V(Z)$. Since $P$ is $\mathcal{R}$-clean, $|W|\leq 3$. Therefore, $P-W$ has at most four components, each of which is an $\mathcal{R}'$-clean path in $G'$. Since each $\mathcal{R}'$-clean path in $G'$ has length at most $\BlockingNumber$, it follows that $P$ has length at most $4\cdot\BlockingNumber+6=894$. Hence  $\mathcal{R}$ is $894$-blocking.
\end{proof}

%%%%%%%%%%%%%%%%%%%%%%%%%%%%%%%%%
\section{Reflections on Blocking Partitions}
\label{Reflections}

This section considers which graph classes have $\ell$-blocking partitions of width at most $c$ for some constants $\ell,c$. Bounded maximum degree is necessary, even for trees.

\begin{prop}
\label{trees}
If every tree with maximum degree $\Delta$ has an $\ell$-blocking partition of width at most $c$, then $c\geq \Delta$.
\end{prop}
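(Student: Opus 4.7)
I would prove the proposition by exhibiting a tree $T$ of maximum degree $\Delta$ such that every $\ell$-blocking partition of $T$ has a part of size at least $\Delta$. Take $T$ to be the rooted tree of depth $\ell+1$ where the root $v$ has $\Delta$ children, every non-root non-leaf vertex has $\Delta-1$ children, and all leaves lie at depth $\ell+1$; then every non-leaf vertex of $T$ has degree exactly $\Delta$, so $\Delta(T)=\Delta$.

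Assume toward contradiction that $\RR$ is an $\ell$-blocking partition of $T$ of width at most $\Delta-1$, and let $V_x$ denote the part of $\RR$ containing a vertex $x$. I would derive a contradiction by constructing an $\RR$-clean descending path $v=x_0,x_1,\ldots,x_{\ell+1}$ of length $\ell+1$ in $T$. The key observation is that whenever $x$ is a non-leaf vertex whose parent (when $x\neq v$) does not lie in $V_x$, at least one child of $x$ lies outside $V_x$: indeed $V_x$ is a connected subtree of $T$ with $|V_x|\leq\Delta-1$, so $|V_x\cap N(x)|\leq\Delta-2$; excluding the parent, at most $\Delta-2$ of $x$'s children lie in $V_x$, leaving at least one child outside (and at least two when $x=v$).

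Using this, at each step $i$ I would pick $x_{i+1}$ to be a child of $x_i$ lying outside $V_{x_i}$. The main delicate point is to verify that the parts $V_{x_0},\ldots,V_{x_{\ell+1}}$ along the path are pairwise distinct, which is what makes the path $\RR$-clean. This uses crucially that $T$ is a tree: if $V_{x_{i+1}}=V_{x_j}$ for some $j<i$, then by connectedness $V_{x_j}$ would have to contain the unique $T$-path from $x_j$ to $x_{i+1}$, namely the descending segment $x_j,x_{j+1},\ldots,x_{i+1}$, forcing $x_{j+1}\in V_{x_j}$ and hence $V_{x_{j+1}}=V_{x_j}$, contradicting the inductive distinctness. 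In turn, this distinctness ensures that at each step $i\geq 1$ the parent $x_{i-1}$ of $x_i$ does not lie in $V_{x_i}$, so the greedy selection via the key observation is always available.

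Iterating down to depth $\ell+1$ therefore produces an $\RR$-clean path of length $\ell+1$, contradicting the $\ell$-blocking property, and so $c\geq\Delta$. I expect the tree-path/connectedness step establishing part distinctness to be the only mildly delicate point; everything else is a short direct greedy descent.
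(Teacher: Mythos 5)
Your proposal is correct and follows essentially the same argument as the paper: build a complete (roughly) $(\Delta-1)$-ary tree of height $\ell+1$, greedily descend choosing a child outside the current part (possible since each part has at most $\Delta-1$ vertices), and use connectedness of parts to conclude the resulting root-to-leaf path is $\RR$-clean. The only cosmetic differences are that the paper's root has $\Delta-1$ children rather than $\Delta$, and the paper states the part-distinctness step more briefly.
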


\begin{proof}
Let $T$ be the complete  $(\Delta-1)$-ary rooted tree of height $\ell+1$. So $T$ has maximum degree $\Delta$ and every root-to-leaf path has length $\ell+1$.  Let $\RR$ be an $\ell$-blocking partition of $T$ of width at most $c$. For the sake of contradiction, suppose $c<\Delta$. Then every non-leaf vertex of $T$ has a child that belongs to a different part in $\RR$. So $T$ contains a root-to-leaf path $P$ where every pair of consecutive vertices belong to different parts in $\RR$. Moreover, no two non-consecutive vertices in $P$ belong to the same part, since each part in $\RR$ is connected. Hence $P$ is an $\RR$-clean path of length $\ell+1$, which is a contradiction.
\end{proof}

On the other hand, bounded maximum degree is not enough. 

\begin{prop}
\label{4regular}
    There are no constants $c,\ell\in\NN$ such that every 4-regular graph has an $\ell$-blocking partition of width at most $c$.
\end{prop}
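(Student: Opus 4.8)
The plan is to prove the (stronger, quantified) statement: for all \(c,\ell\in\NN\) there is a \(4\)-regular graph \(G\) admitting no \(\ell\)-blocking partition of width at most \(c\). For \(G\) I would take a member of a family of \(4\)-regular \emph{expanders of large girth} — concretely a suitable Ramanujan graph, or a random \(4\)-regular graph on enough vertices — chosen so that, writing \(m:=(\ell+1)c\), (i) \(G\) is \(4\)-regular on \(n\) vertices, (ii) \(G\) has girth greater than \(2m+2\), and (iii) \(G\) has edge-expansion at least some absolute constant \(h_0>0\) for vertex sets of size at most \(n/2\); such graphs exist with \(n\) arbitrarily large. Note the contrast with \cref{trees}: there a bushy tree of \emph{unbounded} degree forced a large width, whereas here we must stay \(4\)-regular and instead force a long clean path using global connectivity.

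First I would record the structural consequences of a hypothetical \(\ell\)-blocking partition \(\RR\) of \(G\) with width at most \(c\), writing \(H:=G/\RR\) for the quotient. Since every part of \(\RR\) is a connected set of at most \(c\le m\) vertices, the girth bound (ii) forces each part to induce a subtree of \(G\), and forces at most one edge of \(G\) between any two parts; counting edges leaving a part \(W\) of size \(w\) then gives exactly \(4w-2(w-1)=2w+2\ge 4\), so \(H\) has minimum degree at least \(4\), and contracting connected \(\le c\)-sets keeps \(H\) an expander up to a factor depending only on \(c\). Under these conditions an \(\RR\)-clean path of length \(k\) in \(G\) corresponds exactly to a path \(W_0W_1\cdots W_k\) in \(H\) that is \emph{port-consistent}: for each internal part \(W_i\), the unique edge of \(G\) between \(W_{i-1}\) and \(W_i\) and the unique edge of \(G\) between \(W_i\) and \(W_{i+1}\) share their endpoint in \(W_i\).

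The heart of the argument — and the step I expect to be the main obstacle — is to produce a clean path of length \(\ell+1\), equivalently a port-consistent path of length \(\ell+1\) in \(H\). I would do this by building a clean path greedily in \(G\): start from any vertex and repeatedly extend by an edge into a part not yet used. If a clean path \(v_0\cdots v_j\) with \(j\le\ell\) cannot be extended at \(v_j\), then all four neighbours of \(v_j\) lie in the set \(F\) covered by the at most \(\ell+1\) parts used; but \(F\) is connected of size at most \(m\), so by (ii) it has exactly \(2|F|+2\) edges leaving it and by (iii) it has a positive fraction of escape edges. The work is to exploit this to keep the construction alive for \(\ell+1\) steps once \(n\) is large relative to \(c,\ell\) — if necessary by choosing the extension vertex so its entry-port still has a spare cross-edge, and by backtracking one step and re-choosing when a bad endpoint is reached. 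A clean alternative is a counting argument: bound the number of \emph{non-clean} length-\((\ell+1)\) paths of \(G\) by \(O_{c,\ell}(n)\) using (ii) (a non-clean path revisits a part, hence contains two of the at most \(\binom{c}{2}\) close pairs inside some part, and each close pair lies on only \(O_\ell(1)\) length-\((\ell+1)\) paths), and compare with an \(\Omega(n)\) lower bound, with a larger constant, on the total number of length-\((\ell+1)\) paths obtained from girth and expansion; then some length-\((\ell+1)\) path is clean. Either way the point to nail down is that expansion prevents the forbidden region of a partial clean path from surrounding its endpoint — which fails for expanders but holds for tree-like or planar graphs, consistently with \cref{BlockingPlanar}.

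Finally, a clean path of length greater than \(\ell\) in \(G\) contradicts the assumption that \(\RR\) is \(\ell\)-blocking; since \(c\) and \(\ell\) were arbitrary, the proposition follows. The delicate points are (a) carrying out the ``cannot get stuck'' (or the counting) step with the correct quantitative dependence of \(n\) on \(c,\ell\), and (b) the bookkeeping that parts are subtrees joined by at most one edge, which is exactly what makes the reduction to port-consistent paths in the quotient exact.
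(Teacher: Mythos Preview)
Your proposal has a genuine gap in both suggested approaches, and it misses a much simpler argument that avoids expansion entirely.

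For the greedy extension: edge-expansion of the visited set \(F\) only says that \emph{many} edges leave \(F\), not that your current endpoint \(v_j\) has one. In a \(4\)-regular graph it is perfectly possible for \(v_j\) to have all four neighbours inside \(F\) while \(F\) still expands; backtracking one step does not obviously help, since the same obstruction can recur. You would need a more delicate argument to turn global expansion into a local escape, and you have not supplied one.

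For the counting argument: the numbers do not balance. With girth larger than \(2(\ell+1)\), the total number of length-\((\ell{+}1)\) paths is exactly \(2n\cdot 3^{\ell}\). But a single ``close pair'' \((u,v)\) at distance \(1\) (i.e.\ a red edge inside a part) already lies on roughly \((\ell{+}1)\cdot 3^{\ell}\) such paths, and there can be close to \(n\) red edges; so your upper bound on non-clean paths is of order \(n(\ell{+}1)3^{\ell}\), which exceeds the total for every \(\ell\ge 2\). The claimed comparison ``\(\Omega(n)\) with a larger constant'' therefore fails.

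The paper's proof sidesteps all of this. Take any \(4\)-regular \(G\) with girth \(g\ge c+\ell+2\) (no expansion needed). Given a hypothetical \(\ell\)-blocking partition of width at most \(c\), colour an edge \emph{blue} if its ends lie in different parts and \emph{red} otherwise. Since \(g>c\), each part induces a tree, so there are fewer than \(n\) red edges, hence more than \(|E(G)|-n=n\) blue edges. A subgraph on \(n\) vertices with more than \(n\) edges contains a cycle; this blue cycle has length at least \(g\ge\ell+2\), so it contains a blue path \(P\) of length \(\ell+1\). If two vertices of \(P\) were in the same part, the path inside the part (length \(<c\)) together with the subpath of \(P\) (length \(\le\ell+1\)) would give a cycle of length at most \(c+\ell<g\), impossible. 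Hence \(P\) is \(\RR\)-clean, contradicting \(\ell\)-blocking. The whole argument is four lines and uses only girth; the step you were missing is to look for a blue \emph{cycle} via an edge count, rather than trying to grow or count paths directly.
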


\begin{proof}
Suppose for the sake of contradiction that every 4-regular graph has an $\ell$-blocking partition of width at most $c$. \citet{ES63} showed that for any integers $\Delta,g\geq 3$ there is a $\Delta$-regular graph with girth at least $g$. Let $G$ be a 4-regular $n$-vertex graph with girth $g\geq c+\ell+2$. Consider an $\ell$-blocking partition $\RR$ of $G$ with width at most $c$. Say that an edge $uv\in E(G)$ is \defn{red} if $u,v\in V$ for some $V\in \RR$, otherwise it is \defn{blue}. Since $g>c$, each part $V\in \RR$ induces a tree, so the total number of red edges is less than $n$. Thus the number of blue edges is more than $|E(G)|-n=n$. Hence there is a cycle $C$ in $G$ that consists of blue edges, which has length at least $g\geq \ell+2$. Therefore $C$ contains a path $P$ of length $\ell+1$ that consists of blue edges. If distinct vertices $v,w$ in $P$ are in the same part in $\RR$, then $G$ contains a cycle of length at most $c+\ell$, which contradicts the choice of $g$. Hence $P$ is  $\RR$-clean, which is a contradiction. 
\end{proof}

\cref{4regular} says that for a graph class to admit bounded blocking partitions, some structural assumption  in addition to bounded degree is necessary. \cref{BlockingPlanar} shows that bounded Euler genus is such an assumption.  We now show that bounded treewidth is another such assumption.

\begin{thm}\label{BlockingPartitionTW}
    Every graph \(G\) has a $2$-blocking partition with width at most \[1350 (\tw(G)+1)(\Delta(G))^2.\]
\end{thm}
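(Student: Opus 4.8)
The goal is a $2$-blocking partition of an arbitrary graph $G$ with width $O(\tw(G)\,\Delta(G)^2)$. The natural starting point is a tree-decomposition $(W_x : x\in V(T))$ of $G$ of width $t:=\tw(G)$, which we may assume is \emph{smooth}: $T$ is rooted, $|W_x|=t+1$ for every node, and adjacent bags differ in exactly one vertex. For each $v\in V(G)$ let $x_v$ be the node highest in $T$ whose bag contains $v$; this gives a partition of $V(G)$ into ``introduce classes'' $U_x:=\{v : x_v=x\}$, and the map $v\mapsto x_v$ is a homomorphism-like contraction, but the classes $U_x$ are not connected in $G$ and can be large. The key idea is instead to build the partition greedily/locally so that any path which stays ``clean'' is forced to climb the tree-decomposition, and bounded bag size plus bounded degree limits how far it can climb before it must revisit a bag vertex.

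\textbf{Key steps.} First, I would take a smooth rooted tree-decomposition and, as is standard, pick a \textsc{bfs}-type ordering of $V(T)$; process vertices of $G$ in an order compatible with $T$ (parents before children). Second, I would form a \emph{preliminary} partition whose parts are small connected subgraphs, each obtained by taking a vertex $v$ together with a bounded-radius ball around $v$ inside the subgraph ``owned'' by $x_v$ (the vertices whose highest bag is in the subtree rooted at $x_v$ and that are reachable without passing through $W_{x_v}$), so that each part is connected and has at most, say, $\Delta(\Delta-1)+1 = O(\Delta^2)$ vertices. Third — and this is the crux — I would argue that an $\RR$-clean path $P=(x_0,x_1,x_2,x_3)$ of length $3$ cannot exist: the endpoints of such a path lie in distinct parts, and by following the path one shows each step either stays within a single introduce-class (impossible for three consecutive distinct parts, because a part already absorbs a ball of radius $1$) or moves to a part owned by a strictly higher node of $T$; but three strictly-higher moves would require three nested separators $W_{x_{v_0}}\supseteq\dots$, and since each such separator has size $\le t+1$ and each vertex has degree $\le\Delta$, a counting argument on how the path can re-enter these separators (as in the role of $T_j^0$ and the sets $M_j$ in Sections~\ref{ChordalPartitionSection}--\ref{Analysis}, but in the much simpler tree-decomposition setting) forces two vertices of $P$ into the same part. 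Finally, since each introduce-class $U_x$ meets at most $t+1$ of these balls and we may need a multiplicative slack to handle the $\Delta^2$ ball sizes and the constant $1350$, the width bound $1350(t+1)\Delta^2$ follows by bookkeeping the constants.

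\textbf{Main obstacle.} The real difficulty is Step three: making precise the statement ``a clean path is forced to climb'' and then bounding the climb. In the planar proof this is exactly the content of the lengthy analysis in Section~\ref{Analysis}, where the geodesic trees $T_j^0$ and the far-apart cut-sets $M_j$ do the work; here the tree-decomposition already gives us the separators for free (the bags $W_x$), which should make the argument vastly shorter, but one still has to be careful that a clean path cannot ``hide'' a long detour inside a single part or oscillate between two adjacent introduce-classes. I expect the cleanest route is to define the parts so that each part, viewed as a connected subgraph, together with its neighbourhood, blocks every length-$2$ continuation leaving it except through a higher separator vertex, and then a short case analysis ($P$ of length $3$ has $4$ vertices, hence visits at most $4$ parts, and the separator-nesting kills the configuration) finishes it. The constant $1350$ is surely not tight and is just the price of a clean exposition, so I would not try to optimise it.

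\medskip

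(Author's proof follows.)
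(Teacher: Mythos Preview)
Your plan has a real gap at exactly the point you flag as the obstacle. The ``climbing'' heuristic---that an $\RR$-clean path must move to strictly higher separators---is not forced by your construction: nothing prevents a length-$3$ clean path from oscillating, say $B_x \to B_y \to B_x \to B_{y'}$ where $y,y'$ are children of $x$ and the two visits to $B_x$ land in different balls of your partition. Radius-$2$ balls do not absorb this, because a vertex in a child region can have neighbours in two distinct balls lying inside the same parent bag. A smooth tree-decomposition gives you separators of size $t+1$, but it gives you no control over which \emph{components} of the induced subgraph on a bag a nearby vertex can touch---and that is precisely the control a $2$-blocking argument needs. Your ``separator-nesting kills the configuration'' is not an argument; a path of length $3$ need not nest anything.

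The paper's proof supplies exactly this missing ingredient, via different machinery. It does not use tree-decompositions at all but \emph{tree-partitions}: \cref{ImprovedTreeProduct} (proved in the appendix by refining the Ding--Oporowski argument) gives a rooted tree-partition $(B_x:x\in V(T))$ of width $\le 90(\tw(G)+1)\Delta(G)$ with $\Delta(T)\le 15\Delta(G)$ that is \emph{detached}, meaning every vertex in a child bag $B_y$ is adjacent to at most one component of $G[B_x]$ for the parent $x$. From there the proof is immediate: colour an edge red if it lies inside a bag or between levels $i$ and $i{+}1$ with $i$ odd, blue otherwise; let $\RR$ be the red components. Each part sits in one bag together with its children, so has size at most $15\Delta(G)\cdot 90(\tw(G)+1)\Delta(G)=1350(\tw(G)+1)\Delta(G)^2$. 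A clean path uses only blue edges, hence alternates between one bag $B_x$ at an even level and its children; an internal vertex in a child bag then has its two path-neighbours in different red components of $B_x$, hence in different components of $G[B_x]$, contradicting detachedness. So the detached property---not any counting on nested separators---is what kills the oscillation; your intuition that the bounded-treewidth case should be short is right, but the shortcut is tree-partitions plus detachedness, not balls inside a tree-decomposition.
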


The proof of \cref{BlockingPartitionTW} relies on a new lemma concerning tree-partitions. We say that a rooted $T$-partition $(B_x\colon x\in V(T))$ of a graph $G$ is \defn{detached} if for every non-root node $y\in V(T)$ with parent $x\in V(T)$, each vertex in $B_y$ is adjacent to at most one component of $G[B_x]$. 

\begin{lem}
\label{ImprovedTreeProduct}
Every graph $G$ has a detached $T$-partition of width at most \(90(\tw(G)+1)\Delta(G)\), for some tree $T$ with $\Delta(T)\leq 15\Delta(G)$
\end{lem}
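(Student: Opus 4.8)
The plan is to start from a standard tree-decomposition of $G$ of width $t := \tw(G)$, convert it into an ordinary tree-partition, and then repair it into a \emph{detached} one by splitting bags. First I would recall the classical fact (essentially due to Wood, building on Seese/Ding–Oporowski) that every graph of treewidth $t$ and maximum degree $\Delta$ has a tree-partition of width $O(t\Delta)$; the cleanest route is to take a smooth rooted tree-decomposition $(W_x : x\in V(T))$ of width $t$ and for each node $x$ put into the tree-partition bag $B_x$ those vertices of $G$ whose top node in the decomposition is $x$ (i.e. $B_x = W_x \setminus W_{\mathrm{parent}(x)}$ after suitably processing), obtaining a rooted $T$-partition in which each $B_x$ has at most... — here one needs the degree bound: a vertex $v\in B_x$ has all its neighbours in $W_x \cup \bigcup_{y} W_y$ over the (at most $\Delta$) children $y$ of $x$ containing $v$, which caps $|B_x|$ by roughly $(t+1)\Delta$. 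I would quote this with an explicit constant so that the width is at most, say, $c_1(t+1)\Delta$ for a small absolute $c_1$.

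Next, the detachedness defect: for a non-root node $y$ with parent $x$, a vertex $w\in B_y$ may see several components of $G[B_x]$. The key idea is to \emph{refine the parent bag} so that each component it induces is small enough — in fact a single vertex would suffice but costs too much; instead I would split each $B_x$ into connected pieces of bounded size and re-attach children to the appropriate piece. Concretely, partition each $G[B_x]$ into its connected components; if $w\in B_y$ is adjacent to components $C_1,\dots,C_k$ of $G[B_x]$, then $k\le \deg_G(w)\le\Delta$, and across all $w\in B_y$ the children of $x$ "want" to be attached to at most $|B_y|\cdot\Delta \le c_1(t+1)\Delta^2$ components. The cleaner move is to \emph{merge} children: replace the star at $x$ by letting each child $B_y$ keep only one parent-component as its parent, and absorb the leftover adjacencies by moving a constant number of vertices of $B_x$ down into $B_y$ (those few cut vertices separating the components $w$ sees). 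I would carry this out so that each bag grows by at most a multiplicative constant, the tree degree $\Delta(T)$ blows up by a factor at most $\Delta$ at each node (hence the claimed $\Delta(T)\le 15\Delta(G)$ after bounding the number of components per bag by a constant times $\Delta$), and the resulting partition is detached by construction; finally one checks the connectedness of each new bag is preserved since we only move connected chunks. Tracking the constants through these two steps (initial $c_1(t+1)\Delta$, then a constant-factor and $\Delta$-factor inflation) should land at width $\le 90(t+1)\Delta$ and $\Delta(T)\le 15\Delta$.

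The main obstacle I expect is the bookkeeping in the detachment step: when a vertex $w\in B_y$ is adjacent to many components of $G[B_x]$, naively contracting all those components into $B_y$ could cascade (a child of $x$ absorbing vertices, then its own children needing repair), so the argument must be organised top-down along $T$ with a careful invariant — e.g. "after processing level $i$, every bag at level $\le i$ has size $\le 90(t+1)\Delta$ and its children are detached from it" — and one must verify the size bound is not violated when a bag receives vertices pushed down from its parent while also being a parent itself. A secondary subtlety is controlling $\Delta(T)$: splitting a bag into components and giving each component its own set of child-subtrees can increase a node's degree, so one needs the bound that $G[B_x]$ has at most $O(\Delta)$ components (which follows from $|B_x|=O((t+1)\Delta)$ only loosely; a sharper argument uses that each component of $G[B_x]$ must be adjacent, through the decomposition, to few other bags) — getting this down to the stated constant $15$ is where most of the numeric care goes, and where I would allow myself generous slack rather than optimise.
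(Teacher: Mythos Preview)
Your plan takes a genuinely different route from the paper, and as stated it has a real gap in the detachment step.

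The paper does \emph{not} start from an existing tree-partition and repair it. Instead it builds the detached tree-partition directly by recursion on $|V(G)|$, using the Robertson--Seymour balanced-separator lemma: given a prescribed set $S$ with $5(\tw(G)+1)\le |S|\le 30(\tw(G)+1)\Delta(G)$, it either (small case) puts $V(G)$ into two bags, or (medium $|S|$) creates a root bag containing $S$ and recurses on $G$ minus that bag with $S'$ the neighbourhood, or (large $|S|$) splits $G$ into $G_1,G_2$ with $|V(G_1\cap G_2)|\le \tw(G)+1$ and each $|S\cap V(G_i)|\le \tfrac23|S|$, recurses on both, and merges the two root bags. Detachedness is guaranteed by a short standalone lemma: for any nonempty $S\subseteq V(G)$ one can enlarge $S$ to $X$ with $|X|\le 2|S|-1$ so that every vertex of $G-X$ is adjacent to at most one component of $G[X]$ (greedily add any vertex adjacent to two components; each addition drops the component count). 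This lemma is applied once when forming each root bag, and the constants $90$ and $15$ fall out of the recursion inequalities.

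Your repair step, by contrast, is where the argument breaks. You write that if $w\in B_y$ sees components $C_1,\ldots,C_k$ of $G[B_x]$ you will ``absorb the leftover adjacencies by moving a constant number of vertices of $B_x$ down into $B_y$ (those few cut vertices separating the components $w$ sees)''. But there are no such cut vertices: $C_1,\ldots,C_k$ are already disconnected in $G[B_x]$, so no finite set of vertices of $B_x$ can be pushed into $B_y$ to make $w$ see only one component while keeping $w$'s neighbours in the parent bag. The only way to fix $w$ is either to pull $w$ up into $B_x$ (which is exactly the paper's augmentation lemma, and which you have not bounded) or to merge several $C_i$ into a single new bag, which destroys the tree structure unless done very carefully. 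You yourself flag the cascading problem, but the proposed top-down invariant does not resolve it, because pushing vertices down from $B_x$ to $B_y$ changes the component structure of $G[B_y]$ and can create new detachedness violations for the grandchildren that were previously fine. The paper sidesteps all of this by never having an ``undetached'' intermediate object: the augmentation $S\mapsto X$ is applied \emph{before} the children are even constructed, so the recursion only ever sees $G-X$, and nothing needs repair.

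If you want to salvage your approach, the cleanest fix is to import the paper's augmentation lemma and apply it to each $B_x$ top-down, pulling the at most $|B_x|-1$ extra vertices up from the children; but then you must redo the child bags from scratch on the smaller graph, which is effectively the paper's recursion in disguise.
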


The proof of \cref{ImprovedTreeProduct} builds on a clever argument due to a referee of a paper by \citet{DO95} showing that graphs with bounded treewidth and bounded maximum degree have tree-partitions of bounded width (see also \citep{Wood09,DW24}); see \cref{AppendixTreePartitions} for the details.

\begin{proof}[Proof of \cref{BlockingPartitionTW}]
     By \cref{ImprovedTreeProduct}, $G$ has a detached $T$-partition \((B_x : x \in V(T))\) with width at most $90(\tw(G)+1)\Delta(G)$ for some tree $T$ with $\Delta(T)\leq 15\Delta(G)$ and root $z\in V(G)$. Let $(V_0,V_1,\dots)$ be a \textsc{bfs}-layering of $G$ where $V_0=\{z\}$. We say a part $B_x$ is in level $i$ if $x\in V_i$. Colour the edges of $G$ as follows: each edge with two ends in one part \(B_x\) is coloured red, and each edge with one end in a part \(B_x\) at level \(i\) and one end in a part \(B_y\) at level \(i+1\) is coloured red if \(i\) is odd and blue if \(i\) is even. Let $\RR$ be the connected partition of $G$ where each part is the vertex-set of a component of the spanning subgraph of $G$ consisting of the red edges. Observe that the vertex-sets of the components of $G[B_z]$ are in $\RR$. Moreover, for every other part $X\in \RR$, there is a node $x\in V(T)$ with children $y_1,\dots,y_{\deg_T(x)-1}\in V(T)$ such that $X\subseteq B_x\cup B_{y_1} \cup \dots \cup B_{y_{\deg_T(x)-1}}$.  Since every node in $V(T)\setminus\{z\}$ has at most $15 \Delta(G)-1$ children, it follows that each part in $\RR$ has at most \((15 \Delta(G)) \cdot (90(\tw(G)+1)\Delta(G))\leq 1350 (\tw(G)+1)(\Delta(G))^2\) vertices. 
     
    For the sake of contradiction, suppose $G$ contains an $\RR$-clean path $P$ of length at least $3$. Since $P$ is $\RR$-clean, its edges are blue and so all edges of \(P\) are between levels \(i\) and \(i+1\) for some even \(i\), and all the vertices of \(P\) at level \(i\) belong to one part \(B_x\). Since each edge of $P$ is blue, the vertices of $P$ alternate between vertices in $B_x$ and vertices that belong to parts that are indexed by the children of $x$. Since $P$ has length at least $3$, $P$ has an internal vertex $w$ that belongs to $B_y$ for some child $y$ of $z$. Since $P$ is $\RR$-clean, $w$ is adjacent to at least two components of $G[B_x]$, contradicting \((B_x : x \in V(T))\) being a detached tree-partition. So every $\RR$-clean path in $G$ has length at most $2$, as required.
\end{proof}

\section{Open Problems}
\label{OpenProblems}

We conclude with some open problems.

\begin{open}
What is the minimum integer $\ell$ for which there exists a function $f$ such that every planar graph $G$ has an $\ell$-blocking partition with width at most $f(\Delta(G))$? We have proved that $\ell\leq \BlockingNumber$, although we have chosen to simplify our proof rather than optimise the constant. 
\end{open}

\begin{open}
 Can \cref{ImprovedShallowMinorsPlanar} be proved with $f$ bounded by a polynomial function of $d,r,s$? Our proof gives \(f(d, r,s) \le (sd)^{O(r!)}\).
\end{open}

Consider the following open problems for $k$-planar graphs. 

\begin{open}
\label{kPlanarMinc}
What is the minimum integer $c$ such that there is a function $f$ for which every $k$-planar graph $G$ is contained in $H\boxtimes P \boxtimes K_{f(k)}$where $\tw(H)\leq c$? We know that $3\leq c\leq \TreewidthBound$.
\end{open}

\begin{open}
\label{kPlanarPoly}
Is there a constant $c$ and a polynomial function $f$ such that every $k$-planar graph $G$ is contained in $H\boxtimes P \boxtimes K_{f(k)}$ where $\tw(H)\leq c$? 
Our proof gives \(f(k) \le 2^{O(\lfloor k/2\rfloor!)}\).
\end{open}

Questions analogous to \cref{kPlanarMinc,kPlanarPoly} can be asked for other natural classes.

Finally, consider what other graph classes have an $\ell$-blocking partitions?

\begin{open}
Does there exist integers $\ell,c\geq 1$ such that every graph with maximum degree at most \(3\) has an \(\ell\)-blocking partition of width at most $c$?
\end{open}

\begin{open}
For every $t\in \NN$, does there exist $k_t\in \NN$ and a function $f_t$ such that every $K_t$-minor-free graph $G$ has a $k_t$-blocking partition with width at most $f_t(\Delta(G))$? 
\end{open}

{\fontsize{10pt}{11pt}\selectfont
\bibliographystyle{DavidNatbibStyle}
\bibliography{DavidBibliography}}

\def\soft#1{\leavevmode\setbox0=\hbox{h}\dimen7=\ht0\advance \dimen7
  by-1ex\relax\if t#1\relax\rlap{\raise.6\dimen7
  \hbox{\kern.3ex\char'47}}#1\relax\else\if T#1\relax
  \rlap{\raise.5\dimen7\hbox{\kern1.3ex\char'47}}#1\relax \else\if
  d#1\relax\rlap{\raise.5\dimen7\hbox{\kern.9ex \char'47}}#1\relax\else\if
  D#1\relax\rlap{\raise.5\dimen7 \hbox{\kern1.4ex\char'47}}#1\relax\else\if
  l#1\relax \rlap{\raise.5\dimen7\hbox{\kern.4ex\char'47}}#1\relax \else\if
  L#1\relax\rlap{\raise.5\dimen7\hbox{\kern.7ex
  \char'47}}#1\relax\else\message{accent \string\soft \space #1 not
  defined!}#1\relax\fi\fi\fi\fi\fi\fi}
\begin{thebibliography}{40}
\providecommand{\natexlab}[1]{#1}
\providecommand{\msn}[1]{MR:\,\href{http://www.ams.org/mathscinet-getitem?mr=MR{#1}}{#1}}
\providecommand{\ZBL}[1]{Zbl:\,\href{https://www.zentralblatt-math.org/zmath/en/search/?q=an:#1}{#1}}
\providecommand{\url}[1]{\texttt{#1}}
\providecommand{\urlprefix}{}
\expandafter\ifx\csname urlstyle\endcsname\relax
  \providecommand{\doi}[1]{doi:\discretionary{}{}{}#1}\else
  \providecommand{\doi}{doi:\discretionary{}{}{}\begingroup
  \urlstyle{rm}\Url}\fi

\bibitem[{Angelini et~al.(2018)Angelini, Bekos, Kaufmann, Kindermann, and
  Schneck}]{ABKKS18}
\textsc{Patrizio Angelini, Michael~A. Bekos, Michael Kaufmann, Philipp
  Kindermann, and Thomas Schneck}.
\newblock \href{https://doi.org/10.1016/j.tcs.2018.03.005}{1-fan-bundle-planar
  drawings of graphs}.
\newblock \emph{Theoret. Comput. Sci.}, 723:23--50, 2018.

\bibitem[{Bonnet et~al.(2022)Bonnet, Kwon, and Wood}]{BKW}
\textsc{\'Edouard Bonnet, {O-joung} Kwon, and David~R. Wood}.
\newblock \href{https://arxiv.org/abs/2202.11858}{Reduced bandwidth: a
  qualitative strengthening of twin-width in minor-closed classes (and
  beyond)}.
\newblock 2022, arXiv:2202.11858.

\bibitem[{Bose et~al.(2020)Bose, Dujmovi\'c, Javarsineh, and Morin}]{BDJM}
\textsc{Prosenjit Bose, Vida Dujmovi\'c, Mehrnoosh Javarsineh, and Pat Morin}.
\newblock \href{https://arxiv.org/abs/2007.06455}{Asymptotically optimal vertex
  ranking of planar graphs}.
\newblock 2020, arXiv:2007.06455.

\bibitem[{Campbell et~al.(2024)Campbell, Clinch, Distel, Gollin, Hendrey,
  Hickingbotham, Huynh, Illingworth, Tamitegama, Tan, and Wood}]{UTW}
\textsc{Rutger Campbell, Katie Clinch, Marc Distel, J.~Pascal Gollin, Kevin
  Hendrey, Robert Hickingbotham, Tony Huynh, Freddie Illingworth, Youri
  Tamitegama, Jane Tan, and David~R. Wood}.
\newblock \href{https://doi.org/10.1017/S0963548323000457}{Product structure of
  graph classes with bounded treewidth}.
\newblock \emph{Combin. Probab. Comput.}, 33(3):351--376, 2024.

\bibitem[{Didimo et~al.(2019)Didimo, Liotta, and Montecchiani}]{DLM19}
\textsc{Walter Didimo, Giuseppe Liotta, and Fabrizio Montecchiani}.
\newblock \href{https://doi.org/10.1145/3301281}{A survey on graph drawing
  beyond planarity}.
\newblock \emph{{ACM} Comput. Surv.}, 52(1):4:1--4:37, 2019.

\bibitem[{Diestel(2018)}]{Diestel5}
\textsc{Reinhard Diestel}.
\newblock Graph theory, vol. 173 of \emph{Graduate Texts in Mathematics}.
\newblock Springer, 5th edn., 2018.

\bibitem[{Ding and Oporowski(1995)}]{DO95}
\textsc{Guoli Ding and Bogdan Oporowski}.
\newblock \href{https://doi.org/10.1002/jgt.3190200412}{Some results on tree
  decomposition of graphs}.
\newblock \emph{J. Graph Theory}, 20(4):481--499, 1995.

\bibitem[{Distel et~al.(2022)Distel, Hickingbotham, Huynh, and Wood}]{DHHW22}
\textsc{Marc Distel, Robert Hickingbotham, Tony Huynh, and David~R. Wood}.
\newblock \href{https://doi.org/10.48550/arXiv.2112.10025}{Improved product
  structure for graphs on surfaces}.
\newblock \emph{Discrete Math. Theor. Comput. Sci.}, 24(2):\#6, 2022.

\bibitem[{Distel and Wood(2024)}]{DW24}
\textsc{Marc Distel and David~R. Wood}.
\newblock \href{https://doi.org/10.1007/978-3-031-47417-0_11}{Tree-partitions
  with bounded degree trees}.
\newblock In \textsc{David~R. Wood, Jan de~Gier, and Cheryl~E. Praeger}, eds.,
  \emph{2021--2022 MATRIX Annals}, pp. 203--212. Springer, 2024.

\bibitem[{D\k{e}bski et~al.(2021)D\k{e}bski, Felsner, Micek, and
  Schr\"{o}der}]{DFMS21}
\textsc{Micha{\l} D\k{e}bski, Stefan Felsner, Piotr Micek, and Felix
  Schr\"{o}der}.
\newblock \href{https://doi.org/10.19086/aic.27351}{Improved bounds for
  centered colorings}.
\newblock \emph{Adv. Comb.}, \#8, 2021.

\bibitem[{Dujmovi\'c et~al.(2017)Dujmovi\'c, Eppstein, and Wood}]{DEW17}
\textsc{Vida Dujmovi\'c, David Eppstein, and David~R. Wood}.
\newblock \href{https://doi.org/10.1137/16M1062879}{Structure of graphs with
  locally restricted crossings}.
\newblock \emph{SIAM J. Discrete Math.}, 31(2):805--824, 2017.

\bibitem[{Dujmovi\'c et~al.(2021)Dujmovi\'c, Esperet, Gavoille, Joret, Micek,
  and Morin}]{DEGJMM21}
\textsc{Vida Dujmovi\'c, Louis Esperet, Cyril Gavoille, Gwena\"el Joret, Piotr
  Micek, and Pat Morin}.
\newblock \href{https://doi.org/10.1145/3477542}{Adjacency labelling for planar
  graphs (and beyond)}.
\newblock \emph{J. ACM}, 68(6):42, 2021.

\bibitem[{Dujmovi{\'c} et~al.(2020{\natexlab{a}})Dujmovi{\'c}, Esperet, Joret,
  Walczak, and Wood}]{DEJWW20}
\textsc{Vida Dujmovi{\'c}, Louis Esperet, Gwena\"{e}l Joret, Bartosz Walczak,
  and David~R. Wood}.
\newblock \href{https://doi.org/10.19086/aic.12100}{Planar graphs have bounded
  nonrepetitive chromatic number}.
\newblock \emph{Adv. Comb.}, \#5, 2020{\natexlab{a}}.

\bibitem[{Dujmovi\'c et~al.(2023)Dujmovi\'c, Hickingbotham, Hodor, Joret, La,
  Micek, Morin, Rambaud, and Wood}]{DHHJLMMRW}
\textsc{Vida Dujmovi\'c, Robert Hickingbotham, Jędrzej Hodor, Gwena\"el Joret,
  Hoang La, Piotr Micek, Pat Morin, Clément Rambaud, and David~R. Wood}.
\newblock \href{https://doi.org/10.1137/1.9781611977912.48}{The grid-minor
  theorem revisited}.
\newblock In \emph{Proc. 2024 Annual ACM-SIAM Symposium on Discrete Algorithms
  \textup{(SODA '24)}}, pp. 1241--1245. 2023.
\newblock arXiv:2307.02816.

\bibitem[{Dujmovi{\'c} et~al.(2020{\natexlab{b}})Dujmovi{\'c}, Joret, Micek,
  Morin, Ueckerdt, and Wood}]{DJMMUW20}
\textsc{Vida Dujmovi{\'c}, Gwena\"{e}l Joret, Piotr Micek, Pat Morin, Torsten
  Ueckerdt, and David~R. Wood}.
\newblock \href{https://doi.org/10.1145/3385731}{Planar graphs have bounded
  queue-number}.
\newblock \emph{J. ACM}, 67(4):\#22, 2020{\natexlab{b}}.

\bibitem[{Dujmovi{\'c} et~al.(2017)Dujmovi{\'c}, Morin, and Wood}]{DMW17}
\textsc{Vida Dujmovi{\'c}, Pat Morin, and David~R. Wood}.
\newblock \href{https://doi.org/10.1016/j.jctb.2017.05.006}{Layered separators
  in minor-closed graph classes with applications}.
\newblock \emph{J. Combin. Theory Ser. B}, 127:111--147, 2017.

\bibitem[{Dujmovi{\'c} et~al.(2023)Dujmovi{\'c}, Morin, and Wood}]{DMW23}
\textsc{Vida Dujmovi{\'c}, Pat Morin, and David~R. Wood}.
\newblock \href{https://doi.org/10.1016/j.jctb.2023.03.004}{Graph product
  structure for non-minor-closed classes}.
\newblock \emph{J. Combin. Theory Ser. B}, 162:34--67, 2023.

\bibitem[{Dujmović et~al.(2024)Dujmović, Hickingbotham, Joret, Micek, Morin,
  and Wood}]{DHJMMW24}
\textsc{Vida Dujmović, Robert Hickingbotham, Gwenaël Joret, Piotr Micek, Pat
  Morin, and David~R. Wood}.
\newblock \href{https://doi.org/10.1017/S0963548323000275}{The excluded tree
  minor theorem revisited}.
\newblock \emph{Combin. Probab. Comput.}, 33(1):85--90, 2024.

\bibitem[{Dvor{\'{a}}k et~al.(2022)Dvor{\'{a}}k, Gon{\c{c}}alves, Lahiri, Tan,
  and Ueckerdt}]{DGLTU22}
\textsc{Zdenek Dvor{\'{a}}k, Daniel Gon{\c{c}}alves, Abhiruk Lahiri, Jane Tan,
  and Torsten Ueckerdt}.
\newblock \href{https://doi.org/10.4230/LIPIcs.SoCG.2022.38}{On comparable box
  dimension}.
\newblock In \textsc{Xavier Goaoc and Michael Kerber}, eds., \emph{Proc. 38th
  Int'l Symp. on Computat. Geometry \textup{(SoCG 2022)}}, vol. 224 of
  \emph{LIPIcs}, pp. 38:1--38:14. Schloss Dagstuhl, 2022.

\bibitem[{Erd\H{o}s and Sachs(1963)}]{ES63}
\textsc{Paul Erd\H{o}s and Horst Sachs}.
\newblock Regul\"are {G}raphen gegebener {T}aillenweite mit minimaler
  {K}notenzahl.
\newblock \emph{Wiss. Z. Martin-Luther-Univ. Halle-Wittenberg Math.-Natur.
  Reihe}, 12:251--257, 1963.

\bibitem[{Esperet et~al.(2023)Esperet, Joret, and Morin}]{EJM23}
\textsc{Louis Esperet, Gwena\"{e}l Joret, and Pat Morin}.
\newblock \href{https://doi.org/10.1112/jlms.12781}{Sparse universal graphs for
  planarity}.
\newblock \emph{J. London Math. Soc.}, 108(4):1333--1357, 2023.

\bibitem[{Fox and Pach(2010)}]{FP10}
\textsc{Jacob Fox and J{\'a}nos Pach}.
\newblock \href{https://doi.org/10.1017/S0963548309990459}{A separator theorem
  for string graphs and its applications}.
\newblock \emph{Combin. Probab. Comput.}, 19(3):371--390, 2010.

\bibitem[{Fox and Pach(2012)}]{FP12}
\textsc{Jacob Fox and J{\'a}nos Pach}.
\newblock \href{https://doi.org/10.1016/j.aim.2012.03.011}{String graphs and
  incomparability graphs}.
\newblock \emph{Adv. Math.}, 230(3):1381--1401, 2012.

\bibitem[{Grigoriev and Bodlaender(2007)}]{GB07}
\textsc{Alexander Grigoriev and Hans~L. Bodlaender}.
\newblock \href{https://doi.org/10.1007/s00453-007-0010-x}{Algorithms for
  graphs embeddable with few crossings per edge}.
\newblock \emph{Algorithmica}, 49(1):1--11, 2007.

\bibitem[{Hickingbotham and Wood(2024)}]{HW24}
\textsc{Robert Hickingbotham and David~R. Wood}.
\newblock \href{https://doi.org/10.1137/22M1540296}{Shallow minors, graph
  products and beyond-planar graphs}.
\newblock \emph{SIAM J. Discrete Math.}, 38(1):1057--1089, 2024.

\bibitem[{Hong and Tokuyama(2020)}]{HT20}
\textsc{Seok{-}Hee Hong and Takeshi Tokuyama}, eds.
\newblock \href{https://doi.org/10.1007/978-981-15-6533-5}{Beyond planar
  graphs}.
\newblock Springer, 2020.

\bibitem[{Illingworth et~al.(2022)Illingworth, Scott, and Wood}]{ISW}
\textsc{Freddie Illingworth, Alex Scott, and David~R. Wood}.
\newblock \href{https://arxiv.org/abs/2104.06627}{Product structure of graphs
  with an excluded minor}.
\newblock 2022, arXiv:2104.06627.
\newblock \emph{Trans. Amer. Math. Soc.}, to appear.

\bibitem[{Jacob and Pilipczuk(2022)}]{JP22}
\textsc{Hugo Jacob and Marcin Pilipczuk}.
\newblock \href{https://doi.org/10.1007/978-3-031-15914-5\_21}{Bounding
  twin-width for bounded-treewidth graphs, planar graphs, and bipartite
  graphs}.
\newblock In \textsc{Michael~A. Bekos and Michael Kaufmann}, eds., \emph{Proc.
  48th International Workshop on Graph-Theoretic Concepts in Computer Science
  \textup{({WG} 2022})}, vol. 13453 of \emph{Lecture Notes in Comput. Sci.},
  pp. 287--299. Springer, 2022.

\bibitem[{Kráľ et~al.(2023)Kráľ, Pekárková, and Štorgel}]{KPS23}
\textsc{Daniel Kráľ, Kristýna Pekárková, and Kenny Štorgel}.
\newblock \href{http://arxiv.org/abs/2307.05811}{Twin-width of graphs on
  surfaces}.
\newblock 2023, arXiv:2307.05811.

\bibitem[{Matou{\v{s}}ek(2014)}]{Mat14}
\textsc{Ji{\v{r}}{\'{\i}} Matou{\v{s}}ek}.
\newblock \href{https://doi.org/10.1017/S0963548313000400}{Near-optimal
  separators in string graphs}.
\newblock \emph{Combin. Probab. Comput.}, 23(1):135--139, 2014.

\bibitem[{Mohar and Thomassen(2001)}]{MoharThom}
\textsc{Bojan Mohar and Carsten Thomassen}.
\newblock Graphs on surfaces.
\newblock Johns Hopkins University Press, 2001.

\bibitem[{Ne{\v{s}}et{\v{r}}il and Ossona De~Mendez(2008)}]{NesOdM-GradI}
\textsc{Jaroslav Ne{\v{s}}et{\v{r}}il and Patrice Ossona De~Mendez}.
\newblock \href{https://doi.org/10.1016/j.ejc.2006.07.013}{Grad and classes
  with bounded expansion {I}. {D}ecompositions}.
\newblock \emph{European J. Combin.}, 29(3):760--776, 2008.

\bibitem[{Ossona~de Mendez(2021)}]{OdM-Banff}
\textsc{Patrice Ossona~de Mendez}.
\newblock Product structure for squares of planar graphs.
\newblock 2021.
\newblock Open Problems for Workshop on Graph Product Structure Theory, Banff
  International Research Station (21w5235).

\bibitem[{Pach and T{\'o}th(1997)}]{PachToth97}
\textsc{J{\'a}nos Pach and G{\'e}za T{\'o}th}.
\newblock \href{https://doi.org/10.1007/BF01215922}{Graphs drawn with few
  crossings per edge}.
\newblock \emph{Combinatorica}, 17(3):427--439, 1997.

\bibitem[{Pach and T{\'o}th(2002)}]{PachToth-DCG02}
\textsc{J{\'a}nos Pach and G{\'e}za T{\'o}th}.
\newblock \href{https://doi.org/10.1007/s00454-002-2891-4}{Recognizing string
  graphs is decidable}.
\newblock \emph{Discrete Comput. Geom.}, 28(4):593--606, 2002.

\bibitem[{Pilipczuk and Siebertz(2021)}]{PS21}
\textsc{Micha{\l} Pilipczuk and Sebastian Siebertz}.
\newblock \href{https://doi.org/10.1016/j.jctb.2021.06.002}{Polynomial bounds
  for centered colorings on proper minor-closed graph classes}.
\newblock \emph{J. Combin. Theory Ser. B}, 151:111--147, 2021.

\bibitem[{Robertson and Seymour(1986)}]{RS-II}
\textsc{Neil Robertson and Paul Seymour}.
\newblock \href{https://doi.org/10.1016/0196-6774(86)90023-4}{Graph minors.
  {II}. {A}lgorithmic aspects of tree-width}.
\newblock \emph{J. Algorithms}, 7(3):309--322, 1986.

\bibitem[{Schaefer and {\v{S}}tefankovi{\v{c}}(2004)}]{SS-JCSS04}
\textsc{Marcus Schaefer and Daniel {\v{S}}tefankovi{\v{c}}}.
\newblock \href{https://doi.org/10.1016/j.jcss.2003.07.002}{Decidability of
  string graphs}.
\newblock \emph{J. Comput. System Sci.}, 68(2):319--334, 2004.

\bibitem[{van~den Heuvel et~al.(2017)van~den Heuvel, Ossona~de Mendez, Quiroz,
  Rabinovich, and Siebertz}]{HOQRS17}
\textsc{Jan van~den Heuvel, Patrice Ossona~de Mendez, Daniel Quiroz, Roman
  Rabinovich, and Sebastian Siebertz}.
\newblock \href{https://doi.org/10.1016/j.ejc.2017.06.019}{On the generalised
  colouring numbers of graphs that exclude a fixed minor}.
\newblock \emph{European J. Combin.}, 66:129--144, 2017.

\bibitem[{Wood(2009)}]{Wood09}
\textsc{David~R. Wood}.
\newblock \href{https://doi.org/10.1016/j.ejc.2008.11.010}{On
  tree-partition-width}.
\newblock \emph{European J. Combin.}, 30(5):1245--1253, 2009.

\end{thebibliography}

\appendix

\section{Detached Tree-Partitions}\label{AppendixTreePartitions}

This appendix is devoted to the proof of \cref{ImprovedTreeProduct}. Recall that a rooted tree-partition $(B_x\colon x\in V(T))$ of a graph $G$ is \defn{detached} if for every non-root node $y\in V(T)$ with parent $x\in V(T)$, each vertex in $B_y$ is adjacent to at most one component of $G[B_x]$. 

\begin{lem}
\label{Detaching}
For any graph $G$, for any non-empty set $S\subseteq V(G)$, there exists a set $X$ such that:
\begin{itemize}
    \item $S\subseteq X\subseteq V(G)$;
    \item $|X|\leq 2|S|-1$; and
    \item each vertex in $G-X$ is adjacent to at most one component of $G[X]$.
\end{itemize}
\end{lem}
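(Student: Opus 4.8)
\textbf{Proof strategy for \cref{Detaching}.}
The plan is to build $X$ greedily, starting from $S$ and repeatedly absorbing vertices that ``see'' at least two components of the current set. Formally, I would maintain a set $X$ with $S\subseteq X\subseteq V(G)$ together with the invariant that $X$ is the vertex-set of a subgraph of $G$ that has at most $|S|$ components (initially $X=S$ with up to $|S|$ components). As long as some vertex $v\in V(G)\setminus X$ is adjacent to at least two distinct components of $G[X]$, add $v$ to $X$. The key observation is that adding such a $v$ strictly decreases the number of components of $G[X]$: the two (or more) components $v$ sees are merged into one via $v$. So this process terminates after at most $|S|-1$ absorption steps, giving $|X|\le |S|+(|S|-1)=2|S|-1$. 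When it terminates, by definition no vertex outside $X$ is adjacent to two or more components of $G[X]$, which is exactly the third bullet.

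The first step is to set up this potential-function argument cleanly: let $\kappa(X)$ denote the number of connected components of the induced subgraph $G[X]$, and note $\kappa(S)\le |S|$ trivially. The second step is the monotonicity claim: if $v\notin X$ is adjacent to components $C_1\ne C_2$ of $G[X]$, then $\kappa(X\cup\{v\})\le \kappa(X)-1$, because $C_1\cup C_2\cup\{v\}$ lies in a single component of $G[X\cup\{v\}]$ while every other component of $G[X]$ is unaffected. The third step is to iterate: since $\kappa$ is a positive integer that drops by at least one each time we add a vertex, and we only add vertices when such a $v$ exists, the process performs at most $\kappa(S)-1\le |S|-1$ steps before no valid $v$ remains (or $X=V(G)$, in which case the last bullet is vacuous). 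Counting the vertices added yields the size bound, and $S\subseteq X$ is preserved throughout.

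I do not expect a genuine obstacle here; the only point requiring a little care is the bookkeeping that $\kappa(S)\le|S|$ rather than $=|S|$ (if $G[S]$ already has fewer components we simply finish sooner, and the bound $|X|\le 2|S|-1$ still holds since the number of absorption steps is at most $\kappa(S)-1\le |S|-1$). One should also note the degenerate case $|S|=1$: then $X=S$ works directly, $G-X$ has every vertex adjacent to the unique component $G[X]$, and $2|S|-1=1$, so all three conditions hold. This lemma will presumably be applied bag-by-bag in the proof of \cref{ImprovedTreeProduct} to ``detach'' each bag of a bounded-width tree-partition: replacing a bag $B$ by the set $X$ obtained from $S=B$ enlarges it by at most a factor of $2$, keeps bounded width, and ensures the detachedness condition one level down, at the cost of a controlled increase in the degree of the tree.
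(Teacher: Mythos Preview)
Your proposal is correct and is essentially identical to the paper's proof: both run the same greedy absorption process starting from $S$, adding any vertex adjacent to two or more components of $G[X]$, and both use the component count as a strictly decreasing potential (from at most $|S|$ down to at least $1$) to bound the number of steps by $|S|-1$, yielding $|X|\le 2|S|-1$.
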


\begin{proof}
     Consider the following algorithm: 
Initialise $i:=0$ and $S_0:=S$. 
While there is a vertex $v$ in $G-S_i$ adjacent to at least two components of $G[S_i]$, let $S_{i+1}:=S_i\cup \{v\}$ and $i:=i+1$.

Say this algorithm stops at $i=m$. Let $X:=S_m$. Then each vertex in $G-X$ is adjacent to at most one component of $G[X]$. Let $c_j$ be the number of components of $G[S_j]$. By construction, 
$|S_j|=|S|+j$ and $c_j\leq c_0-j \leq|S|-j$ for each $j\in\{0,\dots,m\}$. In particular,  if $m\geq |S|-1$ then $c_{|S|-1}=1$. Thus $m\leq |S|-1$ and $|X|\leq |S|+m \leq 2|S|-1$.    
\end{proof}

The following lemma is the core of the proof of \cref{ImprovedTreeProduct}.

\begin{lem}
\label{heart}
For $k,d\in\NN$, for any graph $G$ with $\tw(G)\leq k-1$ and $\Delta(G)\leq d$, for any set $S\subseteq V(G)$ with $5 k\leq|S| \leq 30 kd$, there exists a detached tree-partition $(B_x:x\in V(T))$ of $G$ with root $z\in V(T)$ such that:
\begin{itemize}
    \item $\Delta(T)\leq 15d$;
    \item $|B_x|\leq 90kd $ for each $x\in V(T)$;
    \item $S\subseteq B_z$;
    \item $|B_z|\leq 3|S|-5k$; and
    \item $\deg_T(z)\leq \frac{|S|}{2k} - 1$.
\end{itemize}
\end{lem}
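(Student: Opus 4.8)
The lemma packages a single "detaching" step (from \cref{Detaching}) together with a recursive/greedy construction of a tree-partition via separators, tailored so that the root bag absorbs the prescribed set $S$ while the child bags each inherit a separator of bounded size. My plan is to use the classical fact that a graph with $\tw(G)\le k-1$ has, for any vertex-weighting, a balanced separator $A$ of size at most $k$ splitting $G-A$ into components each of weight at most half the total. First I would apply \cref{Detaching} to $S$ to obtain a set $X$ with $S\subseteq X\subseteq V(G)$, $|X|\le 2|S|-1$, such that each vertex outside $X$ sees at most one component of $G[X]$. This last property is exactly the "detached" condition once $X$ becomes (part of) the root bag, so it handles the root's obligation automatically. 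The root bag $B_z$ will be $X$ together with a small amount of extra material (a separator) to control the children.

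**Building the children.** Consider $G - X$; its components partition naturally, and by the adjacency property each component $C$ attaches to only one component of $G[X]$. For each component $C$ of $G-X$, I would take a separator of $G[V(C)\cup N(C)\cap X$-ish] — more precisely a minimum-size set $A_C\subseteq V(C)$ of order $\le k$ whose removal leaves pieces each of size at most $|V(C)|/2$ — no wait: the cleaner route is to put $N_G(V(C))\cap X$ plus a bounded separator into the root or into $B_z$. Let me restructure: enlarge $X$ to $X' := X \cup \bigcup_C A_C$ where $A_C$ is a $\le k$-vertex set such that each component of $G[V(C)] - A_C$ has at most $|V(C)|/2$ vertices; but to keep $\deg_T(z)$ small I instead want to bound the *number* of components, so I would group the small components of $G-X$ so that total degree is controlled. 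The size bookkeeping: $|B_z| = |X'| \le |X| + (\text{number of components of }G-X)\cdot k$. Using $\Delta(G)\le d$, the number of components of $G-X$ adjacent to $X$ is at most $d|X| / 1$, but that is too crude; instead each component of $G[X]$ has boundary at most $d$ times its own size, and $S$ being large ($|S|\ge 5k$) lets me absorb the slack. I expect to need the bound $\deg_T(z)\le |S|/(2k)-1$ to come from: number of child-subtrees $\le$ (number of "large" components of $G-X$), each of which must have size $\ge$ something like $2k$, so their count is at most $|V(G)\setminus X|/(2k)$, and then relating $|V(G)|$ back to $|S|$ via the $90kd$ global bag bound and $|S|\le 30kd$. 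The $|B_z|\le 3|S|-5k$ bound should come from $|X|\le 2|S|-1$ plus at most roughly $|S|-5k+1$ worth of separator vertices distributed into the root — this is where the constant $5k$ lower bound on $|S|$ is used to make the arithmetic close.

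**The recursion and the main obstacle.** For each piece hanging off the root, I would recurse: a component (or grouped block) $C$ of $G-X'$ with its attachment set forms a new instance, with the new "$S$" being the boundary $N_G(V(C))$ (of size $\le$ something $\le 30kd$ by the degree bound and the separator size, while $\ge 5k$ can be arranged by merging too-small pieces), and I would invoke \cref{heart} inductively — or rather prove \cref{heart} by induction on $|V(G)|$, noting each recursive call is on a strictly smaller graph. The detached condition at deeper nodes is maintained because at each level the bag is built as "(attachment set) $\cup$ (detaching closure via \cref{Detaching}) $\cup$ (separator)", and \cref{Detaching} guarantees the one-component adjacency condition between a child bag and its parent. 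The main obstacle I anticipate is the simultaneous satisfaction of all five numerical constraints — in particular reconciling $\Delta(T)\le 15d$ (a bound on the number of children, hence on how finely $G-X'$ may be split) with $|B_x|\le 90kd$ (which requires pieces to be *small*, hence split finely) with $\deg_T(z)\le |S|/(2k)-1$ (which again caps the splitting). The resolution is a careful grouping scheme: merge components of $G-X'$ into blocks of size in a window like $[2k, \,$ something$]$, so that each block is large enough to charge its separator against and few enough blocks exist, while no block exceeds the global size budget; the referee's argument underlying \cref{ImprovedTreeProduct} (cited in the excerpt) presumably does exactly this balancing, and I would follow that template, tuning the window endpoints to land the stated constants $15$, $90$, $3$, $2k$, $5k$.
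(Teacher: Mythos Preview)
Your approach diverges from the paper's and, as you yourself sense, the bookkeeping does not close. The paper does \emph{not} enumerate the components of $G-X$, take per-component separators, and group pieces. Instead it runs an induction on $|V(G)|$ with a three-way case split:
\begin{itemize}
  \item If $|V(G-S)|\le 90kd$, use a two-node tree: apply \cref{Detaching} to $S$ to get $B_z$, and set $B_y:=V(G)\setminus B_z$.
  \item If $5k\le|S|\le 15k$, apply \cref{Detaching} to $S$ to get $B_z$ (so $|B_z|\le 2|S|\le 3|S|-5k$), let $S'$ be the outside neighbourhood of $B_z$ (padded to size $\ge 5k$ if needed; $|S'|\le d|B_z|\le 30kd$), recurse once on $G-B_z$ with $S'$, and attach the resulting root as the \emph{single} child of $z$.
  \item If $15k\le|S|\le 30kd$, invoke the Robertson--Seymour separator lemma to write $G=G_1\cup G_2$ with $|V(G_1\cap G_2)|\le k$ and $|S\cap V(G_i)|\le \tfrac{2}{3}|S|$. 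Set $S_i:=(S\cap V(G_i))\cup V(G_1\cap G_2)$, check $5k\le|S_i|\le 30kd$, recurse on each $G_i$, and \emph{identify the two roots}.
\end{itemize}
The last case is the point you are missing. By merging the two recursive roots, the bounds telescope:
\[
\deg_T(z)=\deg_{T_1}(z_1)+\deg_{T_2}(z_2)\le\Bigl(\tfrac{|S_1|}{2k}-1\Bigr)+\Bigl(\tfrac{|S_2|}{2k}-1\Bigr)\le\tfrac{|S|+2k}{2k}-2=\tfrac{|S|}{2k}-1,
\]
and similarly $|B_z|\le(3|S_1|-5k)+(3|S_2|-5k)-|V(G_1\cap G_2)|\le 3|S|-5k$ using $|S_1|+|S_2|\le|S|+2|V(G_1\cap G_2)|$ and $|V(G_1\cap G_2)|\le k$. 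This is why the constants $3$, $5k$, and $\tfrac{1}{2k}$ fit together so tightly: they are designed to survive the merge, not to count components.

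Your plan of bounding $\deg_T(z)$ by the number of ``large'' components of $G-X'$ cannot produce $\tfrac{|S|}{2k}-1$; there is no reason the component count should be controlled by $|S|$ rather than by $|V(G)|$. The grouping you allude to would need a separate argument showing that pieces can be packed into at most $\tfrac{|S|}{2k}-1$ blocks each with boundary in $[5k,30kd]$, and you have not supplied one. The paper sidesteps this entirely: when $|S|$ is small the root has exactly one child, and when $|S|$ is large the root is assembled from two smaller instances rather than built directly.
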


\begin{proof}
We proceed by induction on $|V(G)$|.

\textbf{Case 1.} $|V(G-S)|\leq 90kd$: Let $T$ be the tree with $V(T)=\{y,z\}$ and $E(T)=\{yz\}$. Note that $\Delta(T)=1\leq 15d$ and $\deg_T(z)=1\leq \frac{|S|}{2k}-1$. By \cref{Detaching}, there exists a set $B_z\subseteq V(G)$ such that $S\subseteq B_z$, $|B_z|\leq 2|S|-1\leq 3 |S|-5k\leq 90kd$ and every vertex in $V(G)-B_z$ is adjacent to at most two components of $G[B_z]$. Set $B_y:=V(G)-B_z$. Then $|B_y|\leq |V(G)-S|\leq 90kd$ and every vertex in $B_y$ is adjacent to at most one component of $G[B_z]$. Hence $(B_x:x\in V(T))$ is the desired detached tree-partition of $G$.

Now assume that $|V(G-S)|\geq 90kd$.

\textbf{Case 2.} $5k \leq |S|\leq 15 k$: By \cref{Detaching}, there exists a set $B_z\subseteq V(G)$ such that $S\subseteq B_z$, $|B_z|\leq 2|S|\leq \min\{3|S|-5k,30k\}$ and every vertex in $V(G)-B_z$ is adjacent to at most one component of $G[B_z]$. Let $S':=\bigcup\{ N_G(v)\setminus B_z: v\in B_z\}$. So $|S'|\leq d |B_z|\leq  30kd$. If
$|S'|< 5 k$ then add $5k-|S'|$ vertices from $V(G-B_z-S')$ to $S'$, so
that $|S'|=5k$. This is well-defined since $|V(G-B_z)| \geq 90kd-30k \geq 5k$, implying $|V(G-B_z-S')| \geq 5k-|S'|$. By induction, there exists a detached tree-partition $(B_x:x\in V(T'))$ of $G-B_z$ with root $z'\in V(T')$ such that:
\begin{itemize}
    \item $|B_x|\leq 90kd $ for each $x\in V(T')$;
    \item $\Delta(T')\leq 15d$;
    \item $S'\subseteq B_{z'}$;
    \item $|B_{z'}|\leq 3 |S'|-5k \leq 90kd$; and
    \item $\deg_{T'}(z')\leq \frac{|S'|}{2k} - 1 \leq 15d - 1$.
\end{itemize}
Let $T$ be the rooted tree obtained from $T'$ by adding a new root $z$ adjacent to $z'$. So $(B_x:x\in V(T))$ is a
tree-partition of $G$ with width at most
$\max\{90kd,|B_z|\}\leq\max\{90kd,30k\}=90kd$. By construction,
$\deg_T(z)=1 \leq \frac{|S|}{2k} - 1$ and $\deg_{T}(z') =
\deg_{T'}(z')+1\leq (15d-1) + 1 = 15d$. Every other vertex in $T$ has
the same degree as in $T'$. Hence $\Delta(T)\leq 15d$, as desired.
Finally, since $(B_x:x\in V(T'))$ is detached and every vertex in $V(G)-B_z$ is adjacent to at most one component of $G[B_z]$, it follows that $(B_x:x\in V(T))$ is also detached.

\textbf{Case 3.} $15 k \leq |S|\leq 30kd$: By the separator lemma of
\citet[(2.6)]{RS-II}, there are induced subgraphs $G_1$ and $G_2$ of
$G$ with $G_1\cup G_2=G$ and $|V(G_1\cap G_2)|\leq k$, where $|S\cap
V(G_i)|\leq \frac23 |S|$ for each $i\in\{1,2\}$. Let $S_i := (S\cap
V(G_i))\cup V(G_1\cap G_2)$ for each $i\in\{1,2\}$.

We now bound $|S_i|$. For a lower bound, since $|S\cap V(G_1)|\leq
\frac23 |S|$, we have $|S_2|\geq |S\setminus V(G_1)|\geq \frac13 |S|
\geq 5k $. By symmetry, $|S_1|\geq  5k $. For an upper bound,
$|S_i|\leq\frac23 |S| + k \leq 20kd + k \leq 30kd$. Also note that
$|S_1|+|S_2|\leq |S|+2k$.

We have shown that $5k \leq |S_i|\leq 30kd$ for each $i\in\{1,2\}$.
Thus we may apply induction to $G_i$ with $S_i$ the specified set.
Hence there exists a detached tree-partition $(B^i_x:x\in V(T_i))$ of $G_i$ with root $z_i\in V(T_i)$ such that:
\begin{itemize}
    \item $|B^i_x|\leq 90kd $ for each $x\in V(T_i))$;
    \item $\Delta(T_i)\leq 15d$;
    \item $S_i\subseteq B_{z_i}$;
    \item $|B_{z_i}|\leq 3|S_i|-5k$; and
    \item $\deg_{T_i}(z_i)\leq \frac{|S_i|}{2k}-1$.
\end{itemize}
Let $T$ be the rooted tree obtained from the disjoint union of $T_1$ and
$T_2$ by identifying $z_1$ and $z_2$ into a new root vertex $z$. Let $B_z:=
B^1_{z_1}\cup B^2_{z_2}$. Let $B_x:= B^i_x$ for each $x\in
V(T_i)\setminus\{z_i\}$. Since $G=G_1\cup G_2$ and $V(G_1\cap
G_2)\subseteq B^1_{z_1}\cap B^2_{z_2} \subseteq B_z$, we have that
$(B_x:x\in V(T))$ is a tree-partition of $G$.
By construction, $S\subseteq B_z$ and since $V(G_1\cap G_2)\subseteq
B^i_{z_i}$ for each $i$,
\begin{align*}
    |B_z|
    & \leq |B^1_{z_1}|+|B^2_{z_2}| - |V(G_1\cap G_2)|\\
    & \leq (3|S_1|-5k) +  (3|S_2|-5k) - |V(G_1\cap G_2)|\\
    & = 3( |S_1|+ |S_2|) -10k - |V(G_1\cap G_2)|\\
    & \leq 3( |S| + 2|V(G_1\cap G_2)| ) -10k - |V(G_1\cap G_2)|\\
    & \leq 3 |S| + 5|V(G_1\cap G_2)| -10k\\
    & \leq 3 |S| - 5 k\\
    & < 90kd.
\end{align*}
Every other part has the same size as in the tree-partition of $G_1$
or $G_2$. So this tree-partition of $G$ has width at most $90kd$.
Note that
\begin{align*}
 \deg_T(z)   = \deg_{T_1}(z_1) + \deg_{T_2}(z_2)
    & \leq  (\tfrac{|S_1|}{2k}-1) + (\tfrac{|S_2|}{2k}-1)\\
    & =  \tfrac{|S_1|+|S_2|}{2k} -2\\
    & \leq  \tfrac{|S|+2k}{2k} -2\\
    & =  \tfrac{|S|}{2k} -1\\
    & < 15 d.
\end{align*}
Every other node of $T$ has the same degree as in $T_1$ or $T_2$.
Thus $\Delta(T) \leq 15d$. So it remains to show that $(B_x \colon x\in V(T))$ is detached. By induction, it follows that for every node $x\in V(T)\setminus\{z\}$ with child $y$, every vertex in $B_y$ is adjacent to at most one component of $G[B_x]$. Now suppose that a vertex $v\in V(G)-B_z$ is adjacent to at least two components of $G[B_z]$. Let $u,w\in B_z$ be neighbours of $v$ in $G$ that belong to distinct components of $G[B_z]$. Since $(B^i_x\colon x\in V(T_i))$ is a detached tree-partition of $G_i$, it follows that either $u\in V(G_1)\setminus V(G_2)$ and $w\in V(G_2)\setminus V(G_1)$, or $u\in V(G_2)\setminus V(G_1)$ and $w\in V(G_1)\setminus V(G_2)$. As such, $v\in V(G_1)\cap V(G_2)$, but this is a contradiction since $V(G_1)\cap V(G_2)\subseteq B_z$. So $(B_x \colon x\in V(T))$ is detached, which completes the proof.
\end{proof}

\begin{proof}[Proof of \cref{ImprovedTreeProduct}]
First suppose that $|V(G)| < 5 (\tw(G)+1)$. Let $T$ be the 1-vertex tree with
$V(T)=\{x\}$, and let $B_x:=V(G)$. Then $(B_x:x\in V(T))$ is the
desired detached tree-partition, since $|B_x|=|V(G)|<5 (\tw(G)+1) \leq 90 (\tw(G)+1)\Delta(G)$ and
$\Delta(T)=0\leq 15\Delta(G)$.
Now assume that $|V(G)| \geq 5 (\tw(G)+1)$. The result follows from
\cref{heart}, where $S$ is any set of $5(\tw(G)+1)$ vertices in $G$.
\end{proof}
\end{document}